\documentclass{amsart}          

\usepackage{amsmath,amsthm}     
\usepackage{amssymb,bbm}            
\usepackage{euscript}           
\usepackage{array,enumerate,calc,graphicx,xcolor}
\usepackage[matrix,arrow,curve,frame]{xy}    
\usepackage[colorlinks]{hyperref}
\usepackage{stmaryrd}
\usepackage{tikz-cd}

\makeatletter
\newcommand*\bigcdot{\mathpalette\bigcdot@{.7}}
\newcommand*\bigcdot@[2]{\mathbin{\vcenter{\hbox{\scalebox{#2}{$\m@th#1\bullet$}}}}}
\makeatother

\usepackage{tikz}
\usetikzlibrary{matrix,arrows,decorations.pathmorphing}

\xymatrixcolsep{1.9pc}                          
\xymatrixrowsep{1.9pc}
\newdir{ >}{{}*!/-5pt/\dir{>}}                  

\newtheorem{defn}[subsection]{Definition}
\newtheorem{prop}[subsection]{Proposition}

\newtheorem{cor}[subsection]{Corollary}
\newtheorem{lemma}[subsection]{Lemma}

\theoremstyle{definition}  
\newtheorem{example}[subsection]{Example}

\newtheorem{remark}[subsection]{Remark}

  {\end{list}}

\newcommand{\dfn}{\textbf} 

\newcommand{\mdfn}[1]{\dfn{\mathversion{bold}#1}} 

\newcommand{\tens}              {\otimes}               

\newcommand{\iso}               {\cong}

\newcommand{\cat}{\EuScript}    
\newcommand{\cB}{{\cat B}}      

\newcommand{\cF}{{\cat F}}

\newcommand{\Mod}{\text{Mod}}

\newcommand{\cBZ}{\cB\Z}

\newcommand{\field}[1]  {\mathbb #1} 

\newcommand{\Z}         {\field Z}
\newcommand{\C}         {\field C}
\newcommand{\Q}         {\field Q}

\DeclareMathOperator*{\Ann}{Ann}

\DeclareMathOperator{\Hom}{Hom}
\DeclareMathOperator{\uHom}{\underline{Hom}}
\DeclareMathOperator{\uExt}{\underline{Ext}}
\DeclareMathOperator{\mExt}{\uExt}
\DeclareMathOperator{\uTor}{\underline{Tor}}
\DeclareMathOperator{\Ext}{Ext}
\DeclareMathOperator{\Tor}{Tor}

\DeclareMathOperator{\coeq}{coeq}

\DeclareMathOperator{\eq}{eq}

\newcommand{\ra}{\rightarrow}                   
\newcommand{\lra}{\longrightarrow}              
\newcommand{\la}{\leftarrow}                    
\newcommand{\lla}{\longleftarrow}               
\newcommand{\llra}[1]{\stackrel{#1}{\lra}}      
\newcommand{\llla}[1]{\stackrel{#1}{\lla}}      

\newcommand{\fib}{\twoheadrightarrow}           

\newcommand{\inc}{\hookrightarrow}              
\newcommand{\dbra}{\rightrightarrows}           

\newcommand{\blank}{-}                          
\newcommand{\id}{id}                            
\newcommand{\und}{\underline}

\newcommand{\adjoint}{\rightleftarrows}

\newcommand{\pt}{pt}

\numberwithin{equation}{section}

\newcommand{\Ab}{\cat Ab}

\newcommand{\ev}{ev}

\newcommand{\bbox}{\Box}

\newcommand{\MMack}[2]{\xymatrix{
{#1} \ar@(ul,dl)[] \ar@<0.5ex>[r] & {#2}\ar@<0.5ex>[l]}}

\newcommand{\MMod}{\!-\!\Mod}
\DeclareMathOperator{\coker}{coker}

\newcommand{\mZ}{\underline{\smash{\Z}}}
\newcommand{\mQ}{\underline{\smash{\Q}}}
\newcommand{\mA}{\underline{\smash{A}}}

\DeclareMathOperator{\stab}{stab}

\DeclareMathOperator{\Res}{Res}
\DeclareMathOperator{\Ind}{Ind}
\DeclareMathOperator{\coInd}{coInd}
\newcommand{\res}{\mathord\downarrow}
\newcommand{\ind}{\mathord\uparrow}

\DeclareMathOperator{\FP}{FP}
\DeclareMathOperator{\lcm}{lcm}

\newcommand{\uH}{\underline{H}}
\newcommand{\Or}{{\cat Or}}

\DeclareMathOperator{\Pre}{Pr}
\newcommand{\mInd}{\underline{\Ind}}
\newcommand{\mRes}{\underline{\Res}}

\numberwithin{equation}{subsection}

\newcommand{\cln}{\colon\!}

\begin{document}
\title{Mackey homological algebra over cyclic groups}

\author{Daniel Dugger}
\address{Department of Mathematics, University of Oregon, Eugene, OR 97403}
\author{Christy Hazel}
\address{Department of Mathematics, Grinnell College, Grinnell, IA 50112}

\begin{abstract}
Let $C_n$ denote a cyclic group of order $n$. In this paper we investigate modules and chain complexes over the constant integral Mackey functor $\mZ$ and perform some related homological calculations.   Along the way we develop a number of foundational tools for working with these categories.  These results are useful for the study of $RO(C_n)$-graded Bredon cohomology, though such applications are delegated to a sequel paper. 
\end{abstract}

\maketitle

\tableofcontents

\section{Introduction}
Let $G$ be a finite group and let $\mZ_G$ (or just $\mZ$) denote the constant-coefficient Mackey functor with value $\Z$.  Given a collection $\cF$ of subgroups of $G$, let $I_\cF\subseteq \mZ$ be the ideal generated by the elements $1_{G/H}\in \mZ(G/H)$ for $H\in \cF$.  Given two such collections $\cF_1$ and $\cF_2$, computing (and understanding) the Mackey functors $\uTor_*^{\mZ}(\mZ/I_{\cF_1},\mZ/I_{\cF_2})$ and $\uExt^*_{\mZ}(\mZ/I_{\cF_1},\mZ/I_{\cF_2})$ is a fundamental problem in $G$-equivariant homological algebra that is intimately connected to the problem of computing the $RO(G)$-graded Bredon cohomology groups $H^\star_G(\pt;\mZ)$.  This perspective was already clear in \cite{Z}, where homological computations were done for cyclic groups $C_{\ell^e}$ of prime-power order and where the cohomology ring $H^\star_{C_{\ell^2}}(\pt;\mZ)$ was computed.  The goal of the present paper is to study the case of a general cyclic group $C_n$.  

As one might guess, the ``answers'' for $C_n$ are in some sense completely determined by the answers for the prime-power case.  
However, turning this philosophy into precise statements takes some work
(for example, it is certainly not the case that the category of $\mZ_{C_n}$-modules decomposes as a product of the categories of $\mZ_{C_{\ell^e}}$-modules, or anything as simple as that). And in fact, such decompositional approaches are not necessarily the best approach: as a classical example, the group cohomology of a cyclic group is much easier to describe as a global object than by patching it together from its local pieces.  We will find that once the necessary machinery is in place many computations for $C_n$ can be done just as easily as the analogous computations for prime powers---one just has to know the ropes.

The goal of this paper is to develop the basic tools needed to facilitate working with the category of $\mZ$-modules for $G=C_n$, and then we explore some of the homological algebra of this category.  The end product is a hodgepodge of foundational results---a pleasant stroll through the woods, if you like, rather than a hike up to a noteworthy viewpoint.  Section~\ref{se:appl} gives a small taste of applications, though.  In a sequel paper 
we use these tools to give a very simple computation of the  regular portion of the Bredon cohomology ring $H^\star_{C_n}(\pt;\mZ)$, as well as significant information about the irregular portion.  The computations for the so-called `positive cone' have already been done in \cite{BD}, but the algebraic approach developed here is independent and a useful complement to their methods. 

From one perspective, the results of this paper are not breaking new ground.  Homological computations with Mackey functors are a topic of much study---we mention \cite{B2}, \cite{BSW}, \cite{MQS}, and \cite{Z} as just a small selection.  We particularly acknowledge the unpublished preprint \cite{Z}, as it contains a number of important ideas that anticipate our current work.  
Nevertheless, for a topologist who wants to get started in this subject and make explicit computations, the existing papers are more than a little formidable.  We hope the present paper can help serve as an introduction to this subject,
and that readers will come away with a
concrete set of techniques that will make these computations easier and more accessible than they currently are.  \bigskip

Let $C_n$ denote the cyclic group of order $n$ with generator $t$. For $d|n$ let $\cF_d$ be the collection of all subgroups whose order divides $d$.  Write $I_d=I_{\cF_d}$ for short.  The following computations are examples of the ones carried out in Section~\ref{se:cyclic}.

\begin{prop} 
\label{pr:intro-computations}
In the category of $\mZ_{C_n}$-modules one has
\begin{enumerate}[(a)]
\item $\uTor_i(\mZ/I_a,\mZ/I_b)\iso 
\begin{cases}
\mZ/I_{(a,b)} & \text{if $i=0$ or $i=3$},\\
0 & \text{otherwise.}
\end{cases}
$
\item $\uExt^i(\mZ/I_a,\mZ/I_b)\iso
\begin{cases}
\mZ/I_{(a,b)} & \text{if $i=0$ or $i=3$},\\
0 & \text{otherwise.}
\end{cases}
$
\item
$\uExt^i(\mZ/I_a,\mZ)\iso
\begin{cases}
\mZ/I_{a} & \text{if $i=3$},\\
0 & \text{otherwise.}
\end{cases}
$
\end{enumerate}
\end{prop}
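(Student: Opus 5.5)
The plan is to build an explicit projective resolution of $\mZ/I_a$ of length three and then compute everything levelwise. Write $\mZ_{G/K}=\mZ\bbox A_{G/K}$ for the free $\mZ$-module on the orbit $G/K$; these are projective, and $\mZ_{G/G}=\mZ$. Every subgroup of order dividing $a$ lies in $C_a$, and $1_{G/H}$ for $H\le C_a$ is a restriction of $1_{G/C_a}$. Hence $I_a$ is generated by the single element $1_{G/C_a}$, so $\mZ/I_a=\coker(\mZ_{G/C_a}\to\mZ)$. Note that $C_n/C_a\iso C_{n/a}$ acts on $G/C_a$; write $t$ for the image of the generator and $N=\sum t^i$ for the norm over $C_{n/a}$. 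Mimicking the periodic resolution for a cyclic group, I would propose the complex
\[
0\to \mZ \llra{N} \mZ_{G/C_a}\llra{1-t}\mZ_{G/C_a}\lra \mZ \lra \mZ/I_a\to 0 .
\]
Here the first map is the transfer-type map picking out the orbit sum, and the last map is induced by $1_{G/C_a}$.

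The first real step is to check exactness at every level $G/C_m$. Using $\mZ_{G/K}(G/H)=\Z[\text{orbits of }G/K\times G/H]$ with the induced restriction and transfer maps, at level $G/C_m$ the complex becomes a truncated standard resolution for the cyclic group $C_{n/a}$ acting on $\Z[G/C_a]^{C_m}$-type lattices. Exactness then reduces to the familiar facts that $\ker(1-t)=\text{image}(N)$ and $\ker N=\text{image}(1-t)$ on permutation lattices. The one spot that needs care is the cokernel: I need to check that it is precisely $(\mZ/I_a)(G/C_m)$, and also compute this value (I expect $\Z/\gcd(a,m)$, up to the convention for restriction/transfer). This gives a projective resolution of length $3$, which already explains why nothing appears above degree $3$.

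For (a), I would use $\mZ_{G/K}\bbox_{\mZ} M\iso M_{G/K}$, where $M_{G/K}(X)=M(G/K\times X)$. Applying this to $M=\mZ/I_b$ turns the resolution into
\[
M\lra M_{G/C_a}\lra M_{G/C_a}\lra M .
\]
In degree $0$ one gets $\mZ/I_a\bbox\mZ/I_b=\mZ/(I_a+I_b)$. Comparing levelwise values (via the gcd description above) identifies this with $\mZ/I_{(a,b)}$. For degrees $1,2,3$ I would again compute levelwise: at each level $M_{G/C_a}$ is a permutation module over $C_{n/a}$ with cyclic coefficients. The homology of the complex is then the Tate-type cohomology $\hat H^*(C_{n/a};\,\cdot)$ in odd and even degrees, plus the end corrections at the two truncation points. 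I expect the middle groups to vanish because the order of the coefficients and the order of the acting group interact to give $\ker N/\text{im}(1-t)=0$. The degree-$3$ term $\ker(M\to M_{G/C_a})$ should be the elements killed by the restriction-type map, and I want to match it with $\mZ/I_{(a,b)}$, including its restriction and transfer maps.

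For (b) and (c), I would use $\uHom_{\mZ}(\mZ_{G/K},N)\iso N_{G/K}$, which turns the resolution into the cochain complex $N\to N_{G/C_a}\to N_{G/C_a}\to N$, with the roles of $N$ and $1-t$ interchanged. For $N=\mZ/I_b$ the same levelwise analysis gives (b); a duality between transfer and restriction should make it a mirror image of the Tor computation. For $N=\mZ$ everything is torsion-free. Then $\uHom(\mZ/I_a,\mZ)=0$, the groups in degrees $1$ and $2$ vanish by the usual cyclic-group cohomology on permutation lattices, and $\uExt^3$ is the cokernel of $\mZ_{G/C_a}\to\mZ$ in the dual direction, which is $\mZ/I_a$.

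The main obstacle I anticipate is the bookkeeping in the degree-$3$ identifications. One has to verify not just that the groups have the right orders levelwise, but that the restriction and transfer maps assemble into exactly $\mZ/I_{(a,b)}$ (respectively $\mZ/I_a$). I would handle this by writing down an explicit map from $\mZ/I_{(a,b)}$, sending $1$ to the evident generator, and checking that it is an isomorphism levelwise.
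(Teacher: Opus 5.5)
Your resolution is the paper's standard resolution, and evaluating $F_{\Theta_a}\bbox M\iso M^{D\Theta_a}$ and $\uHom(F_{\Theta_a},M)\iso M^{\Theta_a}$ levelwise is exactly the paper's route. The gap is the reason you give for the vanishing in degrees $1$ and $2$.

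At each level, the outer maps of the complex are the Mackey restriction $U=(\pi\times\id)^*$ and transfer $S=(\pi\times\id)_*$. The group-ring norm over $\mathrm{Aut}(\Theta_a)\iso C_a$ is only the composite $N=U\circ S$. Hence $H_2=\ker(1-t)/\mathrm{im}\,U$ is merely a quotient of $\hat H^0$, and $H_1=\ker S/\mathrm{im}(1-t)$ is merely a subgroup of $\hat H^{-1}$. These Tate groups are typically nonzero.

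For example, take $n=p^2$, $a=p$, $b=p^2$ (orbit-size indexing, see below). The Tor complex at $\pt$ has both middle terms equal to $(\mZ/I_{p^2})(\Theta_p)=\Z/p$ with trivial $C_p$-action, so $\ker N/\mathrm{im}(1-t)\iso\Z/p$. Yet the complex is
\[ \Z/p^2\llra{\mathrm{proj}}\Z/p\llra{0}\Z/p\llra{\cdot p}\Z/p^2 \]
with $H_1=H_2=0$. No interplay between the order of the coefficients and the order of the group produces this vanishing; the end maps do.

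The same confusion affects your exactness check and part (c). For $\Z$ with trivial $C_a$-action (the level $\pt$ of the resolution), $\ker(1-t)\neq\mathrm{im}\,N$. Exactness holds because the first map is a restriction and hits all of $\ker(1-t)$.

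Two ingredients are missing.

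(i) The decomposition $\Theta_a\times\Theta_c\iso\coprod_{i<(a,c)}\Theta_{[a,c]}$, on which $t\times\id$ cyclically permutes the copies. Because of it, for $M$ with trivial $t^*$ (such as $\mZ$ and $\mZ/I_b$), both your Tor and Ext complexes at $\Theta_c$ are quasi-isomorphic to
\[ M(\Theta_c)\llra{p^*}M(\Theta_{[a,c]})\llra{0}M(\Theta_{[a,c]})\llra{p_*}M(\Theta_c). \]
Read left to right, its homology is $\ker p^*,\coker p^*,\ker p_*,\coker p_*$. This gives $\uTor_3,\dots,\uTor_0$ and also $\uExt^0,\dots,\uExt^3$, which is the duality $\uTor_i\iso\uExt^{3-i}$.

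(ii) The fact that in $\mZ/I_b$ every restriction is surjective and every transfer is injective. This is a gcd computation special to cyclic groups.

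Together these kill degrees $1$ and $2$, for $\mZ$ as well. Your degree-$3$ plan, an explicit map from $\mZ/I_{(a,b)}$ checked levelwise, then goes through; the paper packages that step as its recognition theorem.

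Finally, fix the indexing. The statement requires $I_a$ to be generated by $1$ at the orbit $\Theta_a=C_n/C_{n/a}$ of size $a$, so that $(\mZ/I_a)(\Theta_d)\iso\Z/\tfrac{a}{(a,d)}$. With your $I_a$ generated by $1_{G/C_a}$, one gets $I_a+I_b=I_{[a,b]}$, so your degree-$0$ identification with $\mZ/I_{(a,b)}$ fails. Your guess $\Z/\gcd(a,m)$ at $G/C_m$ is also wrong in either indexing; try $n=p^2$, $a=m=p$.
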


Note that our convention is that underlined objects are Mackey functors, so the isomorphisms in the above result are isomorphisms of $\mZ_{C_n}$-modules.  
As just one example of the utility of the above information, note that 
the abelian group $\Ext^1(\mZ/I_a,\mZ/I_b)$ is the value of $\uExt^1(\mZ/I_a,\mZ/I_b)$ at the trivial orbit.  So Proposition~\ref{pr:intro-computations}(b) implies that these $\Ext^1$ groups vanish for any $a$ and $b$.  This is very useful in spectral sequence computations.  

The above computations get one thinking that $\mZ/I_a$ is an analog of the abelian group $\Z/a$, but one should be careful here: in some ways it is, and in some ways it isn't.  The most direct analog of $\Z/a$ is the cokernel of $\mZ \llra{\cdot a} \mZ$, which we would write as $\mZ/a$.  The homological computations for those modules play out exactly the same as they do in abelian groups, with the expected $\Tor_1$ and $\Ext^1$ objects and everything vanishing in higher degrees.  The modules $\mZ/I_a$, on the other hand, when constructed in the non-equivariant context (i.e. when $n=1$) are simply the zero module: you take $\Z$ and you mod out by $1$.  So the motto to keep in mind is that in some ways $\mZ/I_a$ behaves like $\Z/a$, and in other ways it behaves like $0$.  

The nontrivial $\Ext^3(\mZ/I_a,\mZ)$ is particularly noteworthy.  The generator here gives a kind of Bockstein operator, and it turns out this plays an important role in understanding $H^\star_{C_n}(\pt)$.  This will be discussed in detail in \cite{DH2}, but we mention it here to highlight the fundamental importance of some of these computations.\bigskip  

As soon as $n>2$ the module theory of $\mZ$ is ``wild'' in a certain technical sense, loosely meaning that one does not have a manageable class of indecomposables such as one has when studying finite abelian groups.  But for a topologist studying $RO(G)$-graded Bredon cohomology, some $\mZ$-modules are more important than others.  
The first $\mZ$-modules (beyond free ones) that appear in the picture are the quotients $\mZ/I_a$ (see Section~\ref{se:Bredon} for why).  From there it is not surprising that $\uTor_*(\mZ/I_a,\mZ/I_b)$ and corresponding $\uExt$ modules start to show up, but Proposition~\ref{pr:intro-computations} says that they aren't anything new.  

With the $\mZ/I_a$ floating around it doesn't take long for the modules $I_a$ themselves to enter into computations.  Next are quotients $I_a/I_b$ and box products like $I_a\bbox I_b$ and $(\mZ/I_a)\bbox I_b$.  But it turns out that $I_a\bbox I_b\iso I_{[a,b]}$ and $(\mZ/I_a)\bbox I_b\iso I_b/I_{[a,b]}\iso I_{(a,b)}/I_a$, where $(a,b)$ is the gcd and $[a,b]$ is the lcm. 
One can keep going, introducing more and more modules via iterated extensions and box products, but as we noted at the start this process doesn't really end---the module theory is just too complex.  Still, the ones we have listed so far are a good starting point for an equivariant topologist who is building their toolkit.  

There is one more class of modules that appears fairly quickly in Bredon cohomology calculations.  If $a$ and $b$ are divisors of $n$ where $a|b$ then one can make a $\mZ$-module called $\mZ(b;a)$ whose value at every orbit is $\Z$, the action maps are all the identity, and where the restriction and transfer maps are all either the identity or multiplication by an associated index.  See Section~\ref{se:forms-of-Z} for a precise definition.  For the special case where $n$ is a prime power these $\mZ(b;a)$ modules previously appeared both in \cite{HHR2} and in \cite{Z}.  

The collection of modules $\mZ/I_a$, $I_a$, $I_a/I_b$, and $\mZ(b;a)$ are fundamental constructions that appear over and over again in computations.  One of the goals of the paper is to develop a facility for working with these.  One example of this was the isomorphism $I_a\bbox I_b\iso I_{[a,b]}$ mentioned above.  As just one other example, a certain recognition theorem for the modules $\mZ/I_a$ (see Proposition~\ref{pr:Z/F_a-identify}) turns out to be very useful.  

Finally, we remark that the first place in algebraic topology that homological algebra becomes important is in the K\"unneth and Universal Coefficient Theorems.  In the context of Bredon cohomology these are of course spectral sequences, since we learned in Proposition~\ref{pr:intro-computations} that we don't just have $\Tor_1$ and $\Ext^1$ in this setting.  In Section~\ref{se:appl} we give a quick tour of these spectral sequences with some examples.

\subsection{Organization of the paper}
Sections~\ref{se:background}, \ref{se:fam-ideals}, and \ref{se:change} are written in the context of a general finite group $G$, with the hope that this material will be useful for future research beyond the scope of the current paper.  Sections~\ref{se:cyclic} through \ref{se:appl} focus on the particular case of cyclic groups.

\subsection{Notation and terminology}
Although we do not restrict to cyclic groups until Section~\ref{se:cyclic}, these are frequently used for examples in the earlier material.  So we establish some basic notation here. For a cyclic group $C_n$ we write $t$ for a chosen generator. If $d|n$ then $C_n$ has a unique subgroup of size $d$, so we write $C_d\subseteq C_n$ to denote this subgroup.  Let $\Theta_d=C_n/C_{\frac{n}{d}}$, which is the canonical $C_n$-orbit of size $d$.  Pictorially, $\Theta_d$ is a necklace of $d$ beads where the action of $t$ rotates each bead one to its left. For any group $G$ we write $\pt$ for the orbit $G/G$. 

If $a,b\in \Z$ then $(a,b)$ is the gcd and $[a,b]$ is the lcm.

\begin{remark}
\label{re:gcd}
Starting in Section~\ref{se:cyclic} there are several places in the paper where we need exotic-looking identities between iterated lcm and gcd operations.  A way to check such things is to consider the exponents in prime factorizations, so that lcm and gcd correspond to max and min.  Such equations always come down to equations between iterated max-min operations, which can be verified by (at worst) a case argument.  It is also worth noting that max and min distribute over each other (just as AND and OR do), leading to useful formulas such as
\[ [(x,y),(x,z)]=(x,[y,z]) \quad \text{and} \quad ([x,y],[x,z])=[x,(y,z)],
\]
and the analogs of deMorgan's laws for max/min yield
\[ (\tfrac{x}{d},\tfrac{x}{e})=\tfrac{x}{[d,e]} \quad\text{and}\quad [\tfrac{x}{d},\tfrac{x}{e}]=\tfrac{x}{(d,e)}.
\]
\end{remark}

\subsection{Acknowledgments}  
Some of the work on this paper was done while the first author was a visitor at the Institute for Computational and Experimental Research in Mathematics, in Providence, RI, which is supported by NSF grant DMS-1929284.  Said author is grateful to ICERM and Brown University for providing an excellent working environment.  


\section{Background on Mackey functors and \mdfn{$\mZ$}-modules}
\label{se:background}

Let $G$ be a finite group. In this section we review the definition of $G$-Mackey functors, the box product and the internal hom, and set up the machinery needed for homological algebra.  Although this material is in some sense standard---see e.g. \cite{L1} or \cite{B1}, as two examples---carrying out computations requires us to establish some conventions very carefully.  So it seems worthwhile to provide a self-contained treatment tuned to our specific needs.  See also \cite{TW} and \cite{W} for further background on Mackey functors. \medskip

We write $\cB_G$ for the \textbf{Burnside category} of $G$.  The objects of $\cB_G$ are the finite $G$-sets and the morphisms are described in terms of spans.  Given $G$-sets $S_1$ and $S_2$ a \textbf{span} from $S_1$ to $S_2$ is a diagram of the form
\[ \xymatrixrowsep{1pc}\xymatrix{
& T\ar[dl]\ar[dr] \\
S_2 && S_1
}
\]
where $T$ is a finite $G$-set (all maps between $G$-sets are assumed to be equivariant, unless otherwise noted).  Note our ``right-to-left'' convention here. Two spans $[S_2\leftarrow T \rightarrow S_1]$ and $[S_2\leftarrow T' \rightarrow S_1]$ are considered equivalent if there exists an isomorphism $T\to T'$ making the diagram 
\[
\xymatrixrowsep{1pc}\xymatrix{
&T\ar[dd]_{\cong}\ar[dl]\ar[dr]&\\
S_2& & S_1\\
&T'\ar[ul]\ar[ur]&
}
\]
commute. We add two spans $[S_2\leftarrow T \rightarrow S_1]$, $[S_2\leftarrow T' \rightarrow S_1]$ using disjoint union to get $[S_2\leftarrow T\sqcup T' \rightarrow S_1]$. Note this operation respects the equivalence relation. The set of morphisms $\cB_G(S_1,S_2)$ is then defined to be the Grothendieck group of equivalence classes of spans
under the disjoint union operation.  Composition is given by pullback, that is $[S_3\leftarrow T' \rightarrow S_2] \circ [S_2\leftarrow T \rightarrow S_1]$ is the span obtained by deleting the $S_2$ in the diagram
\[ 
\xymatrixrowsep{1pc}\xymatrix{
& & P\ar[dl]\ar[dr] & & \\
& T'\ar[dl] \ar[dr]& &T\ar[dl] \ar[dr] &\\
S_3 & & S_2 & & S_1
}
\]
where $P$ is the pullback of $T'\to S_2\leftarrow T$.
\smallskip

The category $\cB_G$ has a symmetric monoidal structure induced by Cartesian product of $G$-sets, with the object $\pt$ as the unit (here $\pt$ denotes a singleton).  It also comes with a duality functor $D\colon \cB_G\ra \cB_G^{op}$, which is the identity on objects and on morphisms is induced by
\[ \xymatrixrowsep{0.1pc}\xymatrix{
& T\ar[ddl]\ar[ddr] &&&& T \ar[ddl]\ar[ddr]
\\
&&&\llra{D} \\
S_2 && S_1 && S_1 && S_2.
} 
\]
The functor $D$ will be helpful for book-keeping purposes as we consider various contravariant and covariant functors involving $\cB_G$. 
For example there is an adjunction isomorphism
\[ \cB_G(A\times B,C)\iso \cB_G(B, A\times C)
\]
which has the evident effect on spans.  This is clearly natural in $B$ and $C$, but the naturality in $A$ is a bit tricky because the object on the left is contravariant in $A$ and the  one on the right is covariant in $A$.  The best way to write the formula is
\[ \cB_G(A\times B,C)\iso \cB_G(B,DA\times C),
\]
as this gets the functoriality correct in all the variables.  \smallskip

For any map of $G$-sets $f\colon S_1\ra S_2$ write $Rf$ for the map in $\cB_G(S_1,S_2)$ corresponding to the span
\[\xymatrixrowsep{1pc}\xymatrix{
& S_1\ar[dl]_f  \ar[dr]^{id}\\
S_2 && S_1.
}
\]
Write $If$ for $D(Rf)=[S_1\overset{id}{\leftarrow}S_1\overset{f}{\to} S_2]$.  The following elementary result is very useful:

\begin{prop}
\label{pr:I=R}
If $f\colon S_1\ra S_2$ is an isomorphism of $G$-sets then $Rf=I(f^{-1})$ and $If=R(f^{-1})$ as maps in $\cB_G$.
\end{prop}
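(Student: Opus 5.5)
The plan is to compare the spans directly and produce an explicit isomorphism of apexes that is compatible with both legs, which is exactly the equivalence relation on spans used to define $\cB_G(S_1,S_2)$. By definition,
\[ Rf=[S_2\overset{f}{\leftarrow} S_1\overset{id}{\to} S_1], \qquad I(f^{-1})=[S_2\overset{id}{\leftarrow} S_2\overset{f^{-1}}{\to} S_1]. \]
I would take the equivariant isomorphism $f\colon S_1\to S_2$ between the two apexes and check that the two triangles of the equivalence diagram commute. On the $S_2$ side, the left leg of $Rf$ is $f$, and it must agree with $id\circ f$, which is immediate. On the $S_1$ side, the right leg of $Rf$ is $id_{S_1}$, and it must agree with $f^{-1}\circ f$, which is also immediate. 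This shows that the two spans are equivalent, so they define the same element of the Grothendieck group. Hence $Rf=I(f^{-1})$.

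For the second identity I would not repeat the computation. Instead I would apply the first identity to the isomorphism $f^{-1}\colon S_2\to S_1$, which gives $R(f^{-1})=I(f)$. Alternatively, one can apply the duality functor $D$ to $Rf=I(f^{-1})$ and use $D\circ D=id$ together with $If=D(Rf)$ to obtain $If=D(Rf)=D(I(f^{-1}))=R(f^{-1})$. Either route works.

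I do not expect any real obstacle here. The only points needing care are the right-to-left convention for spans (the left leg maps to the target) and checking that the comparison map $f$ is $G$-equivariant, which holds because $f$ is an isomorphism of $G$-sets.
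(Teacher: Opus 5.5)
Your proposal is correct and matches the paper's proof: the paper uses exactly the same isomorphism of spans, with $f$ as the comparison map between the apexes. For the second identity it likewise says to ``take inverses (or rotate the diagram 180 degrees),'' which are your two routes: apply the first identity to $f^{-1}$, or apply the duality $D$.
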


\begin{proof}
The first comes from the isomorphism of spans
\[ \xymatrixrowsep{0.6pc}\xymatrix{
& S_1\ar[dl]_-f \ar[dr]^-\id \ar[dd]^f \\
S_2 && S_1. \\
& S_2\ar[ul]^{\id}\ar[ur]_-{f^{-1}}}
\]
For the second, just take inverses (or rotate the diagram 180 degrees).
\end{proof}

We now define a $G$-Mackey functor.

\begin{defn}
\label{de:Mackey}
    A \mdfn{$G$-Mackey functor} is defined to be an additive functor $M\colon \cB_G^{op}\ra \Ab$.  For a map of $G$-sets $f\colon S_1\ra S_2$ we abbreviate $M(Rf)$ as $f^*$ and $M(If)$ as $f_*$. 
\end{defn}
The maps $f^*$ are called \dfn{pullbacks}, and the maps $f_*$ are called \dfn{pushforwards}.  

Because of additivity, a Mackey functor is completely determined by its values on the orbits $G/H$ and the collection of maps $f^*$ and $f_*$ where $f$ is a map between orbits.  Furthermore, we only need to know $f^*$ and $f_*$ for certain maps that generate all others under composition. It is straightforward to check that any map of orbits can be written as a composition of two types of maps:
\begin{enumerate}
    \item[(i)] quotient maps $p:G/K\to G/H$ where $K\subseteq H$, and
    \item[(ii)] right-multiplication-maps $\rho_g\colon G/K\ra G/g^{-1}Kg$ given by $\rho_g(uK)=uKg=ug\cdot (g^{-1}Kg)$.
    \end{enumerate}
Lastly note that if $g\in G$ then the span $I(\rho_g)$ is equivalent to $R(\rho_{g^{-1}})$ by Proposition~\ref{pr:I=R}, and so $(\rho_g)_*=(\rho_{g^{-1}})^*$. Thus the pushforwards of the right action by $G$ are determined by pullbacks.
\smallskip

Given a Mackey functor $M$ and quotient map $p\colon G/K\to G/H$ we refer to the pullback $p^*\colon M(G/H)\to M(G/K)$ as the \textbf{restriction map} and the pushforward $p_*\colon  M(G/K) \to M(G/H)$ as the \textbf{transfer map}. A Mackey functor can be completely described by giving the values on orbits, the restriction maps, the transfer maps, and the maps $(\rho_g)^*$ for each orbit and each $g\in G$. We will often refer to the maps $(\rho_g)^*$ as the \textbf{action maps}.

When we need to reference a particular quotient map $p$ we will write $p_H^K$ or $p_{G/K\to G/H}$.  When the group $G$ gets large it will be handy to use the symbol $\pi$ for projection maps whose target is the trivial $G$-orbit $G/G$.  Also, when $G$ is abelian we will often abbreviate $\rho_g$ to just $g$, thereby writing $g^*=(\rho_g)^*$ and so forth.

\begin{example}
    Suppose $\ell$ is a prime and $C_\ell$ is a cyclic group of order $\ell$ (we use $\ell$ instead of $p$ for primes to avoid confusion with the quotient maps). Recall that $t$ always denotes a chosen generator for $C_\ell$.  A $C_\ell$-Mackey functor $M$ is the data of two abelian groups $M(C_\ell/C_\ell)$ and $M(C_\ell/e)$ as well as three homomorphisms $p_*\colon M(C_\ell/e)\to M(C_\ell/C_\ell)$, $p^*\colon M(C_\ell/C_\ell)\to M(C_\ell/e)$, and $t^*\colon M(C_\ell/e)\to M(C_\ell/e)$ (recall $t_*=(t^*)^{-1}$). These must satisfy $(t^*)^\ell=id$, $t^*\circ p^*=p^*$, $p_*\circ t^*=p_*$, and $p^*\circ p_*=1+t^*+\dots +(t^*)^{\ell-1}$. We often depict this using a so-called Lewis diagram, named from \cite{L2}:
    \[
    \xymatrix{
    M(C_\ell/C_\ell)\ar@/_/[d]_{p^*} \\ M(C_\ell/e)\ar@/_/[u]_{p_*}\ar@(dl,dr)_{t^*}
    }
    \]
    For $C_{\ell^k}$ the groups in the Lewis diagram form a vertical line of length $k+1$; when this takes up an inordinate amount of vertical space, we will change orientation and draw the diagram horizontally rather than vertically.
    We discuss Mackey functors for general cyclic groups in Section~\ref{se:cyclic}. For these the Lewis diagram forms a higher-dimensional rectangular lattice instead of a line, with the number of dimensions equal to the number of prime factors.
\end{example}

We define two important examples of Mackey functors:

\begin{defn}\mbox{}\par
\begin{enumerate}[(1)]
\item 
The \textbf{Burnside functor} $\mA$ is the representable Mackey functor given by $\mA(-)=\cB_G(-,pt)$.
\item The \mdfn{constant coefficient Mackey functor} $\mZ$ is given by $\mZ(G/H)=\Z$ for all orbits $G/H$. Restriction maps and action maps are the identity. One can check that this then forces the transfer maps $p_*\colon \mZ(G/K)\to \mZ(G/H)$ to be multiplication by the index $[H:K]$.
\end{enumerate}
\end{defn}

We will write $1_{G/H}\in \mZ(G/H)=\Z$ for the standard generator.  So then we have $(p_{G/K\ra G/H})^*(1_{G/H})=1_{G/K}$ and $(p_{G/K\ra G/H})_*(1_{G/K})=[H\cln K]\cdot 1_{G/H}$.

There is a map of Mackey functors $\mA\ra \mZ$ defined as follows: for any orbit $G/H$, the map $\mA(G/H)\ra \mZ(G/H)=\Z$ sends a span $[\pt \la T \ra G/H]$ to the number of elements in the fiber of the second map over $eH$. One readily checks that this is compatible with the pullbacks and pushforwards, and this map is a surjection. 

\begin{remark}
A more natural definition of $\mZ$ is that it is the assignment $S\mapsto \Hom_{\Z[G]}(\Z\langle S\rangle,\Z)$.  See the discussion preceding Proposition~\ref{pr:Zmod2} below for how to make this into a functor on $\cB_G^{op}$. 
From this perspective the map $\mA\ra \mZ$ is more transparent. 
\end{remark}

\begin{prop}
\label{pr:I_Z}
The kernel of $\mA\ra \mZ$ is the sub-Mackey functor $I_{\mZ}\subseteq \mA$ generated by the elements $(Rp^K_H\circ Ip^K_H)-[H:K]\cdot \id_{G/H}$ where $K\leq H\leq G$ ranges over all nested pairs of subgroups.  
\end{prop}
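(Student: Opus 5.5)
Write $J\subseteq \mA$ for the sub-Mackey functor generated by the elements $x_{K,H}=(Rp^K_H\circ Ip^K_H)-[H:K]\cdot\id_{G/H}$. The plan is to show first that $J\subseteq I_{\mZ}$, and then that the induced map $\mA/J\ra\mZ$ is injective at every orbit. Here $x_{K,H}$ is regarded as an element of $\mA(G/H)=\cB_G(G/H,\pt)$ after composing with $\pi\colon G/H\ra\pt$, i.e.\ $x_{K,H}$ is the span $[\pt\la G/K\ra G/H]$ minus $[H:K]$ copies of $[\pt\la G/H \overset{\id}{\ra} G/H]$.

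For $J\subseteq I_{\mZ}$, it suffices to check that each generator maps to $0$, since the kernel is a sub-Mackey functor. The span $[\pt\la G/K\ra G/H]$ maps to the size of the fiber of $p\colon G/K\ra G/H$ over $eH$, which is $[H:K]$. The identity span maps to $1$. So $x_{K,H}\mapsto [H:K]-[H:K]=0$.

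For injectivity, fix an orbit $G/H$. Every element of $\mA(G/H)$ is a $\Z$-combination of spans $[\pt\la T\overset{f}{\ra} G/H]$, and by additivity we may take $T$ to be an orbit $G/L$. Composing with an action map $\rho_g$ and using Proposition~\ref{pr:I=R}, we may assume $f$ is a quotient map $p\colon G/K\ra G/H$ with $K\le H$, up to conjugating $K$; in $\cB_G$ the span is unchanged by this reparametrization. Hence the basis element is exactly $\pi_*$-type data $[\pt\la G/K\overset{p}{\ra}G/H]$, which equals $Rp\circ Ip$ precomposed appropriately, i.e.\ $x_{K,H}+[H:K]\cdot[\pt\la G/H\ra G/H]$. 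Therefore modulo $J$ every element of $\mA(G/H)$ is congruent to an integer multiple of $u_H=[\pt\la G/H\overset{\id}{\ra}G/H]$. So $(\mA/J)(G/H)$ is cyclic, generated by the class of $u_H$. Since $u_H\mapsto 1_{G/H}$, which generates $\Z$ freely, the map $(\mA/J)(G/H)\ra\Z$ is an isomorphism. Thus $\ker=J$.

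The main obstacle is the bookkeeping in the reduction step. One has to identify precisely how the element $Rp\circ Ip\in\cB_G(G/H,G/H)$ acts on $u_H$ to produce the span $[\pt\la G/K\ra G/H]$; this is the Mackey functor action $\mA(Rp\circ Ip)$ applied to $u_H$, computed via the pullback composition. One also has to handle the case where $f\colon G/L\ra G/H$ is not literally a quotient map but a quotient composed with some $\rho_g$. The plan is to compute the composite span directly: the pullback of $G/H\overset{\id}{\la}G/H$ along the maps involved gives the span $[\pt\la G/K\overset{p}{\ra}G/H]$. For a general $f$, factor it as $f=p\circ\rho_g$ and note that $\rho_g$ is an isomorphism $G/L\iso G/K$ with $K=g^{-1}Lg$, so the two spans are equivalent.
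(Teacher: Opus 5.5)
Your proof is correct and follows essentially the same route as the paper: you check that the generators die under $\mA\ra\mZ$, reduce every span $[\pt\la T\ra G/H]$ to sums of spans $[\pt\la G/K\overset{p}{\ra}G/H]=u_H\circ(Rp\circ Ip)$, conclude that $(\mA/J)(G/H)$ is cyclic on the class of $u_H$, and finish because this cyclic group surjects onto $\Z$. Your handling of maps $G/L\ra G/H$ that are quotient maps only after composing with some $\rho_g$ fills in a step the paper leaves implicit; that step needs only the definition of span equivalence, not Proposition~\ref{pr:I=R}.
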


\begin{proof} 
It is straightforward to verify that the given elements are in the kernel, and so one gets a surjection $\mA/I_{\mZ}\ra \mZ$.  The group $\mA(G/H)$ is generated by spans $[* \la X \ra G/H]$, and such spans can all be written as sums of spans $[*\la G/K \llra{p} G/H]$ where $K\leq H$.  This span can in turn be written as the composition $[*\la G/H \ra G/H]\circ [G/H\llla{p} G/K \llra{p} G/H]$.  The latter span is $Rp\circ Ip$, and so modulo $I_{\mZ}$ we find that all elements of $\mA(G/H)$ are multiples of $[*\la G/H\ra G/H]$.  That is, $\mA/I_{\mZ}(G/H)$ is cyclic.  Since $\mA/I_{\mZ}\ra \mZ$ is a surjection, it must be an isomorphism.
\end{proof}

\subsection{The box product}
\label{se:box}

Given two $G$-Mackey functors $M$ and $N$, the \textbf{box product} $M\bbox N$ is defined to be the Day convolution, i.e. the left Kan extension in the following diagram:
\begin{equation}
\label{eq:box-defn}
\xymatrix{
\cB_G^{op} \times \cB_G^{op} \ar[d]_{M\times N} \ar[r]^-{\times} & \cB_G^{op} \ar@{.>}@/^2ex/[ddl]^{M\bbox N} \\
\Ab \times \Ab \ar[d]_{\tens} \\
\Ab.
}
\end{equation}

One can check $\und{A}$ is the unit for this product, and that the category of $G$-Mackey functors is a closed symmetric monoidal category (see \cite{B1} for details).  Consequently, the box product is right exact.  
The Mackey functor $\mZ$ is a ring object in this category (often called Green functors in the literature). We will be focused on $\mZ$-modules for the remainder of this paper.

If $R$ is a commutative Mackey ring and $M$ and $N$ are $R$-modules, one can define $M\bbox_R N$ in terms of the usual coequalizer diagram.  As in ordinary ring theory, for a quotient ring $R/I$, if $M$ and $N$ are $R/I$-modules then the canonical quotient map $M\bbox_R N \ra M\bbox_{R/I} N$ is an isomorphism (this follows from $M\bbox_{R/I} N =M\bbox_{R/I} R/I \bbox_R N = M\bbox_R N$).   In particular, applying this to $\mA\ra \mZ$ shows that $\bbox_{\mZ} = \bbox$.  

\subsection{\mdfn{$\mZ$}-modules}
A $\mZ$-module is a module over $\mZ$ in the symmetric monoidal category of $G$-Mackey functors. Because $\mA\ra \mZ$ is a surjection, a Mackey functor has at most one $\mZ$-module structure.  In fact, a Mackey functor $M$ is a $\mZ$-module if and only if the composite $I_{\mZ}\bbox M \ra A\bbox M \ra M$ is zero, where $A\bbox M \ra M$ is the unit isomorphism.   Since Proposition~\ref{pr:I_Z} gives generators for $I_{\mZ}$, we have the following useful characterization:

\begin{prop}
\label{pr:Zmod}
    A Mackey functor $M$ is a $\mZ$-module if and only if for all subgroups $K\leq H \leq G$,  the composite $p_*\circ p^*\colon M(G/K)\to M(G/K)$ is multiplication by the index $[H:K]$. 
\end{prop}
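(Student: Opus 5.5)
The composite $p_*\circ p^*$ is really an endomorphism of $M(G/H)$, since $p^*\colon M(G/H)\ra M(G/K)$ and $p_*\colon M(G/K)\ra M(G/H)$. So I will prove: $M$ is a $\mZ$-module iff $p_*p^*=[H:K]\cdot\id$ on $M(G/H)$ for all $K\leq H$.

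The starting point is the criterion stated just before the proposition: $M$ is a $\mZ$-module iff $I_{\mZ}\bbox M\ra \mA\bbox M\iso M$ is zero. By the universal property of the Day convolution (\ref{eq:box-defn}), a map out of $I_{\mZ}\bbox M$ is the same as a pairing $I_{\mZ}(S)\tens M(T)\ra M(S\times T)$ natural in $S$ and $T$. The unit isomorphism corresponds to the ``action'' pairing $\mA(S)\tens M(T)\ra M(S\times T)$. Concretely, a span $\alpha=[\pt\la X\llra{f} S]$ acts on $m\in M(T)$ by applying $M$ to the span $[T\la X\times T\ra S\times T]$.

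Since $I_{\mZ}$ is generated as a sub-Mackey functor by the elements of Proposition~\ref{pr:I_Z}, and the pairing is natural, the composite vanishes iff the action of each generator on every $m\in M(T)$ is zero, for every $G$-set $T$ (equivalently every orbit $T$). I regard the generator $x_{H,K}=Rp\circ Ip-[H\cln K]\id_{G/H}$ as the element $[\pt\la G/K\ra G/H]-[H\cln K][\pt\la G/H\ra G/H]$ of $\mA(G/H)$.

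\emph{Necessity.} I take $T=G/H$ and $m\in M(G/H)$, and then pull $x_{H,K}\cdot m\in M(G/H\times G/H)$ back along the diagonal $\Delta\colon G/H\ra G/H\times G/H$. A pullback computation of spans identifies $\Delta^*(x_{H,K}\cdot m)$ with $M$ applied to $Rp\circ Ip-[H\cln K]\id$, namely $p_*p^*m-[H\cln K]m$. Hence if $M$ is a $\mZ$-module, this vanishes.

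\emph{Sufficiency.} I decompose $G/H\times T$ into orbits $G/L$. For an orbit $T=G/J$, these are indexed by double cosets, with $L$ conjugate into $H$, so up to the action maps I can take $L\leq H$. On the summand $G/L$, the component of $x_{H,K}\cdot m$ is computed by the pullback of $G/K\ra G/H$ along $G/L\ra G/H$. This pullback is $\coprod_{g\in L\backslash H/K} G/(L\cap gKg^{-1})$, and the orbit sizes sum to $[H\cln K]$. The component is therefore
\[ \sum_{g}(q_g)_*q_g^*(m') - [H\cln K]\, m', \]
where $m'$ is the relevant restriction of $m$, $q_g\colon G/(L\cap gKg^{-1})\ra G/L$, and conjugations are absorbed by the action maps $\rho_g$. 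Applying the hypothesis to each pair $L\cap gKg^{-1}\leq L$ gives $(q_g)_*q_g^*=[L\cln L\cap gKg^{-1}]$. Summing over double cosets then yields $[H\cln K]$, so the component is zero.

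The main obstacle is the bookkeeping in this last step. I must get the action formula for $\mA$ on $M$ right under the paper's right-to-left span and $D$ conventions. I must also check that the conjugation maps $\rho_g$ really cancel in the double coset decomposition. I expect that writing the composite of spans explicitly as a single pullback diagram will handle both.
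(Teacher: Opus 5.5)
Your proposal is correct and follows the same route as the paper, whose proof is just ``Immediate'': it rests on the criterion that $I_{\mZ}\bbox M\ra \mA\bbox M\ra M$ vanishes, together with the generators of $I_{\mZ}$ from Proposition~\ref{pr:I_Z}. Your diagonal pullback (for necessity) and your double-coset decomposition of $G/H\times T$ (for sufficiency) supply the details the paper leaves implicit, and you are also right that $p_*\circ p^*$ is an endomorphism of $M(G/H)$, not of $M(G/K)$ as printed.
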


\begin{proof}
Immediate.
\end{proof}

\begin{remark}
The above proposition identifies $\mZ$-modules with what are commonly called \dfn{cohomological Mackey functors} in the algebra literature.  See, for example, \cite[Section 7]{W}. 
The result itself can be found as \cite[Proposition 16.3]{TW} or \cite[Remark 2.13]{Z}.
\end{remark}

An important class of examples of $\mZ$-modules is given by ``fixed point Mackey functors'':

\begin{defn}
For a $\Z[G]$-module $W$ define the \mdfn{fixed point Mackey functor} $\FP(W)$ to be the $G$-Mackey functor defined by $\FP(W)(G/H)=W^H$, with restriction maps the inclusions, action maps given by left multiplication, and transfer maps the evident ``norm'' maps: for $K\subseteq H$ the transfer $W^K\ra W^H$ is $x\mapsto \sum_{hK\in H/K} hx$.
\end{defn} 

Observe in $\FP(W)$ we have that $p_*\circ p^*$ is multiplication by the index of the corresponding subgroups, and so $\FP(W)$ is a $\mZ$-module by Proposition~\ref{pr:Zmod}. For another perspective on $\FP(W)$, see Remark~\ref{re:FP2} below.

The above construction gives a functor $\FP\colon \Z[G]\MMod \to \mZ\MMod$. We also have an evaluation functor $\ev_G\colon \mZ\MMod\to \Z[G]\MMod$ that sends $M$ to $M(G/e)$. This can be checked to give an adjoint pair
\[ \ev_G\colon \mZ\MMod \adjoint \Z[G]\MMod\colon \FP
\]
with $\ev_G$ the left adjoint (see Remark~\ref{re:FP2} below for another perspective on this).\smallskip

Our next characterization utilizes a modified version of the Burnside category. Define $\cBZ_G$ to be the category whose objects are finite $G$-sets, and whose morphisms are given by $\cBZ_G(S,T)=\Hom_{\Z[G]}(\Z\langle S\rangle, \Z\langle T\rangle)$, with the evident composition maps. We have a functor $Q:\cB_G\to \cB\Z_G$ that is the identity on objects, and the map $\cB_G(S,T)\ra \cBZ_G(S,T)$ sends a span $[T\llla{f} X \llra{g} S]$ to the composition
\[ \Z\langle S\rangle \llra{g^*} \Z\langle X\rangle \llra{f_*} \Z\langle T\rangle
\]
where $g^*(s)=\sum_{x\in g^{-1}(s)} x$ for $s\in S$ and $f_*(x)=f(x)$ for $x\in X$.  The following is proven in \cite[Theorem 4.3]{Y}, and also in \cite[Proposition 2.15]{Z}:

\begin{prop}
\label{pr:Zmod2}
    A Mackey functor $M$ is a $\mZ$-module if and only if $M$ factors through $\cB\Z_G$. That is, $M$ is a $\mZ$-module if and only if there is an additive functor $M':\cB\Z_G^{op}\to \Ab$ such that $M=M'\circ Q$. Hence the category of $\mZ$-modules is equivalent to the category of additive functors $\cB\Z_G^{op}\to \Ab$.
\end{prop}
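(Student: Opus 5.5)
The plan is to show that $Q\colon \cB_G\ra \cBZ_G$ is \emph{full} (surjective on each hom-group) and to identify the kernel of $\cB_G(S,T)\ra\cBZ_G(S,T)$ with the ideal of morphisms generated by the elements of Proposition~\ref{pr:I_Z}. Given this, the ``if'' direction is formal. If $M=M'\circ Q$, then $M$ kills every morphism killed by $Q$. In particular $p_*\circ p^* = M(Rp\circ Ip)$ equals $M(Q(Rp\circ Ip))$. A direct computation shows that $Q(Rp^K_H\circ Ip^K_H)$ is the composite $\Z\langle G/H\rangle\ra \Z\langle G/K\rangle\ra\Z\langle G/H\rangle$, sending $gH\mapsto \sum_{hK\subseteq gH} hK \mapsto [H\cln K]\,gH$. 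So $p_*p^*=[H\cln K]$, and $M$ is a $\mZ$-module by Proposition~\ref{pr:Zmod}. (One should first check that $Q$ is a functor, i.e.\ that composition by pullback goes to composition of the maps $f_*g^*$. This is a routine base-change identity $g^*f_* = f'_*g'^*$ for a pullback square.)

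For the ``only if'' direction, additivity reduces everything to orbits. I will use that $\cBZ_G(G/K,G/H)=\Hom_{\Z G}(\Z\langle G/K\rangle,\Z\langle G/H\rangle)\iso \Z\langle G/H\rangle^K$, which is free abelian on the $K$-orbits of $G/H$, that is, on double cosets $KgH$. Each such basis element is $Q$ of the span $[G/H \la G/(K\cap gHg^{-1}) \ra G/K]$, roughly $I p\circ R\rho_g\circ Rp$. So $Q$ is full. Next, the Mackey double-coset formula writes every span between orbits as a sum of such ``basic'' spans $[G/H\la G/L\ra G/K]$ with $L\le K\cap gHg^{-1}$. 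Modulo the ideal $\mathcal I$ generated by the elements $Rp^L_{L'}\circ Ip^L_{L'}-[L'\cln L]\id$, any basic span with $L$ properly inside $L'=K\cap gHg^{-1}$ is congruent to $[L'\cln L]$ times the span through $G/L'$. Hence $\cB_G(G/K,G/H)/\mathcal I$ is spanned by the images of the double-coset spans. These map bijectively onto a $\Z$-basis of $\cBZ_G(G/K,G/H)$, so $\mathcal I$ is exactly the kernel. The factoring relation needs some care with orientation: it should read $p\circ(\text{span via }G/L)$, where the $Ip\circ Rp$-type composite through $G/L$ equals $[L'\cln L]$ times the identity of $G/L'$ after applying $Q$. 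I will check this matches the generators of Proposition~\ref{pr:I_Z} up to applying $D$, which is harmless because $D$ of such a generator has the same form.

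With the kernel identified, the argument finishes as follows. If $M$ is a $\mZ$-module, then $M$ kills $Rp\circ Ip-[H\cln K]\id$ by Proposition~\ref{pr:Zmod}, and additive functors kill two-sided ideals generated by killed morphisms. So $M$ factors uniquely through the quotient category $\cB_G/\mathcal I\simeq\cBZ_G$, giving $M'$. Uniqueness of $M'$, together with the fact that $Q$ is full and bijective on objects, shows that precomposition with $Q$ is fully faithful. This yields the equivalence of categories in the statement.

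The main obstacle is the kernel computation. I need to carefully verify that the double-coset spans map to linearly independent elements of $\Z\langle G/H\rangle^K$ (distinct $K$-orbit sums, so this should be clear). I also need to check that reducing a general basic span to a double-coset span really uses only the given generators, with the correct index and orientation.
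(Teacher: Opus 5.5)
Your proof is correct, and it takes a genuinely different route from the paper. The paper gives no argument of its own: it cites \cite[Theorem 4.3]{Y} and \cite[Proposition 2.15]{Z}. Only afterwards does it remark that the result, combined with Proposition~\ref{pr:Zmod}, lets one regard $\cBZ_G$ as the quotient of $\cB_G$ by the relations $Rp\circ Ip=[H\cln K]\cdot\id_{G/H}$.

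You run the logic in the opposite direction and prove that presentation directly. $Q$ is full because the span $[G/H\la G/(K\cap gHg^{-1})\ra G/K]$ maps to the $K$-orbit sum of $gH$. The kernel of $Q$ equals $\mathcal I$ because, modulo $\mathcal I$, every transitive span reduces to one of these double-coset spans, and $Q$ sends these bijectively onto the $\Z$-basis of $\Z\langle G/H\rangle^K$. Both the factorization statement and the equivalence of categories then follow formally. What your approach buys is a self-contained proof in the paper's span language, which establishes the quotient-category description as a theorem rather than a remark. The paper's route buys brevity by outsourcing the work to the literature.

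When you write it out, tidy three things.

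\begin{enumerate}
\item The reduction to basic spans is not the Mackey double-coset formula. It is just the orbit decomposition of the apex plus a choice of basepoint over $eK$. You should also record that $g$ and $kgh$ (with $k\in K$, $h\in H$) give isomorphic spans, via $xL'\mapsto xk^{-1}\cdot kL'k^{-1}$. This is what makes your spanning set genuinely indexed by $K\backslash G/H$, so that ``maps bijectively onto a basis'' makes sense.
\item Fix the orientation. In the paper's composition convention, for $r\colon G/L\ra G/L'$ the relevant composite is $Rr\circ Ir=[G/L'\llla{r}G/L\llra{r}G/L']$. This is literally a generator from Proposition~\ref{pr:I_Z}, so no appeal to $D$ is needed. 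The span through $G/L$ factors as $Rf'\circ(Rr\circ Ir)\circ Iq'$, where $f'$ and $q'$ are the legs out of $G/L'$. The other order $Ir\circ Rr$ is not a multiple of the identity after applying $Q$, so your phrase ``$Ip\circ Rp$-type composite'' should be corrected.
\item For the equivalence of categories you implicitly use that maps of $\mZ$-modules are just maps of the underlying Mackey functors. This holds because $\mA\ra\mZ$ is surjective.
\end{enumerate}
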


Combining the above result with Proposition~\ref{pr:Zmod} shows that $\cB\Z_G$ may be regarded as the quotient category obtained from $\cB_G$ by imposing the relations $Rp_{(G/K\ra G/H)}\circ Ip_{(G/K\ra G/H)}=[H\colon K]\cdot \id_{G/H}$ whenever $K\leq H\leq G$.  

\begin{remark}
\label{re:FP2}
Let $W$ be a $\Z[G]$-module.  The functor $\Hom_{\Z[G]}(\blank,W)$ is an additive functor $\cB\Z_G^{op} \ra \Ab$, and precomposing this with $Q$ is precisely $\FP(W)$.  
For a $G$-set $S$ there is an evident canonical isomorphism $\Z\langle S\rangle \ra \cB\Z_G(G,S)$, so that if $M$ is a $\mZ$-module we have natural maps
\[ \Z\langle S\rangle \tens M(S) \lra B\Z_G(G,S)\tens M(S) \lra M(G)
\]
which are equivariant with respect to the left $G$-action.
Taking adjoints gives $M(S)\ra \Hom_{\Z[G]}(\Z\langle S\rangle,M(G))=(\FP M(G))(S)$.  This is the map $M\ra \FP M(G)$ forming the unit of the $(\ev_G,\FP)$ adjunction.
\end{remark}

\subsection{The box product of $\mZ$-modules}
Let $M$ and $N$ be two $\mZ$-modules, i.e. functors $\cBZ_G^{op}\ra \Ab$. 
The box product $M\bbox_{\mZ} N$ is defined to be the Day convolution, i.e. the left Kan extension in the diagram analogous to (\ref{eq:box-defn}) where one changes $\cB_G$ to $\cB\Z_G$.  
One can verify that this construction gives the same box product as $M\bbox N$ (see the end of Section~\ref{se:box}), so we drop the $\mZ$ from the box notation without danger of confusion.
In terms of explicit formulas, this means that for any finite $G$-set $X$ the abelian group $(M\bbox N)(X)$ is the coequalizer  of
\begin{center}
\begin{tikzcd}
 \!\!\!\!\!\! \coprod\limits_{C_1,C_2,D_1,D_2}
\cBZ_G(X,C_1\times C_2) \tens \cBZ_G(C_1,D_1)\tens \cBZ_G(C_2,D_2) \tens M(D_1)\tens N(D_2) \arrow[d, shift left] \arrow[d,shift right] 
\\
\coprod\limits_{C_1,C_2} 
\cBZ(X,C_1\times C_2) \tens M(C_1)\tens N(C_2)
\end{tikzcd}
\end{center}
\noindent
where the left arrow just uses composition (and Cartesian product) in $\cBZ_G$ and the right arrow only uses the functor structure on $M$ and $N$. 

Note that $(M\bbox N)(S)$ is not typically $M(S)\tens N(S)$, though there is a natural map from the former to the latter.
To see this let $\Delta_S\in \cBZ_G(S,S\times S)$ be the diagonal, i.e. the span $[S\times S \llla{\Delta} S\llra{=}S]$.  We have the natural maps 
\[ M(S)\tens N(S)\ra \cBZ_G(S,S\times S)\tens M(S)\tens N(S) \ra (M\bbox N)(S)\]
where the first is given by $u\mapsto \Delta_S \tens u$, and the second is from the defining coequalizer for $(M\bbox N)(S)$.  While this comparison is not usually an isomorphism, it is in one important case:

\begin{prop}
\label{pr:box-on-G}
The natural map $M(G)\tens N(G)\ra (M\bbox N)(G)$ is an isomorphism.
\end{prop}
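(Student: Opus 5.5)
Since $M\bbox N$ is a left Kan extension along $\times$, its value at $G$ can be computed from the coequalizer formula above with $X=G$. The idea is to make every term in that formula representable in the first variable and then reduce it by Yoneda. The key input is the adjunction $\cBZ_G(A\times B,C)\iso \cBZ_G(B,DA\times C)$, descended from $\cB_G$ to $\cBZ_G$, together with the isomorphism $\Z\langle S\rangle\iso \cBZ_G(G,S)$ of Remark~\ref{re:FP2}.

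\textbf{Step 1 (reduce to representables).} Every $\mZ$-module is a colimit of representables $\cBZ_G(\blank,S)$, and the box product preserves colimits in each variable. Both $M\mapsto M(G)\tens N(G)$ and $M\mapsto (M\bbox N)(G)$ preserve colimits, since evaluation and $\tens$ are right exact and commute with direct sums. So it suffices to treat $M=\cBZ_G(\blank,S)$ and $N=\cBZ_G(\blank,T)$, provided the comparison map is natural, which it is. For representables, Yoneda applied to the Kan extension gives $M\bbox N\iso \cBZ_G(\blank,S\times T)$. The map in question then becomes
\[ \cBZ_G(G,S)\tens \cBZ_G(G,T)\lra \cBZ_G(G,S\times T),\qquad f\tens g\mapsto (f\times g)\circ \Delta_G. \]

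\textbf{Step 2 (compute the map).} Under $\cBZ_G(G,S)\iso\Z\langle S\rangle$, this map is $\Z\langle S\rangle\tens\Z\langle T\rangle\to\Z\langle S\times T\rangle$, $s\tens t\mapsto (s,t)$. To see this, note that a morphism $G\to S$ in $\cBZ_G$ is a $\Z[G]$-map $\Z\langle G\rangle\to \Z\langle S\rangle$, determined by the image of $e$. The diagonal $\Delta_G$ sends $e\mapsto (e,e)$, and $f\times g$ then sends $(e,e)\mapsto f(e)\tens g(e)$. Here we identify $\Z\langle S\times T\rangle=\Z\langle S\rangle\tens\Z\langle T\rangle$, and the $G$-equivariance of $f\times g$ uses the diagonal action. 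The resulting map is visibly an isomorphism of abelian groups, which finishes the proof.

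\textbf{Main obstacle.} The step needing care is Step 2: checking that $Q$ and the Cartesian product on $\cBZ_G$ are compatible, namely that the monoidal structure on $\cBZ_G$ induced from $\cB_G$ is $\Z\langle S\rangle\tens\Z\langle T\rangle$ with the diagonal action and $f\times g\mapsto f\tens g$. This is a direct check on spans. The subtle point is that the product of morphisms is not defined separately for $\mZ$-module maps, so one must verify it descends along $Q$; this follows because the product is $\Z$-bilinear and sends the relation $Rp\circ Ip-[H\colon K]$ into the ideal of relations.

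\textbf{Alternative route.} If one prefers to avoid Step 1, one can show directly from the coequalizer formula that $(M\bbox N)(G)$ is generated by the images of $M(G)\tens N(G)$: any element of $\cBZ_G(G,C_1\times C_2)$ factors through $\Delta_G$ followed by a product of maps $G\to C_i$. One then constructs the inverse map $(M\bbox N)(G)\to M(G)\tens N(G)$ by sending $\phi\tens m\tens n$, with $\phi\in\cBZ_G(G,C_1\times C_2)$, to the appropriate sum of pullbacks. Checking that this inverse is well defined is the harder part of that approach, so I would use the representable reduction instead.
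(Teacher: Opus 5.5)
Your proof is correct, but it takes a different route from the paper's. The paper postpones this result until after the change-of-groups machinery. For $f\colon\{e\}\inc G$ one has $\mRes_f M=M(G)$. Moreover $\mRes_f$ is strong monoidal (Proposition~\ref{pr:Res-monoidal}), because left Kan extensions compose along the commuting square $\Res_f\circ\times=\times\circ(\Res_f\times\Res_f)$. Since $\bbox$ over the trivial group is just $\tens$, this gives $(M\bbox N)(G)\iso M(G)\tens N(G)$ immediately. The paper leaves the check that this isomorphism is the specified map $u\mapsto\Delta_G\tens u$ as ``a little legwork.''

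You instead reduce to free modules, using colimit preservation in each variable, and then compute directly with $F_S\bbox F_T\iso F_{S\times T}$ and $\cBZ_G(G,S)\iso\Z\langle S\rangle$. This is essentially the technique the paper uses in its appendix proof of Proposition~\ref{pr:box-slice-isos}. The paper's route buys brevity and generality: restriction along any homomorphism is strong monoidal, and this is reused for the change-of-groups results on $\uTor$. Your route is self-contained within Section~\ref{se:background}. It also actually verifies that the given comparison map is the isomorphism, which is exactly the legwork the paper omits.

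Two small remarks. First, the adjunction $\cBZ_G(A\times B,C)\iso\cBZ_G(B,DA\times C)$, which you call the key input, is never used in Steps 1--2. The operative fact is the co-Yoneda computation of $F_S\bbox F_T$. Second, no descent argument is needed for the product on $\cBZ_G$. Since $\cBZ_G(S,T)$ is literally $\Hom_{\Z[G]}(\Z\langle S\rangle,\Z\langle T\rangle)$, you can simply define $f\times g:=f\tens g$. The span computation $Q(\alpha\times\beta)=Q(\alpha)\tens Q(\beta)$ is then all you need to know that this is the product the box product uses.
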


We defer the proof until near the end of Section~\ref{se:res-ind}, when we have enough tools to make it very simple.

Recall the adjoint pair $\ev_G\colon \mZ\MMod \adjoint \Z[G]\MMod\colon FP$, with $\ev_G$ the left adjoint.  The category $\mZ\MMod$ has the box product, and $\Z[G]\MMod$ has the tensor product (over $\Z$) with the diagonal action of $G$.  
Proposition~\ref{pr:box-on-G} shows that $\ev_G$ is strong monoidal, therefore the right adjoint $FP$ is also monoidal; that is, there are natural maps 
\begin{equation}
\label{eq:FP-monoidal}
(\FP M)\bbox (\FP N)\ra \FP(M\tens N).
\end{equation}
Note that $FP$ need not be strong monoidal, as the following example shows.  But it follows from Corollary~\ref{co:F-box-F} below that when $M$ and $N$ are permutation modules (that is, $G$-modules of the form $\Z\langle S\rangle$ where $S$ is a $G$-set) the map of (\ref{eq:FP-monoidal}) is an isomorphism.

\begin{example}
Let $G=C_2$ and let $M=N=\Z_-$ ($\Z$ with the sign action).  Then $M\tens N\iso \Z$ and so $FP(M\tens N)\iso \mZ$.  But $FP(M)=FP(N)$ is the Mackey functor
\[ \xymatrix{
\Z \ar@(ul,ur)[]^{-1} \ar@<0.5ex>[r] & 0 \ar@<0.5ex>[l]
}
\]
and $FP(M)\bbox FP(N)$ is readily checked to be 
\[ \xymatrix{
\Z \ar@(ul,ur)[]^{1} \ar@<0.5ex>[r]^{1} & \Z \ar@<0.5ex>[l]^2
}
\] (note that this is not equal to $\mZ$, as the $p_*$ and $p^*$ are swapped.) The map $FP(M)\bbox FP(N)\ra FP(M\tens N)$ is the identity on the $C_2$-spot and multiplication by $2$ on the $C_2/C_2$-spot. 
\end{example}

\subsection{Free modules}
We next give some constructions of free $\mZ$-modules.

\begin{defn}
    For any finite $G$-set $S$, let $F_S$ be the $\mZ$-module $X\mapsto \cBZ_G(X,S)$.  This is the ``free $\mZ$-module on one generator at spot $S$''. 
\end{defn} 
It is useful to denote $\id\in \cBZ_G(S,S)=F_S(S)$ as $g_S$, as this is the aforementioned generator.  The free module $F_{\pt}$ is $\mZ$. 
By the Yoneda Lemma we have the natural isomorphism
\[  \Hom_{\mZ}(F_S,M) \iso M(S),
\]
which sends a map $\alpha\colon F_S\ra M$ to the element $\alpha(g_S)\in M(S)$.  

\begin{example}
\label{ex:free}
One can engineer free modules in terms of the universal property: loosely speaking, to make $F_{S}$ one drops a generator at spot $S$ and lets it freely generate other elements under pushforwards and pullbacks.  For example, let $\ell$ be a prime and consider $G=C_{\ell^2}$.  The three basic orbits are $\Theta_{\ell^2}$, $\Theta_\ell$, and $\Theta_1$.  Below is a Lewis diagram for $F_{\Theta_\ell}$:
\[ \xymatrixrowsep{0.3pc}\xymatrix{
\Z\langle p^*g,t^*(p^*g),\ldots,(t^*)^{\ell-1}(p^*)g\rangle \ar@<0.5ex>[r]^-{p_*} &
\Z\langle g,t^*g,\ldots,(t^*)^{\ell-1}g\rangle \ar@<0.5ex>[r]^-{p_*} \ar@<0.5ex>[l]^-{p^*}& \Z\langle p_*g\rangle.  \ar@<0.5ex>[l]^-{p^*}\\
\Theta_{\ell^2} & \Theta_\ell & \Theta_1
}
\]
Note the Mackey relations $t^*(p^*g)=p^*(t^*g)$ and $(t^*)^\ell g=g$, from which it follows that $(t^*)^\ell(p^*g)=p^*g$, and so forth.  A good exercise is to work out the action of $p^*$, $p_*$, and $t^*$ on all of the elements in the picture: they all follow from the Mackey relations and the fact that $g$ is an element at spot $\Theta_\ell$.  
\end{example}

The construction method in Example~\ref{ex:free} is useful, but in complicated examples one might wonder how to systematically account for all of the images of the generator $g$.  It turns out that there
are two other ways to describe free modules---one using fixed point Mackey functors and the other using an exponential construction---and in some aspects these are a bit more concrete.  These different descriptions turn out to be surprisingly useful; often a line of argument works very well with one but not the others.  

First we have the following, from \cite[Lemma 16.4]{TW}:

\begin{prop}\label{pr:fixedpointfree}
For any $H\leq G$, the map $F_{G/H}\ra \FP(\Z\langle G/H\rangle)$ sending $g_{G/H}$ to the element $eH\in \Z\langle G/H\rangle^H$ is an isomorphism.
\end{prop}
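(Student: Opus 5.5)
The plan is to identify both sides with the same representable functor on $\cB\Z_G$. By definition $F_{G/H}(X)=\cB\Z_G(X,G/H)=\Hom_{\Z[G]}(\Z\langle X\rangle,\Z\langle G/H\rangle)$. By Remark~\ref{re:FP2} (or directly from the definitions), $\FP(\Z\langle G/H\rangle)$ is $\Hom_{\Z[G]}(\blank,\Z\langle G/H\rangle)\circ Q$. So on underlying functors $\cB\Z_G^{op}\ra\Ab$ the two $\mZ$-modules are literally the same functor $\Hom_{\Z[G]}(\Z\langle\blank\rangle,\Z\langle G/H\rangle)$, and Proposition~\ref{pr:Zmod2} lets us compare them as functors on $\cB\Z_G$.

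The real content is checking that the identification $\Hom_{\Z[G]}(\Z\langle S\rangle,W)\iso$ ``$\FP(W)(S)$'' agrees with the paper's explicit description of $\FP(W)$ via fixed points and norm maps. First, for an orbit $S=G/K$, evaluation at $eK$ gives $\Hom_{\Z[G]}(\Z\langle G/K\rangle,W)\iso W^K$. Second, I will check the structure maps. For a quotient $p\colon G/K\ra G/L$ with $K\le L$, the map $Q(Rp)\colon\Z\langle G/K\rangle\ra\Z\langle G/L\rangle$ is $gK\mapsto gL$. Precomposition with it sends $\phi$ to $\phi\circ Q(Rp)$, which on elements is the inclusion $W^L\subseteq W^K$, i.e.\ restriction. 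The map $Q(Ip)$ sends $gL\mapsto\sum_{\ell K\in L/K}g\ell K$, so precomposition gives the norm $x\mapsto\sum_{\ell K}\ell x$. Third, $\rho_g$ gives left multiplication, up to the convention on $g$ versus $g^{-1}$, which I will match by a one-line computation. Additivity then extends everything from orbits to all finite $G$-sets.

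With this in hand, the Yoneda lemma finishes the argument. The map $F_{G/H}\ra\FP(\Z\langle G/H\rangle)$ determined by $g_{G/H}\mapsto eH$ corresponds, under $\Hom_{\mZ}(F_{G/H},M)\iso M(G/H)$, to the element of $\FP(\Z\langle G/H\rangle)(G/H)=\Z\langle G/H\rangle^H$ given by $eH$. Under the evaluation isomorphism of the first step, this element is $\id\in\Hom_{\Z[G]}(\Z\langle G/H\rangle,\Z\langle G/H\rangle)$. The natural transformation of representables induced by the identity is the identity. Hence at every $X$ the map is the identity of $\cB\Z_G(X,G/H)$ transported through the isomorphisms above, so it is an isomorphism.

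The main obstacle is purely bookkeeping: confirming that the convention for $Q$ on spans ($g^*$ summing over fibers, $f_*$ pushing forward) and the right-to-left span convention produce exactly restriction $=$ inclusion and transfer $=$ norm, rather than the two swapped. The $C_2$ example with swapped $p_*,p^*$ shows that a swap is a real danger. I will settle this by computing with a single quotient map $G/K\ra G/L$, as in the second step above.
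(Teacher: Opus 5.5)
Your proposal is correct and follows essentially the same route as the paper: both identify $F_{G/H}$ and $\FP(\Z\langle G/H\rangle)$ with the functor $\Hom_{\Z[G]}(\Z\langle\blank\rangle,\Z\langle G/H\rangle)$ on $\cB\Z_G$. You additionally spell out two pieces of bookkeeping that the paper compresses into ``almost tautological'': first, that restrictions, transfers and action maps match inclusions, norms and left multiplication (your computations of $Q(Rp)$ and $Q(Ip)$ are right); second, via Yoneda, that $g_{G/H}\mapsto eH$ corresponds to the identity.
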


\begin{proof}
From our development using $\cB\Z_G$ this is almost tautological.  The free module $F_{G/H}$ is the Mackey functor $X\mapsto \cB\Z_G(X,G/H)$, whereas $FP(\Z\langle G/H\rangle)$ is the Mackey functor $X\mapsto \Hom_{\Z[G]}(\Z\langle X\rangle,\Z\langle G/H\rangle)$.
These are the same by the definition of $\cB\Z_G$.  
\end{proof}

We next describe the exponential construction. Let $M$ be a $\mZ$-module and let $S$ be a finite $G$-set.  Define $M^S$ to be the $\mZ$-module $X\mapsto M(S\times X)$.  For a map $f\colon S_1\ra S_2$ in $\cBZ_G$ we obtain $M^{S_2}\ra M^{S_1}$ via the maps $(f\times \id)^*\colon M(S_2\times X)\ra M(S_1\times X)$.  
So $M^{(\blank)}$ is naturally contravariant.  

It is worth noting at this point that $M^S$ and $M^{DS}$ are the same Mackey functor, but the two notations come with different variances in $S$:  a map $f\colon S_1\ra S_2$ in $\cBZ_G$ induces $M^{S_2}\ra M^{S_1}$ and $M^{DS_1}\ra M^{DS_2}$.  The same phenomenon occurs with the free modules $F_S$ and $F_{DS}$, and in many other places in this subject.  As a consequence, certain formulas will include $D$ in order to get the variance correct on the two sides.  

\begin{prop}
There is a natural isomorphism of $\mZ$-modules $\mZ^{DA}\iso F_A$.
\end{prop}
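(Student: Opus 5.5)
We will prove the isomorphism $\mZ^{DA}\iso F_A$ by comparing the two functors objectwise and then checking that the identifications are compatible with morphisms of $\cBZ_G$. Evaluating at a finite $G$-set $X$, the left side is $\mZ(A\times X)$ and the right side is $\cBZ_G(X,A)$. The underlying idea is the adjunction $\cB_G(A\times B,C)\iso\cB_G(B,DA\times C)$ with $C=\pt$, transported to $\cBZ_G$.

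Concretely, we use $\mZ(S)=\Hom_{\Z[G]}(\Z\langle S\rangle,\Z)$, which is the same as $\cBZ_G(S,\pt)$, i.e.\ $\mZ=F_{\pt}$. The desired isomorphism then has the form
\[ \cBZ_G(A\times X,\pt)\iso \cBZ_G(X,A). \]
A map $\phi\colon\Z\langle A\times X\rangle\to\Z$ is the same as a $G$-invariant function $\phi\colon A\times X\to\Z$. To $\phi$ we assign the map $\Z\langle X\rangle\to\Z\langle A\rangle$ given by $x\mapsto\sum_{a\in A}\phi(a,x)\,a$. This map is equivariant exactly because $\phi$ is $G$-invariant. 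Conversely, given $\psi\colon \Z\langle X\rangle\to\Z\langle A\rangle$, we set $\phi(a,x)$ equal to the $a$-coefficient of $\psi(x)$. These two constructions are visibly inverse bijections of abelian groups.

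Naturality in $X$ is the main point, because $F_A$ and $\mZ^{DA}$ are both functors on $\cBZ_G^{op}$. So for each $f\in\cBZ_G(Y,X)$ we must compare precomposition $\psi\mapsto\psi\circ f$ with $(\id_A\times f)^*$ acting on $\mZ(A\times X)$. Here $\id_A\times f$ is the map $\Z\langle A\times Y\rangle=\Z\langle A\rangle\tens\Z\langle Y\rangle\to\Z\langle A\rangle\tens\Z\langle X\rangle$ given by $\id\tens f$, with diagonal $G$-action. Under our bijection, $\phi$ corresponds to the pairing $(a,x)\mapsto\langle a,\psi(x)\rangle$, where $\langle -,-\rangle$ is the standard symmetric bilinear form on $\Z\langle A\rangle$. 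Precomposing $\phi$ with $\id\tens f$ then corresponds to $\psi\circ f$, which is immediate by linearity.

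The one step needing care is that the Cartesian product on $\cBZ_G$, inherited from $\cB_G$, really corresponds to the tensor product $\Z\langle A\rangle\tens\Z\langle X\rangle$ on morphisms. It suffices to check this on the generators $Rg$ and $Ig$ under $Q$. For these, $g\times\id$ gives $g_*\tens\id$ and $g^*\tens\id$ respectively, which is clear from the formulas defining $Q$.

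Finally, naturality in $A$ in the stated variance must be checked. A map $h\colon A_1\to A_2$ in $\cBZ_G$ induces $F_{A_1}\to F_{A_2}$ by postcomposition, and $\mZ^{DA_1}\to\mZ^{DA_2}$ via $(Dh\times\id)^*$. The duality $D$ corresponds to taking the transpose of $h$ with respect to the standard forms, since it swaps $Rg$ and $Ig$, i.e.\ $g_*$ and $g^*$, which are transposes of one another. With this, naturality in $A$ again reduces to the identity $\langle a,h\psi(x)\rangle=\langle h^{T}a,\psi(x)\rangle$. Checking that $D$ agrees with the transpose on $\cBZ_G$ is the main obstacle; we will do it on the generating spans.
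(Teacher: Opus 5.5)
Your argument is correct and is essentially the paper's: the paper just invokes the adjunction isomorphism $\cBZ_G(DA\times X,\pt)\iso\cBZ_G(X,A)$ and asserts that it is natural in $X$ and $A$, and your formula $\phi\mapsto\bigl(x\mapsto\sum_a\phi(a,x)\,a\bigr)$ is exactly what the span-level adjunction $[\pt\la T\ra A\times X]\mapsto[A\la T\ra X]$ becomes under $Q$. Your explicit checks (the Cartesian product corresponds to $\tens$, and $D$ corresponds to transpose because $(f_*)^T=f^*$) simply spell out that naturality, and the step you flag as the main obstacle is only this one-line verification.
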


\begin{proof}
Our definitions give $\mZ^{DA}(X)=\mZ(DA\times X)=\cBZ_G(DA\times X,\pt)$ and $F_A(X)=\cBZ_G(X,A)$.  So the result follows from the adjunction isomorphism
\[ \cBZ_G(DA\times X,\pt)\iso \cBZ_G(X,A\times \pt)=\cBZ_G(X,A),
\]
which is natural in both $X$ and $A$ (the latter thanks to the presence of $D$).
\end{proof}

For $\mZ$-modules $M$ and $N$ let $\uHom(M,N)$ be the $\mZ$-module defined by
\[ \uHom(M,N)(X)=\Hom(F_X\bbox M,N).
\]
Then there is an adjunction
\[ \Hom_{\mZ}(M\bbox N,Q) \iso \Hom_{\mZ}(M,\uHom(N,Q)).
\]
The functors $\bbox$ and $\uHom$ give a closed symmetric monoidal structure on the category of $\mZ$-modules.  It is worth stressing the connection between the internal $\Hom$  
and regular $\Hom$, namely
\begin{equation}
\label{eq:uHom=Hom} 
\uHom(M,N)(\pt)=\Hom(M,N).
\end{equation}

The following result connects our exponential construction to both the box and Hom operation with free modules:

\begin{prop}
\label{pr:box-slice-isos}
There is a natural isomorphism $F_A\bbox M\iso M^{DA}$ and also a natural isomorphism $\uHom(F_A,M) \iso M^A$.  Consequently, note the natural isomorphism
\[ \uHom(F_A,\mZ)\iso \mZ^A \iso F_{DA}.
\]
\end{prop}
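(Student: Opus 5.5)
The plan is to prove both isomorphisms by Yoneda-style arguments, evaluating everything objectwise at a finite $G$-set $X$ and using the adjunction isomorphism $\cBZ_G(A\times B,C)\iso\cBZ_G(B,DA\times C)$ in $\cBZ_G$. I would do the $\uHom$ statement first, since it is the more direct one, and then deduce the box product statement from it by adjunction.

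\emph{Step 1: $\uHom(F_A,M)\iso M^A$.} By definition, $\uHom(F_A,M)(X)=\Hom(F_X\bbox F_A,M)$. The key lemma is that $F_X\bbox F_A\iso F_{X\times A}$. This holds because Day convolution of representables is representable: the left Kan extension of $\cBZ_G(-,X)\tens\cBZ_G(-,A)$ along $\times$ is $\cBZ_G(-,X\times A)$, which one checks directly from the coequalizer formula via co-Yoneda. Granting this, Yoneda gives
\[
\Hom(F_{X\times A},M)\iso M(X\times A)=M(A\times X)=M^A(X).
\]
Next I would check naturality in $X$ and in $A$. In both variables the functoriality goes through $F_{(-)}$ covariantly and then through $\Hom$ contravariantly, so it matches the contravariance of $M^{(-)}$ in $A$. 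No $D$ appears at this stage.

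\emph{Step 2: $F_A\bbox M\iso M^{DA}$.} I would show that $M\mapsto M^{DA}$ is left adjoint to $N\mapsto N^{A}$; comparing with the adjunction between $F_A\bbox(-)$ and $\uHom(F_A,-)$, together with Step 1, then gives the result by uniqueness of adjoints. To construct this adjunction, I would write down unit and counit maps $M\to (M^{DA})^{A}$ and $(N^{A})^{DA}\to N$. These come from evaluating $M$ and $N$ on the morphisms $X\to A\times DA\times X$ and $DA\times A\times X\to X$ in $\cBZ_G$, which are adjunct to the identity of $A\times X$ (the unit and counit of the self-duality of $A$). The triangle identities then reduce to the zigzag identities for duality in $\cBZ_G$, which hold because they hold at the level of spans.

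Alternatively, one can compute directly from the coequalizer. Using co-Yoneda in the $F_A=\cBZ_G(-,A)$ variable, $(F_A\bbox M)(X)$ collapses to a coend $\int^{C}\cBZ_G(X,A\times C)\tens M(C)$. By the adjunction this equals $\int^C\cBZ_G(DA\times X,C)\tens M(C)$, which is $M(DA\times X)$ by co-Yoneda. This second route is cleaner, and I would probably present it.

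\emph{Step 3: the consequence.} Apply Step 1 with $M=\mZ$ to get $\uHom(F_A,\mZ)\iso\mZ^A$. Then apply the previous proposition $\mZ^{DA}\iso F_A$ with $A$ replaced by $DA$; since $DDA=A$, this gives $\mZ^A\iso F_{DA}$.

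The main obstacle I expect is the bookkeeping of variance, namely verifying that the isomorphisms are natural in $A$ with the $D$'s placed correctly. The coend manipulations themselves are formal. To handle this I would track an arbitrary span $f\colon A_1\to A_2$ through each identification and check compatibility, using the naturality in $A$ of $\cBZ_G(A\times B,C)\iso\cBZ_G(B,DA\times C)$ that was noted earlier.
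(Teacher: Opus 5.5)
Your proposal is correct. The $\uHom$ half is exactly the paper's argument: use $F_X\bbox F_A\iso F_{X\times A}$ and Yoneda to get $\uHom(F_A,M)(X)\iso M(X\times A)$. The consequence $\mZ^A\iso F_{DA}$ is also handled as in the paper.

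Where you differ is the box-product half and the order of deduction.

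\textbf{The paper's route.} It writes down the explicit map $\sigma\colon F_A\bbox M\ra M^{DA}$: compose $T\in\cBZ_G(X,C_1\times C_2)$ with $S\times\id$, apply the duality adjunction, then pull back in $M$. It proves $\sigma$ is an isomorphism only for $M=F_B$, by exhibiting contracting homotopies for the coequalizer. It then extends to all $M$, since every $\mZ$-module is a colimit of free modules and both $F_A\bbox(\blank)$ and $(\blank)^{DA}$ preserve colimits. The representable case $F_A\bbox F_B\iso F_{A\times B}$ obtained along the way is then reused for the $\uHom$ statement.

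\textbf{Your route (b).} This is a factorization of that same $\sigma$ into canonical isomorphisms: co-Yoneda in $C_1$, then the adjunction $\cBZ_G(X,A\times C)\iso\cBZ_G(DA\times X,C)$, then co-Yoneda in $C$ against the arbitrary $M$. It therefore shows $\sigma$ is an isomorphism for every $M$ at once, with no reduction to free modules and no colimit argument. Naturality in $A$ comes from naturality of co-Yoneda and of the $D$-adjunction. The cost is reliance on enriched coend calculus (Fubini and co-Yoneda for additive functors), whereas the paper's contracting-homotopy check is entirely hands-on.

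\textbf{Your route (a).} This deduces the box isomorphism formally from the $\uHom$ one, via uniqueness of adjoints and the ambidexterity of $A\times(\blank)$ and $DA\times(\blank)$ in $\cBZ_G$. It is valid and reverses the paper's order, but naturality in $A$ then needs a separate check, as you note.

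One small slip in (a): because $M$ is contravariant, the unit $M(X)\ra M(DA\times A\times X)$ is induced by the evaluation $DA\times A\times X\ra X$, and the counit by the coevaluation $X\ra A\times DA\times X$. This is the reverse of the pairing you wrote. It is harmless, since $D$ interchanges the two morphisms.
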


\begin{remark}
It is worth accentuating that the second statement of the above result shows that free modules are self-dual: $\uHom(F_A,\mZ)\iso F_A$.  However, a map $F_A\ra F_B$ induced by $f\in \cB\Z_G(A,B)$ dualizes to the map $F_B\ra F_A$ induced by $Df$. See also Example~\ref{ex:cyclic-res} below.  
\end{remark}

Proving Proposition~\ref{pr:box-slice-isos} takes a bit of legwork, so we defer the proof to the appendix.  It reduces to the case where $M$ is a free module, and there it can just be checked directly from the definitions.  The result below is basically this case: for expository reasons it is stated here as a corollary, but in fact it is the key step in the proof of the proposition.  

\begin{cor}
\label{co:F-box-F}
There is a natural isomorphism $F_A\bbox F_B\iso F_{A\times B}$.
\end{cor}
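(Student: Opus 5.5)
The plan is to prove the isomorphism by Yoneda, comparing maps out of both sides into an arbitrary $\mZ$-module $Q$. The one input I need is the universal property of the Day convolution. Since $M\bbox N$ is the left Kan extension of $(C_1,C_2)\mapsto M(C_1)\tens N(C_2)$ along $\times\colon \cBZ_G^{op}\times\cBZ_G^{op}\ra\cBZ_G^{op}$, maps $M\bbox N\ra Q$ correspond to families of homomorphisms
\[ M(C_1)\tens N(C_2)\lra Q(C_1\times C_2) \]
that are natural in $(C_1,C_2)\in\cBZ_G^{op}\times\cBZ_G^{op}$. One can also read this off from the explicit coequalizer formula for $(M\bbox N)(X)$.

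First, take $M=F_A$ and $N=F_B$. A natural family $\cBZ_G(C_1,A)\tens\cBZ_G(C_2,B)\ra Q(C_1\times C_2)$ is the same as a natural transformation in the first variable, with values in abelian groups of natural transformations in the second. Applying the additive Yoneda lemma twice, once in each variable, shows such families are in bijection with $Q(A\times B)$: the family is determined by the image of $\id_A\tens\id_B$. The bijection sends $q\in Q(A\times B)$ to the family $f\tens g\mapsto (f\times g)^*q$, where $f\times g\in\cBZ_G(C_1\times C_2,A\times B)$ is the Cartesian product of maps.

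Second, by Yoneda again, $Q(A\times B)\iso\Hom_{\mZ}(F_{A\times B},Q)$. Both bijections are natural in $Q$, so we get $\Hom(F_A\bbox F_B,Q)\iso\Hom(F_{A\times B},Q)$ naturally in $Q$. Hence $F_A\bbox F_B\iso F_{A\times B}$, and the isomorphism is induced by $g_A\tens g_B\mapsto g_{A\times B}$. Naturality in $A$ and $B$ follows because everything is functorial in the generators: a map $f\colon A\ra A'$ in $\cBZ_G$ acts on both sides via composition with $f\times\id$.

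The main obstacle is bookkeeping rather than ideas. One must check that $\times$ really is a bifunctor on $\cBZ_G$, i.e. that the product of two maps of permutation modules, $\Z\langle C_1\times C_2\rangle\iso\Z\langle C_1\rangle\tens\Z\langle C_2\rangle$ with diagonal action, is well defined and compatible with composition. This holds because $\Hom_{\Z[G]}(\Z\langle C_1\rangle,\Z\langle A\rangle)\tens\Hom_{\Z[G]}(\Z\langle C_2\rangle,\Z\langle B\rangle)$ maps into $\Hom_{\Z[G]}(\Z\langle C_1\rangle\tens\Z\langle C_2\rangle,\Z\langle A\rangle\tens\Z\langle B\rangle)$ by tensoring, and this is functorial.

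Alternatively, one can argue directly from the coequalizer formula: the composite
\[ \cBZ_G(X,C_1\times C_2)\tens F_A(C_1)\tens F_B(C_2)\ra \cBZ_G(X,A\times B), \qquad h\tens f\tens g\mapsto (f\times g)\circ h, \]
coequalizes the two arrows. Its inverse is $u\mapsto u\tens\id_A\tens\id_B$ with $C_1=A$ and $C_2=B$. In the coequalizer, any $h\tens f\tens g$ is identified with $((f\times g)\circ h)\tens\id_A\tens\id_B$, so this map is surjective, hence bijective.
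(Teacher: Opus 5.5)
Your proof is correct, and your second, direct argument is essentially the one the paper ultimately relies on.

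In the body, the corollary is deduced from Proposition~\ref{pr:box-slice-isos} via $(F_B)^{DA}(\Theta)=\cBZ(DA\times\Theta,B)\iso\cBZ(\Theta,A\times B)$. The paper points out, however, that this free case is the key input for that proposition. In the appendix (Proposition~\ref{pr:box-isos}(a)) it is proved exactly as you do it. The map $\beta$ built from Cartesian product and composition is shown to be an isomorphism using contracting homotopies into the $C_1=A,\ C_2=B$ and $D_1=A,\ D_2=B$ summands. Your identification of $h\tens f\tens g$ with $((f\times g)\circ h)\tens\id_A\tens\id_B$ in the coequalizer is precisely what the second homotopy encodes.

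Your main argument takes a genuinely different, more formal route. It uses the general fact that the Day convolution of representables is represented by the product, obtained from the universal property of the Kan extension and the additive Yoneda lemma in each variable. This avoids the duality $D$ and the exponential $M^{DA}$ altogether, and it makes naturality in $A$ and $B$ automatic.

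What the paper's route buys in exchange is the comparison map $\sigma\colon F_A\bbox M\ra M^{DA}$ for arbitrary $M$, with the free case as the base case. That more general isomorphism is what the paper then uses for flatness of free modules and for the formula $\uHom(F_A,M)\iso M^A$.
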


\begin{proof}
We have $F_A\bbox F_B\iso (F_B)^{DA}$.  Then we observe for a $G$-set $\Theta$ that
\[ (F_B)^{DA}(\Theta)=F_B(DA\times \Theta)=\cB\Z(DA\times \Theta,B)=\cB\Z(\Theta,A\times B)=F_{A\times B}(\Theta).\]
\end{proof}

Proposition~\ref{pr:box-slice-isos} leads to a couple of important corollaries:

\begin{cor}
\label{co:flat->Free}
The free modules $F_A$ are flat.
\end{cor}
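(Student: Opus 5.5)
By Proposition~\ref{pr:box-slice-isos} there is a natural isomorphism $F_A\bbox M\iso M^{DA}$, so it suffices to show that the functor $M\mapsto M^{DA}$ is exact on the category of $\mZ$-modules. By definition $M^{DA}(X)=M(DA\times X)=M(A\times X)$, and a map $M\ra N$ induces on $X$-components the map $M(A\times X)\ra N(A\times X)$. This is just the component of the original map at the $G$-set $A\times X$.

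Exactness in the category of $\mZ$-modules (additive functors $\cBZ_G^{op}\ra\Ab$) is checked levelwise. Kernels, cokernels and homology are all computed objectwise in $\Ab$. So let $0\ra M'\ra M\ra M''\ra 0$ be a short exact sequence of $\mZ$-modules. For every finite $G$-set $X$, the sequence
\[ 0\ra M'(A\times X)\ra M(A\times X)\ra M''(A\times X)\ra 0 \]
is exact, because it is the original sequence evaluated at the finite $G$-set $A\times X$. Hence $0\ra (M')^{DA}\ra M^{DA}\ra (M'')^{DA}\ra 0$ is exact.

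By naturality of the isomorphism in Proposition~\ref{pr:box-slice-isos}, the sequence $0\ra F_A\bbox M'\ra F_A\bbox M\ra F_A\bbox M''\ra 0$ is isomorphic to this one, so it is also exact. This shows that $F_A\bbox(\blank)$ is exact. Since the box product is symmetric, $(\blank)\bbox F_A$ is exact as well, so $F_A$ is flat.

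The only point needing care is that the isomorphism $F_A\bbox M\iso M^{DA}$ is natural in $M$, so that it carries the maps of the sequence to the maps induced by precomposition with $A\times(\blank)$. This naturality is part of the statement of Proposition~\ref{pr:box-slice-isos}, which we may assume.
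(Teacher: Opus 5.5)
Your proposal is correct and is essentially the paper's own argument. You use the natural isomorphism $(F_A\bbox M)(X)\iso M(A\times X)$ from Proposition~\ref{pr:box-slice-isos}, together with the fact that exactness is checked objectwise, to conclude that $F_A\bbox(\blank)$ is exact.
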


\begin{proof}
Exactness of a sequence is determined by evaluating at $G$-sets $S$, and Proposition~\ref{pr:box-slice-isos} gives natural isomorphisms $(F_A\bbox M)(S)\iso M(A\times S)$.   So if a sequence $M_\bullet$ is exact then so is $F_A\bbox M_\bullet$, for any $A$.   
\end{proof}

\begin{cor}
\label{co:box-hom}
For finite $G$-sets $A$ and $\mZ$-modules $M$ there is a natural isomorphism
$\uHom(F_A,M)\iso \uHom(F_A,\mZ)\bbox M$.
\end{cor}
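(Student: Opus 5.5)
This follows by stringing together the isomorphisms of Proposition~\ref{pr:box-slice-isos}, applied to $A$ and to $DA$. We want a natural isomorphism $\uHom(F_A,M)\iso \uHom(F_A,\mZ)\bbox M$. The second part of Proposition~\ref{pr:box-slice-isos} gives $\uHom(F_A,M)\iso M^A$ and $\uHom(F_A,\mZ)\iso F_{DA}$. It therefore suffices to produce a natural isomorphism $F_{DA}\bbox M\iso M^A$.

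For this we use the first part of Proposition~\ref{pr:box-slice-isos}, with $A$ replaced by $DA$. It gives $F_{DA}\bbox M\iso M^{DDA}$. Since $D$ is the identity on objects and $D\circ D=\id$ on morphisms, $M^{DDA}=M^A$, with the correct variance in $A$. Composing, we get
\[ \uHom(F_A,M)\iso M^A \iso F_{DA}\bbox M\iso \uHom(F_A,\mZ)\bbox M. \]
Naturality in $M$ is immediate, since each isomorphism in Proposition~\ref{pr:box-slice-isos} is natural in $M$.

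The only point that needs care is naturality in $A$. Both sides are contravariant in $A$ as functors of $A\in\cBZ_G$: on the right, $\uHom(F_A,\mZ)$ is contravariant in $A$, and in the form $F_{DA}$ this shows up through the $D$. I would check that the identification $M^{DDA}=M^A$ respects the induced maps. A map $f\colon A\to B$ yields $Df\colon DB\to DA$ and hence $F_{DB}\to F_{DA}$. On the exponential side it yields $(Df\times\id)^*$ for the $DA$ variance, and $DDf=f$ gives back the map $M^B\to M^A$ induced by $f$.

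A more conceptual alternative gives the comparison map directly, without invoking the proposition. There is always a canonical map $\uHom(N,\mZ)\bbox M\to \uHom(N,M)$, adjoint to the evaluation $\uHom(N,\mZ)\bbox M\bbox N\to \mZ\bbox M\iso M$. One then checks that it is an isomorphism for $N=F_A$ using the isomorphisms above. I expect no real obstacle beyond this variance bookkeeping, and would present the short chain of isomorphisms as the proof.
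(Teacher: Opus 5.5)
Your proposal is correct and is the same argument as the paper's: the chain $\uHom(F_A,M)\iso M^A\iso F_{DA}\bbox M\iso \uHom(F_A,\mZ)\bbox M$, obtained entirely from Proposition~\ref{pr:box-slice-isos}. Your extra bookkeeping ($M^{DDA}=M^A$, and the variance check in $A$) simply makes explicit what the paper leaves implicit.
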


\begin{proof}
Proposition~\ref{pr:box-slice-isos} gives us
\[ \uHom(F_A,M)\iso M^A \iso F_{DA}\bbox M \iso \uHom(F_A,\mZ)\bbox M.
\]
\end{proof}

\subsection{Maps between free modules}
A map $f\colon S_1\ra S_2$ in $\cBZ_G$ induces the map $F_{S_1}\ra F_{S_2}$ 
sending $g_{S_1}\mapsto f^*(g_{S_2})$, and note that $\cBZ_G(S_1,S_2)\ra \Hom_{\mZ}(F_{S_1},F_{S_2})$ is an isomorphism by the Yoneda Lemma.  Thus a map of free modules $\bigoplus_{i=1}^k F_{S_i} \ra \bigoplus_{j=1}^m F_{T_j}$ is determined by a collection of maps $\alpha_{j,i}\colon S_i\ra T_j$, which can be assembled into the evident $m\times k$ matrix.
Composition then corresponds to matrix multiplication, as usual.
The map sends the generator $g_{S_i}$ to $\sum_{j=1}^l (\alpha_{j,i})^*(g_{T_j})$.

Given $f\colon S_1\ra S_2$, it is tempting to denote the induced map $F_f\colon F_{S_1}\ra F_{S_2}$ as either just ``$f$'' or as ``$f^*$'', in the latter case thinking of the behavior on generators.  Systematic use of the latter convention, though, leads to formulas like $h^*\circ f^*=(hf)^*$, which inherently conflict with the usual contravariance formulas for Mackey functors.  So it is better to avoid this route.  We will tend to abbreviate $F_f$ to just $f$, and remember that on generators the behavior is $g_{S_1}\mapsto f^*g_{S_2}$.  

\begin{example}
\label{ex:cyclic-res}
Fix a prime $\ell$ and a generator $t$ for $C_\ell$.  Let $M$ be the unique $\mZ$-module having $M(C_\ell)=0$ and $M(\pt)=\Z/\ell$.  The element $1\in M(\pt)$ determines a map $\epsilon \colon F_{\pt} \ra M$, which is readily checked to be a surjection.  The following can be verified to a free resolution of $M$:
\[ \xymatrix{
0 \ar[r] & F_{\pt} \ar[r]^{Ip} & F_{C_\ell} \ar[r]^{\id-Rt} & F_{C_{\ell}} \ar[r]^{Rp} & F_{\pt} \ar[r]^\epsilon & M \ar[r] & 0.
}
\]
Here $p\colon C_\ell\ra \pt$ and $t\colon C_\ell\ra C_\ell$ are the maps in the orbit category.  
Observe that $Rp\circ Rt=R(p\circ t)=Rp$ by functoriality.  For $Rt\circ Ip$ recall from Proposition~\ref{pr:I=R} 
that $Rt=It^{-1}=It^{\ell-1}$, and so $Rt\circ Ip=It^{\ell-1}\circ Ip=I(p\circ t^{\ell-1})=Ip$.  This verifies that we have a chain complex, and checking exactness involves just a bit more legwork (this is a good exercise, but details can be found in
Proposition~\ref{pr:Z/Fa-res} below).

Applying $\uHom(\blank,\mZ)$ to the resolution (without the $M$) yields
\[ 
\xymatrix{
0 & F_{\pt} \ar[l] & F_{C_\ell} \ar[l]_{Rp} & F_{C_{\ell}} \ar[l]_{\id-It} & F_{\pt} \ar[l]_{Ip} & 0. \ar[l]
}
\]
Here we have used the isomorphisms $\uHom(F_A,\mZ)\iso F_{DA}$ from Proposition~\ref{pr:box-slice-isos}.  Note the  names of the maps in both complexes and how these follow our conventions.
\end{example}

\begin{remark}
In the future, we will identify any map $f$ in the orbit category with its image $Rf$ in $\cB\Z_G$.  So we will sometimes drop the $R$'s in examples such as the above, though including them can provide some extra clarity.    
\end{remark}

\begin{remark}
\label{re:translate}
Shortly we will be taking free resolutions $F_\bullet$ and applying $(\blank)\bbox M$ or $\uHom(\blank,M)$.  To analyze such constructions, observe that
for a map $f\colon S\ra T$ in $\cB\Z_G$, a $G$-set $\Theta$, and a $\mZ$-module $M$, Proposition~\ref{pr:box-slice-isos} gives natural isomorphisms
\[ \xymatrixcolsep{3.2pc}\xymatrix{
(F_S\bbox M)(\Theta) \ar[r]^{F_f\bbox \id_M}\ar[d]^\iso & (F_T\bbox M)(\Theta)\ar[d]_\iso\\ 
M(S\times \Theta) \ar[r]^-{(Df\times \id)^*} & M(T\times \Theta)
}\ \ 
\xymatrix{
\uHom(F_S,M)(\Theta)\ar[d]^\iso & \uHom(F_T,M)\ar[l]_-{\uHom(F_f,M)}\ar[d]^\iso\\
M(S\times \Theta) & M(T\times \Theta).\ar[l]_{(f\times \id)^*}
}
\]
Note that $(Df\times \id)^*$ can also be written as $(f\times \id)_*$, which is often more convenient.  

In particular, taking the first with $M=\mZ$ recovers the canonical isomorphism
\[ \xymatrixcolsep{3.2pc}\xymatrix{F_S(\Theta)\ar[d]^\iso \ar[r]^{F_f} & F_T(\Theta)\ar[d]_\iso  \\
\mZ(S\times \Theta) \ar[r]^-{(f\times \id)_*} & \mZ(T\times \Theta).
}
\]
\end{remark}

\subsection{Rationalization}
We end this section with a nice result about rationalization: 

\begin{prop}
\label{pr:rational-equiv}
For any $\mZ$-module $M$, the natural map $M\ra \FP(M(G))$ is a rational equivalence.  
\end{prop}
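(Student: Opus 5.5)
The plan is to check the statement orbit by orbit. A map of Mackey functors is a rational equivalence exactly when, at every orbit $G/H$, the kernel and cokernel of the induced map of abelian groups are torsion. Equivalently, the map becomes an isomorphism after applying $\blank\tens\Q$, and $\blank\tens\Q$ is exact. So I will show that at each orbit $G/H$ both the kernel and the cokernel of
\[ M(G/H)\lra \FP(M(G))(G/H)=M(G/e)^H \]
are annihilated by $|H|$.

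\textbf{Step 1: identify the map.} The first step is to identify this component concretely, using the description of the unit in Remark~\ref{re:FP2}. For $S=G/H$, an element $m\in M(G/H)$ goes to the $\Z[G]$-map $\Z\langle G/H\rangle\ra M(G)$ given by $x\mapsto x\cdot m$, using $\cB\Z_G(G,G/H)\iso \Z\langle G/H\rangle$. The element $eH$ corresponds to the quotient map $p\colon G\ra G/H$, viewed as $Rp$. Evaluating at $eH$ then identifies $\Hom_{\Z[G]}(\Z\langle G/H\rangle,M(G))$ with $M(G)^H$. Under this identification the component is the restriction
\[ p^*\colon M(G/H)\ra M(G/e)^H. \]
Its image lands in the $H$-fixed points because $h^*p^*=(p\circ\rho_h)^*=p^*$ for $h\in H$. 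This bookkeeping is routine but has to be done carefully with the variance conventions.

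\textbf{Step 2: the kernel.} By Proposition~\ref{pr:Zmod}, $p_*\circ p^*$ is multiplication by $[H:e]=|H|$ on $M(G/H)$. So if $p^*m=0$, then $|H|m=0$, and the kernel is $|H|$-torsion.

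\textbf{Step 3: the cokernel.} I need the double coset formula
\[ p^*p_*=\sum_{h\in H} h^* \quad\text{on } M(G/e). \]
To get it, compute $Rp\circ Ip$ in $\cB_G$ as the pullback span. The pullback is $G\times_{G/H}G$, which decomposes $G$-equivariantly as $\coprod_{h\in H}G$, with the two legs given by $\id$ and $\rho_h$. Hence $Rp\circ Ip=\sum_{h\in H} R\rho_h$ in $\cB_G$, which holds in any Mackey functor.

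Now take $x\in M(G/e)^H$. The formula gives $p^*(p_*x)=\sum_h h^*x=|H|x$. So $|H|x$ lies in the image, and the cokernel is $|H|$-torsion.

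Combining Steps 2 and 3, the map becomes an isomorphism after tensoring with $\Q$ at every orbit, and hence as Mackey functors.

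The main obstacle is the double coset identity in Step 3: identifying the pullback and checking that the legs are the right-multiplication maps with the correct variance. Steps 1 and 2 are essentially formal.
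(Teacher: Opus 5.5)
Your proposal is correct and is essentially the paper's own proof. You identify the component at $G/H$ as $p^*\colon M(G/H)\ra M(G)^H$, bound the kernel using $p_*p^*=|H|$, and bound the cokernel using $p^*p_*=\sum_{h\in H}(\rho_h)^*$, which is multiplication by $|H|$ on $H$-fixed points; you also supply details on the unit and the double coset formula that the paper simply asserts. One notational slip: with the paper's contravariant conventions $p^*p_*=M(Ip\circ Rp)$, so the composite whose pullback is $G\times_{G/H}G$ is $Ip\circ Rp$, whereas $Rp\circ Ip$ is the endomorphism of $G/H$ that gives $p_*p^*$.
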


\begin{proof}
Evaluating at the orbit $G/H$, the map in question is $p^*\colon M(G/H)\ra M(G)^H$ where $p\colon G\ra G/H$ is the projection.    
We have the maps
\[ M(G/H) \llra{p^*} M(G) \llra{p_*} M(G/H),
\]
with the image of the first landing in $M(G)^H$.  
The composition $p_*\circ p^*$ is multiplication by $|H|$, and the composition $p^*\circ p_*$ equals $\sum_{h\in H} (\rho_h)^*$---which is multiplication by $|H|$ on $M(G)^H$.  So after rationalization the map $p^*\colon M(G/H)\ra M(G)^H$ is an isomorphism.    
\end{proof}

\begin{cor}
\label{co:rational-equiv1}
The functors $\FP\colon \Q[G]\MMod \adjoint \mQ_G\MMod\colon \ev_G$ are an equivalence.
\end{cor}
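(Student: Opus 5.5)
We prove that the two composites are naturally isomorphic to the identity functors. Here $\mQ_G\MMod$ means $\mQ$-modules, that is, $\mZ$-modules on which multiplication by every nonzero integer is invertible at every spot. First we check that the functors land in the right places. If $W$ is a $\Q[G]$-module then each $W^H$ is a $\Q$-vector space. The restriction, transfer and action maps of $\FP(W)$ are $\Q$-linear, so $\FP(W)$ is a $\mQ$-module. Conversely, $\ev_G(M)=M(G)$ is a $\Q[G]$-module whenever $M$ is a $\mQ$-module. The adjunction $\ev_G\dashv \FP$ between $\mZ\MMod$ and $\Z[G]\MMod$ restricts to these full subcategories. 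This gives the unit $\eta_M\colon M\ra \FP(M(G))$ and the counit $\epsilon_W\colon \ev_G\FP(W)\ra W$.

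For the counit, $\ev_G\FP(W)=\FP(W)(G/e)=W^{\{e\}}=W$. Unwinding Remark~\ref{re:FP2} at the orbit $G=G/e$, the map $\epsilon_W$ is the identity, and in particular an isomorphism. It suffices to check this on the generator $e\in \Z\langle G\rangle$, where $\Hom_{\Z[G]}(\Z\langle G\rangle,W)\iso W$ is evaluation at $e$.

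For the unit, Proposition~\ref{pr:rational-equiv} says that $\eta_M$ becomes an isomorphism after rationalization. Its proof gives more precisely the following. At the spot $G/H$, the map $p^*\colon M(G/H)\ra M(G)^H$ has a two-sided inverse up to the scalar $|H|$, namely $p_*$, because $p_*p^*=|H|$ and $p^*p_*=|H|$ on $M(G)^H$. When $M$ is a $\mQ$-module, $|H|$ is already invertible on both $M(G/H)$ and $M(G)^H$. Hence $p^*$ is itself an isomorphism with inverse $\tfrac{1}{|H|}p_*$, so $\eta_M$ is an isomorphism at every orbit and therefore an isomorphism of Mackey functors. By additivity, checking orbits is enough.

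An adjunction whose unit and counit are both natural isomorphisms is an equivalence of categories. The only point requiring care is that "rational equivalence" in Proposition~\ref{pr:rational-equiv} really yields an honest isomorphism for $\mQ$-modules. This is a small point rather than a genuine obstacle: for a $\mQ$-module $M$, both $M$ and $\FP(M(G))$ are already rational, so the rationalized map equals the map itself. The scalar argument above makes this explicit.
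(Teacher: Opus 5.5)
Your proposal is correct and follows the same route as the paper: $\ev_G\circ\FP\iso\id$ already holds over $\Z$, and the unit $M\ra \FP(M(G))$ is an isomorphism for $\mQ$-modules by Proposition~\ref{pr:rational-equiv}. You also make explicit the scalar argument behind that proposition ($p_*p^*=|H|$ on $M(G/H)$ and $p^*p_*=|H|$ on $M(G)^H$) and the point that the adjunction restricts to the rational subcategories, both of which the paper leaves implicit.
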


\begin{proof}
The composition $\ev_G\circ \FP$ is naturally isomorphic to $\id$ even over $\Z$.  The natural isomorphism $\FP\circ \ev_G\iso \id$ is by Proposition~\ref{pr:rational-equiv}.
\end{proof}

Of course the above results do not require inverting all integers, one only needs to  invert $|G|$.

\section{Families of subgroups and their associated ideals}
\label{se:fam-ideals}

Let $G$ be a finite group.  A \mdfn{$G$-family of subgroups} is a collection $\cF$ of subgroups of $G$ that is closed under taking subgroups and under conjugation.  More briefly, we just say that $\cF$ is closed under subconjugacy.  Given such a family, let $I_\cF\subseteq \mZ$ be the smallest sub-Mackey functor containing the elements $1\in \mZ(G/H)$ for all $H\in \cF$.  This is an ideal in the Mackey ring $\mZ$.  We will often write $\mZ/\cF$ as an abbreviation for the quotient $\mZ/I_\cF$.  

Note that the element $1\in \mZ(G/H)$ determines a map $F_{G/H}\ra \mZ$. 
The Mackey functor $\mZ/\cF$ can also be described as the cokernel in the sequence
\[ \bigoplus_{H\in \cF} F_{G/H} \lra \mZ \lra \mZ/\cF \lra 0
\]
where the left map sends each generator $g_{G/H}$ to $1_{G/H}$.  

\begin{example}\label{ex:c45}
Let $G=C_{45}$, the cyclic group of order $45$. Consider the family $\cF=\{e,C_3\}$. From left to right, the diagrams in Figure~\ref{fig:Z/I-example}
show the orbit category (without the automorphisms drawn), the Mackey functor $\mZ_{C_{45}}$, the ideal $I_{\cF}$, and the quotient $\mZ/\cF$. Note that restriction maps $p^*$ are drawn as straight arrows, and transfers $p_*$ are drawn curved; the $t^*$ maps are not drawn at all, as they are all equal to the identity.  Unlabelled maps send $1\mapsto 1$.  In the orbit category we write $\Theta_{d}$ for the unique orbit of size $d$ (so $\Theta_d=C_{45}/C_{45/d}$). We can imagine placing a $1$ at the spots $\Theta_{45}=C_{45}/e$ and $\Theta_{15}=C_{45}/C_3$ and letting those $1$'s freely generate an ideal.

\begin{figure}[ht]
\label{fig:Z/I-example}
\caption{The $\mZ$-modules $\mZ$, $I_\cF$, and $\mZ/\cF$}
\[
\xymatrix{
\Theta_{1}& \Theta_5 \ar[l] & \Z\ar[r]\ar[d] & \Z \ar@/_2.5ex/[l]_5\ar[d] \\
\Theta_{3} \ar[u]  & \Theta_{15}\ar[u]\ar[l] & \Z\ar[d] \ar@/^2.5ex/[u]^3\ar[r] & \Z \ar@/_2.5ex/[l]_5\ar[d]\ar@/_2.5ex/[u]_3\\
\Theta_{9}\ar[u] & \Theta_{45}\ar[u]\ar[l] & \Z\ar[r] \ar@/^2.5ex/[u]^3& \Z \ar@/_2.5ex/[l]_5\ar@/_2.5ex/[u]_3\\
{} \ar@<1.5ex>@{}[r]^{\Or(C_{45})} &{}& {}\ar@{}@<1.5ex>[r]^{\mZ} & {}
    }\hspace{0.1in}
\xymatrix{
15\Z\ar[r]\ar[d] & 3\Z \ar@/_2.5ex/[l]_5\ar[d] \\
5\Z\ar[d] \ar@/^2.5ex/[u]^3\ar[r] & \Z \ar@/_2.5ex/[l]_5\ar[d]\ar@/_2.5ex/[u]_3\\
5\Z\ar[r] \ar@/^2.5ex/[u]^3& \Z \ar@/_2.5ex/[l]_5\ar@/_2.5ex/[u]_3\\
{}\ar@{}@<0.9ex>[r]^{I_\cF} & {}
}
\hspace{0.1in}
\xymatrix{
\Z/15\ar[r]\ar[d] & \Z/3 \ar@/_2.5ex/[l]_5\ar[d] \\
\Z/5\ar[d] \ar@/^2.5ex/[u]^3\ar[r] & 0 \ar@/_2.5ex/[l]\ar[d]\ar@/_2.5ex/[u]\\
\Z/5\ar[r] \ar@/^2.5ex/[u]^3& 0 \ar@/_2.5ex/[l]\ar@/_2.5ex/[u]\\
{}\ar@{}@<2.5ex>[r]^{\mZ/I_\cF} &{}
}
\]
\end{figure}
\end{example}

\smallskip

In general the value of $\mZ/\cF$ at an orbit $G/J$ is given by the following formula:

\begin{prop}
\label{pr:Z/F-order}
For any subgroup $J$ of $G$ the group $(\mZ/\cF)(G/J)$ is cyclic of order
equal to 
\[ \gcd\bigl \{ [J:H]\,\bigl |\, H\in \cF,\  H\leq J\bigr \} = \frac{\#J}{\lcm\{ \#H\,|\, H\in \cF, H\leq J\} }.    
\]
In particular, the order of $G$ annihilates $\mZ/\cF$.
\end{prop}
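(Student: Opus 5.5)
We compute $I_\cF(G/J)\subseteq \mZ(G/J)=\Z$ directly; the quotient is then $\Z/I_\cF(G/J)$, which is cyclic.  Since $I_\cF$ is the image of $\bigoplus_{H\in\cF} F_{G/H}\ra \mZ$, its value at $G/J$ is the subgroup of $\Z$ generated by the images of all elements of $F_{G/H}(G/J)=\cBZ_G(G/J,G/H)$, $H\in\cF$, under $g_{G/H}\mapsto 1_{G/H}$.  In the Burnside category every span $[G/H\la T\ra G/J]$ splits as a sum over the orbits of $T$.  A single orbit $G/L$ gives a composite of the form $Ip\circ (\text{action map})\circ Rq$, where $L$ is subconjugate to both $J$ and $H$.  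The generator $1_{G/H}$ is sent to $\alpha^*(1_{G/H})$, and on $\mZ$ this is computed as follows: restrictions and action maps are the identity on $1$, and transfers multiply by the index.  So the image is $[J:L']\cdot 1_{G/J}$ for some conjugate $L'\le J$ of $L$.  Because $\cF$ is closed under subconjugacy, $L'\in \cF$ and $L'\le J$.  Hence $I_\cF(G/J)$ is contained in the subgroup generated by the integers $[J:H]$ with $H\in\cF$, $H\le J$.

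For the reverse inclusion, take $H\in\cF$ with $H\le J$.  Then $1_{G/H}\in I_\cF(G/H)$, and its transfer along $p\colon G/H\ra G/J$ is $[J:H]\cdot 1_{G/J}\in I_\cF(G/J)$.  So $I_\cF(G/J)=\gcd\{[J:H]\}\cdot\Z$, and the quotient is cyclic of that order.  The identity
\[ \gcd\{\#J/\#H\} = \#J/\lcm\{\#H\} \]
holds because all the $\#H$ divide $\#J$.  As noted in Remark~\ref{re:gcd}, in prime exponents this reads $\min(j-h_i)=j-\max h_i$.  The trivial group lies in every nonempty family, so the gcd divides $[J:e]=\#J$, which divides $\#G$.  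Hence $\#G$ annihilates every value of $\mZ/\cF$; for the empty family the statement is vacuous-convention dependent and can be handled separately, with gcd of the empty set equal to $0$.

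The only real care needed is the first step: verifying that any element of $\cBZ_G(G/J,G/H)$ sends $1_{G/H}$ to an integer multiple of some $[J:L']$ with $L'\in\cF$, $L'\le J$.  The cleanest route is to pass through spans in $\cB_G$, which surject onto $\cBZ_G$ via $Q$, and to decompose each span into orbits.
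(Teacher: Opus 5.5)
Your argument is correct, and its skeleton matches the paper's, but the hard inclusion is computed with a different tool. The shared skeleton is: the inclusion $\gcd\cdot\Z\subseteq I_\cF(G/J)$ comes from transfers of $1_{G/H}$, and the opposite inclusion from computing the image of each $F_{G/H}(G/J)\ra \mZ(G/J)$.

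The paper computes that image with the fixed-point model $F_{G/K}\iso \FP(\Z\langle G/K\rangle)$. By equivariance the map to $\mZ$ sends every basis element $[gK]$ to $1$ at $G/e$. Restriction to $G/e$ is the injective inclusion of $J$-fixed points, and $\Z\langle G/K\rangle^J$ has the $J$-orbit sums as a basis. So the image at $G/J$ is generated by the orbit sizes $[J:J\cap gKg^{-1}]$. You instead decompose spans into transitive ones, factor each into restriction, conjugation and transfer, and read off the value on $1_{G/H}$ from the structure maps of $\mZ$.

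Both computations are indexed by the same double-coset data: $J$-orbits on $G/K$ correspond to $G$-orbits of $G/J\times G/K$, i.e.\ to transitive spans. So the difference is bookkeeping. The paper's version needs no span manipulation and shows the stabilizers $J\cap gKg^{-1}$ explicitly. Yours stays entirely inside $\cB_G$ and uses only the definition of $\mZ$. In fact you can drop the appeal to fullness of $Q$: $I_\cF$ is by definition the smallest sub-Mackey functor containing the $1_{G/H}$, so $I_\cF(G/J)$ is already spanned by the $\alpha^*(1_{G/H})$ with $\alpha$ a span. You also treat the $\gcd/\lcm$ identity and the empty family, which the paper leaves implicit.

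One notational slip: as a morphism $G/J\ra G/H$ in $\cB_G$, a transitive span factors as $Rq\circ(\text{action})\circ Ip$. The order you wrote is that of the induced maps $p_*\circ(\text{action})\circ q^*$, which is what your computation actually uses.
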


\begin{proof}
The group $(\mZ/\cF)(G/J)$ is a quotient of $\mZ(G/J)=\Z$, and so is cyclic.  Let $D_J=\{ [J:H]\, \bigl |\, H\in \cF, H\leq J\}$ and set $d_J=\gcd(D_J)$. 

First note that if $H\in \cF$ and $H\leq J$ then the transfer map $\mZ(G/H) \ra \mZ(G/J)$ sends $1_{G/H}$ to $[J:H]\cdot 1_{G/J}$.   
From this it follows at once that $d_J\cdot 1_{G/J}\in I_{\cF}(G/J)$, and so $(\mZ/\cF)(G/J)$ is annihilated by $d_J$.  Proving that $d_J$ is the precise order requires some additional work.

For any $K\in \cF$ we will analyze the map $f\colon F_{G/K}\ra \mZ$ that sends the generator to $1_{G/K}$.  We will use that $F_{G/K}=\FP(\Z\langle G/K\rangle)$, with generator $g_{G/K}$ corresponding to the element $[eK]\in \Z\langle G/K\rangle^K$ (see Proposition~\ref{pr:fixedpointfree}).  The map $f_{G/e}\colon \Z\langle G/K\rangle \ra \Z$ (the component of $f$ at the $G/e$ orbit) sends $[eK]$ to $1_G$, and so by $G$-equivariance it must send $[gK]$ to $1_G$ for all $g\in G$.  

Next consider a subgroup $J\leq G$. We can determine $f_{G/J}$ using the diagram of restriction maps
\[
\xymatrix{
F_{G/K}(G/J)\ar@{=}[r]\ar[d]_{p^*} & \Z\langle G/K\rangle^J \ar@{^{(}->}[d]_{p^*} \ar[r]^-{f_{G/J}} & \Z \ar[d]_{1}\\
F_{G/K}(G/e) \ar@{=}[r] & \Z\langle G/K\rangle \ar[r]^-{f_{G/e}} & \Z. 
}
\]
The restriction map in the middle is just the inclusion of fixed points.  Write $G/K=S_1\amalg S_2\amalg \cdots \amalg S_r$ as $J$-sets, where each $S_j$ is a $J$-orbit.  Then $\Z\langle G/K\rangle^J$ is free abelian with generators the sums of elements in each $S_j$.  Recall $f_{G/e}$ maps each element of $G/K$ to $1$, so under the composition $f_{G/e}\circ p^*$ these sums map to $\# S_j\cdot 1_G$. Thus under $f_{G/J}$ they map to $\# S_j\cdot 1_{G/J}$.  This shows that the image of $f_{G/J}$ is generated by the gcd of the numbers $\# S_j$.  Finally note if $gK$ is an element of $S_j$ then $S_j\iso J/(J\cap gKg^{-1})$ as $J$-sets.  

Taking all possible $g\in G$ and $K\in \cF$, we find that $I_\cF(G/J)$ is generated by $m\cdot 1_{G/J}$ where $m$ is the gcd of the set
\[ \bigl \{ [J\colon J\cap gKg^{-1}] \,\bigl |\, K\in \cF, \ g\in G\bigr \}.
\]
But since $\cF$ is closed under subconjugacy, this is precisely the set $D_J$. So $(\mZ/\cF)(G/J)$ has order $d_J$.
\end{proof}

The above result leads to the following `recognition theorem' for the $\mZ$-modules $\mZ/\cF$.  A more streamlined result along these lines that works only for cyclic $G$ is given in Proposition~\ref{pr:Z/F_a-identify}.

\begin{prop}
\label{pr:recognition-easy}
Let $\cF$ be a family of subgroups of $G$.  
Let $M$ be a $\mZ$-module having the properties that
\begin{enumerate}[(1)]
\item $M(\pt)$ is a cyclic abelian group;
\item all restriction maps are surjections; and
\item the order of each $M(G/J)$ is equal to the number from Proposition~\ref{pr:Z/F-order}.  
\end{enumerate}
Then $M\iso \mZ/\cF$.  Moreover, for any generator $a\in M(\pt)$ the induced map $\mZ\ra M$ sending $1_{\pt}\mapsto a$ factors through $\mZ/\cF$ to give an isomorphism $\mZ/\cF\ra M$.  
\end{prop}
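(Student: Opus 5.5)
Fix a generator $a\in M(\pt)$ and let $\phi\colon \mZ\ra M$ be the map determined by $1_{\pt}\mapsto a$ (by Yoneda, since $\mZ=F_{\pt}$). The plan is to show first that $\phi$ is surjective, then that $\phi$ kills $I_\cF$, and finally to compare orders.

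\emph{Surjectivity.} At an orbit $G/J$ we have $\phi(1_{G/J})=\phi(p^*1_{\pt})=p^*(a)$, where $p\colon G/J\ra \pt$. Since $a$ generates $M(\pt)$ and $p^*$ is surjective by hypothesis (2), the element $p^*(a)$ generates $M(G/J)$. So $\phi_{G/J}\colon \Z\ra M(G/J)$ is onto, and in particular every $M(G/J)$ is cyclic.

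\emph{Vanishing on $I_\cF$.} It suffices to show $\phi(1_{G/H})=0$ for every $H\in\cF$, because $I_\cF$ is generated by these elements. By hypothesis (3), $M(G/H)$ has order $\gcd\{[H:K]\mid K\in\cF, K\leq H\}$. Since $\cF$ is closed under subgroups, $H$ itself lies in this set with index $1$, so the gcd is $1$. Hence $M(G/H)=0$ and $\phi(1_{G/H})=0$. Therefore $\phi$ factors through $\bar\phi\colon \mZ/\cF\ra M$, and $\bar\phi$ is still surjective at every orbit.

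\emph{Orders.} At each $G/J$, $\bar\phi_{G/J}$ is a surjection between finite groups. By Proposition~\ref{pr:Z/F-order} and hypothesis (3) these groups have the same order, so $\bar\phi_{G/J}$ is a bijection. A map of Mackey functors that is a bijection at every orbit is an isomorphism, which proves both assertions of the proposition.

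There is no real obstacle. The only point needing care is the surjectivity step, which is exactly where the hypothesis that restrictions are surjective enters. Hypothesis (1) ensures a generator $a$ exists, and the finiteness of all orders comes from hypothesis (3) together with Proposition~\ref{pr:Z/F-order}.
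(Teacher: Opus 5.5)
Your proof is correct and is essentially the paper's argument. Hypotheses (1) and (2) make $\mZ\to M$ surjective, (3) forces $M(G/H)=0$ for $H\in\cF$ so the map factors through $\mZ/\cF$, and at each orbit you get a surjection between cyclic groups of the same order. The only caveat is that your appeal to finiteness tacitly assumes $\cF\neq\emptyset$, as the paper also does; the argument survives anyway, since a surjection $\Z\to\Z$ is an isomorphism.
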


\begin{proof}
Let $a$ be any generator for $M(\pt)$ and consider the associated map $f\colon \mZ\ra M$ defined by $1_{pt}\mapsto a$.  
Conditions (1) and (2) imply that $f$ is a surjection.  For $H\in \cF$ the order of $M(G/H)$ is $0$ by condition (3).  So $f$ annihilates $I_\cF$, and hence factors as a (necessarily surjective) map $\mZ/\cF\ra M$.  
At each spot $G/J$ this is a surjection between cyclic groups of the same order so it is an isomorphism.  
\end{proof}

Note that if $\cF_1$ and $\cF_2$ are two $G$-families then so are $\cF_1\cup \cF_2$ and $\cF_1\cap \cF_2$. The next proposition details how these operations on families relate to operations on the corresponding ideals or quotients.

\begin{prop}\label{pr:familyprop}
Let $\cF_1$ and $\cF_2$ be two $G$-families.  Then
\begin{enumerate}[(a)]
\item If $\cF_1\subseteq \cF_2$ then $I_{\cF_1}\subseteq I_{\cF_2}$.  
\item $I_{\cF_1\cup \cF_2}=I_{\cF_1}+I_{\cF_2}$.
\item $I_{\cF_1\cap \cF_2}\subseteq I_{\cF_1}\cap I_{\cF_2}$.
\item $\mZ/\cF_1 \,\bbox\, \mZ/\cF_2\iso \mZ/(\cF_1\cup \cF_2)$.  
\end{enumerate}
\end{prop}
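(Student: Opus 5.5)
Parts (a)--(c) follow directly from the definition of $I_\cF$ as the smallest sub-Mackey functor of $\mZ$ containing the elements $1_{G/H}$ for $H\in\cF$.

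For (a), every generator of $I_{\cF_1}$ is a generator of $I_{\cF_2}$, so $I_{\cF_2}$ contains $I_{\cF_1}$ by minimality.

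For (b), the inclusion $I_{\cF_1}+I_{\cF_2}\subseteq I_{\cF_1\cup\cF_2}$ follows from (a). Conversely, $I_{\cF_1}+I_{\cF_2}$ is a sub-Mackey functor containing $1_{G/H}$ for every $H\in\cF_1\cup\cF_2$, so minimality gives the reverse inclusion.

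For (c), apply (a) twice, using $\cF_1\cap\cF_2\subseteq\cF_i$.

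For (d), use right exactness of $\bbox$ and the standard ring-theoretic identity $R/I\bbox_R R/J\iso R/(I+J)$, which holds here because $\bbox_{\mZ}=\bbox$. Concretely, start from the presentation
\[ \bigoplus_{H\in\cF_2}F_{G/H}\lra \mZ\lra \mZ/\cF_2\lra 0. \]
Apply $\mZ/\cF_1\bbox(\blank)$. Right exactness gives that $\mZ/\cF_1\bbox\mZ/\cF_2$ is the cokernel of
\[ \bigoplus_{H\in\cF_2}\mZ/\cF_1\bbox F_{G/H}\lra \mZ/\cF_1\bbox \mZ\iso \mZ/\cF_1. \]
By the Yoneda description of maps out of free modules, together with the unit isomorphism, the image of this map is the sub-Mackey functor of $\mZ/\cF_1$ generated by the images of the elements $1_{G/H}$ for $H\in\cF_2$. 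Explicitly, the summand $\mZ/\cF_1\bbox F_{G/H}$ maps onto the submodule generated by $[1_{G/H}]\in(\mZ/\cF_1)(G/H)$; one checks this via $F_{G/H}\bbox M\iso M^{G/H}$ from Proposition~\ref{pr:box-slice-isos}, or more simply by observing that it is the image of the composite $F_{G/H}\bbox\mZ\to\mZ\bbox\mZ\to\mZ\to\mZ/\cF_1$. That generated submodule is $(I_{\cF_1}+I_{\cF_2})/I_{\cF_1}$, so the cokernel is $\mZ/(I_{\cF_1}+I_{\cF_2})$, and this equals $\mZ/(\cF_1\cup\cF_2)$ by (b).

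The main obstacle is identifying the image precisely: one must verify that the map $\mZ\bbox\mZ\to\mZ$ is the ring multiplication, so that the image really is the ideal generated by the classes of the $1_{G/H}$. This follows from the unit isomorphism $\mZ\bbox\mZ\iso\mZ$ arising from $\bbox_{\mZ}=\bbox$, under which $\mZ$ is the unit for $\mZ$-modules.

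As a sanity check in place of the above, one could instead use Proposition~\ref{pr:recognition-easy}. Evaluating $\mZ/\cF_1\bbox\mZ/\cF_2$ at $\pt$ gives a cyclic group, and restrictions are surjective since the module is a quotient of $\mZ$. The order at $G/J$ is the gcd over $\cF_1\cup\cF_2$, which matches the formula of Proposition~\ref{pr:Z/F-order}. However, computing these orders still requires the cokernel argument, so the direct argument is preferable.
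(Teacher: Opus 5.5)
Your proof is correct and follows essentially the paper's route: (a)--(c) are the routine minimality arguments, and (d) comes from right exactness of $\bbox$ applied to the free presentations $\bigoplus_{H\in\cF_i}F_{G/H}\to\mZ\to\mZ/\cF_i\to 0$. The only cosmetic difference is that the paper boxes both presentations together and reads off the cokernel of $\bigoplus_{H\in\cF_1}F_{G/H}\oplus\bigoplus_{J\in\cF_2}F_{G/J}\to\mZ$ directly, whereas you box one presentation with $\mZ/\cF_1$, identify the image as $(I_{\cF_1}+I_{\cF_2})/I_{\cF_1}$, and then invoke (b).
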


\begin{proof}
Parts (a)--(c) are routine.  For (d) take the box product of the two sequences
\[ \bigoplus_{H\in \cF_1} F_{G/H}\ra \mZ \ra \mZ/\cF_1 \ra 0 \quad\text{and}\quad
\bigoplus_{J\in \cF_2} F_{G/J}\ra \mZ \ra \mZ/\cF_2 \ra 0.
\]
Right-exactness of the box product then gives the exact sequence
\[ \bigoplus_{H\in \cF_1} F_{G/H} \oplus \bigoplus_{J\in \cF_2} F_{G/J} \lra \mZ \lra \mZ/\cF_1\bbox\, \mZ/\cF_2\ra 0
\]
and the result follows since the cokernel of the first map is evidently $\mZ/(\cF_1\cup \cF_2)$.  
\end{proof}

\begin{remark}
The subset in part (c) need not be an equality.  For example, take $G=C_2\times C_2$.  Let $\cF_1=\{e,C_2\times e\}$ and $\cF_2=\{e,e\times C_2\}$.  Then $I_{\cF_1\cap \cF_2}(G)=(4)$ whereas $I_{\cF_1}(G) \cap I_{\cF_2}(G)=(2)$.  
\end{remark}

We now give some definitions that are useful for describing operations involving the modules $I_{\cF}$ and $\mZ/\cF$.

\begin{defn}
Let $\cF$ be a family of subgroups and let $M$ be a $\mZ$-module.
\begin{enumerate}[(1)]
\item $M$ is said to be \mdfn{$\cF$-null} if $M(G/H)=0$ for all $H\in \cF$.
\item $M\{\cF\}$ is defined to be the intersection of all submodules $N\subseteq M$ such that $N(G/H)=M(G/H)$ for all $H\in \cF$.
\item $\Ann_\cF M$ is the submodule of $M$ consisting of all elements $x\in M(G/K)$ such that for every $f\colon G/H\ra G/K$ with $H\in \cF$ one has $f^*(x)=0$.   
\end{enumerate}
\end{defn}

Checking that $\Ann_\cF M$ is closed under pullbacks is immediate. Then to see it is closed under pushforwards (so that it is indeed a submodule as claimed), one observes the following:

\begin{lemma}
\label{le:product-family}
Suppose $S\ra T$ and $G/H\ra T$ are maps of $G$-sets.  If $H\in \cF$ then the pullback $G/H \times_T S$ is a disjoint union of $G/K_i$ where each $K_i\in \cF$.
\end{lemma}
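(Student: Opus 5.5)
The lemma reduces to two facts: every orbit of a finite $G$-set is of the form $G/K$ with $K$ the stabilizer of a point, and $\cF$ is closed under subconjugacy. So the plan is to decompose $P=G/H\times_T S$ into $G$-orbits and show that the stabilizer of any point of $P$ is a subgroup of a conjugate of $H$.

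Concretely, $P$ is the $G$-subset of $G/H\times S$ consisting of pairs $(gH,s)$ whose images in $T$ agree, with the diagonal action. Being a finite $G$-set, $P$ decomposes as a disjoint union of orbits. Each orbit is isomorphic to $G/K_i$, where $K_i$ is the stabilizer of a chosen point $x_i=(g_iH,s_i)$ in it.

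An element $k\in G$ fixes $x_i$ if and only if it fixes both coordinates. In particular $k$ must fix $g_iH\in G/H$, so
\[ K_i=\stab(x_i)\subseteq \stab(g_iH)=g_iHg_i^{-1}. \]
Thus $K_i$ is a subgroup of a conjugate of $H$. Since $H\in\cF$ and $\cF$ is closed under conjugation and under taking subgroups, $K_i\in\cF$.

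There is no real obstacle here. The only point needing care is that orbits of $P$ are identified with $G/K_i$ only up to isomorphism, and that the choice of base point in each orbit changes $K_i$ by conjugation. Both are harmless because $\cF$ is closed under conjugation.
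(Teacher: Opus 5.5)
Your proof is correct and follows the paper's argument: you bound the stabilizer of a point $(gH,s)$ in the pullback by $gHg^{-1}$, and then use that $\cF$ is closed under subconjugacy. The paper computes this stabilizer exactly as $gHg^{-1}\cap \stab_G s$, but it needs only the containment you use.
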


\begin{proof}
The stabilizer of a point $(aH,s)\in G/H\times_T S$ is $aHa^{-1}\cap \stab_G s$, which is subconjugate to $H$ and therefore belongs to $\cF$.  
\end{proof}

We also use this lemma to prove the following vanishing properties for boxing with $\cF$-null modules:

\begin{prop}
\label{pr:F-null-1}
Let $\cF$ be a family of subgroups and
let $M$ be a $\mZ$-module that is $\cF$-null.  Then $F_{G/H}\bbox M=0$ for every $H\in \cF$, and also $I_\cF\bbox M=0$.
\end{prop}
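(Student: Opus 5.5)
First I show $F_{G/H}\bbox M=0$ for $H\in\cF$. By Proposition~\ref{pr:box-slice-isos}, $F_{G/H}\bbox M\iso M^{D(G/H)}$, so for any $G$-set $X$ we get $(F_{G/H}\bbox M)(X)\iso M(G/H\times X)$. The product $G/H\times X$ is the pullback $G/H\times_{\pt} X$, so Lemma~\ref{le:product-family} (with $T=\pt$) writes it as a disjoint union of orbits $G/K_i$ with each $K_i\in\cF$. This is also immediate directly: the stabilizer of $(aH,x)$ is $aHa^{-1}\cap\stab_G x$, which is subconjugate to $H$. Additivity of $M$ then gives
\[ M(G/H\times X)\iso\bigoplus_i M(G/K_i), \]
and each summand vanishes because $M$ is $\cF$-null. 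Hence $F_{G/H}\bbox M$ vanishes at every $X$ and is therefore zero.

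Next I treat $I_\cF\bbox M$. By definition $I_\cF$ is the image of the map $\bigoplus_{H\in\cF}F_{G/H}\ra\mZ$ sending each generator to $1_{G/H}$. So there is a surjection $\bigoplus_{H\in\cF}F_{G/H}\fib I_\cF$. The box product is right exact, so it preserves surjections. Boxing with $M$ therefore gives a surjection
\[ \bigoplus_{H\in\cF}F_{G/H}\bbox M\fib I_\cF\bbox M. \]
The source is zero by the first step, so $I_\cF\bbox M=0$.

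I expect no serious obstacle. The one point needing care is the first step, where we must use the natural isomorphism of Proposition~\ref{pr:box-slice-isos} evaluated at an arbitrary $G$-set $X$, together with the orbit decomposition of $G/H\times X$ into orbits with stabilizers in $\cF$. The second step then uses only right exactness, which applies because $I_\cF$ is a quotient of a sum of free modules.
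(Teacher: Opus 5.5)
Your proposal is correct and follows essentially the same route as the paper: you identify $(F_{G/H}\bbox M)(X)\iso M(G/H\times X)$ via Proposition~\ref{pr:box-slice-isos} and kill it using Lemma~\ref{le:product-family}. You then get $I_\cF\bbox M=0$ from the surjection $\bigoplus_{H\in\cF}F_{G/H}\fib I_\cF$ and right exactness of the box product.
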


\begin{proof}
We know that $F_{G/H}\bbox M\iso M^{D(G/H)}$ by Proposition~\ref{pr:box-slice-isos}, and hence $[F_{G/H}\bbox M](\Theta)\iso M(G/H\times \Theta)$ for every $G$-set $\Theta$.  If $H\in \cF$ then by Lemma~\ref{le:product-family}  the $G$-set $G/H\times \Theta$ is a disjoint union of $G$-sets $G/K$ with $K\in \cF$. But $M$ vanishes at each of those spots because it was assumed to be $\cF$-null, and so $F_{G/H}\bbox M=0$.

For the second part we have a surjection $\bigoplus_{H\in \cF} F_{G/H} \fib I_\cF$ because $I_\cF$ is generated by the elements $1_{G/H}$ for $H\in \cF$.
Boxing with $M$ gives an induced surjection
\[ \bigoplus_{H\in \cF} F_{G/H}\bbox M \fib I_\cF \bbox M.
\]
Since all of the $F_{G/H}\bbox M$ summands vanish, so does $I_\cF\bbox M$.
\end{proof}

As in classical ring theory, $\mZ/\cF$-modules may be identified with $\mZ$-modules that are annihilated by the ideal $I_\cF$.  The following proposition unwinds this a bit:

\begin{prop}
\label{pr:Z/F-modules}
A $\mZ$-module $M$ is a $\mZ/\cF$-module if and only if $M(G/H)=0$ for all $H\in \cF$ (i.e. $M$ is $\cF$-null).
\end{prop}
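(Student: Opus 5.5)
A $\mZ/\cF$-module is the same thing as a $\mZ$-module $M$ for which the action map $\mZ\bbox M\ra M$ factors through $\mZ/\cF\bbox M$. Equivalently, since $\mZ/\cF\bbox M$ is the cokernel of $I_\cF\bbox M\ra \mZ\bbox M\iso M$ by right exactness of $\bbox$, it is a $\mZ$-module $M$ for which the composite $I_\cF\bbox M\ra \mZ\bbox M\ra M$ is zero. As in ordinary ring theory, such a factorization is automatically unique and associative, because $\mZ\ra\mZ/\cF$ is a surjection of rings. So I reduce to showing that this composite vanishes if and only if $M$ is $\cF$-null.

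For the direction ``$\cF$-null $\Rightarrow$ $\mZ/\cF$-module'', I will invoke Proposition~\ref{pr:F-null-1} directly. If $M$ is $\cF$-null then $I_\cF\bbox M=0$, so the composite is trivially zero. Nothing further is needed here.

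For the converse, suppose the composite $I_\cF\bbox M\ra M$ is zero and fix $H\in\cF$. I will use the surjection $F_{G/H}\ra I_\cF$ sending $g_{G/H}\mapsto 1_{G/H}$. Composing, we get that
\[ F_{G/H}\bbox M\lra \mZ\bbox M\iso M \]
is zero, where the first map is induced by $f\colon F_{G/H}\ra\mZ$, $g_{G/H}\mapsto 1_{G/H}$. I then evaluate at the orbit $G/H$ using Remark~\ref{re:translate} and Proposition~\ref{pr:box-slice-isos}. Under $F_{G/H}=F_{G/H}$ and $\mZ=F_{\pt}$, the map $f$ is the map $F_f$ for $f=Rp$ with $p\colon G/H\ra\pt$. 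The induced map $(F_{G/H}\bbox M)(\Theta)\ra (F_{\pt}\bbox M)(\Theta)$ is therefore identified with $(p\times\id)_*\colon M(G/H\times\Theta)\ra M(\Theta)$. Taking $\Theta=G/H$, this is the pushforward along the projection $\pi_2\colon G/H\times G/H\ra G/H$. Precomposing with the pushforward along the diagonal $\Delta\colon G/H\ra G/H\times G/H$ gives $(\pi_2\Delta)_*=\id_*=\id$ on $M(G/H)$. Since the map vanishes, $\id_{M(G/H)}=0$, and hence $M(G/H)=0$.

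The only delicate step is bookkeeping. I need to check that under the isomorphisms of Proposition~\ref{pr:box-slice-isos} the module action $\mZ\bbox M\ra M$ really corresponds to the identity $M(\pt\times\Theta)=M(\Theta)$, and that the map induced by $f$ is a pushforward rather than a pullback. If that identification proves awkward, I will fall back on an element-level argument. The image of $1_{G/H}\tens x$, for $x\in M(G/H)$, under $I_\cF\bbox M\ra M$ equals $1_{G/H}\cdot x=x$, because the multiplication is unital and $1_{G/H}=p^*(1_{\pt})$ acts by restriction of the unit. Vanishing of the composite then again forces $x=0$.
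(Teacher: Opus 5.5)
Your proof is correct and is essentially the paper's argument. The direction ``$\cF$-null $\Rightarrow$ $\mZ/\cF$-module'' is exactly the paper's appeal to Proposition~\ref{pr:F-null-1}. For the other direction you use the same key fact as the paper's lemma, namely that $(\pi\times\id)_*\colon M(G/H\times G/H)\ra M(\pt\times G/H)$ is split by $\Delta_*$. You apply it directly to the vanishing action composite, whereas the paper packages it as ``$\mZ/\cF\bbox M$ is $\cF$-null and $M$ is a retract of it.''

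Two small remarks. A single map $F_{G/H}\ra I_\cF$ need not be surjective, but you only use that $F_{G/H}\ra \mZ$ factors through $I_\cF$, so nothing breaks. The bookkeeping you flag as delicate is also unnecessary. It suffices that the first map is surjective at the spot $G/H$ and that $\mZ\bbox M\ra M$ is an isomorphism.
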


To prove this we start with a lemma:

\begin{lemma}
$\mZ/\cF\,\bbox M$ is $\cF$-null for any $\mZ$-module $M$.
\end{lemma}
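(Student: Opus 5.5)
We need to show: for $H\in\cF$, $(\mZ/\cF\bbox M)(G/H)=0$. The plan is to evaluate at $G/H$ through the free module $F_{G/H}$, reducing the claim to a statement about $(\mZ/\cF)(G/H\times -)$, which vanishes by the order formula.

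First identify the value at $G/H$ with a box product against a free module. By Yoneda and Proposition~\ref{pr:box-slice-isos}, for any $\mZ$-module $N$ we have
\[ N(G/H)\iso \mZ(G/H\times\pt)\text{-twisted value}, \quad\text{namely}\quad (F_{DG/H}\bbox N)(\pt)\iso N(G/H). \]
Apply this with $N=\mZ/\cF\bbox M$. Using associativity and commutativity of $\bbox$,
\[ F_{G/H}\bbox(\mZ/\cF\bbox M)\iso (F_{G/H}\bbox \mZ/\cF)\bbox M. \]
So it suffices to show $F_{G/H}\bbox \mZ/\cF=0$ whenever $H\in\cF$. Evaluating the left side at $\pt$ then gives $(\mZ/\cF\bbox M)(G/H)\iso 0$.

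The key vanishing follows from Proposition~\ref{pr:F-null-1} applied to the module $\mZ/\cF$, provided $\mZ/\cF$ is itself $\cF$-null. That holds by Proposition~\ref{pr:Z/F-order}: for $J\in\cF$ the gcd contains $[J:J]=1$, so $(\mZ/\cF)(G/J)=0$. Alternatively, one can argue directly: $(F_{G/H}\bbox\mZ/\cF)(\Theta)\iso(\mZ/\cF)(G/H\times\Theta)$, and Lemma~\ref{le:product-family} shows $G/H\times\Theta$ is a disjoint union of orbits with stabilizers in $\cF$, where $\mZ/\cF$ vanishes.

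The one point needing care is the identification $N(G/H)\iso (F_{G/H}\bbox N)(\pt)$. This is the first isomorphism of Proposition~\ref{pr:box-slice-isos}, $(F_A\bbox N)(\Theta)\iso N(A\times\Theta)$, taken with $\Theta=\pt$; since $F_{DA}$ and $F_A$ are the same module, variance causes no problem. Everything else is formal.

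Alternatively, one could use right exactness directly. Boxing the presentation $\bigoplus_{K\in\cF}F_{G/K}\to\mZ\to\mZ/\cF\to 0$ with $M$ presents $\mZ/\cF\bbox M$ as the cokernel of $\bigoplus_K M^{D(G/K)}\to M$. At $G/H$ the $K=H$ summand maps $M(G/H\times G/H)\to M(G/H)$ by pushforward along the diagonal-containing projection, and this is surjective since the diagonal orbit splits off. The box-product route above is cleaner, so that is the argument I would write.
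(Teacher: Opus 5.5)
Your proof is correct, but the argument you chose to write up is not the paper's. The paper's proof is essentially the alternative in your last paragraph. It boxes the presentation $\bigoplus_{H\in\cF}F_{G/H}\to\mZ\to\mZ/\cF\to 0$ with $M$ and evaluates at $G/J$ for $J\in\cF$ via Remark~\ref{re:translate}. It then notes that the $H=J$ component $(\pi\times\id)_*\colon M(G/J\times G/J)\to M(\pt\times G/J)$ is surjective, because $(\pi\times\id)_*\circ\Delta_*=\id$.

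Your main route instead rewrites $(\mZ/\cF\bbox M)(G/H)\iso\bigl((F_{G/H}\bbox\mZ/\cF)\bbox M\bigr)(\pt)$, using Proposition~\ref{pr:box-slice-isos} together with associativity and symmetry of $\bbox$. It then kills $F_{G/H}\bbox\mZ/\cF$ by Proposition~\ref{pr:F-null-1}. This is not circular, since that proposition is proved before and independently of the lemma.

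The trade-off is as follows. Your argument never uses the particular presentation of $\mZ/\cF$, only that it is $\cF$-null. So it shows directly that $N\bbox M$ is $\cF$-null for every $\cF$-null $N$ and every $M$. The paper's argument is more elementary: it needs only right-exactness, Remark~\ref{re:translate}, and a diagonal section, with no appeal to the associativity constraint for $\bbox$.

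Two small cleanups. First, $\mZ/\cF$ is $\cF$-null straight from the definition: for $J\in\cF$ the generator $1_{G/J}$ already lies in $I_\cF(G/J)$, so Proposition~\ref{pr:Z/F-order} is unnecessary there. Second, your first display should simply read $(F_{G/H}\bbox N)(\pt)\iso N(G/H)$, since the ``twisted value'' expression is not meaningful.
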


\begin{proof}
Recall that we have a right-exact sequence $\bigoplus_{H\in \cF} F_{G/H} \ra \mZ \ra \mZ/\cF\ra 0$.  Boxing with $M$ and evaluating at a $G$-set $\Theta$ gives
\[ \xymatrix{
\bigoplus_{H\in \cF} M(G/H\times \Theta) \ar[r]^-{(\pi\times \id)_*} & M(\pt \times \Theta) \ar[r] & (\mZ/\cF\,\bbox M)(\Theta) \ar[r] & 0.
}
\]
(Here we are using the identifications explained in Remark~\ref{re:translate}.) Now suppose that $J\in \cF$ and take $\Theta=G/J$.  Then $(\pi \times \id)_*\colon M(G/J\times G/J)\ra M(\pt \times G/J)$ is surjective because of the identity $(\pi\times \id)_*\circ \Delta_*=\id$.  It follows that $(\mZ/\cF\,\bbox M)(G/J)=0$, as desired.
\end{proof}

\begin{proof}[Proof of Proposition~\ref{pr:Z/F-modules}]
The lemma shows that $\mZ/\cF\,\bbox M$ is $\cF$-null for every $\mZ$-module $M$.
If $M$ is a $\mZ/\cF$-module then it is a retract of $\mZ/\cF\bbox M$ because $M=\mZ\bbox M\ra \mZ/\cF\,\bbox M\ra M$ is the identity.  Since $\mZ/\cF\,\bbox M$ is $\cF$-null, so is $M$.

Conversely, assume that a $\mZ$-module $M$ is $\cF$-null.  Then $I_\cF\bbox M=0$ by Proposition~\ref{pr:F-null-1}, and so $I_\cF$ annihilates $M$.  Therefore $M$ is a $\mZ/\cF$-module.
\end{proof}

We use this characterization of $\mZ/\cF$-modules to prove two final propositions that will be helpful in later sections.

\begin{prop}
\label{pr:M{F}}
For $M$ a $\mZ$-module there is a natural isomorphism $\mZ/\cF\,\bbox M \iso M/M\{\cF\}$ making the diagram
\[ \xymatrix{
\mZ\bbox M \ar@{=}[r]\ar@{->>}[d] & M\ar@{->>}[d] \\
\mZ/\cF\,\bbox M \ar[r]^\iso & M/M\{\cF\}
}
\]
commute.  In particular, the image of $I_\cF\bbox M \ra \mZ\bbox M=M$ is precisely $M\{\cF\}$.  
\end{prop}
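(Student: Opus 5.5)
The plan is to identify the image of $I_\cF\bbox M\ra \mZ\bbox M = M$ with $M\{\cF\}$ first, and then read off the isomorphism from right exactness of the box product. Tensoring the right-exact sequence $\bigoplus_{H\in\cF}F_{G/H}\ra \mZ\ra \mZ/\cF\ra 0$ with $M$ gives a right-exact sequence
\[ \bigoplus_{H\in \cF} F_{G/H}\bbox M \lra M \lra \mZ/\cF\,\bbox M \lra 0. \]
The surjection $\bigoplus F_{G/H}\fib I_\cF$ shows that the image of $I_\cF\bbox M\ra M$ equals the image of the left-hand map. Call this image $N$. Then $\mZ/\cF\,\bbox M\iso M/N$ compatibly with the projection from $M$, and the commutative square in the statement follows. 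It therefore suffices to show $N=M\{\cF\}$.

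For $M\{\cF\}\subseteq N$, I will use the definition: $M\{\cF\}$ is contained in any submodule agreeing with $M$ at all $G/H$, $H\in\cF$. So I only need $N(G/J)=M(G/J)$ for $J\in\cF$. This is exactly the lemma preceding Proposition~\ref{pr:Z/F-modules}: $M/N\iso \mZ/\cF\,\bbox M$ is $\cF$-null, hence $N(G/J)=M(G/J)$ for $J\in \cF$.

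For $N\subseteq M\{\cF\}$, I take any submodule $N'\subseteq M$ with $N'(G/H)=M(G/H)$ for all $H\in\cF$ and show $N\subseteq N'$. Using Remark~\ref{re:translate}, the summand $(F_{G/H}\bbox M)(\Theta)\ra M(\Theta)$ is $(\pi\times\id)_*\colon M(G/H\times\Theta)\ra M(\Theta)$. By Lemma~\ref{le:product-family}, $G/H\times \Theta$ decomposes as a disjoint union of orbits $G/K_i$ with $K_i\in\cF$. So $M(G/H\times\Theta)=\bigoplus M(G/K_i)=\bigoplus N'(G/K_i)=N'(G/H\times\Theta)$, using additivity of both $M$ and $N'$. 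Since $N'$ is closed under pushforwards, the image lands in $N'(\Theta)$. Hence $N\subseteq N'$, and intersecting over all such $N'$ gives $N\subseteq M\{\cF\}$.

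Naturality in $M$ is immediate because every map in the argument is natural. The only delicate point is the identification of the summand maps via Remark~\ref{re:translate}: one must check that the map $F_{G/H}\ra\mZ$ sending $g\mapsto 1_{G/H}$ is $F_f$ for $f=R\pi$, so that boxing with $M$ yields $(\pi\times \id)_*$. I expect this bookkeeping to be the main, though modest, obstacle.
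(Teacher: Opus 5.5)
Your proof is correct, but it takes a different route from the paper's.

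\textbf{The paper's route.} The paper argues abstractly. It uses Proposition~\ref{pr:Z/F-modules} to identify $\mZ/\cF$-modules with $\cF$-null $\mZ$-modules. It then observes that $M\mapsto \mZ/\cF\,\bbox M$ (extension of scalars) and $M\mapsto M/M\{\cF\}$ are both left adjoint to the inclusion of $\cF$-null modules into $\mZ$-modules. By uniqueness of adjoints they are naturally isomorphic, compatibly with the canonical maps out of $M$. The claim about the image of $I_\cF\bbox M$ then follows by comparing kernels.

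\textbf{Your route.} You work directly from the presentation $\bigoplus_{H\in\cF}F_{G/H}\to\mZ\to\mZ/\cF\to 0$ and prove $N=M\{\cF\}$ by two inclusions. The first comes from the lemma that $\mZ/\cF\,\bbox M$ is $\cF$-null. The second comes from Lemma~\ref{le:product-family} together with closure of any candidate $N'$ under transfers. Both inclusions check out, and the bookkeeping you flag is harmless. The map $F_{G/H}\to\mZ$ is $F_{R\pi}$, since it sends $g_{G/H}\mapsto\pi^*(1_{\pt})=1_{G/H}$. Hence boxing with $M$ gives exactly the $(\pi\times\id)_*$ that the paper itself uses in the proof of that lemma.

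\textbf{Comparison.} The paper's argument is shorter. However, it leans on the full identification in Proposition~\ref{pr:Z/F-modules}, including the direction ``$\cF$-null implies $\mZ/\cF$-module,'' which rests on Proposition~\ref{pr:F-null-1}. Yours avoids the adjunction formalism. It also yields an explicit description of $M\{\cF\}$: its value at $\Theta$ is the sum, over $H\in\cF$, of the images of the transfers $(\pi\times\id)_*\colon M(G/H\times\Theta)\to M(\Theta)$.

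Your second inclusion can be phrased more compactly. Since $M/N'$ is $\cF$-null, Proposition~\ref{pr:F-null-1} gives $F_{G/H}\bbox (M/N')=0$. By naturality the composite $F_{G/H}\bbox M\to M\to M/N'$ factors through this zero module, so it vanishes.
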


\begin{proof}
We use the identification of $\mZ/\cF$-modules with the $\mZ$-modules that are $\cF$-null.  The forgetful/inclusion functor to $\mZ$-modules has left adjoint $M\mapsto \mZ/\cF\,\bbox M$ (looked at the first way), and left adjoint $M\mapsto M/M\{\cF\}$ (looked at the second way).  So these two functors are naturally isomorphic.  
\end{proof} 

\begin{prop}\mbox{}\par
\label{pr:Hom-Ann}
\begin{enumerate}[(a)]
\item A $\mZ$-module $M$ is $\cF$-null if and only if $\Ann_{\cF}M=M$. 
\item If $M$ is a $\mZ$-module and $N\subseteq M$ is a submodule that is $\cF$-null, then $N\subseteq \Ann_{\cF}M$.  
\item $\Ann_\cF M$ is the largest submodule of $M$ that is $\cF$-null.
\item If $M$ is a $\mZ$-module then
the map $\uHom(\mZ/\cF,M) \ra \uHom(\mZ,M)=M$ induced by $\mZ\ra \mZ/\cF$ is an injection with image equal to $\Ann_\cF M$.  
\end{enumerate}
\end{prop}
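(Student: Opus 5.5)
Parts (a)--(c) follow directly from the definition of $\Ann_\cF M$, and part (d) uses the presentation of $\mZ/\cF$ by free modules together with the Yoneda lemma.

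For (a), suppose $M$ is $\cF$-null. For any $x\in M(G/K)$ and any $f\colon G/H\ra G/K$ with $H\in\cF$, the element $f^*(x)$ lies in $M(G/H)=0$. So every element lies in $\Ann_\cF M$. Conversely, suppose $\Ann_\cF M=M$ and let $x\in M(G/H)$ with $H\in\cF$. Taking $f=\id_{G/H}$ gives $x=f^*(x)=0$.

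For (b), let $x\in N(G/K)$ and $f\colon G/H\ra G/K$ with $H\in \cF$. Since $N$ is a submodule, $f^*(x)\in N(G/H)=0$, so $x\in \Ann_\cF M$.

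For (c), we need $\Ann_\cF M$ to be a submodule; the paper already established this via Lemma~\ref{le:product-family}. For $H\in\cF$, an element $x\in(\Ann_\cF M)(G/H)$ satisfies $x=\id^*x=0$, so $\Ann_\cF M$ is $\cF$-null. Maximality is then exactly (b).

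For (d), I would apply $\uHom(\blank,M)$ to the right exact sequence
\[ \bigoplus_{H\in\cF}F_{G/H}\lra \mZ\lra \mZ/\cF\lra 0. \]
Since $\uHom(\blank,M)$ is left exact (it is a right adjoint in the appropriate sense, or one can check pointwise using $\uHom(N,M)(X)=\Hom(F_X\bbox N,M)$ and right exactness of $F_X\bbox\blank$), this gives an exact sequence
\[ 0\lra \uHom(\mZ/\cF,M)\lra M\lra \prod_{H\in\cF}\uHom(F_{G/H},M)\iso \prod_{H\in\cF} M^{G/H}. \]
This shows injectivity, and identifies the image as the kernel of the last map. By Remark~\ref{re:translate}, at a spot $\Theta$ that last map is $M(\Theta)\ra\prod_H M(G/H\times\Theta)$, given by $(\pi\times\id)^*$.

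The main obstacle is identifying this kernel with $\Ann_\cF M(\Theta)$.

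First, suppose $x\in\Ann_\cF M(\Theta)$. Decompose $G/H\times\Theta$ into orbits; their stabilizers are subconjugate to $H$, hence lie in $\cF$ (Lemma~\ref{le:product-family}). By additivity, $(\pi\times\id)^*x$ is the tuple of pullbacks of $x$ along the projection restricted to each orbit $G/K_i\ra\Theta$. Each such pullback vanishes because $K_i\in\cF$, so $x$ is in the kernel.

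Conversely, suppose $x$ is in the kernel and let $f\colon G/H\ra\Theta$ with $H\in\cF$. The map $f$ factors as
\[ G/H\xrightarrow{(\id,f)} G/H\times\Theta\xrightarrow{\pi\times\id}\Theta. \]
Hence $f^*x=(\id,f)^*(\pi\times\id)^*x=0$. Since $\Ann_\cF$ is defined orbitwise and $\Theta$ decomposes into orbits, this gives $x\in\Ann_\cF M(\Theta)$.
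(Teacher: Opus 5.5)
Your proof is correct. Parts (a)--(c) are the routine checks the paper leaves as exercises. For (d), however, you take the route the paper only mentions in its bracketed note, not its main argument.

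The paper's main argument is formal:
\begin{enumerate}
\item injectivity comes from surjectivity of $\mZ\ra\mZ/\cF$;
\item the image lies in $\Ann_\cF M$ because $\uHom(\mZ/\cF,M)$ is a $\mZ/\cF$-module, hence $\cF$-null by Proposition~\ref{pr:Z/F-modules};
\item equality holds because $\Ann_\cF(\blank)$ and $\uHom(\mZ/\cF,\blank)$ are both right adjoint to the inclusion of $\cF$-null modules into $\mZ$-modules, so the two functors agree.
\end{enumerate}

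You instead apply the left exact functor $\uHom(\blank,M)$ to the presentation $\bigoplus_{H\in\cF}F_{G/H}\ra\mZ\ra\mZ/\cF\ra 0$. Via Remark~\ref{re:translate}, you identify the resulting map at a spot $\Theta$ with $(\pi\times\id)^*\colon M(\Theta)\ra\prod_{H\in\cF} M(G/H\times\Theta)$. You then compute its kernel by hand, using Lemma~\ref{le:product-family} for one inclusion and the graph factorization $f=(\pi\times\id)\circ(\id,f)$ for the other.

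Your version bypasses Proposition~\ref{pr:Z/F-modules} and the uniqueness-of-adjoints step. It also yields an explicit spot-by-spot description of $\uHom(\mZ/\cF,M)$ as the joint kernel of the maps $(\pi\times\id)^*$. The paper's version is shorter and purely categorical, and needs no orbit decompositions.
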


\begin{proof}
Parts (a)--(c) are simple exercises.  For (d),
injectivity follows from the fact that $\mZ\ra \mZ/\cF$ is surjective.  
The module $\uHom(\mZ/\cF,M)$ is a $\mZ/\cF$-module and hence $\cF$-null, so the image lies inside $\Ann_{\cF}M$.  One can conclude by observing that the inclusion functor from $\cF$-null modules to $\mZ$-modules has both $\Ann_{\cF}(\blank)$ and $\uHom(\mZ/\cF,\blank)$ as right adjoints.  So the functors are equal.
[Note: There is also a direct proof where one applies $\uHom(\blank,M)$ to the right-exact sequence $\bigoplus_{H\in \cF} F_{G/H} \ra \mZ \ra \mZ/\cF\ra 0$ and analyzes what is happening there.]
\end{proof}

\subsection{Connections with Bredon cohomology}
\label{se:Bredon}
The modules $\mZ/\cF$ are of particular interest because they arise naturally in the context of $RO(G)$-graded Bredon cohomology.  
Let $V$ be a nonzero $G$-representation and define the set $\cF_{V}=\{H\leq G \mid V^H\neq 0\}$. It is straightforward to check this set forms a family of subgroups. We have the following result:

\begin{prop}
Let $V$ be a nonzero $G$-representation. Then $\underline{H}^V(pt;\mZ)\cong \mZ/\cF_V$.
\end{prop}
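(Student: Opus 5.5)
Let $n=\dim V$.  By definition $\underline{H}^V(\pt;\mZ)(G/H)=\tilde H^n_G(G/H_+\wedge S^V;\mZ)$, which is the Bredon cohomology of the pair $(D(V),S(V))$ crossed with $G/H$.  My plan is to compute this with a $G$-CW structure on $S^V$, which is the unreduced suspension of the unit sphere $S(V)$.  I will put a $G$-CW structure on $S(V)$ with cells of dimension at most $n-1$; every cell then has the form $G/K\times D^k$, where $K$ is the stabilizer of a point of $S(V)$.  A point $v\neq 0$ is fixed by $K$ exactly when $V^K\neq 0$, so these $K$ lie in $\cF_V$.  Conversely, every $H\in\cF_V$ fixes some unit vector, and hence occurs as an isotropy group of a point of $S(V)$.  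The cofiber sequence $S(V)_+\to \pt_+\to S^V$ then gives a long exact sequence of Mackey functors
\[ \underline{H}^{n-1}(\pt)\lra \underline{H}^{n-1}(S(V))\lra \underline{H}^{V}(\pt)\lra \underline{H}^{n}(\pt)\lra \underline{H}^{n}(S(V)). \]
Here I use that $\underline H^{V}(\pt)$ identifies with $\tilde{\underline H}^{n}$ of $S^V$ in the unreduced grading on the Bredon side.  That identification is the main obstacle: one must use Bredon cohomology graded by $V$, not by the integer $n$, and honestly $\underline H^V$ is $\underline H^0$ of $S^{-V}$.  So instead I will argue through the cellular chain complex of $S^V$ computing $\underline{H}^{*}_G(S^V\wedge(-);\mZ)$, relying on the fact that $\underline{H}^V(\pt)=\underline H^{\,n}_G(S^V;\mZ)$ in the reduced sense.

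The cellular cochain complex in degrees $n-1,n$ will be computed as follows.  The top cells of $S^V$ are the cones on the top cells of $S(V)$, which have the form $G/K\times D^n$ with $K\in\cF_V$.  The top cell structure of $S(V)$ is a manifold of dimension $n-1$; I would take a $G$-CW decomposition in which the cells of dimension $n$ in $S^V$ are the suspensions of the $(n-1)$-cells of $S(V)$.  In the cochain complex, $C^n=\bigoplus \mZ^{G/K}=\bigoplus F_{G/K}$ with $K\in\cF_V$, and $C^{n+1}=0$.  Thus $\underline H^n(S^V)$ is the cokernel of $C^{n-1}\to C^n$.  That cokernel is a quotient of $\bigoplus F_{G/K}$, which is harder to match directly.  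It is cleaner to dualize: I would rather use the cofiber sequence above, with $\underline H^{n}(S(V))=0$ by dimension.  For $n\ge 2$ one has $\underline H^{n-1}(\pt)=\underline H^{n}(\pt)=0$ off degree $0$.  This gives $\underline H^V(\pt)\cong \underline H^{n-1}(S(V))$, which is the top cohomology of an $(n-1)$-manifold.

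To identify $M=\underline H^{n-1}(S(V);\mZ)$ with $\mZ/\cF_V$ I plan to use Proposition~\ref{pr:recognition-easy}.  Evaluated at $G/J$, $M(G/J)=H^{n-1}(S(V)/J;\Z)$ (with appropriate orientation twists).  For condition (3), Proposition~\ref{pr:Z/F-order} requires the order to be the gcd of the indices $[J:H]$ with $H\in\cF_V$, $H\le J$.  If $J\in\cF_V$ then $S(V)^J\neq\emptyset$; in this case I expect the transfer argument shows the group vanishes, because the $(n-1)$-cells with isotropy containing $J$ allow one to split off.  In general, the top cohomology of the orbit space is generated by a fundamental class, and transfer from a $J$-orbit of type $J/H$ multiplies by $[J:H]$.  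This is the same computation as in Proposition~\ref{pr:Z/F-order}, so I will reuse that proof, applied to the top-cell cochain group.  Conditions (1) and (2) then follow: cyclicity at $\pt$ comes from the top cohomology, and surjectivity of restriction comes from top cells mapping onto top cells.  I expect the real work to be bookkeeping for orientations and the small case $n=1$, which I would handle separately using the sign representation.
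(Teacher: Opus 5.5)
Your argument rests on the identification $\uH^V(\pt)\iso\tilde{\uH}^{\,n}(S^V)$. That identification is false, so everything after it computes a different Mackey functor.

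Reduced cohomology of $S^V$ in integer degree $n$ is $[S^V,\Sigma^n H\mZ]^G$, which is $\uH^{\,n-V}(\pt)$, not $\uH^V(\pt)$. You correctly note that $\uH^V(\pt)=\tilde{\uH}^0(S^{-V})$, but then replace it by something else. The discrepancy is already visible at the free orbit. There $\uH^V(\pt)(G/e)=H^n(\pt;\Z)=0$, as it must be since $e\in\cF_V$, whereas $\tilde{\uH}^n(S^V)(G/e)=\tilde H^n(S^n;\Z)=\Z$.

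Your cofiber-sequence step $\tilde{\uH}^n(S^V)\iso\uH^{n-1}(S(V))$ for $n\ge 2$ is correct, and it makes the failure explicit. The group $\uH^{n-1}(S(V))(G/J)=H^{n-1}(S(V)/J;\Z)$ equals $\Z$ at $J=e$. So condition (3) of Proposition~\ref{pr:recognition-easy} fails at $G/e$, and your expectation that this group vanishes whenever $J\in\cF_V$ cannot be realized. For example, take $G=C_2$ and $V=2\sigma$ with $\sigma$ the sign representation. Your $M$ is then the form of $\Z$ called $I_2$, while $\uH^{2\sigma}(\pt)\iso\mZ/I_2$.

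The missing idea is that $\uH^V(\pt)=[S^0,S^V\wedge H\mZ]$ is Bredon \emph{homology} of $S^V$ in degree $0$: by Dold--Thom, $\uH^V(\pt)\iso\tilde{\uH}_0(S^V;\mZ)\iso\underline{\pi}_0(AG(S^V))$. This is the paper's starting point. It then quotes \cite[Proposition 2.7]{DH1}: $\underline{\pi}_0(AG(S^V))(G/J)$ is cyclic, generated by the class of the map sending the non-basepoint to $0$, of order $\gcd\{[J:H] : H\le J,\ V^H\neq 0\}$. It finishes with Proposition~\ref{pr:recognition-easy}; the restrictions are surjective because this generator restricts to the generator.

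Your cellular approach also works once it is aimed at the bottom of the chain complex rather than the top. Give $S^V$ the unreduced-suspension structure with $\tilde C_0=\mZ$ (the origin) and $\tilde C_{k+1}=C_k(S(V))$. Then $\tilde C_1=\bigoplus_i F_{G/K_i}$ over the orbits of $0$-cells of $S(V)$, and $\partial_1$ sends each generator to $\pm 1_{G/K_i}$. Each $K_i$ fixes a unit vector, so $K_i\in\cF_V$. For $H\in\cF_V$ the fixed set $S(V)^H$ is a nonempty subcomplex, hence contains a $0$-cell, so $H$ is subconjugate to some $K_i$. Thus the image of $\partial_1$ is exactly $I_{\cF_V}$, and $\tilde{\uH}_0(S^V)=\mZ/\cF_V$. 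This needs no orientation bookkeeping and no separate treatment of $n=1$.
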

\begin{proof}
This is just a reinterpretation of \cite[Proposition 2.7]{DH1}.
Start with the isomorphism $\uH^V(\pt)\iso \underline{\pi}_0(AG(S^V))$ where $AG$ denotes the free abelian group on the pointed space $(S^V,\infty)$, where the basepoint $[\infty]$ is the zero element.  For brevity write $M$ for $\underline{\pi}_0(AG(S^V))$.  Then
\[ M(G/J)=[G/J_+,AG(S^V)]_G=[S^0,AG(S^V)]_J
\]
where the latter indicates homotopy classes of pointed $J$-equivariant maps and therefore only depends on $V$ as a $J$-representation.  The result \cite[Proposition 2.7]{DH1} shows that the group on the right is cyclic, generated by the map $a$ sending the non-basepoint to $0$, and has order equal to $\gcd \{ [J:H]\,\bigl |\, H\leq J,\ V^H\neq 0\}$.  It follows at once that $M$ satisfies the conditions of Proposition~\ref{pr:recognition-easy}, and therefore $M\iso \mZ/\cF_V$.  
\end{proof}

For more connections with Bredon cohomology see Section~\ref{se:appl} below.


\section{Homological computations for cyclic groups}
\label{se:cyclic}

In this section we specialize to $G=C_n$, the cyclic group of order $n$.
For any positive divisor $d|n$ let $\cF_d$ be the family of subgroups of $G$ whose order divides $\frac{n}{d}$, which is also the family of subgroups of $C_{\frac{n}{d}}$.  Write $I_d=I_{\cF_d}$. We study some general properties of the modules $\mZ/I_d$, compute their free resolutions, and then proceed to calculate $\uTor_*(\mZ/I_d,M)$ and $\uExt^*(\mZ/I_d,M)$ for any $\mZ$-module $M$.\smallskip

Recall that $\Theta_d=C_n/C_{\frac{n}{d}}$, and if $a|b$ then $p^{b}_a\colon \Theta_b\ra \Theta_a$ is the canonical projection. 
Note that $I_d\subseteq \mZ$ is the ideal generated by the element $1_d\in \mZ(\Theta_d)$  (equivalently, by $1_a\in \mZ(\Theta_a)$ for all $d| a$).  
For example the family $\cF=\{e,C_3\}$ of $G=C_{45}$ given in Example~\ref{ex:c45} is $\cF_{15}$, and the ideal $I_{15}$ is generated by the identity element at the orbit $\Theta_{15}$.

If $d|e$ then note that $\cF_e\subseteq \cF_d$ and so $I_{e}\subseteq I_{d}$. We also have the following relations:  

\begin{prop}\label{pr:cyclicfam} If $d|n$ and $e|n$ then
$\cF_d\cap \cF_e=\cF_{[d,e]}$ and $I_d+I_e=I_{(d,e)}$.  
\end{prop}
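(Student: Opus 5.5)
The first claim is a statement about divisibility and follows directly from the definition of $\cF_d$. A subgroup of $C_n$ is determined by its order $m$, with $m\mid n$. Such a subgroup lies in $\cF_d\cap\cF_e$ exactly when $m\mid \frac{n}{d}$ and $m\mid\frac{n}{e}$, that is, when $m\mid(\frac{n}{d},\frac{n}{e})$. Remark~\ref{re:gcd} gives the identity $(\tfrac{n}{d},\tfrac{n}{e})=\tfrac{n}{[d,e]}$, so this condition says exactly that $m\mid \frac{n}{[d,e]}$. Since $[d,e]$ divides $n$, the family $\cF_{[d,e]}$ makes sense, and it is precisely this set. So $\cF_d\cap\cF_e=\cF_{[d,e]}$.

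For the ideals, I will first identify the union of the two families and then apply Proposition~\ref{pr:familyprop}(b), which gives $I_d+I_e=I_{\cF_d\cup\cF_e}$. The key point is that $\cF_d\cup\cF_e$ need not equal $\cF_{(d,e)}$; it is in general strictly smaller. For example, with $n=6$, $d=2$, $e=3$, the union is $\{e,C_2,C_3\}$, whereas $\cF_1$ contains every subgroup, including $C_6$. So I must show that the two different families generate the same ideal. I will compare the generators through the description in the paper: $I_d$ is generated by $1_d\in\mZ(\Theta_d)$. Alternatively, I can use the fact that $I_\cF$ is generated by the elements $1_{G/H}$ for the maximal $H\in\cF$.

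The inclusion $I_d+I_e\subseteq I_{(d,e)}$ is immediate from Proposition~\ref{pr:familyprop}(a), since $(d,e)$ divides both $d$ and $e$. For the reverse inclusion, it suffices to show that $1_{(d,e)}\in(I_d+I_e)(\Theta_{(d,e)})$. I will compute this value using Proposition~\ref{pr:Z/F-order}. Write $J=C_{n/(d,e)}$, so that $\Theta_{(d,e)}=G/J$. The subgroups of $J$ lying in $\cF_d\cup\cF_e$ include $C_{n/d}$ and $C_{n/e}$; both are contained in $J$ because $(d,e)$ divides $d$ and $e$. Their indices in $J$ are $\frac{d}{(d,e)}$ and $\frac{e}{(d,e)}$ respectively, and these two numbers are coprime. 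Hence the gcd in Proposition~\ref{pr:Z/F-order} equals $1$. It follows that $\bigl(\mZ/(\cF_d\cup\cF_e)\bigr)(G/J)=0$, so $1_{(d,e)}$ lies in $I_{\cF_d\cup\cF_e}=I_d+I_e$. Concretely, we have
\[
1_{(d,e)}=u\,p_*(1_d)+v\,p_*(1_e)
\]
for integers $u,v$ satisfying $u\frac{d}{(d,e)}+v\frac{e}{(d,e)}=1$. Because $I_{(d,e)}$ is generated by $1_{(d,e)}$, this proves $I_{(d,e)}\subseteq I_d+I_e$.

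I expect the only real subtlety to be the observation that $\cF_d\cup\cF_e\neq\cF_{(d,e)}$ in general. This is why the second step has to pass through the explicit transfer computation (the Bézout argument above) rather than through an equality of families. The rest is bookkeeping with gcds and lcms.
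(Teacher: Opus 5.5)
Your proposal is correct and matches the paper's proof: the first claim comes from $(\tfrac{n}{d},\tfrac{n}{e})=\tfrac{n}{[d,e]}$, and the second from the easy inclusion together with the B\'ezout combination of the transfers $p_*(1_d)=\tfrac{d}{(d,e)}1_{(d,e)}$ and $p_*(1_e)=\tfrac{e}{(d,e)}1_{(d,e)}$ at the $\Theta_{(d,e)}$ spot. Your detour through Proposition~\ref{pr:Z/F-order} and Proposition~\ref{pr:familyprop}(b) is valid but unnecessary, since your concrete B\'ezout display alone already shows $1_{(d,e)}\in (I_d+I_e)(\Theta_{(d,e)})$.
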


\begin{proof}
The first statement follows from the definitions and that $(\frac{n}{d},\frac{n}{e})=\frac{n}{[d,e]}$.  

For the second statement, we know $I_d\subseteq I_{(d,e)}$ and $I_e\subseteq I_{(d,e)}$, therefore $I_d+I_e\subseteq I_{(d,e)}$. It suffices to show $1_{(d,e)}\in (I_d+I_e)(\Theta_{(d,e)})$ for the other containment. Observe in the $\Theta_{(d,e)}$-spot the ideal $I_d$ contains the element $[p^d_{(d,e)}]_*(1_d)=\frac{d}{(d,e)}\cdot 1_{(d,e)}$.  Likewise, the ideal $I_e$ contains the element
$[p^e_{(d,e)}]_*(1_e)=\frac{e}{(d,e)}\cdot 1_{(d,e)}$.  So both these elements are in $I_d+I_e$.  Since $\frac{d}{(d,e)}$ and $\frac{e}{(d,e)}$ are relatively prime, it follows that $1_{(d,e)}$ is in $I_d+I_e$.  Hence $I_{(d,e)}\subseteq I_d+I_e$ and we have proven equality.  
\end{proof}

\begin{remark}
It is not true that every family of subgroups $\cF$ of $C_n$ has the form $\cF_d$ for some $d$, but it is true that $I_{\cF}=I_{d}$ for some $d$.  To see this, observe that one can always write $\cF=\bigcup_i \cF_{a_i}$ for some divisors $a_i$, and therefore $I_{\cF}=\Sigma_{i} I_{\cF_{a_i}}=I_{(a_1,\ldots,a_s)}$ by Propositions~\ref{pr:familyprop}(b) and  \ref{pr:cyclicfam}.
\end{remark}

We also note the following immediate consequence of  Propositions~\ref{pr:familyprop} and \ref{pr:cyclicfam} that gives an analog of the familiar fact that $\Z/d\otimes \Z/e\cong \Z/(d,e)$.
\begin{cor}
Let $d,e$ be divisors of $n$. Then $\mZ/I_d\bbox\, \mZ/I_e\cong \mZ/I_{(d,e)}$.
\end{cor}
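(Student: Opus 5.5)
The plan is to combine Proposition~\ref{pr:familyprop}(d) with Proposition~\ref{pr:cyclicfam}. First, Proposition~\ref{pr:familyprop}(d) applied to the families $\cF_d$ and $\cF_e$ gives
\[ \mZ/I_d\,\bbox\,\mZ/I_e=\mZ/\cF_d\,\bbox\,\mZ/\cF_e\iso \mZ/(\cF_d\cup\cF_e)=\mZ/I_{\cF_d\cup\cF_e}. \]
That proposition is proved for arbitrary finite $G$ by boxing the two right-exact presentations $\bigoplus F_{G/H}\ra\mZ\ra\mZ/\cF_i\ra 0$. No cyclic-specific input is needed at this step.

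Second, identify the ideal $I_{\cF_d\cup\cF_e}$. By Proposition~\ref{pr:familyprop}(b) we have $I_{\cF_d\cup\cF_e}=I_{\cF_d}+I_{\cF_e}=I_d+I_e$, and Proposition~\ref{pr:cyclicfam} gives $I_d+I_e=I_{(d,e)}$. Hence $\mZ/I_{\cF_d\cup\cF_e}=\mZ/I_{(d,e)}$ as quotients of $\mZ$, which completes the argument.

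The only point requiring any care is that $\cF_d\cup\cF_e$ is generally \emph{not} of the form $\cF_c$ for a single $c$, as the preceding remark notes. So one cannot simply write $\cF_{(d,e)}$; the argument must pass through the ideals and use part (b) of Proposition~\ref{pr:familyprop} rather than an equality of families. Beyond this bookkeeping there is no real obstacle. The substantive number-theoretic input, namely that $\tfrac{d}{(d,e)}$ and $\tfrac{e}{(d,e)}$ are coprime so that $1_{(d,e)}\in I_d+I_e$, is already contained in Proposition~\ref{pr:cyclicfam}.
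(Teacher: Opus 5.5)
Your proof is correct and matches the paper's argument: the paper states this corollary as an immediate consequence of Proposition~\ref{pr:familyprop} (parts (d) and (b)) together with Proposition~\ref{pr:cyclicfam}. You are also right that $\cF_d\cup\cF_e$ need not equal $\cF_{(d,e)}$, so the identification with $\mZ/I_{(d,e)}$ has to be made at the level of ideals rather than families.
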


We next prove some useful properties of the modules $\mZ/I_a$.

\begin{prop}
Fix $e|n$. Let $M$ be a $\mZ$-module that is generated in spots $\Theta_d$ for $e|d$.  Then $M\bbox \mZ/I_e=0$.   In particular, if $e|d$ then $F_d\bbox \mZ/I_e=0$ and $I_d\bbox \mZ/I_e=0$.  
\end{prop}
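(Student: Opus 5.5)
The plan is to reduce to free modules and then use the vanishing result for $\cF$-null modules, Proposition~\ref{pr:F-null-1}. Since $M$ is generated in spots $\Theta_d$ with $e|d$, there is a surjection $\bigoplus_{i} F_{\Theta_{d_i}}\fib M$ with each $e|d_i$; the map sends each generator to a chosen generating element of $M(\Theta_{d_i})$. The box product is right exact, so boxing with $\mZ/I_e$ gives a surjection
\[ \bigoplus_i F_{\Theta_{d_i}}\bbox \mZ/I_e \fib M\bbox \mZ/I_e. \]
The box product also commutes with direct sums. So it suffices to prove that $F_{\Theta_d}\bbox \mZ/I_e=0$ whenever $e|d$.

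For that claim, note that $\Theta_d=C_n/C_{\frac nd}$, and its stabilizer $C_{\frac nd}$ has order dividing $\frac ne$ because $e|d$. Hence $C_{\frac nd}\in\cF_e$. The module $\mZ/I_e=\mZ/\cF_e$ is a $\mZ/\cF_e$-module, so it is $\cF_e$-null by Proposition~\ref{pr:Z/F-modules}. Proposition~\ref{pr:F-null-1} then gives $F_{G/H}\bbox \mZ/\cF_e=0$ for every $H\in\cF_e$, and in particular for $H=C_{\frac nd}$.

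The two special cases follow directly. The module $F_d=F_{\Theta_d}$ is generated by $g_{\Theta_d}$ at spot $\Theta_d$. The ideal $I_d$ is generated by $1_d\in\mZ(\Theta_d)$, as noted at the start of this section. Both are therefore generated in a spot $\Theta_d$ with $e|d$, so the main statement applies to each of them.

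There is no real obstacle here. The only point needing care is the divisibility bookkeeping: we must check that $e|d$ places the stabilizer $C_{\frac nd}$ inside $\cF_e$, which holds because $\frac nd$ divides $\frac ne$.
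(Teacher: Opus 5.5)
Your proof is correct, but it takes a different route from the paper's.

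The paper's proof is a one-liner from Proposition~\ref{pr:M{F}}: $M\bbox \mZ/I_e\iso M/M\{\cF_e\}$. Here $M\{\cF_e\}=M$, because any submodule agreeing with $M$ at the spots $G/H$ with $H\in\cF_e$ already contains the generators of $M$. Those generators sit at spots $\Theta_d=C_n/C_{n/d}$ with $C_{n/d}\in\cF_e$.

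You instead cover $M$ by free modules $F_{\Theta_{d_i}}$ and kill each summand using Proposition~\ref{pr:F-null-1}, together with right exactness and additivity of $\bbox$. This essentially re-runs the argument the paper uses for $I_\cF\bbox M=0$ in the second half of Proposition~\ref{pr:F-null-1}.

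Your version is more elementary. It needs only the identification $F_{G/H}\bbox N\iso N^{D(G/H)}$, not the adjoint-functor identification of $\mZ/\cF\bbox M$ with $M/M\{\cF\}$. It also proves slightly more: $M\bbox N=0$ for \emph{any} $\cF_e$-null module $N$, not just $N=\mZ/I_e$.

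The paper's version gives a converse for free. $M\{\cF_e\}$ is exactly the submodule generated by the values of $M$ at the spots $\Theta_d$ with $e|d$. So $M\bbox\mZ/I_e=0$ holds if and only if $M$ is generated at such spots.
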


\begin{proof}
This is by Proposition~\ref{pr:M{F}}: $M\bbox\, \mZ/I_e\iso M/M\{\cF_e\}=M/M=0$.  
\end{proof}

\begin{prop}
\label{pr:Z/F_a-props}
Let $a|n$. The $\mZ$-module $\mZ/I_a$ has the following properties:
\begin{enumerate}[(1)]
\item $[\mZ/I_a](\Theta_d)\iso \Z/\frac{a}{(d,a)}$ for every $d|n$ (in particular, the group is zero when $a|d$).
\item All of the restriction maps are surjections and all of the transfer maps are injections.
\item For every $d|n$, the restriction and transfer maps between the values at $\Theta_d$ and $\Theta_{(d,a)}$ are both isomorphisms.
\end{enumerate}
\end{prop}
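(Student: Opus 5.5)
Part (1) follows from Proposition~\ref{pr:Z/F-order} applied to $\cF=\cF_a$ and $J$ the stabilizer of $\Theta_d$, namely $J=C_{\frac{n}{d}}$. The subgroups $H\in\cF_a$ with $H\leq J$ are exactly the $C_m$ with $m\mid \frac{n}{a}$ and $m\mid \frac{n}{d}$, i.e. $m\mid(\frac{n}{a},\frac{n}{d})=\frac{n}{[a,d]}$ (Remark~\ref{re:gcd}). The lcm of their orders is therefore $\frac{n}{[a,d]}$, and the order of $[\mZ/I_a](\Theta_d)$ is
\[ \frac{n/d}{n/[a,d]}=\frac{[a,d]}{d}=\frac{a}{(a,d)}. \]
The group is cyclic because it is a quotient of $\Z$, which gives (1). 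When $a\mid d$ this order is $1$.

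For (2), note that every restriction map of $\mZ$ is the identity $\Z\ra\Z$, hence surjective. Restriction maps of the quotient $\mZ/I_a$ are the induced maps, so they remain surjective. For transfers I will reduce to the generating case $p\colon\Theta_{dq}\ra\Theta_d$ with $q$ prime, since every transfer factors as a composite of such maps. This map is multiplication by $q$ from $\Z/\frac{a}{(a,dq)}$ to $\Z/\frac{a}{(a,d)}$, and I split into cases on the $q$-adic valuations. If $v_q(d)\geq v_q(a)$, then $(a,dq)=(a,d)$ and $q\nmid \frac{a}{(a,d)}$, so multiplication by $q$ is an isomorphism. If $v_q(d)<v_q(a)$, then $(a,dq)=q(a,d)$ and the map is $\Z/\frac{N}{q}\xrightarrow{\cdot q}\Z/N$ with $N=\frac{a}{(a,d)}$, which is injective. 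This prime-by-prime valuation bookkeeping is the only place needing care; it is routine in the style of Remark~\ref{re:gcd}.

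For (3), note that $(d,a)\mid d$, so there is a projection $\Theta_d\ra\Theta_{(d,a)}$. By (1), both groups have order $\frac{a}{(a,d)}$, since $((d,a),a)=(d,a)$. By (2), restriction is a surjection between finite groups of equal order and transfer is an injection between them, so both are isomorphisms.
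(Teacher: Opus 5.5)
Your proof is correct and follows the paper's argument. Part (1) is read off from Proposition~\ref{pr:Z/F-order}; you use its lcm form where the paper uses the gcd form. Surjectivity of restrictions is inherited from $\mZ$, and (3) follows from (1) and (2) by comparing orders. The one small variation is in the injectivity of transfers: you factor through prime-index projections and check $q$-adic valuations case by case, while the paper treats a general $d\mid e$ at once by computing the order of $\frac{e}{d}$ in $\Z/\frac{a}{(a,d)}$ via $\bigl(\frac{a}{(a,d)},\frac{e}{d}\bigr)=\frac{(a,e)}{(a,d)}$, which is the same prime-by-prime bookkeeping packaged differently.
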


\begin{proof}
For (1) observe by Proposition~\ref{pr:Z/F-order} that $[\mZ/I_a](\Theta_d)$ is cyclic and the order is equal to $\gcd\{[C_{n/d}:H]\mid H \in \cF_a, H\leq C_{n/d}\}$. Note that any such $H$ is a cyclic group of order $n/b$ for some divisor $b$ of $n$. We have that $H \in \cF_a$ and $H\leq C_{n/d}$ if and only if $\frac{n}{b} \mid \frac{n}{a}$ and $\frac{n}{b} \mid \frac{n}{d}$, that is, if and only if the divisor $b$ is a common multiple of $a$ and $d$. Thus we have that
\begin{align*}
\gcd\left \{[C_{n/d}:H] \Bigl | H\in \cF_a, H\leq C_{n/d} \right \} =\gcd\left\{ \tfrac{b}{d} \,\Bigl |\  \text{$ [a,d]$ divides $b$} \right\}
&= \frac{[a,d]}{d}=\frac{a}{(a,d)}.
\end{align*}

For (2) note that the restriction maps in $\mZ$ are all surjections, and this property passes to any quotient.
The transfer map $(p_d^e)_*:\Z/\frac{a}{(a,e)}\to \Z/\frac{a}{(a,d)}$ will be multiplication by $\frac{e}{d}$. The order of $\frac{e}{d}$ in $\Z/\frac{a}{(a,d)}$ is computed by dividing $\frac{a}{(a,d)}$ by the gcd of $\frac{e}{d}$ and $\frac{a}{(a,d)}$. Observe
\begin{equation}
\label{eq:exotic}
\left(\frac{a}{(a,d)}, \frac{e}{d}\right) = \left(\frac{a}{(a,d)}, \frac{e}{(a,d)}\right) = \frac{(a,e)}{(a,d)}
\end{equation}
(see Remark~\ref{re:gcd}).  
Thus the order of the element $\frac{e}{d}$ is
\[
\frac{a}{(a,d)} \cdot \frac{(a,d)}{(a,e)} = \frac{a}{(a,e)}.
\]
Since the generator of $\Z/\frac{a}{(a,e)}$ is sent to an element of order $a/(a,e)$, we conclude the transfer map must be an injection.

Finally (3) follows from (2) because $[\mZ/I_a](\Theta_d)\iso \Z/\frac{a}{(a,d)}\iso [\mZ/I_a](\Theta_{(a,d)})$.
\end{proof}

\begin{cor}
\label{co:Z/F_a-injection}
If a map $\mZ/I_a\ra M$ is an injection in the $\Theta_1$ spot, then it is an injection in every spot.
\end{cor}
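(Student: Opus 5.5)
Let $f\colon \mZ/I_a\ra M$ be the given map, injective at $\Theta_1$. The plan is to use the restriction/transfer structure of $\mZ/I_a$ from Proposition~\ref{pr:Z/F_a-props}, part (2): all transfer maps of $\mZ/I_a$ are injections. For any $d|n$ let $p=p^d_1\colon \Theta_d\ra \Theta_1$ be the projection. Since $f$ is a map of Mackey functors, it commutes with pushforwards, so we have a commutative square
\[ \xymatrix{
[\mZ/I_a](\Theta_d) \ar[r]^-{f_{\Theta_d}} \ar[d]_{p_*} & M(\Theta_d) \ar[d]^{p_*} \\
[\mZ/I_a](\Theta_1) \ar[r]^-{f_{\Theta_1}} & M(\Theta_1).
} \]

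The left vertical map is an injection by Proposition~\ref{pr:Z/F_a-props}(2), and the bottom map is an injection by hypothesis. Hence the composite $p_*\circ f_{\Theta_d}=f_{\Theta_1}\circ p_*$ is an injection, which forces $f_{\Theta_d}$ to be injective. Since every orbit of $C_n$ is some $\Theta_d$, and injectivity at a general finite $C_n$-set follows by additivity (the value at a disjoint union of orbits is the direct sum of the values at those orbits), $f$ is injective in every spot.

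There is essentially no obstacle. The only point to confirm is that ``all transfer maps are injections'' in Proposition~\ref{pr:Z/F_a-props}(2) covers the composite transfer $\Theta_d\ra\Theta_1$. It does: that proof treats an arbitrary $(p^e_d)_*$, in particular $(p^d_1)_*$, and in any case a composite of injective transfers is injective.
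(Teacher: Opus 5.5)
Your proof is correct and is the same argument as the paper's: push an element of the kernel at $\Theta_d$ down to $\Theta_1$ via the transfer, then combine injectivity of transfers in $\mZ/I_a$ (Proposition~\ref{pr:Z/F_a-props}(2)) with injectivity at $\Theta_1$. The paper phrases this as a contradiction, whereas you phrase it via the commutative square; the difference is only cosmetic.
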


\begin{proof}
Suppose that $[\mZ/I_a](\Theta_d)\ra M(\Theta_d)$ has a nonzero element in the kernel.  Applying the transfer map to $\Theta_1$ then gives an element in the kernel of $[\mZ/I_a](\Theta_1)\ra M(\Theta_1)$.  But this kernel is zero by hypothesis.  Since the transfer maps in $\mZ/I_a$ are injections, this is a contradiction.
\end{proof}

If $I$ and $J$ are ideals in $\mZ$ then define $I\cdot J$ to be the image of $I\bbox J \ra \mZ\bbox \mZ \llra{\mu} \mZ$.  One has $I\cdot J\subseteq I\cdot \mZ=I$ and likewise $I\cdot J\subseteq J$, so $I\cdot J\subseteq I\cap J$.

\begin{cor}
\label{co:I-product}
Let $a|n$ and $b|n$.  Then $I_a\cap I_b=I_a\cdot I_b=I_{[a,b]}$.  Moreover, the natural map $I_b/I_{[a,b]}\ra I_{(a,b)}/I_a$ is an isomorphism.
\end{cor}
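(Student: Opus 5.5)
The plan is to prove the chain of containments $I_{[a,b]}\subseteq I_a\cdot I_b\subseteq I_a\cap I_b\subseteq I_{[a,b]}$, and then obtain the isomorphism as a second-isomorphism-theorem statement.

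The middle containment $I_a\cdot I_b\subseteq I_a\cap I_b$ was noted just before the corollary. For $I_{[a,b]}\subseteq I_a\cdot I_b$, it suffices to show that $1_{[a,b]}\in (I_a\cdot I_b)(\Theta_{[a,b]})$, since $I_{[a,b]}$ is generated by that element. Since $a\,|\,[a,b]$, the restriction of $1_a$ along $p^{[a,b]}_a$ is $1_{[a,b]}\in I_a(\Theta_{[a,b]})$, and likewise $1_{[a,b]}\in I_b(\Theta_{[a,b]})$. Using the diagonal map $I_a(S)\otimes I_b(S)\to (I_a\bbox I_b)(S)$ and composing with multiplication, the image of $1_{[a,b]}\otimes 1_{[a,b]}$ in $\mZ(\Theta_{[a,b]})$ is the product $1_{[a,b]}\cdot 1_{[a,b]}=1_{[a,b]}$. (The ring structure on $\mZ$ is levelwise multiplication, so $\mu\circ\Delta$ is ordinary multiplication in $\Z$.) Hence $1_{[a,b]}\in I_a\cdot I_b$.

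The remaining containment $I_a\cap I_b\subseteq I_{[a,b]}$ is the only real computation. I will compare values levelwise inside $\mZ(\Theta_d)=\Z$. By Proposition~\ref{pr:Z/F_a-props}(1), $I_a(\Theta_d)=\frac{a}{(a,d)}\Z$, and since $\frac{a}{(a,d)}=\frac{[a,d]}{d}$ this equals $\frac{[a,d]}{d}\Z$. Similarly $I_b(\Theta_d)=\frac{[b,d]}{d}\Z$ and $I_{[a,b]}(\Theta_d)=\frac{[a,b,d]}{d}\Z$. The intersection of the first two is generated by
\[ \Bigl[\tfrac{[a,d]}{d},\tfrac{[b,d]}{d}\Bigr]=\tfrac{[a,b,d]}{d}, \]
which is checked prime-by-prime via max/min exponents as in Remark~\ref{re:gcd}. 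So $I_a\cap I_b$ and $I_{[a,b]}$ agree at every spot, giving equality throughout the chain. This exponent bookkeeping is the step most likely to need care, but it is routine.

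For the last claim, the map $I_b/I_{[a,b]}\to I_{(a,b)}/I_a$ is induced by the inclusion $I_b\subseteq I_{(a,b)}$; it is well defined because $I_{[a,b]}\subseteq I_a$. Levelwise this is the standard map $I_b/(I_a\cap I_b)\to (I_a+I_b)/I_a$, using $I_a\cap I_b=I_{[a,b]}$ (just proved) and $I_a+I_b=I_{(a,b)}$ (Proposition~\ref{pr:cyclicfam}). This map is an isomorphism of abelian groups at each spot by the second isomorphism theorem. Since it is a map of Mackey functors, it is therefore an isomorphism of $\mZ$-modules.
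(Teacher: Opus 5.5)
Your proposal is correct and follows the paper's proof essentially step for step. Both arguments use the chain $I_{[a,b]}\subseteq I_a\cdot I_b\subseteq I_a\cap I_b\subseteq I_{[a,b]}$, with the first containment coming from the product $1_{[a,b]}\cdot 1_{[a,b]}$. The last containment is a levelwise lcm computation via Proposition~\ref{pr:Z/F_a-props}(1), and the second isomorphism theorem together with Proposition~\ref{pr:cyclicfam} then gives the final claim. Your rewriting of the generators as $\tfrac{[a,d]}{d}$ makes the lcm identity a bit more transparent than the paper's form $\tfrac{[a,b]}{([a,b],d)}$, but these are the same number.
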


\begin{proof}
Since $1_{[a,b]}\in I_a(\Theta_{[a,b]})$ and $1_{[a,b]}\in I_b(\Theta_{[a,b]})$, the product $1_{[a,b]}\cdot 1_{[a,b]}\in (I_a\cdot I_b)(\Theta_{[a,b]})$.  It follows that $I_{[a,b]}\subseteq I_a\cdot I_b$.  

Since $I_{[a,b]}\subseteq I_a\cdot I_b\subseteq I_a\cap I_b$, it will suffice to show $I_{[a,b]}=I_a\cap I_b$.  Let $d|n$.  By Proposition~\ref{pr:Z/F_a-props}(1) we know $I_a(\Theta_d)\subseteq \Z$ is the ideal generated by $\frac{a}{(a,d)}$ and $I_b(\Theta_d)\subseteq \Z$ is the ideal generated by $\frac{b}{(b,d)}$.  So $(I_a\cap I_b)(\Theta_d)$ is the ideal generated by their lcm.  But one checks that
\[ \bigl [ \tfrac{a}{(a,d)}, \tfrac{b}{(b,d)} \bigr ] = \tfrac{[a,b]}{([a,b],d)},
\]
and Proposition~\ref{pr:Z/F_a-props} says that the expression on the right is the generator for $I_{[a,b]}(\Theta_d)$. 

The isomorphism $I_b/I_{[a,b]}\ra I_{(a,b)}/I_a$ follows directly from $I_a+I_b=I_{(a,b)}$ and $I_a\cap I_b=I_{[a,b]}$ (some textbooks call this the ``Second Isomorphism Theorem'').
\end{proof}

Proposition~\ref{pr:Z/F_a-props} also leads to the following improvement on
Proposition~\ref{pr:recognition-easy}, giving a useful `recognition theorem' for the modules $\mZ/I_a$:

\begin{prop}
\label{pr:Z/F_a-identify}
Let $a|n$.  
Let $M$ be a $\mZ$-module with the following properties: 
\begin{enumerate}[(i)]
\item $M(\Theta_1)\iso \Z/a$; 
\item $M(\Theta_a)=0$; and 
\item all of the restriction maps in $M$ are surjections.  
\end{enumerate}
Then $M\iso \mZ/I_a$.  Moreover, if $u$ is any generator for the abelian group $M(\Theta_1)$ then there is a unique isomorphism $\mZ/I_a\ra M$ that sends $1_{\Theta_1}$ to $u$.  
\end{prop}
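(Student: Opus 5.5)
Let $u\in M(\Theta_1)$ be a generator and let $f\colon \mZ\ra M$ be the map with $1_{\Theta_1}\mapsto u$. By hypothesis (iii) every element of $M(\Theta_d)$ is a restriction from $M(\Theta_1)$, so $f$ is surjective; note that $\Theta_1=\pt$. Since $1_a\in\mZ(\Theta_a)$ maps into $M(\Theta_a)=0$ by (ii), and $I_a$ is generated by $1_a$, the map $f$ kills $I_a$. It therefore factors as a surjection $\bar f\colon \mZ/I_a\ra M$ with $1_{\Theta_1}\mapsto u$. Uniqueness of such a map is clear, since $\mZ/I_a$ is generated by $1_{\Theta_1}$ (its restriction maps are surjective by Proposition~\ref{pr:Z/F_a-props}(2)).

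It remains to show that $\bar f$ is injective. At $\Theta_1$ it is a surjection $\Z/a\ra M(\Theta_1)\iso\Z/a$, using Proposition~\ref{pr:Z/F_a-props}(1) with $d=1$. A surjection between finite groups of the same order is an isomorphism, so $\bar f$ is injective at $\Theta_1$. Corollary~\ref{co:Z/F_a-injection} then gives injectivity at every spot, because the transfer maps of $\mZ/I_a$ into $\Theta_1$ are injective. Hence $\bar f$ is an isomorphism.

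The main step is the reduction to the $\Theta_1$ spot via Corollary~\ref{co:Z/F_a-injection}. Once it is in place, the only things to verify are that $f$ annihilates $I_a$ and that $M$ is generated by $u$, and these are immediate from (ii) and (iii).
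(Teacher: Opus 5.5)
Your proof is correct and follows the paper's argument essentially step for step: factor the map $\mZ\ra M$, $1_{\Theta_1}\mapsto u$, through $\mZ/I_a$ using (ii), get an isomorphism at $\Theta_1$ by comparing orders, invoke Corollary~\ref{co:Z/F_a-injection} for injectivity at every spot, and use (iii) for surjectivity. Your explicit justification of uniqueness, via $\mZ/I_a$ being generated by $1_{\Theta_1}$, fills in a point the paper leaves implicit.
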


\begin{proof}
Let $u$ be a generator for the abelian group $M(\Theta_1)$.  There is an induced map $\mZ\ra M$ sending $1_{\Theta_1}$ to $u$.  Since $M(\Theta_a)=0$ this map must send $1_a$ to $0$, and hence induces a map $\mZ/I_a\ra M$.  By construction, in the $\Theta_1$ spot this is a surjection $\Z/a\ra M(\Theta_1)$.  Since $M(\Theta_1)\iso \Z/a$, this map is in fact an isomorphism.  Then by Corollary~\ref{co:Z/F_a-injection} the map $\mZ/I_a\ra M$ is an injection.  

Since all of the restriction maps in $M$ are surjections, the map $\mZ/I_a\ra M$ is a surjection in each spot.  Thus, it is an isomorphism.
\end{proof}

\begin{remark}
The above result depends heavily on the transfer maps in $\mZ/I_\cF$ being injections.  This does not always hold over non-cyclic groups.  For example, take $G=C_2\times C_2$ and let $\cF$ be the family $\{e\times e, e\times C_2\}$.  Then if $J=C_2\times e$, the transfer map $\mZ/\cF(G/J)\ra \mZ/\cF(\pt)$ is the zero map $\Z/2\ra \Z/2$.    
\end{remark}

We will use Proposition~\ref{pr:Z/F_a-identify} often.  Here is our first application:

\begin{prop}
\label{pr:Hom-identify}
Let $a$ and $b$ be divisors of $n$.  Then there is an isomorphism 
\[ \uHom(\mZ/I_a,\mZ/I_b)\iso \mZ/I_{(a,b)}.
\]
\end{prop}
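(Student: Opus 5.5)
By Proposition~\ref{pr:Hom-Ann}(d), $\uHom(\mZ/I_a,\mZ/I_b)$ is isomorphic to the submodule $\Ann_{\cF_a}(\mZ/I_b)\subseteq \mZ/I_b$. So my plan is to identify this submodule and then apply the recognition theorem, Proposition~\ref{pr:Z/F_a-identify}, with $(a,b)$ in place of $a$.

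The first step is to compute $N=\Ann_{\cF_a}(\mZ/I_b)$ spot by spot. An element $x\in[\mZ/I_b](\Theta_d)$ lies in $N$ exactly when $f^*x=0$ for every map $f\colon\Theta_c\ra\Theta_d$ with $a|c$. Such maps exist only when $d|c$, so the relevant $c$ are the common multiples of $a$ and $d$. Each such map is a restriction composed with an automorphism, and the automorphisms act trivially on $\mZ/I_b$. Restrictions factor through $\Theta_{[a,d]}$, so the condition is simply $(p^{[a,d]}_d)^*x=0$.

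The second step is to compute $N(\Theta_d)$. By Proposition~\ref{pr:Z/F_a-props}(1), the restriction in question is the surjection
\[ \Z/\tfrac{b}{(b,d)}\lra \Z/\tfrac{b}{(b,[a,d])}. \]
Its kernel is cyclic of order $\frac{(b,[a,d])}{(b,d)}$. In particular, $N(\Theta_1)$ has order $(b,a)$, and since $[\mZ/I_b](\Theta_1)$ is cyclic, $N(\Theta_1)\iso\Z/(a,b)$. Taking $d=(a,b)$, we have $[a,d]=a$ and $(b,a)=d$, so $N(\Theta_{(a,b)})$ has order $1$, i.e.\ $N(\Theta_{(a,b)})=0$. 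This gives hypotheses (i) and (ii) of Proposition~\ref{pr:Z/F_a-identify}.

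The third step, which I expect to be the main obstacle, is hypothesis (iii): every restriction map of $N$ must be surjective. These maps are the restrictions of the surjective maps of $\mZ/I_b$ to the kernels computed above, and there is no formal reason such restrictions stay onto. I plan to handle this by explicit cyclic-group arithmetic. For $d|e$, the restriction $[\mZ/I_b](\Theta_d)\ra[\mZ/I_b](\Theta_e)$ is the projection $\Z/\frac{b}{(b,d)}\ra\Z/\frac{b}{(b,e)}$ sending $1\mapsto1$. Inside it, $N(\Theta_d)$ is the subgroup generated by $\frac{b}{(b,[a,d])}$, and $N(\Theta_e)$ is generated by $\frac{b}{(b,[a,e])}$. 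Surjectivity holds if $\frac{b}{(b,[a,d])}$ maps to a generator of $N(\Theta_e)$. Its image generates the subgroup with generator $\gcd\bigl(\frac{b}{(b,[a,d])},\frac{b}{(b,e)}\bigr)=\frac{b}{[(b,[a,d]),(b,e)]}$. By distributivity (Remark~\ref{re:gcd}) the denominator equals $(b,[[a,d],e])=(b,[a,e])$, using $d|e$. This matches the generator of $N(\Theta_e)$, which verifies (iii). The identity should be checked prime by prime with max/min.

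With (i)--(iii) established, Proposition~\ref{pr:Z/F_a-identify} gives $N\iso\mZ/I_{(a,b)}$, and hence $\uHom(\mZ/I_a,\mZ/I_b)\iso\mZ/I_{(a,b)}$.
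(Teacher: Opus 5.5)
Your proposal is correct and follows essentially the same route as the paper's proof. You identify the internal Hom with $\Ann_{\cF_a}(\mZ/I_b)$ via Proposition~\ref{pr:Hom-Ann}(d), compute each spot as the subgroup generated by $\frac{b}{(b,[a,d])}$, verify surjectivity of the restrictions with the same gcd/lcm identity as (\ref{eq:exotic2}), and conclude with the recognition theorem, Proposition~\ref{pr:Z/F_a-identify}.
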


\begin{proof}
Proposition~\ref{pr:Hom-Ann}(d) identifies $\uHom(\mZ/I_a,\mZ/I_b)$ with $\Ann_{\cF_a}(\mZ/I_b)$.  Denote this annihilator by $A$ for short.  Since $A\subseteq \mZ/I_b$ we know that $A(\Theta_1)$ is cyclic.

Fix a divisor $c$ of $n$, and consider a map $\Theta_e\ra \Theta_c$ (so $c|e$) where $e$ is a multiple of $a$.  
Then we know $\mZ/I_b(\Theta_c)=\Z/\tfrac{b}{(b,c)}$, $\mZ/I_b(\Theta_e)=\Z/\tfrac{b}{(b,e)}$, and the restriction map $\mZ/I_b(\Theta_c)\ra \mZ/I_b(\Theta_e)$ is the natural surjection.  So the kernel is generated by $\frac{b}{(b,e)}$.  Taking this together for all possible $e$, we find that   $A(\Theta_c)$ is generated by $\frac{b}{(b,[a,c])}\in \Z/\tfrac{b}{(b,c)}$.  In particular, we have that $A(\Theta_{(a,b)})=0$ and $A(\Theta_1)\iso \Z/(a,b)$. 

When $c|d$ we will show that the restriction map $A(\Theta_c)\ra A(\Theta_d)$ is surjective.  Regarding the target as contained in $\Z/\frac{b}{(b,d)}$ it is generated by $\tfrac{b}{(b,[a,d])}$, whereas the image is generated by $\frac{b}{(b,[a,c])}$.  But these generate the same subgroup; this amounts
to the numerical identity
\begin{equation}
\label{eq:exotic2}
\Bigl ( \tfrac{b}{(b,[a,c])} , \tfrac{b}{(b,d)} \Bigr ) =
\tfrac{b}{(b,[a,c,d])}=\tfrac{b}{(b,[a,d])}=\Bigl ( \tfrac{b}{(b,[a,d])} , \tfrac{b}{(b,d)} \Bigr )
\end{equation}
(see Remark~\ref{re:gcd} for how to prove such identities).  

We have now shown that $A$ satisfies all of the conditions of Proposition~\ref{pr:Z/F_a-identify}, hence $A\iso \mZ/I_{(a,b)}$.  
\end{proof}

Now suppose $a|n$ and $b|n$.  In the group $(\mZ/I_a\oplus \mZ/I_b)(\Theta_1)\iso \Z/a\oplus \Z/b$ consider the element $v=[\frac{a}{(a,b)} 1_{\Theta_1} , -\frac{b}{(a,b)} 1_{\Theta_1}]$.  The additive order of $v$ is precisely the gcd $(a,b)$.  Let $M=\langle v\rangle \subseteq \mZ/I_a\oplus \mZ/I_b$ be the submodule generated by $v$.  Then each $M(\Theta_d)$ is cyclic, all the restriction maps are surjections, and $M(\Theta_1)\iso \Z/(a,b)$.  
Since $[\mZ/I_a\oplus \mZ/I_b](\Theta_{(a,b)})\iso \Z/(\frac{a}{(a,b)})\oplus \Z/(\frac{b}{(a,b)})$, we find that the image of $v$ here is zero, hence $M(\Theta_{(a,b)})=0$.  So by Proposition~\ref{pr:Z/F_a-identify} we conclude that $M\iso \mZ/I_{(a,b)}$ and that the unique map $\mZ/I_{(a,b)}\ra M$ sending $1_{\Theta_1}$ to $v$ is an isomorphism.  

Let $Q=(\mZ/I_a\oplus \mZ/I_b)/M$.  All of the restriction  maps in $\mZ/I_a\oplus \mZ/I_b$ are surjections, so the same is true of $Q$.  Observe that $Q(\Theta_1)$ is isomorphic to $\Z/[a,b]$, by construction.  Finally, $\mZ/I_a$ and $\mZ/I_b$ both vanish at the $\Theta_{[a,b]}$ spot and so $Q$ does as well. Therefore Proposition~\ref{pr:Z/F_a-identify} shows that $Q\iso \mZ/I_{[a,b]}$.
We record the result of this analysis for later use:

\begin{prop}
\label{pr:mZ-ses}
There is a short exact sequence 
\[ 0\ra \mZ/I_{(a,b)}\ra \mZ/I_a\oplus \mZ/I_b \ra \mZ/I_{[a,b]}\ra 0
\]
that when evaluated at the $\Theta_1$ spot is the standard short exact sequence of abelian groups $0\ra \Z/(a,b)\ra \Z/a\oplus \Z/b \ra \Z/[a,b]\ra 0$.  
\end{prop}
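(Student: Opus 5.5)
The proof packages the analysis carried out just before the statement. Consider the element $v=[\tfrac{a}{(a,b)}1_{\Theta_1},-\tfrac{b}{(a,b)}1_{\Theta_1}]\in(\mZ/I_a\oplus\mZ/I_b)(\Theta_1)$ and the submodule $M=\langle v\rangle$ it generates. First we identify $M$ with $\mZ/I_{(a,b)}$ by checking the three hypotheses of Proposition~\ref{pr:Z/F_a-identify}.
\begin{enumerate}[(i)]
\item In $\Z/a\oplus\Z/b$ the element $v$ has additive order $(a,b)$, so $M(\Theta_1)\iso\Z/(a,b)$.
\item Since $v$ lives at $\Theta_1$, $M$ is the image of $F_{\Theta_1}=\mZ$, i.e. $M(\Theta_d)$ is generated by the restriction of $v$. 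Hence all restriction maps of $M$ are surjective and each $M(\Theta_d)$ is cyclic.
\item The restriction of $v$ to $\Theta_{(a,b)}$ lands in $\Z/\tfrac{a}{(a,b)}\oplus\Z/\tfrac{b}{(a,b)}$ by Proposition~\ref{pr:Z/F_a-props}(1), and its coordinates $\tfrac{a}{(a,b)}$ and $-\tfrac{b}{(a,b)}$ vanish there. So $M(\Theta_{(a,b)})=0$.
\end{enumerate}
Proposition~\ref{pr:Z/F_a-identify} then gives an isomorphism $\mZ/I_{(a,b)}\to M$ sending $1_{\Theta_1}\mapsto v$. Composing with the inclusion $M\subseteq\mZ/I_a\oplus\mZ/I_b$ gives the first map of the sequence, which is therefore injective.

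Next let $Q$ be the cokernel $(\mZ/I_a\oplus\mZ/I_b)/M$ and identify it with $\mZ/I_{[a,b]}$, again via Proposition~\ref{pr:Z/F_a-identify}.
\begin{enumerate}[(i)]
\item Restriction maps in $Q$ are surjective, since they are in $\mZ/I_a$ and $\mZ/I_b$ (Proposition~\ref{pr:Z/F_a-props}(2)) and surjectivity passes to quotients.
\item $Q(\Theta_1)=(\Z/a\oplus\Z/b)/\langle v\rangle$ has order $ab/(a,b)=[a,b]$. It is cyclic: the class of $(1,0)$ generates it, because $[a,b]$ times that class is $0$ and the index count matches; equivalently, this is the classical sequence for abelian groups. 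Hence $Q(\Theta_1)\iso\Z/[a,b]$.
\item $Q(\Theta_{[a,b]})=0$, since both $\mZ/I_a$ and $\mZ/I_b$ vanish at $\Theta_{[a,b]}$ by Proposition~\ref{pr:Z/F_a-props}(1).
\end{enumerate}
So $Q\iso\mZ/I_{[a,b]}$, which yields the short exact sequence.

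Finally, the $\Theta_1$ spot must match the standard sequence $0\to\Z/(a,b)\to\Z/a\oplus\Z/b\to\Z/[a,b]\to0$. The first map sends $1\mapsto(\tfrac{a}{(a,b)},-\tfrac{b}{(a,b)})$, which is standard. For the second map we use the uniqueness clause of Proposition~\ref{pr:Z/F_a-identify}: choose the generator $u\in Q(\Theta_1)$ to be the class of the element that the standard quotient map sends to $1$. A convenient choice is the class of $(x,y)$ with $x\tfrac{b}{(a,b)}+y\tfrac{a}{(a,b)}=1$, coming from the map $(s,t)\mapsto s\tfrac{b}{(a,b)}+t\tfrac{a}{(a,b)}$. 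The resulting isomorphism $\mZ/I_{[a,b]}\iso Q$ then makes the $\Theta_1$ component the standard projection.

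The only delicate step is this bookkeeping of generators at $\Theta_1$; all module-level facts reduce to Proposition~\ref{pr:Z/F_a-identify}.
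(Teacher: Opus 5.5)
Your argument is the paper's own. You identify the submodule generated by $v$ and the quotient $Q$ with $\mZ/I_{(a,b)}$ and $\mZ/I_{[a,b]}$ via the recognition theorem (Proposition~\ref{pr:Z/F_a-identify}), and your use of its uniqueness clause to pin down the $\Theta_1$ component is a correct, more explicit version of the paper's ``by construction.''

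There is one slip in the cyclicity step for $Q(\Theta_1)$. The class of $(1,0)$ does not generate $Q(\Theta_1)$ in general: $k(1,0)\in\langle v\rangle$ if and only if $a\mid k$, so this class has order $a$ and generates only when $b\mid a$ (for example $a=2$, $b=3$). Also, ``$[a,b]$ kills it and the orders match'' does not show an element is a generator. Cyclicity should instead come from the classical exact sequence, or directly from the class of your $(x,y)$, which maps to $1$ under the standard surjection $\Z/a\oplus\Z/b\to\Z/[a,b]$ and hence has order $[a,b]$.
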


\begin{remark}
\label{re:decompose}
If $(a,b)=1$ then the above result yields $\mZ/I_{ab}\iso \mZ/I_a\oplus \mZ/I_b$.  Consequently, every $\mZ/I_d$ breaks up as a direct sum of terms $\mZ/I_{\ell_i^{e_i}}$ where $d=\prod_i \ell_i^{e_i}$ is the prime factorization.  For another approach to this decomposition, see Proposition~\ref{pr:misc}(c) below.
\end{remark}

\begin{remark}
While we have established several formal similarities between the objects $\mZ/I_a$ and the abelian groups $\Z/a$, it should be pointed out that one should not take this too far.  For example, it is certainly not true that the kernel or cokernel of a map $\mZ/I_a\ra \mZ/I_b$ will necessarily also be a $\mZ/I_d$ for some $d$.  Taking $n=\ell^2$, the projection $\mZ/I_{\ell^2}\ra \mZ/I_\ell$ and the map $\mZ/I_{\ell}\ra \mZ/I_{\ell^2}$ that sends $1_{\Theta_1}\mapsto \ell\cdot 1_{\Theta_1}$ give simple counterexamples.  See Section~\ref{se:more_Zmod} for more on these kernels and cokernels.
\end{remark}

\subsection{Monogenal modules and localization}

The collection of modules $\mZ/I_a$, together with $\mZ$ itself, share several common features and often arise in computations.  It is useful to have a name for this class---we will call them \mdfn{monogenal} modules.  Observe that the box product of monogenal $\mZ$-modules is again monogenal.  

Note that being monogenal is not the same as being generated by a singleton in the $\Theta_1$ spot.  For example, $\mZ/2$ has the latter property but is not monogenal.

The result below contains a miscellany of results that are useful concerning monogenal modules.
First we introduce some notation. 
If $M$ is a $\mZ$-module and $a\in \Z$, define $\Ann_a(M)\subseteq M$ to be the sub-Mackey functor consisting of all elements annihilated by $a$.  
For a positive integer $a$ and a prime $\ell$, write $a(\ell)$ for the largest power of $\ell$ that divides $a$.  So $a=\prod_{\ell} a(\ell)$, where the product runs over all primes.  Finally, 
say that a $\mZ$-module is ``monogenal after $\ell$-localization'' if its $\ell$-localization is  isomorphic to the $\ell$-localization of a monogenal $\mZ$-module. 
\newpage

\begin{prop}
\label{pr:misc} Let $M$ be a $\mZ$-module.
\begin{enumerate}[(a)]
\item Suppose that $(a,b)=1$ and $ab$ annihilates $M$.  Then $M=\Ann_a(M)\oplus \Ann_b(M)$.
\item Suppose $a|n$, $b|n$, and $(a,b)=1$.  Then $\Ann_a(\mZ/I_{ab})\iso \mZ/I_a$.  
\item If $a|n$ and $b|n$ and $(a,b)=1$ then $\mZ/I_a\oplus \mZ/I_b\iso \mZ/I_{ab}$.
\item If $\ell$ is a prime and $a|n$ then $(\mZ/I_{a})_{(\ell)}\iso \mZ/I_{a(\ell)}$. 
\item If $M$ and $N$ are monogenal $\mZ$-modules that are isomorphic, then any surjection $M\ra N$ is an isomorphism.
\item Suppose that $M$ is finitely-generated.
If $M$ is monogenal after $\ell$-localization, for each prime $\ell$, then $M$ is monogenal.  If a subset $S\subseteq M(\pt)$ generates $M_{(\ell)}$ for each prime $\ell$, then $S$ generates $M$.
\end{enumerate}
\end{prop}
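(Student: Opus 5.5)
For (a) I will use the usual Bézout argument, applied objectwise. Choose integers $r,s$ with $ra+sb=1$. Multiplication by an integer is an endomorphism of any Mackey functor, so the operators $e_1=sb$ and $e_2=ra$ are $\mZ$-module endomorphisms of $M$. Since $ab$ annihilates $M$, they are complementary orthogonal idempotents: $e_1+e_2=1$ and $e_1e_2=rs\,ab=0$. The image of $e_1$ is killed by $a$, and conversely on $\Ann_a(M)$ we have $e_1=1-ra=1$. So $\operatorname{im}e_1=\Ann_a(M)$, and symmetrically $\operatorname{im}e_2=\Ann_b(M)$. This gives the splitting.

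For (b) I will use the recognition theorem, Proposition~\ref{pr:Z/F_a-identify}, with $N=\Ann_a(\mZ/I_{ab})$.
\begin{enumerate}[(i)]
\item At $\Theta_1$ we have $N(\Theta_1)=\Ann_a(\Z/ab)\iso\Z/a$.
\item At $\Theta_a$ the value is $\Z/\tfrac{ab}{(ab,a)}=\Z/b$ by Proposition~\ref{pr:Z/F_a-props}(1). Since $(a,b)=1$, multiplication by $a$ is injective there, so $N(\Theta_a)=0$.
\item For restrictions, the restriction maps of $\mZ/I_{ab}$ are surjections of cyclic groups. Taking $a$-torsion commutes with this, because $\mZ/I_{ab}=\Ann_a\oplus\Ann_b$ by (a). The decomposition is natural, so surjectivity passes to each summand.
\end{enumerate}
Hence $N\iso\mZ/I_a$.

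For (c), combine (a) and (b): $\mZ/I_{ab}=\Ann_a\oplus\Ann_b\iso\mZ/I_a\oplus\mZ/I_b$. Alternatively this is Proposition~\ref{pr:mZ-ses} with $(a,b)=1$. For (d), write $a=a(\ell)a'$ with $(a',\ell)=1$. Then (c) gives $\mZ/I_a\iso\mZ/I_{a(\ell)}\oplus\mZ/I_{a'}$. Localizing at $\ell$ kills the second summand, since it is annihilated by $a'$, and leaves the first unchanged, since it is annihilated by a power of $\ell$.

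For (e), isomorphic monogenal modules have values at each orbit that are cyclic groups of the same order. The one exception is $\mZ$, whose values are all $\Z$. In either case, a surjection between abstractly isomorphic finite cyclic groups, or between copies of $\Z$, is injective, so the map is an isomorphism objectwise.

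For (f), the statement that $S$ generates $M$ is the local-to-global principle. Let $N\subseteq M$ be the submodule generated by $S$. Localization is exact and commutes with taking generated submodules, so $(M/N)_{(\ell)}=0$ for all primes $\ell$. Thus every value of $M/N$ is zero, and $N=M$.

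For the first claim in (f), I expect the main obstacle to be assembling a global monogenal model from the local ones. The plan is to first dispose of the case where $M$ is not torsion. Rationally, $M\iso\FP(M(G))$ by Proposition~\ref{pr:rational-equiv}, and this forces $M_{(\ell)}\iso\mZ_{(\ell)}$ for every $\ell$. I would then try to show $M\iso\mZ$ by choosing a generator of the free rank-one group $M(\Theta_1)$, and using the generation statement just proved together with torsion-freeness to get a surjection $\mZ\ra M$ that is locally an isomorphism. In the torsion case, finite generation makes $M$ annihilated by some integer, so (a) splits $M$ into its $\ell$-primary parts $M_{(\ell)}\iso\mZ/I_{a_\ell}$ with $a_\ell$ a power of $\ell$. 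Only finitely many of these are nonzero, and by (c) the sum is $\mZ/I_{\prod a_\ell}$.
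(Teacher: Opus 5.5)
Your proposal is correct and follows the paper's proof essentially part by part: Bézout idempotents for (a); the recognition theorem Proposition~\ref{pr:Z/F_a-identify} for (b), with surjectivity of restrictions coming from the natural splitting in (a); (c)--(e) as immediate consequences; and for (f) the local-to-global generation argument plus a torsion/non-torsion case split assembled via (a), (c) and (d). The one deviation is in the non-torsion case of (f): the paper first shows locally-to-globally that all values are cyclic and all restrictions surjective, then maps $\mZ\to M$ directly, whereas you use the generation claim and a local isomorphism, and your appeal to Proposition~\ref{pr:rational-equiv} there is unnecessary, since $M$ being non-torsion already rules out the torsion local models $\mZ/I_{\ell^f}$.
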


\begin{proof}
These are mostly elementary.  Part (a) follows from the fact that $1$ is a linear combination of $a$ and $b$.  For (b) we use the recognition conditions from Proposition~\ref{pr:Z/F_a-identify}.  Set $W=\mZ/I_{ab}$ for convenience.  Each abelian group $W(\Theta_r)$ is cyclic, hence its $a$-annihilator is also cyclic.  We have $W(\Theta_r)\iso \Z/(\frac{ab}{(r,ab)})$ and so $W(\Theta_1)\iso \Z/(ab)$ and $W(\Theta_a)\iso\Z/b$.  Hence $(\Ann_a W)(\Theta_1)\iso \Z/a$ and $(\Ann_a W)(\Theta_a)=0$.  Finally, if $X\ra Y$ is a surjection of abelian groups annihilated by $ab$ it is elementary to check that $\Ann_a(X)\ra \Ann_a(Y)$ is still surjective (use the decomposition as in (a)).  So all of the restriction maps in $\Ann_a(W)$ are surjective. Now apply Proposition~\ref{pr:Z/F_a-identify}.  

Part (c) follows directly from (a) and (b).

For (d), observe that (c) implies $\mZ/I_a\iso \bigoplus_{q} \mZ/I_{a(q)}$ where $q$ ranges over all primes (and note that this is a finite direct sum).  Since $\mZ/I_{a(q)}$ is annihilated by a power of $q$, its $\ell$-localization is zero for $q\neq \ell$ and is equal to itself for $q=\ell$.  

For (e) just look at the map $M(\Theta)\ra N(\Theta)$ for every $C_n$-orbit $\Theta$ and use the related property of cyclic abelian groups.  

For (f), first note that a finitely-generated abelian group that is cyclic after $\ell$-localization for all primes $\ell$ is necessarily cyclic.  Likewise, a map of abelian groups that is surjective after localization at each prime is necessarily surjective.  So each $M(\Theta_d)$ is cyclic, and every restriction map in $M$ is surjective.  

Suppose $M(\Theta_1)\iso \Z$, and pick a generator $g$.  It follows that $M(\Theta_d)$ must also be isomorphic to $\Z$ for every $d|n$, generated by $p^*(g)$.  So the map $\mZ\ra M$ that sends $1$ to $g$ is surjective, and hence an isomorphism.  

If $M(\Theta_1)\not\iso \Z$ then 
$M(\Theta_1)$ is torsion, and by surjectivity of the restriction maps each $M(\Theta_d)$ is torsion as well.
Pick an integer $s$ that annihilates $M$ and write $s=\ell_1^{e_1}\cdots \ell_k^{e_k}$ where the $\ell_i$ are distinct primes.  Then by (a) we have $M=\oplus_i \Ann_{\ell_i^{e_i}}(M)$, and therefore $M_{(\ell_i)}=\Ann_{\ell_i^{e_i}}(M)$.  By assumption this is monogenal, so it is isomorphic to $\mZ/I_{\ell_i^{f_i}}$ for some $f_i$.    Now use (c) to conclude that $M$ is monogenal.  

The final claim is just standard commutative algebra.  Let $N\subseteq M$ be the subMackey functor generated by $S$.  Then for each $a|n$ we have $N(\Theta_a)\subseteq M(\Theta_a)$, and the subset is an equality after localization at every prime---hence, it is an equality.  Since this holds for all $a$, $N=M$.  That is, $S$ generates $M$.
\end{proof}

\subsection{The resolution for \mdfn{$\mZ/I_a$}}

Consider the sequence of maps 
\begin{equation} 
\label{eq:complex}
\xymatrix{
F_{\pt} \ar[r]^{Ip} & F_{\Theta_a} \ar[r]^{\id-Rt} & F_{\Theta_a} \ar[r]^{Rp} & F_{\pt}. }
\end{equation}
One checks that this is a chain complex exactly as in Example~\ref{ex:cyclic-res}.
In terms of generators, the map $Rp$ sends $g_{\Theta_a}$ to $p^*(g_{\pt})=1_a\in \mZ(\Theta_a)$.  So the image of $Rp$ is precisely $I_a$, and the cokernel is $\mZ/I_a$.  

\begin{remark}
\label{re:complex-iso}
There is an evident lack of symmetry in the way the complex is written in (\ref{eq:complex}), but one should observe the isomorphism of complexes
\[ \xymatrix{
0 \ar[r] & F_{\pt} \ar[d]_{\id} \ar[r]^-{Ip} & F_{\Theta_a} \ar[d]_{\id} \ar[r]^-{\id-Rt} & F_{\Theta_a} \ar[d]^{-Rt^{-1}}\ar[r]^-{Rp} & F_{\pt}\ar[d]^{-\id}\ar[r] & 0 \\
0 \ar[r] & F_{\pt} \ar[r]^{Ip} & F_{\Theta_a} \ar[r]^{\id-It} & F_{\Theta_a} \ar[r]^-{Rp} & F_{\pt}\ar[r] & 0 \\
}
\]
which uses that $It=Rt^{-1}$ (Proposition~\ref{pr:I=R}) and $p\circ t^{-1}=p$.  Isomorphisms of complexes such as this one---translating between an $Rt$ and an $It$---show up frequently in what follows and will often be made implicitly.
\end{remark}

For the case where $n$ is a prime power, incarnations of the following result can be found in \cite[Section 3.3]{Z} and \cite[Proposition 3.3]{HHR2}.

\begin{prop}
\label{pr:Z/Fa-res}
Fix $n\geq 1$.  For any $a|n$, the sequence
\[ \xymatrix{
0 \ar[r] & F_{\pt} \ar[r]^{Ip} & F_{\Theta_a} \ar[r]^{\id-Rt} & F_{\Theta_a} \ar[r]^{Rp} & F_{\pt} \ar[r] & \mZ/I_a \ar[r] & 0}
\]
is exact.  Consequently, this is a free resolution of $\mZ/I_a$.  From now on we will refer to it as the \dfn{standard free resolution} for $\mZ/I_a$.  
\end{prop}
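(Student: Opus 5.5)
Since exactness of a sequence of Mackey functors is checked spotwise, I will evaluate the whole complex at each orbit $\Theta_d$ with $d|n$. The cleanest description of the free modules uses Proposition~\ref{pr:fixedpointfree}: $F_{\Theta_a}\iso \FP(\Z\langle \Theta_a\rangle)$ and $F_{\pt}=\mZ$. At the spot $\Theta_d$, the group $F_{\Theta_a}(\Theta_d)$ is $\Z\langle\Theta_a\rangle^{C_{n/d}}$, and $F_\pt(\Theta_d)=\Z$. Write $H=C_{n/d}$ and $K=C_{n/a}$, and let $x_0=eK\in\Theta_a$, so the orbit consists of $x_i=t^ix_0$ for $i\in\Z/a$.

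The maps at the $\Theta_d$ spot are the fixed points of the permutation-module complex
\[ 0\ra \Z \llra{N} \Z\langle\Theta_a\rangle \llra{1-t} \Z\langle\Theta_a\rangle \llra{\epsilon}\Z, \]
where $N(1)=\sum x_i$ and $\epsilon$ is the augmentation. This follows from the definition of $\cB\Z_G$: $Rp$ becomes $\epsilon$, $Ip$ becomes the norm map, and $Rt$ becomes translation by $t^{\pm1}$. The sign of the translation is harmless, because $1-t$ and $1-t^{-1}$ have the same kernel and image. I will check these identifications quickly against Example~\ref{ex:free}.

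The $H$-orbits on $\Theta_a$ are the cosets of the subgroup generated by $t^{(a,d)}$ in $\Z/a$. There are $r=(a,d)$ of them, each of size $a/r$. So $\Z\langle\Theta_a\rangle^H$ is free on the orbit sums $s_0,\dots,s_{r-1}$, with $t s_j=s_{j+1}$ (indices mod $r$). On this basis:
\begin{itemize}
\item $1-t$ acts as $1-\sigma$, where $\sigma$ is the cyclic shift of $\Z^r$;
\item $N(1)=\sum_j s_j$;
\item $\epsilon(s_j)=a/r$.
\end{itemize}
The fixed-point complex is therefore
\[ 0\ra\Z\llra{\Delta}\Z^r\llra{1-\sigma}\Z^r\llra{(a/r)\Sigma}\Z. \]

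Exactness at each interior position is then elementary linear algebra over $\Z$:
\begin{itemize}
\item $\Delta$ is injective.
\item $\ker(1-\sigma)$ is the set of constant vectors, which is exactly the image of $\Delta$.
\item The image of $1-\sigma$ is the set of vectors with coordinate sum zero, which is exactly the kernel of $(a/r)\Sigma$.
\end{itemize}
The cokernel of the last map is $\Z/(a/(a,d))$, matching Proposition~\ref{pr:Z/F_a-props}(1). Also, the image of $Rp$ was already identified with $I_a$ in the text preceding the statement, so the cokernel is $\mZ/I_a$. Combining spotwise exactness with the fact that the $F$'s are free gives the free resolution.

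The main obstacle is bookkeeping: correctly translating $Ip$, $Rp$ and $Rt$ into the fixed-point model, with the right conventions. The exactness computation itself is routine once that translation is set up.
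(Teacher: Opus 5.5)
Your proposal is correct and is essentially the paper's proof: both evaluate the complex spotwise and reduce it to the integer complex $\Z\to\Z^{(a,d)}\to\Z^{(a,d)}\to\Z$, whose maps are the diagonal, the cyclic difference, and $\frac{a}{(a,d)}=\frac{[a,d]}{d}$ times the coordinate sum. The only difference is the model. You write $F_{\Theta_a}(\Theta_d)$ as $\Z\langle\Theta_a\rangle^{C_{n/d}}$ via Proposition~\ref{pr:fixedpointfree} and work with orbit sums, whereas the paper uses Remark~\ref{re:translate} to write it as $\mZ(\Theta_a\times\Theta_d)$ and decomposes $\Theta_a\times\Theta_d\iso\coprod_{(a,d)}\Theta_{[a,d]}$. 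Your version avoids the twisted shift $\alpha$ with its exponent $\epsilon$, while the paper's version carries over verbatim to $\uHom(\blank,M)$ for arbitrary $M$ in Proposition~\ref{pr:Ext-Mackey0}.
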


\begin{proof}
For any $c|n$ we need to evaluate the above complex at $\Theta_c$ and verify that we get a sequence of abelian groups that is exact in degrees $1$, $2$, and $3$ (where the rightmost $F_{\pt}$ is given degree $0$, as usual).  As in Remark~\ref{re:translate}
this complex is isomorphic to
\[ \xymatrixcolsep{2.7pc}\xymatrix{
0 \ar[r] & \mZ(\pt\times \Theta_c) \ar[r]^{(\pi \times \id)^*} & \mZ(\Theta_a\times \Theta_c) \ar[r]^{\id-(t\times \id)_*} & \mZ(\Theta_a\times \Theta_c) \ar[r]^{(\pi \times \id)_*} & \mZ(\pt\times \Theta_c)
}
\]
and we need to verify exactness at the three spots on the left.
To understand this sequence we will make use of an isomorphism
\[ \Theta_a\times \Theta_c \iso \coprod_{i=0}^{(a,c)-1}\Theta_{[a,c]}.
\]
To describe this isomorphism let us introduce a placeholder $\sigma^i$ to denote the $i$th copy of $\Theta_{[a,c]}$ in the coproduct.  We have our canonical projections $\Theta_{[a,c]}\ra \Theta_a$ and $\Theta_{[a,c]}\ra \Theta_c$, and we will write $x$ both for an element of the domain and its image in the codomain. Under these conventions, define
\[ F\colon \coprod_{i=0}^{(a,c)-1} \sigma^i \Theta_{[a,c]}\lra \Theta_a\times \Theta_c
\]
by $\sigma^ix\mapsto (x,t^ix)$.  This induces an isomorphism of $C_n$-sets, and hence an isomorphism
\[ \Z^{(a,c)}= \bigoplus_i \mZ(\Theta_{[a,c]}) \llra{F_*} \mZ(\Theta_a\times \Theta_c)
\]
where in the first part we have used that $\mZ(\Theta_{[a,c]})$ has the canonical generator $1$.
Since $F$ is an isomorphism, note that $F_*=(F^*)^{-1}$ (Proposition~\ref{pr:I=R}).

Observe the existence of commutative diagrams
\[ \xymatrix{
\coprod_i \sigma^i\Theta_{[a,c]} \ar[r]^-F \ar[d]_\alpha  & \Theta_a\times \Theta_c \ar[d]^{t\times \id} && 
\coprod_i \sigma^i\Theta_{[a,c]} \ar[r]^-F \ar[dr]_\beta  & \Theta_a\times \Theta_c \ar[d]^{\pi_2=\pi\times \id}
\\
\coprod_i \sigma^i \Theta_{[a,c]} \ar[r]^-F & \Theta_a\times \Theta_c
&&
 & \Theta_c
}
\]
where 
$\beta(\sigma^ix)=p(t^ix)$ and
\[ \alpha(\sigma^ix)=\begin{cases}
\sigma^{i-1}(tx) & \text{when $i>0$},\\
\sigma^{(a,c)-1}(t^\epsilon x) & \text{when $i=0$}
\end{cases}
\]
and here $\epsilon$ is the unique integer modulo $[a,c]$ having the property that $\epsilon \equiv 1$ (mod $a$) and $\epsilon \equiv 1-(a,c)$ (mod $c$). Note we are using the pullback diagram
\[ \xymatrix{\Z/{[a,c]}\ar@{->>}[r]\ar@{->>}[d] & \Z/c \ar@{->>}[d] \\
\Z/a \ar@{->>}[r] & \Z/{(a,c)}
}
\]
to define $\epsilon$.  

The maps $F$, $\alpha$, and $\beta$ yield an isomorphism of chain complexes as follows:
\[ \xymatrixcolsep{2.7pc}\xymatrix{
0 \ar[r] & \mZ(\pt\times \Theta_c)  \ar[r]^{(\pi\times \id)^*} & \mZ(\Theta_a\times \Theta_c) \ar[r]^{\id-(t\times \id)_*}  & \mZ(\Theta_a\times \Theta_c) \ar[r]^{(\pi\times \id)_*}  & \mZ(\pt\times \Theta_c) \\
0 \ar[r] & \Z \ar[r]^C\ar[u]_\iso & 
\bigoplus_i \sigma^i\Z \ar[r]^B\ar[u]_\iso^{(F^*)^{-1}=F_*} & \bigoplus_i \sigma^i \Z\ar[r]^A\ar[u]_\iso^{F_*} & \Z\ar[u]_\iso
}
\]
where $A(\und{x})=\tfrac{[a,c]}{c}\sum x_i$, $B(\und{x})=(x_0-x_1,x_1-x_2,\ldots,x_{(a,c)-1}-x_0)$, and $C(1)=(1,1,\ldots,1)$.  Note that $A=\beta_*$, $B=\id-\alpha_*$, and $C=\beta^*$.  The lower sequence is readily checked to be exact. 
\end{proof}

The following result is almost immediate:

\begin{prop}
\label{pr:Ext-Mackey0}
Fix $n\geq 1$ and let $a|n$.  Let $M$ be a $\mZ$-module for $C_n$.  Then for any $c|n$ the value $\uExt^i(\mZ/I_a,M)(\Theta_c)$ is isomorphic to the $i$th cohomology of the complex
\[
\xymatrixcolsep{2.2pc}\xymatrix{
0 & M(\Theta_c)\ar[l] & M(\Theta_{[a,c]}) \ar[l]_-{p^*}
&&
M(\Theta_{[a,c]}) \ar[ll]_{\id-(t^*)^{\epsilon+(a,c)-1}} & M(\Theta_c) 
\ar[l]_-{p_*} & 0 \ar[l]
}
\]
where the rightmost $M(\Theta_c)$ is in cohomological degree $0$ and where $\epsilon$ is the unique integer modulo $[a,c]$ such that $\epsilon \equiv 1$ (mod $a$) and $\epsilon\equiv 1-(a,c)$ (mod $c$).
\end{prop}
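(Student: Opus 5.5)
We compute $\uExt^*(\mZ/I_a,M)$ with the standard free resolution from Proposition~\ref{pr:Z/Fa-res}. Apply $\uHom(\blank,M)$ to
\[ 0\ra F_{\pt}\llra{Ip} F_{\Theta_a}\llra{\id-Rt} F_{\Theta_a}\llra{Rp} F_{\pt}\ra 0 \]
and take cohomology of the resulting cochain complex of Mackey functors. Cohomology is computed objectwise, so we evaluate at $\Theta_c$. The second diagram of Remark~\ref{re:translate} identifies $\uHom(F_S,M)(\Theta_c)\iso M(S\times\Theta_c)$, with $\uHom(F_f,M)$ corresponding to $(f\times\id)^*$. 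The complex at $\Theta_c$ therefore becomes
\[ M(\pt\times\Theta_c)\llra{(\pi\times\id)^*} M(\Theta_a\times\Theta_c)\llra{\id-(t\times \id)^*} M(\Theta_a\times\Theta_c)\llra{(\pi\times\id)_*} M(\pt\times\Theta_c), \]
with the leftmost term in degree $0$. Here $Rp\mapsto(\pi\times\id)^*$, $Rt\mapsto (t\times\id)^*$, and $Ip$ goes to $(D(Rp)\times\id)^*=(\pi\times\id)_*$.

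Next we transport this along the isomorphism $F\colon\coprod_{i=0}^{(a,c)-1}\sigma^i\Theta_{[a,c]}\ra \Theta_a\times\Theta_c$ and the maps $\alpha,\beta$ from the proof of Proposition~\ref{pr:Z/Fa-res}. We have $(t\times\id)F=F\alpha$ and $\pi_2F=\beta$. Hence, after conjugating by $F^*$:
\begin{itemize}
\item $(\pi\times\id)^*$ becomes $\beta^*$ and $(\pi\times\id)_*$ becomes $\beta_*$;
\item $\id-(t\times\id)^*$ becomes $\id-\alpha^*$ on $\bigoplus_i M(\Theta_{[a,c]})$.
\end{itemize}
This is now a complex
\[ M(\Theta_c)\llra{(\beta_i^*)} \bigoplus_i M(\Theta_{[a,c]})\llra{\id-\alpha^*}\bigoplus_i M(\Theta_{[a,c]})\llra{\sum\beta_{i*}} M(\Theta_c). \]
The components are $\beta_i=p\circ t^i$, so $\beta_i^*=(t^*)^ip^*$. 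The map $\alpha^*$ acts as a cyclic shift: the $i$-th output component is the $(i-1)$-st input composed with $t^*$ for $i>0$, and for $i=0$ it is the last input twisted by $(t^*)^\epsilon$.

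To reduce to a single copy, use the automorphism $(y_i)\mapsto ((t^*)^{-i}y_i)$, or equivalently reparametrize $F$ by $\sigma^ix\mapsto(t^{-i}x,x)$. This turns every $\beta_i$ into $p$ and $\alpha^*$ into a pure shift, except in one slot, where the accumulated twist is $(t^*)^{\epsilon+(a,c)-1}$. Then apply the standard fact that for a cyclic shift with total monodromy $\tau$, the complex $\bigoplus_{i<k}X\xrightarrow{\id-\text{shift}}\bigoplus_{i<k}X$ is quasi-isomorphic, compatibly with the diagonal map and the sum map, to $X\xrightarrow{\id-\tau}X$. Concretely, one filters off the acyclic pieces with $x_i-x_{i+1}$ and kills all coordinates but one by elementary row and column operations. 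After this contraction the diagonal $(p^*,\dots,p^*)$ becomes $p^*$ and $\sum p_*$ becomes $p_*$, which yields exactly the stated three-term complex.

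The main obstacle is the bookkeeping in the last two steps. We must verify that the single surviving twist is precisely $(t^*)^{\epsilon+(a,c)-1}$, and not, for example, its inverse. That amounts to tracking the compositions of $t^{\pm1}$ around the cycle through the definition of $\alpha$ with the chosen conventions $(\rho_g)_*=(\rho_{g^{-1}})^*$. We must also check that the outer maps remain $p^*$ and $p_*$ after the contraction. I would first verify the twist in the case $(a,c)=1$, where no contraction occurs and $\alpha$ is just $t^\epsilon$ with $\epsilon\equiv 1$ mod $a$ and $\equiv0$ mod $c$. Then I would do the general cyclic contraction explicitly.
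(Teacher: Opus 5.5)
Your proposal is correct and is essentially the paper's own proof. The paper likewise applies $\uHom(\blank,M)$ to the standard resolution, transports along $F$, $\alpha$, $\beta$ to get the twisted cyclic-shift complex, and then maps the three-term complex in via $R(x)=(x,t^*x,\ldots,(t^*)^rx)$ and $Q(x)=(x,0,\ldots,0)$; these are exactly your diagonal and first-slot inclusions transported back through your untwisting automorphism $(y_i)\mapsto((t^*)^{-i}y_i)$, and that conjugation does give the twist $(t^*)^{\epsilon+(a,c)-1}$ rather than its inverse.

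Your contraction implicitly uses $p\circ t^{\epsilon+(a,c)-1}=p$, i.e.\ $c\mid \epsilon+(a,c)-1$, which holds by the choice of $\epsilon$. Also, the $p^*$ and $p_*$ labels in the displayed statement are swapped, and your reading ($p^*$ out of degree $0$, $p_*$ into degree $3$) is the correct one.
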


\begin{proof}
One applies $\uHom(\blank,M)$ to the resolution from Proposition~\ref{pr:Z/Fa-res} and evaluates at $\Theta_c$.  Using Remark~\ref{re:translate} this yields the sequence
\[ \xymatrixcolsep{2.25pc}\xymatrix{
0 & M(\pt \times \Theta_c) \ar[l] & M(\Theta_a\times \Theta_c) \ar[l]_{(\pi\times \id)_*} & M(\Theta_a \times \Theta_c) \ar[l]_{\id-(t\times \id)^*} & M(\pt \times \Theta_c). \ar[l]_{(\pi\times \id)^*}
}
\]
Next one uses the maps $F$, $\alpha$, and $\beta$ from the above proof in exactly the same way as there, which leads to the top complex in the following diagram:

\[
\xymatrixcolsep{1.6pc}\xymatrix{
0 & M(\Theta_c)\ar[l] & \bigoplus_{i=0}^r \sigma^i M(\Theta_{[a,c]}) \ar[l]_-{S}
&
\bigoplus_{i=0}^{r} \sigma^i M(\Theta_{[a,c]}) \ar[l]_-T & M(\Theta_c)\ar[dl]^{p^*} 
\ar[l]_-U & 0. \ar[l]\\
 && M(\Theta_{[a,c]}) \ar[u]^Q \ar[ul]^{p_*}
& M(\Theta_{[a,c]}) \ar[u]^R \ar[l]_{\id-(t^*)^{\epsilon+(a,c)-1}} 
}
\]
Here the rightmost $M(\Theta_c)$ is in cohomological degree $0$, $r=(a,c)-1$, and the maps are given by
\begin{align*}
S(y_0,\ldots,y_r)&=p_*y_0 + p_*t_*y_1+p_*(t_*)^2y_2+\cdots+p_*(t_*)^ry_r , \\
T(y_0,\ldots,y_r)&=(y_0-(t^*)^\epsilon y_r,\, y_1-t^*y_0, \, y_2-t^*y_1,\, \ldots,\, y_r-t^*y_{r-1}), \\
U(z)&= (p^*z,p^*(t^*z),p^*((t^*)^2z),\ldots,p^*((t^*)^rz)).
\end{align*}

Finally, defining the vertical maps by
\[ Q(x)=(x,0,\ldots,0), \quad R(x)=(x,t^*x,(t^*)^2x,\ldots,(t^*)^rx)
\]
makes the diagram commute and gives a quasi-isomorphism of chain complexes, since it is readily checked that the kernel and cokernel of $T$ are the same as those of $\id-(t^*)^{\epsilon+(a,c)-1}$.  
\end{proof}

\begin{prop}
\label{pr:Ext-Mackey}
Fix $n\geq 1$, and let $a$ and $b$ be divisors of $n$.  Then $\mExt^i(\mZ/I_a,\mZ/I_b)\iso\begin{cases} \mZ/I_{(a,b)} & \text{if $i=0,3$},\\ 0 & \text{otherwise.}\end{cases}$ 
\end{prop}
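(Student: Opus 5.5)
Apply $\uHom(\blank,\mZ/I_b)$ to the standard free resolution of Proposition~\ref{pr:Z/Fa-res}. This gives the cochain complex of Mackey functors
\[ 0\ra \uHom(F_{\pt},\mZ/I_b)\ra \uHom(F_{\Theta_a},\mZ/I_b)\ra \uHom(F_{\Theta_a},\mZ/I_b)\ra \uHom(F_{\pt},\mZ/I_b)\ra 0,
\]
whose value at each $\Theta_c$ is computed by Proposition~\ref{pr:Ext-Mackey0}. With $M=\mZ/I_b$, every action map $t^*$ is the identity, so the middle map $\id-(t^*)^{\epsilon+(a,c)-1}$ is zero. Hence at the spot $\Theta_c$:
\begin{itemize}
\item $\uExt^1(\Theta_c)$ is the cokernel of the transfer $p_*\colon M(\Theta_c)\ra M(\Theta_{[a,c]})$;
\item $\uExt^2(\Theta_c)$ is the kernel of the restriction $p^*\colon M(\Theta_{[a,c]})\ra M(\Theta_c)$ (in the paper's diagram the arrow labelled $p^*$ points from $M(\Theta_{[a,c]})$ to $M(\Theta_c)$, so it is the map $p\colon\Theta_{[a,c]}\ra\Theta_c$ read in the appropriate variance);
\item $\uExt^0(\Theta_c)=\ker p_*$ and $\uExt^3(\Theta_c)=\coker p^*$.
\end{itemize}
The precise naming of these two maps needs checking against Remark~\ref{re:translate}. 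I would pin it down by computing one of the two directly: the map out of degree $0$ is induced by $Rp\colon F_{\Theta_a}\ra F_{\pt}$ under $\uHom$, which by Remark~\ref{re:translate} is a pullback $(\pi\times\id)^*$. So in degrees $0\to1$ the map is a restriction $M(\Theta_c)\ra M(\Theta_{[a,c]})$, and in degrees $2\to3$ it is a transfer $M(\Theta_{[a,c]})\ra M(\Theta_c)$.

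With this orientation, Proposition~\ref{pr:Z/F_a-props}(2) says restrictions in $\mZ/I_b$ are surjective and transfers are injective. The degree-$0$ restriction $M(\Theta_c)\ra M(\Theta_{[a,c]})$ is therefore surjective, killing $\uExt^1$. The degree-$2\to3$ transfer is injective, killing $\uExt^2$. So $\uExt^1=\uExt^2=0$, and $\uExt^i=0$ for $i>3$ automatically since the resolution has length $3$. For $i=0$, Proposition~\ref{pr:Hom-identify} already gives $\uHom(\mZ/I_a,\mZ/I_b)\iso\mZ/I_{(a,b)}$.

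It remains to identify $\uExt^3$, which is the cokernel of the transfers $M(\Theta_{[a,c]})\ra M(\Theta_c)$, i.e.\ the cokernel of the Mackey map $\uHom(F_{\Theta_a},\mZ/I_b)\ra\uHom(F_{\pt},\mZ/I_b)=\mZ/I_b$. I would use the recognition result Proposition~\ref{pr:Z/F_a-identify} for $(a,b)$, checking its three hypotheses.

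First, all restriction maps are surjective, since $\uExt^3$ is a quotient of $\mZ/I_b$ whose restrictions are surjective.

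Second, at $\Theta_1$ the transfer $\mZ/I_b(\Theta_a)=\Z/\tfrac{b}{(a,b)}\ra\Z/b$ is multiplication by $a$. Its image is generated by $a$, so the cokernel is $\Z/(a,b)$.

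Third, at $\Theta_{(a,b)}$ we need the cokernel to vanish. The target is $\Z/\tfrac{b}{(a,b)}$ and the transfer from $\Theta_a$ is multiplication by $\tfrac{a}{(a,b)}$, which is coprime to $\tfrac{b}{(a,b)}$. So the transfer is surjective and the cokernel is zero.

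Proposition~\ref{pr:Z/F_a-identify} then gives $\uExt^3\iso\mZ/I_{(a,b)}$.

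The main obstacle is bookkeeping the variance in Proposition~\ref{pr:Ext-Mackey0}: one must confirm which end of the complex carries restrictions and which carries transfers, and that the middle map really vanishes for trivial action. The vanishing of the middle map is immediate here because $t^*=\id$ on $\mZ/I_b$. Once the orientation is fixed, everything else is forced by Proposition~\ref{pr:Z/F_a-props} and the recognition theorem.
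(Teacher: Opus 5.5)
Your proposal is correct and follows essentially the same route as the paper. You specialize Proposition~\ref{pr:Ext-Mackey0} to $M=\mZ/I_b$, where $t^*=\id$ kills the middle map, and use surjective restrictions and injective transfers to get $\uExt^1=\uExt^2=0$. You take $\uExt^0$ from Proposition~\ref{pr:Hom-identify}, and you identify $\uExt^3$, a quotient of $\mZ/I_b$, via Proposition~\ref{pr:Z/F_a-identify} by checking its values at $\Theta_1$ and $\Theta_{(a,b)}$. Your resolution of the variance question (a restriction out of degree $0$, a transfer into degree $3$) is the right one and agrees with the proof of Proposition~\ref{pr:Ext-Mackey0}, whose displayed statement labels the two outer maps $p^*$ and $p_*$ the other way around.
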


\begin{remark}
Note from (\ref{eq:uHom=Hom})  that evaluating the $\uExt$ Mackey functors at $\pt$ yields that 
\[ \Ext^i(\mZ/I_a,\mZ/I_b)\iso\begin{cases} \Z/(a,b) & \text{if $i=0,3$},\\ 0 & \text{otherwise.}\end{cases}
\]
\end{remark}

\begin{proof}[Proof of Proposition~\ref{pr:Ext-Mackey}]
Let $M$ be a $\mZ$-module where all of the $t^*$ maps are the identity.  By Proposition~\ref{pr:Ext-Mackey0} 
for any $c|n$ the group
$\uExt^i(\mZ/I_a,M)(\Theta_c)$ is computed by the complex  
\[ \xymatrix{
0 & M(\Theta_c) \ar[l] & M(\Theta_{[a,c]})  \ar[l]_-{p_*} &  M(\Theta_{[a,c]}) \ar[l]_-{0} & M(\Theta_c) \ar[l]_-{p^*} & 0. \ar[l]
}
\]
Now take $M=\mZ/I_b$. Then the restriction maps are all surjective and the transfer maps are all injective, so this immediately yields 
$\mExt^i(\mZ/I_a,\mZ/I_b)=0$ for $i=1,2$. We next use the recognition theorem from Proposition~\ref{pr:Z/F_a-identify} to compute $\mExt^3$.

For $M=\mZ/I_b$ the above complex takes the form
\[ \xymatrix{
0 & \Z/r \ar[l] &  \Z/s \ar[l]_-S &  \Z/s \ar[l]_-0 & \Z/r \ar[l]_-U & 0 \ar[l]
}
\]
where $r=\tfrac{b}{(b,c)}$, $s=\tfrac{b}{(b,[a,c])}$, and the maps are $U(x)=\bar{x}$, $S(x)=\tfrac{r}{s}\cdot x$.  Here $x\mapsto \bar{x}$ is the canonical projection $\Z/r\ra \Z/s$ (note that $s|r$). In particular, we find that
$\mExt^3(\mZ/I_a,\mZ/I_b)(\Theta_{(a,b)})=0$ and
$\mExt^3(\mZ/I_a,\mZ/I_b)(\Theta_1)\iso \Z/(a,b)$. 
Finally, observe that the proof of Proposition~\ref{pr:Ext-Mackey0} exhibits $\uExt^3(\mZ/I_a,M)$ as a quotient of $M$, for any $M$.  Since the restriction maps in $\mZ/I_b$ are all surjections, the same is true for $\uExt^3(\mZ/I_a,\mZ/I_b)$.  It therefore follows from Proposition~\ref{pr:Z/F_a-identify} that $\uExt^3(\mZ/I_a,\mZ/I_b)\iso \mZ/I_{(a,b)}$, as desired. 

The $\mExt^0$ computation can be done in this way too, but in fact we already did it from a different perspective back in Proposition~\ref{pr:Hom-identify}.
\end{proof}

\begin{remark}
More generally, it follows from Proposition~\ref{pr:Ext-Mackey0} that for any $\mZ$-module $M$ one has $\mExt^3(\mZ/I_a,M)\iso M/M\{\cF_a\}=M\bbox \mZ/I_a$, and of course $\mExt^0(\mZ/I_a,M)\iso \Ann_{\cF_a}(M)$.  We leave the proofs to the reader.  However, we do not know  nice descriptions for the $\mExt^1$ and $\mExt^2$ computations for general $M$. 
\end{remark}

\begin{prop}
\label{pr:Ext-Z}
We have $\uExt^i(\mZ/I_a,\mZ)\iso \begin{cases}
\mZ/I_a & \text{if $i=3$},\\
0 & \text{otherwise}.  
\end{cases}$
\end{prop}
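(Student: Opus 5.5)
We apply Proposition~\ref{pr:Ext-Mackey0} with $M=\mZ$. Since all the action maps $t^*$ in $\mZ$ are the identity, the map $\id-(t^*)^{\epsilon+(a,c)-1}$ is zero, just as in the proof of Proposition~\ref{pr:Ext-Mackey}. So for each $c|n$ the value $\uExt^i(\mZ/I_a,\mZ)(\Theta_c)$ is the cohomology of
\[ \xymatrix{
0 & \Z \ar[l] & \Z \ar[l]_-{p_*} & \Z \ar[l]_-{0} & \Z \ar[l]_-{p^*} & 0. \ar[l]
} \]
Here $p^*\colon \mZ(\Theta_c)\ra \mZ(\Theta_{[a,c]})$ is the identity, and $p_*\colon \mZ(\Theta_{[a,c]})\ra \mZ(\Theta_c)$ is multiplication by the index $\tfrac{[a,c]}{c}=\tfrac{a}{(a,c)}$.

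Reading off the cohomology:
\begin{itemize}
\item In degree $0$ it is $\ker(p^*)=0$.
\item In degree $1$ it is $\ker(0)/\operatorname{im}(p^*)=\Z/\Z=0$.
\item In degree $2$ it is $\ker(p_*)/\operatorname{im}(0)=0$, since multiplication by a nonzero integer is injective on $\Z$.
\item In degree $3$ it is $\coker(p_*)\iso\Z/\tfrac{a}{(a,c)}$.
\end{itemize}
Thus $\uExt^i(\mZ/I_a,\mZ)=0$ for $i\neq 3$, and the orders in degree $3$ agree with Proposition~\ref{pr:Z/F_a-props}(1): at $\Theta_1$ we get $\Z/a$, and at $\Theta_a$ we get $0$.

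To identify $\uExt^3$ as a Mackey functor, we use the recognition theorem, Proposition~\ref{pr:Z/F_a-identify}. We have verified conditions (i) and (ii); it remains to check (iii), that all restriction maps are surjective. As observed in the proof of Proposition~\ref{pr:Ext-Mackey}, the proof of Proposition~\ref{pr:Ext-Mackey0} exhibits $\uExt^3(\mZ/I_a,M)$ as a quotient of $M$ for any $M$. Concretely, it is the cokernel of $S$, which comes from the last map $\uHom(F_{\Theta_a},M)\ra\uHom(F_{\pt},M)=M$ of the dualized resolution. Since $\mZ$ has surjective (identity) restriction maps, so does its quotient $\uExt^3(\mZ/I_a,\mZ)$. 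Proposition~\ref{pr:Z/F_a-identify} then gives $\uExt^3(\mZ/I_a,\mZ)\iso\mZ/I_a$.

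The only delicate point is this naturality in $\Theta_c$. The computation above is spot-by-spot, so the restriction maps must come from a genuine Mackey-level description, not just from the individual complexes. We avoid the issue by working with the Mackey functor cokernel $\coker\bigl(\uHom(F_{\Theta_a},\mZ)\ra\mZ\bigr)$ directly; alternatively, by Proposition~\ref{pr:box-slice-isos} this is $\coker(F_{\Theta_a}\ra\mZ)=\mZ/I_a$, which gives the result outright.
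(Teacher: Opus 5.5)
Your proof is correct and follows the paper's route. Like the paper, you specialize Proposition~\ref{pr:Ext-Mackey0} to $M=\mZ$ exactly as in the proof of Proposition~\ref{pr:Ext-Mackey}, and you identify $\uExt^3$ via the recognition theorem, Proposition~\ref{pr:Z/F_a-identify}. Your closing remark that $\uExt^3\iso\coker(F_{\Theta_a}\ra\mZ)=\mZ/I_a$ is the paper's alternative self-duality argument (Remark~\ref{re:complex-iso}); applied to the whole dualized complex, that argument gives the vanishing in degrees $0,1,2$ at the same time.
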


\begin{proof}
This is an easy computation using the same analysis as in the proof of Proposition~\ref{pr:Ext-Mackey}.  Or one can observe that the resolution of $\mZ/I_a$ given in Proposition~\ref{pr:Z/Fa-res} is self-dual, in the sense that applying $\uHom(\blank,\mZ)$ gives a complex isomorphic to the original resolution (see Remark~\ref{re:complex-iso}).  
\end{proof}

We next turn to the analogous $\uTor$ computations.  These can be done in exactly the same way we did the $\uExt$ computations, but in fact we get them for free using duality of the standard resolution:

\begin{prop}
\label{pr:Ext-Tor}
There are isomorphisms, functorial in $M$, of the form
\[ \uTor_i(\mZ/I_a,M)\iso \uExt^{3-i}(\mZ/I_a,M)
\]
for all $i$.
\end{prop}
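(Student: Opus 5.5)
Let $P_\bullet$ denote the standard free resolution of $\mZ/I_a$ from Proposition~\ref{pr:Z/Fa-res}, so $P_0=P_3=F_{\pt}$ and $P_1=P_2=F_{\Theta_a}$. The idea is that $P_\bullet$ is self-dual up to reindexing $i\leftrightarrow 3-i$. Combined with the identification $\uHom(F_A,M)\iso \uHom(F_A,\mZ)\bbox M$ of Corollary~\ref{co:box-hom}, this will identify the cochain complex computing $\uExt^*(\mZ/I_a,M)$ with the chain complex computing $\uTor_*(\mZ/I_a,M)$, reindexed.

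First, define $P^\vee_\bullet=\uHom(P_{3-\bullet},\mZ)$. This is a chain complex of free modules, because Proposition~\ref{pr:box-slice-isos} gives $\uHom(F_A,\mZ)\iso F_{DA}$. As in Example~\ref{ex:cyclic-res}, a map $F_f$ dualizes to $F_{Df}$. So $P^\vee_\bullet$ is the complex
\[ F_{\pt}\llra{Ip} F_{\Theta_a}\llra{\id-It} F_{\Theta_a}\llra{Rp} F_{\pt}, \]
and Remark~\ref{re:complex-iso} supplies an isomorphism of complexes $P_\bullet\iso P^\vee_\bullet$. Here $D$ swaps $Rp$ and $Ip$, and $\id-Rt$ becomes $\id-It$; I would take care with the ordering of the maps once the reindexing is applied.

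Next, apply Corollary~\ref{co:box-hom} termwise. This gives isomorphisms $\uHom(P_j,M)\iso \uHom(P_j,\mZ)\bbox M=P^\vee_{3-j}\bbox M$. These are natural in $M$ and in maps between free modules, since they come from natural isomorphisms in Proposition~\ref{pr:box-slice-isos}. Consequently the cochain complex $\uHom(P_\bullet,M)$ is isomorphic, with degree $j$ matched to degree $3-j$, to the chain complex $P^\vee_\bullet\bbox M$. Using $P^\vee_\bullet\iso P_\bullet$, the latter is isomorphic to $P_\bullet\bbox M$. Taking homology gives $\uExt^{j}(\mZ/I_a,M)\iso H_{3-j}(P_\bullet\bbox M)=\uTor_{3-j}(\mZ/I_a,M)$. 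This uses that $P_\bullet$ is a flat (indeed free, Corollary~\ref{co:flat->Free}) resolution, so it computes $\uTor$. All identifications are natural in $M$, which yields functoriality. Outside degrees $0$ through $3$ both sides vanish.

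The main obstacle is the naturality claim in Corollary~\ref{co:box-hom} with respect to morphisms $F_A\to F_B$, not just isomorphisms of $G$-sets. We need the square relating $\uHom(F_f,M)$ to $\uHom(F_f,\mZ)\bbox M$ to commute, with the correct variance; this is precisely where the $D$ bookkeeping enters. To check it I would use the explicit descriptions in Remark~\ref{re:translate}. At a $G$-set $\Theta$, $\uHom(F_f,M)$ is $(f\times\id)^*$ on $M(T\times\Theta)\to M(S\times\Theta)$. On the other side, $F_{Df}\bbox M$ is $(D(Df)\times\id)^*=(f\times\id)^*$ under $F_{DA}\bbox M\iso M^{A}$. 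So the two agree. After that, the only remaining care is the signs in Remark~\ref{re:complex-iso}, which do not affect homology.
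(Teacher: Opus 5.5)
Your proposal is correct and follows essentially the same route as the paper. The paper applies Corollary~\ref{co:box-hom} to the standard resolution $J_\bullet$, uses Proposition~\ref{pr:box-slice-isos} to identify $\uHom(J_\bullet,\mZ)$ with $\Sigma^{-3}J_\bullet$ via Remark~\ref{re:complex-iso}, and takes homology. Your explicit verification, via Remark~\ref{re:translate}, that these isomorphisms commute with the maps $F_f$ fills in a naturality point the paper leaves implicit.
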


\begin{proof}
Let $J_\bullet \ra \mZ/I_a$ be the standard resolution.  We have an isomorphism of complexes $\uHom(J_\bullet,M)\iso \uHom(J_\bullet,\mZ)\bbox M$
by Corollary~\ref{co:box-hom}. Using Proposition~\ref{pr:box-slice-isos} we have that $\uHom(J_\bullet,\mZ)$ is isomorphic to the complex
\[
\xymatrix{
0 & F_{\pt} \ar[l] & F_{\Theta_a} \ar[l]_{Rp} & F_{\Theta_a} \ar[l]_{\id-It} & F_{\pt} \ar[l]_{Ip} & 0 \ar[l]
}
\]
where the right-most $F_{\pt}$ is in degree $0$. Thus $\uHom(J_\bullet,\mZ)\iso \Sigma^{-3}J_\bullet$, as shown in Remark~\ref{re:complex-iso}.
This yields the desired isomorphisms of homology groups.
\end{proof}

\begin{prop}
\label{pr:Tor-Mackey1}
Let $a$ and $b$ be divisors of $n$. Then 
\[
\uTor_i(\mZ/I_a, \mZ/I_b)\cong \begin{cases}\mZ/I_{(a,b)} & \text{ if } i=0,3, \\ 0 & \text{otherwise}.\end{cases}
\]
\end{prop}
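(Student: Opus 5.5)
This follows directly from Proposition~\ref{pr:Ext-Tor} combined with Proposition~\ref{pr:Ext-Mackey}. Taking $M=\mZ/I_b$ in Proposition~\ref{pr:Ext-Tor} gives natural isomorphisms
\[ \uTor_i(\mZ/I_a,\mZ/I_b)\iso \uExt^{3-i}(\mZ/I_a,\mZ/I_b) \]
for every $i$. Proposition~\ref{pr:Ext-Mackey} says the right-hand side is $\mZ/I_{(a,b)}$ when $3-i\in\{0,3\}$, that is, when $i\in\{0,3\}$, and is zero otherwise. Both statements include all indices. For $i<0$ or $i>3$ the $\uExt$ group vanishes because the standard resolution has length $3$. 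So the case pattern for $\uTor$ is exactly the one claimed, since the set $\{0,3\}$ is invariant under $i\mapsto 3-i$.

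Before relying on Proposition~\ref{pr:Ext-Tor}, I would check that its proof computes $\uTor$ correctly. It identifies $\uHom(J_\bullet,M)$ with $\uHom(J_\bullet,\mZ)\bbox M\iso (\Sigma^{-3}J_\bullet)\bbox M$. Here $J_\bullet$ is the standard free resolution of Proposition~\ref{pr:Z/Fa-res}, so $J_\bullet\bbox M$ computes $\uTor_*(\mZ/I_a,M)$; free modules are flat by Corollary~\ref{co:flat->Free}. The only bookkeeping is that cohomological degree $j$ of $\uHom(J_\bullet,M)$ corresponds to homological degree $3-j$ of $J_\bullet\bbox M$. This is the reindexing the statement of Proposition~\ref{pr:Ext-Tor} records.

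As an independent sanity check, I would verify the two nonzero cases directly. We have $\uTor_0(\mZ/I_a,\mZ/I_b)=\mZ/I_a\bbox\mZ/I_b\iso\mZ/I_{(a,b)}$ by the corollary to Propositions~\ref{pr:familyprop} and \ref{pr:cyclicfam}. This agrees with $\uExt^3(\mZ/I_a,\mZ/I_b)$, which was identified with $\mZ/I_b\bbox\mZ/I_a$ in the remark after Proposition~\ref{pr:Ext-Mackey}. Likewise $\uTor_3(\mZ/I_a,\mZ/I_b)$ is the kernel of $\id-It$ on $(F_{\Theta_a}\bbox\mZ/I_b)$ restricted to the image of $Ip$. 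This matches $\Ann_{\cF_a}(\mZ/I_b)\iso\mZ/I_{(a,b)}$ from Proposition~\ref{pr:Hom-identify}.

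I expect no real obstacle. The only point needing care is the degree reindexing in the duality $\uHom(J_\bullet,\mZ)\iso\Sigma^{-3}J_\bullet$, and since $\{0,3\}$ is self-dual under $i\mapsto 3-i$, even a sign or shift slip there would not change the answer.
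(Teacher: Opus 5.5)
Your proof is correct and matches the paper's: the paper's proof is just ``Immediate,'' meaning Proposition~\ref{pr:Ext-Tor} with $M=\mZ/I_b$ combined with Proposition~\ref{pr:Ext-Mackey}, using that $\{0,3\}$ is invariant under $i\mapsto 3-i$. One slip in your optional sanity check: $\uTor_3(\mZ/I_a,\mZ/I_b)$ is simply the kernel of $Ip\bbox\id$ on $F_{\pt}\bbox\mZ/I_b\iso\mZ/I_b$ (the kernel of the relevant restriction maps), not anything involving $\id-It$, though your identification of it with $\Ann_{\cF_a}(\mZ/I_b)\iso\mZ/I_{(a,b)}$ is right.
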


\begin{proof}
Immediate.
\end{proof}

\subsection{A few more computations}
\label{se:more-computations}

One sometimes needs to know $\uTor_*(I_a,I_b)$, $\uTor_*(I_a,\mZ/I_b)$, or perhaps the analogs with $\uExt$.  These can be computed using the short exact sequences $0\ra I_b\ra \mZ \ra \mZ/I_b\ra 0$ together with the calculations we have already done.  Here are the answers:

\begin{prop}
\label{pr:Tor-I}\mbox{}\par
\begin{itemize}
\item 
$ \uExt^{3-i}(\mZ/I_a,I_b) \iso \uTor_i(\mZ/I_a,I_b)\iso \begin{cases}
\mZ/I_{(a,b)} & \text{if $i=2$},\\
I_{(a,b)}/I_a & \text{if $i=0$}, \\
0 & \text{otherwise,}
\end{cases}$
\item $\uTor_i(I_a,I_b)\iso \begin{cases}
\mZ/I_{(a,b)} & \text{if $i=1$},\\
I_{[a,b]} & \text{if $i=0$},\\
0 & \text{otherwise.}
\end{cases}$
\end{itemize}
In particular, note the isomorphisms
\[ \mZ/I_a\bbox I_b\iso I_{(a,b)}/I_a \qquad\text{and}\qquad
I_a\bbox I_b\iso I_{[a,b]}.
\]
\end{prop}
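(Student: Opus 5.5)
The plan is to use the long exact sequences coming from $0\ra I_b\ra \mZ\ra \mZ/I_b\ra 0$, fed with Propositions~\ref{pr:Tor-Mackey1}, \ref{pr:Ext-Tor} and \ref{pr:Ext-Z}. By Proposition~\ref{pr:Ext-Tor}, $\uExt^{3-i}(\mZ/I_a,I_b)\iso\uTor_i(\mZ/I_a,I_b)$, so only the $\uTor$ groups need computing.

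First, $\uTor_*(\mZ/I_a,\blank)$ applied to this short exact sequence. Since $\mZ$ is free, $\uTor_i(\mZ/I_a,\mZ)=0$ for $i>0$ and equals $\mZ/I_a$ for $i=0$. The long exact sequence gives
\[ \uTor_{i+1}(\mZ/I_a,\mZ/I_b)\iso \uTor_i(\mZ/I_a,I_b) \qquad (i\geq 1). \]
By Proposition~\ref{pr:Tor-Mackey1} this is $\mZ/I_{(a,b)}$ for $i=2$ and $0$ for $i=1$ and $i\geq 3$. In low degrees we get
\[ 0\ra \uTor_1(\mZ/I_a,\mZ/I_b)\ra \mZ/I_a\bbox I_b\ra \mZ/I_a\ra \mZ/I_a\bbox\mZ/I_b\ra 0. \]
The first term vanishes, so $\mZ/I_a\bbox I_b$ injects into $\mZ/I_a$, and it is the kernel of $\mZ/I_a\ra \mZ/I_{(a,b)}$. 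Under the identification $\mZ/I_a\bbox N\iso N/N\{\cF_a\}$ of Proposition~\ref{pr:M{F}}, this map is the natural projection $\mZ/I_a\ra \mZ/(I_a+I_b)$, using Proposition~\ref{pr:cyclicfam}. Its kernel is $(I_a+I_b)/I_a=I_{(a,b)}/I_a$. The one point needing care is that the connecting maps are the natural ones; this follows from naturality of the box product identifications.

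Second, $\uTor_*(I_a,I_b)$. Apply $\uTor_*(I_a,\blank)$ to the same sequence, and use the sequence $0\ra I_a\ra\mZ\ra\mZ/I_a\ra0$ in the first variable to get $\uTor_i(I_a,N)\iso \uTor_{i+1}(\mZ/I_a,N)$ for $i\geq1$. Then
\[ \uTor_i(I_a,I_b)\iso \uTor_{i+1}(\mZ/I_a,I_b), \]
which is $\mZ/I_{(a,b)}$ for $i=1$ and $0$ for $i\geq 2$ by the first bullet.

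For $i=0$ there is an exact sequence
\[ 0\ra \uTor_1(\mZ/I_a,I_b)\ra I_a\bbox I_b\ra I_b\ra \mZ/I_a\bbox I_b\ra 0, \]
and the first term is $0$. So $I_a\bbox I_b$ injects into $I_b=\mZ\bbox I_b$, and its image is the image of $I_a\bbox I_b\ra\mZ\bbox\mZ=\mZ$, namely $I_a\cdot I_b$. By Corollary~\ref{co:I-product}, $I_a\cdot I_b=I_{[a,b]}$. This also gives the displayed isomorphisms $I_a\bbox I_b\iso I_{[a,b]}$ and $\mZ/I_a\bbox I_b\iso I_{(a,b)}/I_a$.

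I expect the main obstacle to be the degree-zero identifications: checking that the maps in the long exact sequences really are the natural multiplication and projection maps, so that the images and kernels can be named as ideals. The higher-degree statements follow mechanically from dimension shifting.
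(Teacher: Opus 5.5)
Your proof is correct. The higher-degree computations and the identification $I_a\bbox I_b\iso I_{[a,b]}$ are essentially the paper's argument. The paper boxes $0\ra I_b\ra \mZ\ra \mZ/I_b\ra 0$ with $I_a$ and uses $\uTor_1(I_a,\mZ/I_b)=0$ to embed $I_a\bbox I_b$ in $I_a$. You embed it in $I_b$ using $\uTor_1(\mZ/I_a,I_b)=0$. By dimension shifting both vanishings amount to $\uTor_2(\mZ/I_a,\mZ/I_b)=0$, and in either case the image is $I_a\cdot I_b=I_{[a,b]}$.

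The genuine difference is in the computation of $\mZ/I_a\bbox I_b$.
\begin{itemize}
\item The paper does not use a long exact sequence at this step. It applies the formal identity $M\bbox \mZ/I_a\iso M/(I_a\cdot M)$ (essentially Proposition~\ref{pr:M{F}}) to get $I_b/(I_a\cdot I_b)=I_b/I_{[a,b]}$. It then uses the second isomorphism theorem from Corollary~\ref{co:I-product} to rewrite this as $I_{(a,b)}/I_a$.
\item You instead use $\uTor_1(\mZ/I_a,\mZ/I_b)=0$ to realize $\mZ/I_a\bbox I_b$ as the kernel of $\mZ/I_a\ra \mZ/I_a\bbox\mZ/I_b\iso \mZ/I_{(a,b)}$.
\end{itemize}
Your route needs only the sum formula $I_a+I_b=I_{(a,b)}$ from Proposition~\ref{pr:cyclicfam}, and it lands directly on the form $I_{(a,b)}/I_a$. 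The paper's route needs the intersection formula $I_a\cdot I_b=I_a\cap I_b=I_{[a,b]}$ at this step but no $\uTor$ vanishing. It also yields the alternative description $I_b/I_{[a,b]}$ along the way.

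The point you flag as needing care does not involve connecting homomorphisms. The relevant map is $\id\bbox\pi$. The composite $\mZ\ra \mZ/I_a\ra \mZ/I_a\bbox \mZ/I_b$ is the surjection $\mZ\bbox\mZ\ra \mZ/I_a\bbox\mZ/I_b$, whose kernel is $I_a+I_b$ by the proof of Proposition~\ref{pr:familyprop}(d). So the kernel of the map out of $\mZ/I_a$ is $(I_a+I_b)/I_a$ with no further bookkeeping.
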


\begin{proof}
The duality between $\uExt$ and $\uTor$ is from Proposition~\ref{pr:Ext-Tor}.  The computations of the $\uTor_i$ for $i>0$ is immediate from the long exact $\Tor$-sequence.  It  only remains to do the $\uTor_0$ computations.

For any $\mZ$-module $M$ one formally has $M\bbox \mZ/I_a\iso M/(I_a\cdot M)$, and so $I_b\bbox \mZ/I_a\iso I_b/(I_a\cdot I_b)$.  But Corollary~\ref{co:I-product} identifies $I_a\cdot I_b$ with $I_{[a,b]}$, and also gives $I_b/(I_a\cdot I_b)=I_b/I_{[a,b]}\iso I_{(a,b)}/I_a$.  

Finally, take the exact sequence
 $0\ra I_b\ra \mZ \ra \mZ/I_b\ra 0$ and box with $I_a$ to get $0\ra I_a\bbox I_b \ra I_a \ra I_a\bbox \mZ/I_b\ra 0$.  This sequence is still exact because $\uTor_1(I_a,\mZ/I_b)=0$.  It follows that $I_a\bbox I_b$ is isomorphic to its image in $I_a$, which is precisely $I_a\cdot I_b=I_{[a,b]}$.
\end{proof}

\subsection{Homological dimension}

All of the homological computations we have done so far have vanished in degrees larger than three, and this is not a coincidence.  As explained in \cite{Z}, the following result is proven in \cite{BSW}.  As the statement is not totally transparent in \cite{BSW}, we include a sketch of the proof.

\begin{prop}
When $G=C_n$, every finitely-generated $\mZ$-module has a projective resolution of length three or less.
\end{prop}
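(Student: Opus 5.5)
The plan is to reduce to the local case via the rationalization and localization tools above, and then use a periodicity/duality argument. First reduce to showing that every finitely-generated $\mZ$-module $M$ has projective dimension at most $3$. Equivalently, $\uExt^i(M,N)=0$ for $i\geq 4$ and all $N$; by dimension shifting it suffices to prove $\uExt^4(M,N)=0$. Since $M$ is finitely generated, take a free presentation $0\to K\to F\to M\to 0$ with $F$ a finite sum of $F_{\Theta_d}$'s, and iterate twice. This gives an exact sequence
\[ 0\to K_3\to P_2\to P_1\to P_0\to M\to 0 \]
with the $P_i$ finitely generated free, so $K_3$ is a \emph{third syzygy}. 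The goal is to show $K_3$ is projective.

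The key structural input I would use is the following: projectivity of a finitely-generated $\mZ$-module can be tested locally at each prime $\ell$. Over $\Z_{(\ell)}$, write $n=\ell^e m$ with $(\ell,m)=1$. Then $|C_m|$ is invertible, and a Corollary~\ref{co:rational-equiv1}-style argument (inverting only $m$) lets me split the $C_n$-problem into a $C_{\ell^e}$-problem twisted by the semisimple group algebra $\Z_{(\ell)}[C_m]$. Concretely, I expect the $\ell$-local category of $\mZ_{C_n}$-modules to decompose into pieces each equivalent to cohomological Mackey functors for $C_{\ell^e}$ over a ring $\Z_{(\ell)}[\zeta_k]$. These rings are regular of dimension $1$, being Dedekind.

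So the crux is the prime-power case: for $G=C_{\ell^e}$ over a Dedekind ring $R$, every third syzygy is projective. Here I would argue as in \cite{BSW}. A cohomological Mackey functor for a cyclic $\ell$-group is determined by a chain of modules with restriction and transfer maps. The second syzygy $K_2$, being a submodule of a free module, is $R$-torsionfree and has injective restriction maps. For such lattice-like modules I would show $\uExt^1(K_2,-)$ is controlled only by the Tate cohomology of the underlying $R[C_{\ell^e}]$-module $K_2(G/e)$, which is periodic of period $2$. Comparing with the length-$3$ standard resolutions of Proposition~\ref{pr:Z/Fa-res} (the prime-power case gives the template) then shows that the relevant obstruction vanishes after one more syzygy.

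Finally I glue back. If $K_3$ is $\ell$-locally projective for every $\ell$, then $K_3$ is a finitely generated module whose localizations are summands of free modules. A standard local-to-global argument applies, using that $\Hom$ of finitely presented objects commutes with localization, as in Proposition~\ref{pr:misc}(f). It shows that the splitting of $P_2\to\operatorname{im}$ exists globally, so $K_3$ is projective.

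The main obstacle is the middle step: making the $\ell$-local decomposition precise and proving the third-syzygy vanishing in the $C_{\ell^e}$ case. Since the paper only sketches this, I would lean on \cite{BSW} for the precise statement that cohomological Mackey functors over $C_{\ell^e}$ with coefficients in a regular ring of dimension $1$ have global dimension $3$.
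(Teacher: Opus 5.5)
\textbf{Verdict: there is a genuine gap.} The step that carries all of the mathematical content is never supplied.

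Your outer reductions are fine:
\begin{itemize}
\item Projective dimension $\le 3$ is equivalent to the third syzygy being projective.
\item Projectivity of a finitely generated $\mZ$-module can be tested after localizing at each prime.
\item $\ell$-locally the category does split into $\Z_{(\ell)}[\zeta_k]$-linear cohomological Mackey functors for $C_{\ell^e}$. This holds because $\Z_{(\ell)}[C_m]$ is a separable algebra, a product of the rings $\Z_{(\ell)}[\zeta_k]$ for $k\mid m$. It is not semisimple, though.
\end{itemize}

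The missing step is why a third syzygy is projective for $C_{\ell^e}$. The Tate-cohomology sentence is not an argument. It is unclear what statement is intended, and Tate cohomology of the underlying $\Z[G]$-module does not detect projectivity of $\mZ$-modules. For example, $F_{G/H}=\FP(\Z\langle G/H\rangle)$ is free even though $\hat H^0(G;\Z\langle G/H\rangle)\iso \Z/|H|\neq 0$.

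The fallback citation does not rescue this. As you phrase it, with coefficients in an arbitrary regular ring of dimension one, the statement is false. Over $\mathbb{F}_\ell[x]$ the global dimension is infinite: it is one more than over $\mathbb{F}_\ell$, where \cite{BSW} show it is infinite once $\ell^e>2$. So regularity is not the relevant hypothesis. Moreover, what \cite{BSW} actually establish (building on \cite{A}) is the bound over $\Z$ for every group with cyclic Sylow subgroups. That applies to $C_n$ directly, so your localization detour does no work and the proof collapses to citing the proposition itself.

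The missing idea, used in the paper (following \cite[Proof of Proposition 7.1]{BSW}), is to work globally over $\Z$, identify the \emph{second} syzygy as a fixed-point functor, and invoke Arnold's theorem.
\begin{enumerate}
\item Take $F_1\to F_0\to M\to 0$ with $F_0,F_1$ finite sums of free modules.
\item By Proposition~\ref{pr:fixedpointfree}, $F_i=\FP(W_i)$ with $W_i$ a finitely generated permutation module.
\item By the $(\ev_G,\FP)$-adjunction, the map $F_1\to F_0$ equals $\FP(W_1\to W_0)$.
\item Taking fixed points is left exact, so $\ker(F_1\to F_0)=\FP(K)$ with $K=\ker(W_1\to W_0)$.
\item Arnold \cite[Corollary 4.8]{A} shows that for every finitely generated $\Z[C_n]$-module $K$, the $\mZ$-module $\FP(K)$ has a projective resolution of length at most one.
\item Splicing gives a resolution of $M$ of length at most three.
\end{enumerate}
Your sketch lacks some input of this kind: a statement about permutation resolutions of $\Z[G]$-modules that remain exact on all fixed points. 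With it, no prime-by-prime decomposition is needed.
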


\begin{proof}
The argument is taken from \cite[Proof of Proposition 7.1]{BSW}.  Let $M$ be a finitely-generated $\mZ$-module.  Then there is a surjection $f_0\colon F_0\ra M$ where $F_0$ is a finite direct sum of free modules, and likewise there is a surjection $F_1\ra \ker(f_0)$ where $F_1$ is a finite direct sum of free modules.  By Proposition~\ref{pr:fixedpointfree} we can write $F_0=\FP(W_0)$ and $F_1=\FP(W_1)$ where $W_0$ and $W_1$ are finitely-generated permutation modules.  The composite $F_1\ra \ker(f_0)\inc F_0$ is, by the $(\ev_G,\FP)$-adjunction, adjoint to a map $W_1=\FP(W_1)(G)\ra W_0$.  This implies that $F_1\ra F_0$ equals the map $\FP(W_1\ra W_0)$.

Let $K=\ker(W_1\ra W_0)$. 
Then $0\ra \FP(K)\ra \FP(W_1)\ra \FP(W_0)$ is exact, by inspection. Now we come to the key point, which is that \cite[Corollary 4.8]{A} shows that for any finitely-generated $\Z[C_n]$-module $K$ the $\mZ$-module $\FP(K)$ has a projective resolution of length at most one.  Splicing this into the $F_1\ra F_0\ra M\ra 0$ sequence shows that $M$ has a projective resolution of length at most three.
\end{proof}


\section{More \mdfn{$\mZ$}-modules}
\label{se:more_Zmod}

So far we have introduced the $\mZ_{C_n}$-modules $I_d$ and $\mZ/I_d$, for $d|n$.  In this section we introduce a ``next round'' of useful actors that includes the quotients $I_a/I_b$, which we already saw arise in Proposition~\ref{pr:Tor-I}.  Unlike the category of abelian groups, there is (in general) no reasonable description of the finitely-generated indecomposable $\mZ$-modules---in a certain technical sense the module theory is ``wild''.  So we should not expect to ever be done with this process. The more computations one does, the more new modules one is likely to encounter.

\subsection{Torsion modules}
Fix $b$ and $c$ that divide $n$.  Maps $\mZ/I_c\ra \mZ/I_b$ are uniquely determined by the image of $1_{\pt}$ in $\mZ/I_b(\pt)=\Z/b$, which can be any element $x$ annihilated by $c$.   This is the condition that $b|cx$, and the map $\mZ/I_c\ra \mZ/I_b$ can be considered to be multiplication-by-$x$.  The cokernel and kernel of this map will be denoted as follows:
\[ \mZ/(I_b,x)=\coker\bigl ( \mZ/I_c\llra{x} \mZ/I_b\bigr ) \qquad\text{and}\qquad
\tfrac{I_b\colon x}{I_c}=\ker \bigl(\mZ/I_c \llra{x} \mZ/I_b\bigr ).
\]
To explain the latter notation, recall the colon ideal $I_b\colon x=\{y\in \mZ\,|\, xy\in I_b\}$ (here we are regarding $x$ as an integer).  One can see that the assumption $b|cx$ implies that $I_c\subseteq I_b\colon x$, though this is not exactly trivial.  Recalling that in spot $\Theta_d$ one has $I_b(\Theta_d)=(\tfrac{b}{(b,d)})$ and $I_c(\Theta_d)=(\tfrac{c}{(c,d)})$, the statement boils down to $b|cx \Rightarrow \tfrac{b}{(b,d)}| \tfrac{c}{(c,d)}x$.  This final statement can be verified by examining the $\ell$-adic valuation for each prime $\ell$.

Note that when $x=1$ (so $b|c$) one has $I_b\colon x=I_b$ and the short exact sequence
\[ 0 \ra I_b/I_c \ra \mZ/I_c \ra \mZ/I_b \ra 0.
\]
If $b|c$ then we also have the canonical inclusion $\mZ/I_b \subseteq \mZ/I_c$ (corresponding to multiplication by $\frac{c}{b}$), yielding the short exact sequence
\[ 0 \ra \mZ/I_b \inc \mZ/I_c \ra \mZ/(I_c,\tfrac{c}{b})\ra 0.
\]

\begin{prop}
Let $b,c,d$ be divisors of $n$ and suppose $b|cx$.  
\begin{enumerate}[(a)]
\item $[\mZ/(I_b,x)](\Theta_d)$ is cyclic of order $(\tfrac{b}{(b,d)},x)$.
\item $[\tfrac{I_b\cln x}{I_c}](\Theta_d)$ is cyclic of order $\tfrac{(bc,cxd)}{(bc,bd)}$.
\end{enumerate}
In particular, $[\mZ/(I_b,x)](\Theta_1)\iso \Z/(b,x)$ and 
$[\tfrac{I_b\cln x}{I_c}](\Theta_1)\iso \Z/\tfrac{(bc,cx)}{b}$.  Also, if $b|c$ then $I_b/I_c(\Theta_1)\iso \Z/(\frac{c}{b})$.  
\end{prop}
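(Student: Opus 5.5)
The plan is to compute everything spotwise. Cokernels and kernels of maps of Mackey functors are formed objectwise, so at each $\Theta_d$ we only need the map of cyclic groups
\[ [\mZ/I_c](\Theta_d)=\Z/\tfrac{c}{(c,d)} \lra [\mZ/I_b](\Theta_d)=\Z/\tfrac{b}{(b,d)}. \]
Here we use Proposition~\ref{pr:Z/F_a-props}(1) for the groups. First we identify this map. The map $\mZ/I_c\to\mZ/I_b$ is multiplication by $x$ on $1_{\pt}$. Restriction maps in both modules are the canonical surjections sending $1_{\pt}\mapsto 1_d$, so by naturality the component at $\Theta_d$ sends $1_d\mapsto x\cdot 1_d$. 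That is, it is multiplication by $x$ from $\Z/\frac{c}{(c,d)}$ to $\Z/\frac{b}{(b,d)}$. It is well defined precisely because $\frac{b}{(b,d)}\mid\frac{c}{(c,d)}x$, which is the valuation check already noted before the statement.

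For (a), the cokernel of multiplication by $x$ into $\Z/m$ with $m=\frac{b}{(b,d)}$ is $\Z/(m,x)$. This gives cyclic of order $(\frac{b}{(b,d)},x)$ immediately.

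For (b), the kernel is a subgroup of a cyclic group, hence cyclic. Its order is $|\text{source}|/|\text{image}|$. The image is generated by $x$ in $\Z/m$, so it has order $m/(m,x)$. Thus the kernel has order
\[ \frac{c}{(c,d)}\cdot\frac{(\frac{b}{(b,d)},x)}{\frac{b}{(b,d)}} = \frac{c\,(b,d)\,(\frac{b}{(b,d)},x)}{(c,d)\,b}. \]
It remains to show this equals $\frac{(bc,cxd)}{(bc,bd)}$. Rewrite $(\frac{b}{(b,d)},x)(b,d)=(b,x(b,d))=(b,xb,xd)=(b,xd)$, so the numerator becomes $c(b,xd)=(bc,cxd)$. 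The denominator is $b(c,d)=(bc,bd)$. So the identity is exact and needs no valuation casework.

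The special cases follow by substitution. Putting $d=1$ gives $(b,x)$ in (a), and $\frac{(bc,cx)}{(bc,b)}=\frac{(bc,cx)}{b}$ in (b). Taking $x=1$ with $b\mid c$ gives $\frac{(bc,c)}{b}=\frac{c}{b}$, and the kernel is $I_b/I_c$ by the short exact sequence displayed above.

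The only mildly delicate point is justifying that the spot-$\Theta_d$ component is literally multiplication by $x$. I would argue it from the surjectivity of restrictions in $\mZ/I_c$ and naturality with respect to $p^*$.
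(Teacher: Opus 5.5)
Your proof is correct and follows essentially the same route as the paper. You compute spotwise via Proposition~\ref{pr:Z/F_a-props}, read off the cokernel of multiplication by $x$ into $\Z/\frac{b}{(b,d)}$, and get the kernel order as $P(Q,x)/Q$ with $P=\frac{c}{(c,d)}$ and $Q=\frac{b}{(b,d)}$, simplified using $(\frac{b}{(b,d)},x)(b,d)=(b,xd)$. Your naturality argument that the $\Theta_d$-component is literally multiplication by $x$ makes explicit a step the paper leaves implicit.
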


\begin{proof}
The statements about $\mZ/(I_b,x)$ are immediate from Proposition~\ref{pr:Z/F_a-props}.  For (b) use that the kernel of $\Z/P\llra{x}\Z/Q$ for any integers $P,Q$ is cyclic of order $\frac{P(Q,x)}{Q}$. In our case $P=\frac{c}{(c,d)}$ and $Q=\frac{b}{(b,d)}$ and observe that
\[ \frac{Q}{(Q,x)}= \frac{\tfrac{b}{(b,d)}}{(\tfrac{b}{(b,d)},x)}=\frac{b}{(b,x(b,d))}=\frac{b}{(b,xd)}.\]
Finally,
\[
P \cdot \frac{(Q,x)}{Q}=\frac{c}{(c,d)}\cdot \frac{(b,xd)}{b}=\frac{(bc,cxd)}{(bc,bd)}.
\]
\end{proof}

Describing the restriction and transfer maps in $\mZ/(I_b,x)$ and $\tfrac{I_b\cln x}{I_c}$ is complicated and we will not pursue it.  

The $\mZ$-modules we have just introduced arise in $\uExt$ and $\uTor$ computations involving the $I_a/I_b$ modules:

\begin{prop}
\label{pr:Ia/Ib}
Let $b,c,d$ be divisors of $n$ and assume $b|c$.
\begin{enumerate}[(a)]
\item $\uExt^{3-i}(\mZ/I_d,I_b/I_c)\iso \uTor_i(I_b/I_c,\mZ/I_d)\iso \begin{cases}
\frac{I_{(b,d)}\colon \frac{c(b,d)}{b(c,d)}}{I_{(c,d)}} & \text{if $i=3$}, \\
\mZ/(I_{(b,d)},\frac{c(b,d)}{b(c,d)}) & \text{if $i=2$},\\
0 & \text{if $i=1$, $i\geq 4$}, \\
I_{(b,d)}/I_{(c,d)} & \text{if $i=0$}.
\end{cases}$
\item $\uExt^i(I_b/I_c,\mZ/I_d)\iso \begin{cases}
\mZ/(I_{(c,d)},\tfrac{c}{b}) & \text{if $i=3$},\\
\frac{I_{(c,d)}\cln \tfrac{c}{b}}{I_{(b,d)}} & \text{if $i=2$},\\
0 & \text{if $i=1$ or $i\geq 4$}, \\
\mZ/(I_{(c,d)},\tfrac{(c,d)}{(b,d)}) & \text{if $i=0$}.
\end{cases}$
\end{enumerate}
\end{prop}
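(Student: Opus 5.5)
The plan is to feed the two short exact sequences from the start of this section into the long exact $\uTor$/$\uExt$ sequences. For (a), box $0\ra I_b/I_c\ra \mZ/I_c\ra \mZ/I_b\ra 0$ with $\mZ/I_d$. The identification $\uExt^{3-i}(\mZ/I_d,I_b/I_c)\iso \uTor_i(I_b/I_c,\mZ/I_d)$ is Proposition~\ref{pr:Ext-Tor} together with symmetry of $\uTor$. By Proposition~\ref{pr:Tor-Mackey1}, the middle and right terms have $\uTor_i$ equal to $\mZ/I_{(c,d)}$, resp.\ $\mZ/I_{(b,d)}$, for $i=0,3$, and zero otherwise. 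The long exact sequence then gives three things:
\begin{itemize}
\item $\uTor_i(I_b/I_c,\mZ/I_d)=0$ for $i=1$ and $i\geq 4$, since those terms are squeezed between zeros.
\item In degree $0$, $I_b/I_c\bbox \mZ/I_d\iso\ker\bigl(\mZ/I_{(c,d)}\ra \mZ/I_{(b,d)}\bigr)$. The map is induced by the projection, so it is the canonical surjection and the kernel is $I_{(b,d)}/I_{(c,d)}$.
\item An exact sequence
\[ 0\ra \uTor_3(I_b/I_c,\mZ/I_d)\ra \mZ/I_{(c,d)}\llra{\phi} \mZ/I_{(b,d)}\ra \uTor_2(I_b/I_c,\mZ/I_d)\ra 0. \]
\end{itemize}

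Everything in (a) therefore reduces to identifying $\phi$. A map between monogenal modules $\mZ/I_{(c,d)}\ra \mZ/I_{(b,d)}$ is multiplication by some integer $x$, determined by the image of $1_{\Theta_1}$. So once $\phi$ is shown to be multiplication by $x=\frac{c(b,d)}{b(c,d)}$, the kernel and cokernel are by definition $\frac{I_{(b,d)}\colon x}{I_{(c,d)}}$ and $\mZ/(I_{(b,d)},x)$.

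To compute $\phi$, use the isomorphism $\uTor_3(\mZ/I_d,M)\iso \uExt^0(\mZ/I_d,M)=\Ann_{\cF_d}M$, which is natural in $M$ (Propositions~\ref{pr:Ext-Tor} and \ref{pr:Hom-Ann}). Then $\phi$ is the restriction of the projection $\mZ/I_c\ra \mZ/I_b$ to annihilators. From the proof of Proposition~\ref{pr:Hom-identify}:
\begin{itemize}
\item at $\Theta_1$, $\Ann_{\cF_d}(\mZ/I_c)$ is generated by $\frac{c}{(c,d)}\in\Z/c$;
\item this maps to $\frac{c}{(c,d)}\in \Z/b$;
\item the target annihilator is generated by $\frac{b}{(b,d)}$.
\end{itemize}
So, in terms of the standard generators, $\phi$ is multiplication by $\frac{c}{(c,d)}\big/\frac{b}{(b,d)}=x$, which is an integer since $b|c$.

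For (b), apply $\uHom(\blank,\mZ/I_d)$ to the same short exact sequence and use Proposition~\ref{pr:Ext-Mackey}. The vanishing of $\uExt^1,\uExt^2$ of the monogenal terms gives three things:
\begin{itemize}
\item $\uExt^0(I_b/I_c,\mZ/I_d)=\coker\bigl(\uHom(\mZ/I_b,\mZ/I_d)\ra \uHom(\mZ/I_c,\mZ/I_d)\bigr)$;
\item $\uExt^1(I_b/I_c,\mZ/I_d)=0$, and likewise $\uExt^i=0$ for $i\geq 4$;
\item an exact sequence
\[ 0\ra \uExt^2(I_b/I_c,\mZ/I_d)\ra \mZ/I_{(b,d)}\llra{\psi}\mZ/I_{(c,d)}\ra \uExt^3(I_b/I_c,\mZ/I_d)\ra 0. \]
\end{itemize}
The degree-$0$ map is the inclusion of annihilators $\Ann_{\cF_b}(\mZ/I_d)\subseteq \Ann_{\cF_c}(\mZ/I_d)$. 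On $\Theta_1$ generators this is $\frac{d}{(b,d)}=\frac{(c,d)}{(b,d)}\cdot\frac{d}{(c,d)}$, i.e.\ multiplication by $\frac{(c,d)}{(b,d)}$, which gives the stated $\uExt^0$.

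For $\psi$, lift the projection $\mZ/I_c\ra \mZ/I_b$ to a chain map of standard resolutions (Proposition~\ref{pr:Z/Fa-res}):
\begin{itemize}
\item in degree $0$, the identity of $F_{\pt}$;
\item in degrees $1$ and $2$, $Rq\colon F_{\Theta_c}\ra F_{\Theta_b}$ for the projection $q\colon \Theta_c\ra\Theta_b$, which commutes with $\id-Rt$ because $q\circ t=t\circ q$;
\item in degree $3$, some map $F_{\pt}\ra F_{\pt}$, i.e.\ an integer $k$, which I expect to be $k=\frac{c}{b}$. Checking this means verifying $Rq\circ Ip_c=k\cdot Ip_b$ in $\cB\Z_G$. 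Here I would use $Ip_c=Ip_b\circ Iq$, together with $Ip_b\circ Rt=Ip_b$ and the fact that $Rq\circ Iq$ is a norm of $t$-translates with $\frac{c}{b}$ terms, to collapse the composite.
\end{itemize}
With $k=\frac{c}{b}$, the self-dual description of $\uExt^3$ identifies $\psi$ as multiplication by $\frac{c}{b}$. Its kernel and cokernel are then $\frac{I_{(c,d)}\colon \frac{c}{b}}{I_{(b,d)}}$ and $\mZ/(I_{(c,d)},\frac{c}{b})$, as claimed.

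I expect the main obstacle to be exactly this pair of identifications, $\phi$ as multiplication by $x$ and $\psi$ as multiplication by $\frac{c}{b}$. In particular, the degree-$3$ comparison of resolutions needs a careful composition computation in $\cB\Z_G$. If that gets messy, the fallback is to compute the maps directly at $\Theta_1$ using the complex of Proposition~\ref{pr:Ext-Mackey0}, which is natural in $M$.
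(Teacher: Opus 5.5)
Your proposal is correct and is essentially the paper's argument: long exact sequences for $0\ra I_b/I_c\ra \mZ/I_c\ra \mZ/I_b\ra 0$, with the key input that the lift of the projection to standard resolutions is multiplication by $\frac{c}{b}$ in degree $3$. That fact is just $Rq\circ Ip_c=Rq\circ Iq\circ Ip_b=\frac{c}{b}\,Ip_b$, using the relation $Rq\circ Iq=\frac{c}{b}\cdot\id_{\Theta_b}$ in $\cBZ_G$; your sketch of this step has the composition orders garbled, since $Iq\circ Rq$ is the norm of translates, but the conclusion is right. The one variation is in (a): you identify the map on $\uTor_3$ via the natural isomorphism $\uTor_3(\mZ/I_d,\blank)\iso \Ann_{\cF_d}(\blank)$, so it is the projection restricted to annihilators, whereas the paper boxes the degree-$3$ lift with $\mZ/I_d$ and reads off kernels at the $\pt$-spot; both give multiplication by $\frac{c(b,d)}{b(c,d)}$.
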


\begin{proof}
The proof is just a calculation with the long exact sequences induced by $0\ra I_b/I_c \ra \mZ/I_c \ra \mZ/I_b\ra 0$.  The key step is lifting the projection $\mZ/I_c\ra \mZ/I_b$ to a map of resolutions and observing that this lift is multiplication by $\frac{c}{b}$ in degree $3$. Part (b) follows readily from this observation.

For the $\uTor$ part of (a) one needs to calculate the map on $\uTor_3(\blank,\mZ/I_d)$ induced by $\mZ/I_c\ra \mZ/I_b$. This comes down to analyzing the $\pt$-spot after boxing with $\mZ/I_d$, which gives the diagram
\[\xymatrix{
\Z/d \ar[r]^{\cdot \frac{c}{b}}\ar@{->>}[d] & \Z/d \ar@{->>}[d] \\
\Z/\frac{d}{(d,c)} \ar@{->>}[r] & \Z/\frac{d}{(d,b)}.
}
\]
The kernels of the vertical maps give $\uTor_3(\mZ/I_c, \mZ/I_d)(\pt)$ and $\uTor_3(\mZ/I_c, \mZ/I_d)(\pt)$, and these are generated by $\frac{d}{(d,c)}$ and $\frac{d}{(d,b)}$, respectively. The induced map on the kernels sends 
\[ \tfrac{d}{(d,c)}\mapsto \tfrac{c}{b}\cdot \tfrac{d}{(d,c)}=\tfrac{c(b,d)}{b(c,d)}\cdot \tfrac{d}{(d,b)},
\]
so in $H_3$ the generator is sent to $\tfrac{c(b,d)}{b(c,d)}$ times the generator of the target.  

Finally in (a), the $\Ext$-$\Tor$ duality is from Proposition~\ref{pr:Ext-Tor}.
\end{proof}

\subsection{Forms of \mdfn{$\Z$}}
\label{se:forms-of-Z}
Following \cite{HHR2} and \cite{Z} it is useful
to define a ``form of $\Z$'' to be any $\mZ$-module whose value at every orbit is isomorphic to $\Z$.  As first examples, note that each $I_d$ is a form of $\Z$.  The Bredon cohomology groups $H_{C_n}^{\lambda_b-\lambda_c}(\pt)$ also turn out to be forms of $\Z$ (see Section~\ref{se:appl}), which is the main reason for exploring them.

The following two lemmas will be useful:

\begin{lemma}
Suppose $X$ is a form of $\Z$.  Then every restriction and transfer map in $X$ is injective, and every $t$-map is the identity.  If $e=d\cdot \ell$ where $\ell$ is prime and $p\colon \Theta_e\ra \Theta_d$ is the projection, then as maps $\Z\ra \Z$ both $p_*$ and $p^*$ are multiplication by elements from the set $\{\pm 1, \pm \ell\}$.  
\end{lemma}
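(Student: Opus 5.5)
The argument is entirely elementary and rests on the cohomological Mackey relations of Proposition~\ref{pr:Zmod}. Fix a projection $p\colon \Theta_e\ra\Theta_d$ with $d|e$. Since $X$ is a $\mZ$-module, $p_*\circ p^*\colon X(\Theta_d)\ra X(\Theta_d)$ is multiplication by the index $\frac{e}{d}$. Identify $X(\Theta_d)$ and $X(\Theta_e)$ with $\Z$, and write $p^*$ as multiplication by an integer $u$ and $p_*$ as multiplication by an integer $v$. Then $uv=\frac{e}{d}\neq 0$, so both $u$ and $v$ are nonzero. A nonzero endomorphism of $\Z$ is injective, so every restriction and transfer map along a projection is injective. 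Every map of orbits is a projection composed with an automorphism $\rho_g$, and $(\rho_g)^*$ is an isomorphism, so injectivity holds for all restriction and transfer maps.

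Next I show that the $t$-maps are the identity. Let $p\colon \Theta_d\ra\Theta_1=\pt$. Because $p\circ t=p$, we have $t^*\circ p^*=p^*$. On $X(\Theta_d)\iso\Z$ the automorphism $t^*$ is $\pm 1$. If it were $-1$, then $p^*=-p^*$, so $p^*=0$, contradicting injectivity. Hence $t^*=\id$ at every spot, and $t_*=(t^*)^{-1}=\id$ as well.

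For the last claim take $e=d\ell$ with $\ell$ prime. From $uv=\ell$ with $u,v\in\Z$, the only factorizations are $u,v\in\{\pm1,\pm\ell\}$ (with $\{|u|,|v|\}=\{1,\ell\}$). This gives the stated constraint.

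None of these steps is a real obstacle. The one point needing care is the $t$-map claim, which depends on using $p^*\circ$-compatibility ($p\circ t=p$) together with injectivity. I will deduce it only after injectivity has been established.
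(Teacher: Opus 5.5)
Your proof is correct and matches the paper's argument essentially step for step. The relation $p_*p^*=\tfrac{e}{d}$ makes both maps nonzero, hence injective, integers; $t^*\pi^*=\pi^*$ with $\pi^*\neq 0$ rules out $t^*=-1$; and $uv=\ell$ with $\ell$ prime gives the last claim. The only cosmetic difference is that you start from a general projection $\Theta_e\to\Theta_d$, whereas the paper uses $\pi\colon\Theta_d\to\Theta_1$ and then says the same idea works for every restriction/transfer pair.
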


\begin{proof}
Pick an orbit $\Theta_d$ and consider $\pi\colon \Theta_d\ra \Theta_1$.  Then $\pi_*\pi^*$ is multiplication by $d$ as a map $\Z\ra \Z$, which is 
injective.  So both $\pi^*$ and $\pi_*$ are nonzero maps between $\Z$ and thus injective.  The same idea works for every restriction and transfer pair.  

The $t^*$ map on $X(\Theta_d)$ is an automorphism from $\Z$ to itself, so it is multiplication by $1$ or $-1$.  It also must satisfy $t^*\pi^*=\pi^*$.  Since $\pi^*$ is nonzero,  $t^*$ must be multiplication by $1$.  

The last statement of the proposition follows because $p_*p^*$ is multiplication by $\ell$ and $\ell$ is prime.
\end{proof}

\begin{lemma}
\label{le:twistedZ}
Suppose $X$ is a form of $\Z$ and $0\ra X\ra Y \ra K\ra 0$ is an exact sequence where 
\begin{enumerate}[(1)]
\item the values of $K$ are all cyclic torsion groups, and
\item the transfer maps in $K$ are all injective.
\end{enumerate}
If $Y(\pt)$ is torsion-free then $Y$ is also a form of $\Z$.  (Examples of $K$ are the modules $\mZ/I_d$ or any submodule of such.)
\end{lemma}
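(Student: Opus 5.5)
The plan is to show that each $Y(\Theta_d)$ is torsion-free of rank one. The rank statement is immediate from the short exact sequence: since $X(\Theta_d)\iso\Z$ and $K(\Theta_d)$ is finite cyclic, $Y(\Theta_d)$ is a finitely generated abelian group of rank one. So the entire content is torsion-freeness.

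Fix $d|n$ and let $\pi\colon\Theta_d\ra\pt$ be the projection. Suppose $y\in Y(\Theta_d)$ is a torsion element. I will transport $y$ to the $\pt$ spot with the transfer, where torsion-freeness is known. Concretely, $\pi_*y\in Y(\pt)$ is again torsion, so $\pi_*y=0$ because $Y(\pt)$ is torsion-free.

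Next I will show that $y$ lies in $X(\Theta_d)$. Let $\bar y$ be the image of $y$ in $K(\Theta_d)$. By naturality of $\pi_*$, we have $\pi_*\bar y=\overline{\pi_*y}=0$ in $K(\pt)$. Hypothesis (2) says transfers in $K$ are injective. The transfer $\pi_*$ is a composite of transfers along the projections $p$, so it is injective as well, giving $\bar y=0$. Hence $y$ is in the image of $X(\Theta_d)$.

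Finally, $X(\Theta_d)\iso\Z$ is torsion-free and $X\ra Y$ is injective, so the torsion element $y$ must be $0$. Therefore $Y(\Theta_d)$ is torsion-free. Combined with the rank computation above, this gives $Y(\Theta_d)\iso\Z$ for every $d$, so $Y$ is a form of $\Z$.

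For the parenthetical examples, $\mZ/I_d$ has cyclic torsion values and injective transfers by Proposition~\ref{pr:Z/F_a-props}(2). Both properties pass to submodules, since subgroups of cyclic groups are cyclic and restrictions of injective maps are injective.

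There is no real obstacle here. The only point to be careful about is using the transfer rather than the restriction to reach $\pt$. Restrictions go from $\pt$ outward, whereas it is the transfers of $K$ that are assumed injective, and these are what carry the torsion element back to $Y(\pt)$.
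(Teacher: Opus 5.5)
Your proof is correct and follows the paper's argument. The paper evaluates the sequence at $\Theta_d$ and $\Theta_1$, joins the two rows by the transfers $\pi_*$, and runs a diagram chase: injectivity of $\pi_*$ on $K$ and torsion-freeness of $Y(\pt)$ rule out torsion in $Y(\Theta_d)$. You have written that chase out explicitly, and you correctly state the goal as torsion-freeness, where the paper says ``nontorsion.''
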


\begin{proof}
 We evaluate the short exact sequence at the orbits $\Theta_d$ and $\Theta_1$ to get
\[ \xymatrix{
0 \ar[r] & X(\Theta_d) \ar[r]\ar[d]_{\pi_*} & Y(\Theta_d) \ar[r]\ar[d]_{\pi_*} & K(\Theta_d) \ar[d]^{\pi_*}\ar[r] & 0 \\
0 \ar[r] & X(\Theta_1) \ar[r] & Y(\Theta_1)\ar[r] & K(\Theta_1) \ar[r] & 0.
}
\]
In the top row, the left term is isomorphic to $\Z$ and the right term is $\Z/e$ for some $e$.  So if we can show that the middle group is nontorsion, it will be isomorphic to $\Z$ and we will be done.

By assumption the vertical transfer map on the right is an injection and $Y(\Theta_1)$ is non-torsion. It now follows by an easy diagram chase that $Y(\Theta_d)$ cannot have torsion, and hence it must be that $Y(\Theta_d)\iso \Z$.
\end{proof}

Let us call the projections $\Theta_{d}\ra \Theta_{c}$ where $\frac{d}{c}$ is prime the \dfn{elementary projections}, and let us call the pushforwards and pullbacks corresponding to these the \dfn{elementary maps} in a $C_n$-Mackey functor.  All of the standard projections are composites of elementary projections, therefore all of the pushforwards and pullbacks for standards projections are determined by the elementary ones.

For general $n$, suppose $e|n$ and $d|e$.  Write $e=\prod_i \ell_i^{e_i}$ and $d=\prod_i \ell_i^{d_i}$, where the $\ell_i$ are the distinct prime factors of $n$.  Define a $\mZ$-module $\mZ(e;d)$ that equals $\Z$ at every orbit and has elementary
maps defined as follows.  At an orbit $\Theta_k$ where $k=\prod_i \ell_i^{k_i}$, we have elementary restriction and transfer maps emanating from $\Theta_k$ in each $\ell_i$-direction:
\begin{itemize}
\item If $k_i\geq e_i$ or $k_i<d_i$ then the restriction map is the identity.  If $k_i \in [d_i,e_i)$ then the restriction map is multiplication by $\ell_i$.  
\item If $k_i\geq e_i+1$ or $k_i\leq d_i$ then the transfer map is multiplication by $\ell_i$.  If $k_i\in (d_i,e_i]$ then the transfer map is the identity.
\end{itemize}
This defines a $\mZ$-module that we will denote as $\mZ(e;d)$.  A helpful picture is in Figure~\ref{fig:Z(d,e)} below: depicting a $C_n$-Mackey functor as the usual rectangular grid, we have put dots at the $\Theta_e$ and $\Theta_d$ spots and then drawn the restriction maps in the different regions (in the  picture there are exactly two primes, so that $n=\ell_1^{\alpha_1}\ell_2^{\alpha_2}$).  Note that the transfer maps are uniquely determined by the associated restriction maps, since the pair is always $(1,\ell_i)$ or $(\ell_i,1)$.  

\begin{figure}[ht]
\begin{tikzpicture}[scale=0.9]
\draw (0,0)--(0,4)--(6,4)--(6,0)--(0,0);
\draw[dotted] (2,0) -- (2,4);
\draw[dotted] (4,0) -- (4,4);
\draw[dotted] (0,1)--(6,1);
\draw[dotted] (0,3)--(6,3);
\draw[->] (6,0.5)--(5.5,0.5);
\draw (5.75,0.3) node{$\scriptstyle{1}$};
\draw[->] (6,2)--(5.5,2);
\draw (5.75,1.8) node{$\scriptstyle{1}$};
\draw[->] (6,3.5)--(5.5,3.5);
\draw (5.75,3.3) node{$\scriptstyle{1}$};
\draw[->] (2,0.5)--(1.5,0.5);
\draw (1.75,0.3) node{$\scriptstyle{1}$};
\draw[->] (2,2)--(1.5,2);
\draw (1.75,1.8) node{$\scriptstyle{1}$};
\draw[->] (2,3.5)--(1.5,3.5);
\draw (1.75,3.3) node{$\scriptstyle{1}$};
\draw[->] (4,0.5)--(3.5,0.5);
\draw (3.75,0.3) node{$\scriptstyle{\ell_1}$};
\draw[->] (4,2)--(3.5,2);
\draw (3.75,1.8) node{$\scriptstyle{\ell_1}$};
\draw[->] (4,3.5)--(3.5,3.5);
\draw (3.75,3.3) node{$\scriptstyle{\ell_1}$};
\draw[->] (0.5,0)--(0.5,0.5);
\draw (0.7,0.2) node{$\scriptstyle{1}$};
\draw[->] (2.5,0)--(2.5,0.5);
\draw (2.7,0.2) node{$\scriptstyle{1}$};
\draw[->] (4.5,0)--(4.5,0.5);
\draw (4.7,0.2) node{$\scriptstyle{1}$};
\draw[->] (0.5,1)--(0.5,1.5);
\draw (0.7,1.2) node{$\scriptstyle{\ell_2}$};
\draw[->] (2.5,1)--(2.5,1.5);
\draw (2.7,1.2) node{$\scriptstyle{\ell_2}$};
\draw[->] (4.5,1)--(4.5,1.5);
\draw (4.7,1.2) node{$\scriptstyle{\ell_2}$};
\draw[->] (0.5,3)--(0.5,3.5);
\draw (0.7,3.2) node{$\scriptstyle{1}$};
\draw[->] (2.5,3)--(2.5,3.5);
\draw (2.7,3.2) node{$\scriptstyle{1}$};
\draw[->] (4.5,3)--(4.5,3.5);
\draw (4.7,3.2) node{$\scriptstyle{1}$};
\filldraw (2,3) circle (2pt);
\draw (2.2,2.8) node {$\scriptstyle{e}$};
\filldraw (4,1) circle (2pt);
\draw (3.8,1.2) node {$\scriptstyle{d}$};
\end{tikzpicture}
\label{fig:Z(d,e)}
\caption{The $\mZ$-module $\mZ(e;d)$ and its restriction maps.}
\end{figure}
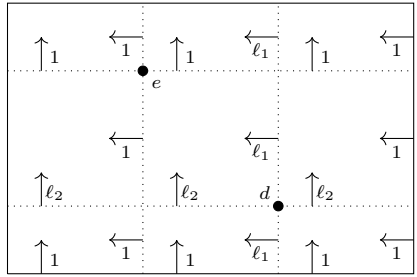

The portion of the picture that is above and left of the point $\Theta_d$ precisely matches the $\mZ$-module $I_e$.  A bit of thought shows that we have
a short exact sequence
\[ 0 \ra I_e \ra \mZ(e;d) \ra \mZ/I_d \ra 0.
\]
Also observe that $\mZ(e;1)\iso I_e$.  The following result is also easy from the definitions:

\begin{prop}
\label{pr:Z(d,e)-iso}
Let $b$ and $c$ be divisors of $n$.  Then there is an isomorphism $\mZ([b,c];b)\iso \mZ(c;(b,c))$.
\end{prop}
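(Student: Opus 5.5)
The plan is to show that the two $\mZ$-modules $\mZ([b,c];b)$ and $\mZ(c;(b,c))$ are not merely isomorphic but literally equal as Mackey functors. Both are $\Z$ at every orbit, all $t$-maps are the identity, and all restriction and transfer maps are composites of elementary ones. So it suffices to check that the elementary restriction and transfer maps agree at every spot and in every prime direction. Then the identity map $\Z\ra\Z$ at each orbit is the desired isomorphism.

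Write $b=\prod_i \ell_i^{b_i}$ and $c=\prod_i \ell_i^{c_i}$. The definition of $\mZ(e;d)$ is prime-by-prime: the elementary maps out of $\Theta_k$ in the $\ell_i$-direction depend only on $k_i$, $d_i$ and $e_i$. The relevant exponents are:
\begin{itemize}
\item for $\mZ([b,c];b)$: $e_i=\max(b_i,c_i)$ and $d_i=b_i$;
\item for $\mZ(c;(b,c))$: $e_i=c_i$ and $d_i=\min(b_i,c_i)$.
\end{itemize}
This is the max/min translation of Remark~\ref{re:gcd}. The check therefore reduces to one prime at a time, with two cases.

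\emph{Case $b_i\geq c_i$.} In the first module the interval $[d_i,e_i)=[b_i,b_i)$ is empty. In the second it is $[c_i,c_i)$, also empty. So every elementary restriction in the $\ell_i$-direction is the identity in both modules. Likewise the interval $(d_i,e_i]$ governing transfers is empty in both, so every elementary transfer is multiplication by $\ell_i$ in both.

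\emph{Case $b_i<c_i$.} Both modules have $d_i=b_i$ and $e_i=c_i$, so the defining rules are verbatim the same.

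Since $\ell_i$ is prime, each restriction/transfer pair is $(1,\ell_i)$ or $(\ell_i,1)$, so the transfers are in any case forced by the restrictions. Agreement of all elementary maps then gives equality of all maps. I expect no real obstacle here; the only care needed is to handle the boundary conventions ($k_i\geq e_i$ versus $k_i<d_i$, and the half-open intervals) correctly when the interval is empty.
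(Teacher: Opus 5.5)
Your proposal is correct and is essentially the paper's proof: the paper also works prime by prime, comparing the restriction intervals $[b_i,\max(b_i,c_i))$ and $[\min(b_i,c_i),c_i)$. These coincide when $c_i\geq b_i$ and are both empty otherwise, so the restriction maps agree and the transfer maps, which are forced by the restrictions, agree as well.
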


\begin{proof}
Write $b=\prod_i \ell_i^{b_i}$ and $c=\prod_i \ell_i^{c_i}$, so that $[b,c]=\prod_i \ell_i^{\max(b_i,c_i)}$ and $(b,c)=\prod_i \ell_i^{\min(b_i,c_i)}$.  In the definition of the restriction maps for $\mZ([b,c];b)$ the relevant intervals are $[b_i,\max(b_i,c_i))$, whereas for $\mZ(c;(b,c))$ the relevant intervals are $[\min(b_i,c_i),c_i)$.   If $c_i\geq b_i$ then these two intervals are the same, whereas if $c_i<b_i$ then both intervals are empty and hence also the same.  So $\mZ([b,c];b)$ and $\mZ(c;(b,c))$ have the same restriction maps, and hence also the same transfer maps. 
\end{proof}

\begin{remark}
\label{re:Z(e;d)-Q}
Here is another useful description of $\mZ(e;d)$. Recall for each orbit $\Theta_k$ we write $k=\prod_{i} \ell_i^{k_i}$. For each $i$ define the function
\[ \gamma_{[d_i,e_i)}(f)=\begin{cases}
0 & \text{if $f< d_i$}, \\
f-d_i & \text{if $d_i\leq f\leq e_i$}, \\
e_i-d_i & \text{if $f>e_i$}.
\end{cases}
\]
Then $\mZ(e;d)$ is
the sub-Mackey-functor of $\mQ$ whose value at $\Theta_k$ is the $\Z$-submodule generated by $\prod_i \ell_i^{-\gamma_i(k_i)}$.
\end{remark}

\begin{prop}
For all $d$ and $e$ where $d|e$, $\mZ(e;d)\tens \Q\iso \mQ$.  
\end{prop}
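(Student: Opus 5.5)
One route is to use the short exact sequence $0\ra I_e\ra \mZ(e;d)\ra \mZ/I_d\ra 0$ together with Remark~\ref{re:Z(e;d)-Q}. Rationalization ($-\tens\Q$, applied objectwise) is exact. By Proposition~\ref{pr:Z/F-order}, $\mZ/I_d$ is annihilated by $n$, so $(\mZ/I_d)\tens\Q=0$. Hence $I_e\tens\Q\ra\mZ(e;d)\tens\Q$ is an isomorphism, and it suffices to show $I_e\tens \Q\iso \mQ$. For this, tensor the short exact sequence $0\ra I_e\ra\mZ\ra\mZ/I_e\ra 0$ with $\Q$. The same torsion argument shows that the inclusion $I_e\subseteq \mZ$ becomes an isomorphism $I_e\tens\Q\iso \mZ\tens\Q=\mQ$. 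Composing the two isomorphisms gives $\mZ(e;d)\tens\Q\iso \mQ$.

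A more direct alternative uses Remark~\ref{re:Z(e;d)-Q}, which exhibits $\mZ(e;d)$ as a sub-Mackey functor of $\mQ$. At each orbit $\Theta_k$ the inclusion is $\Z\cdot q_k\subseteq \Q$ with $q_k=\prod_i\ell_i^{-\gamma_i(k_i)}$ a nonzero rational number. Since $\Q$ is flat over $\Z$, tensoring this inclusion with $\Q$ gives an injective map of Mackey functors $\mZ(e;d)\tens\Q\ra \mQ\tens\Q\iso\mQ$. At each spot this map is $\Q\cdot q_k\ra\Q$, which is surjective because $q_k\neq 0$. Therefore it is an isomorphism of $\mQ$-modules.

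The only point needing care is that these objectwise isomorphisms assemble into a map of Mackey functors. This holds automatically, because each isomorphism is induced by a map of Mackey functors: either the sub-functor inclusion from Remark~\ref{re:Z(e;d)-Q}, or the maps in the short exact sequences. I expect no real obstacle. If one distrusts Remark~\ref{re:Z(e;d)-Q}, the first argument relies only on the stated short exact sequence and Proposition~\ref{pr:Z/F-order}.
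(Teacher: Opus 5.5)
Your proposal is correct and takes essentially the same approach as the paper, which also notes that the result is nearly immediate from the description of $\mZ(e;d)$ as a sub-Mackey functor of $\mQ$ in Remark~\ref{re:Z(e;d)-Q}, or alternatively follows from the two short exact sequences $0\ra I_e\ra \mZ(e;d)\ra \mZ/I_d\ra 0$ and $0\ra I_e\ra \mZ\ra \mZ/I_e\ra 0$. You have simply written out both of these arguments in detail (exactness of rationalization and the torsion bound from Proposition~\ref{pr:Z/F-order} for the first, flatness of $\Q$ for the second).
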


\begin{proof}
This is almost obvious from the description in Remark~\ref{re:Z(e;d)-Q}.  But it also 
follows directly from the exact sequences $0\ra I_e \ra \mZ(e;d)\ra \mZ/I_d\ra 0$ and $0\ra I_e\ra \mZ\ra \mZ/I_e\ra 0$.
\end{proof}

Finally, we give one more alternative construction of $\mZ(e;d)$.  This is in some ways the cleanest approach.  Consider the map $r\colon \mZ\oplus \mZ/I_d \ra \mZ/I_e$
that is the canonical projection on the first factor and on the second factor sends $1_{\pt} \mapsto \frac{e}{d}\cdot 1_{\pt}$. This second map is injective (use Proposition~\ref{pr:Z/F_a-props} and a little numerology) and so the kernel $\ker r$ is a form of $\Z$. The inclusion $I_e\inc \mZ$ factors through $\ker r$, and a diagram chase (or the Zig-Zag Lemma, if you like) using the diagram
\[ \xymatrix{
0\ar[r] & I_e \ar[r]\ar@{ >->}[d] & \mZ \ar[r]\ar@{ >->}[d] & \mZ/I_e\ar[r]
\ar[d]^{=} & 0\\
0 \ar[r] & \ker r \ar[r] & \mZ\oplus \mZ/I_d \ar[r] & \mZ/I_e \ar[r] & 0
}
\]
shows that the cokernel of $I_e\inc \ker r$ is $\mZ/I_d$.  With a little thought one can show directly that $\ker r\iso \mZ(e;d)$, or one can use Proposition~\ref{pr:type(e,d)-facts} below.

We next observe that $\mZ(e;d)$ is the unique solution to a certain extension problem.  

\begin{prop}
\label{pr:type(e,d)-facts}
Suppose $0\ra I_e \ra X \ra \mZ/I_d\ra 0$ is a short exact sequence where $X(\pt)$ is torsion-free.  Then $d|e$ and $X\iso \mZ(e;d)$.  Moreover, $\Ext^1(\mZ/I_d,X)=0$.  
\end{prop}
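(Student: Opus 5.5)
The plan is to classify extensions of $\mZ/I_d$ by $I_e$ through $\Ext^1$ and then read off the torsion-freeness condition at the $\pt$ spot. By Proposition~\ref{pr:Tor-I} (taking $i=2$) we have $\uExt^1(\mZ/I_d,I_e)\iso \mZ/I_{(d,e)}$. Evaluating at $\pt$ gives $\Ext^1(\mZ/I_d,I_e)\iso \Z/(d,e)$, and this group classifies the extension $X$ up to equivalence. Since evaluation at $\pt$ is exact, it induces a homomorphism
\[ \Phi\colon \Ext^1_{\mZ}(\mZ/I_d,I_e)\lra \Ext^1_{\Z}(\Z/d,\Z)\iso \Z/d \]
that sends the class of $0\ra I_e\ra X\ra \mZ/I_d\ra 0$ to the class of $0\ra \Z\ra X(\pt)\ra \Z/d\ra 0$. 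The middle group $X(\pt)$ is torsion-free exactly when this abelian extension class has order $d$, i.e.\ is a generator of $\Z/d$.

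The key step is to compute $\Phi$ explicitly. I would do this with the standard free resolution of Proposition~\ref{pr:Z/Fa-res} (with $a=d$), comparing it with the resolution $0\ra\Z\llra{d}\Z\ra\Z/d\ra 0$ at the $\pt$ spot. A cocycle in $\Hom(F_{\Theta_d},I_e)=I_e(\Theta_d)$ is a multiple of $\frac{e}{(e,d)}\cdot 1_d$. Pushing it forward along $Rp$ to $\pt$ should identify $\Phi$ with the map $\Z/(d,e)\ra\Z/d$ given by multiplication by $\frac{d}{(d,e)}$, which is injective. I expect this bookkeeping to be the main obstacle, namely matching the degree-$1$ cocycle with the abelian-group extension. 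If it becomes messy, the fallback is to build each extension directly as a pushout and evaluate at $\pt$.

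Granting this, the image of $\Phi$ is the subgroup of order $(d,e)$. It contains a generator of $\Z/d$ only if $(d,e)=d$, so $d|e$, and then the class of $X$ is a unit $u\in \Z/d=\Ext^1(\mZ/I_d,I_e)$. Precomposing with the automorphism ``multiplication by $u^{-1}$'' of $\mZ/I_d$ multiplies the extension class by $u^{-1}$. Hence all extensions with a unit class have isomorphic middle terms. The sequence $0\ra I_e\ra\mZ(e;d)\ra\mZ/I_d\ra 0$ from the text has $\mZ(e;d)(\pt)=\Z$, which is torsion-free, so its class is a unit, and therefore $X\iso\mZ(e;d)$.

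For the vanishing, apply $\Hom(\mZ/I_d,\blank)$ to $0\ra I_e\ra X\ra \mZ/I_d\ra 0$. This gives the exact segment
\[ \Hom(\mZ/I_d,\mZ/I_d)\llra{\delta}\Ext^1(\mZ/I_d,I_e)\ra\Ext^1(\mZ/I_d,X)\ra\Ext^1(\mZ/I_d,\mZ/I_d). \]
The last group is $0$ by Proposition~\ref{pr:Ext-Mackey}. The connecting map $\delta$ sends $\id$ to the extension class, which is a generator of $\Z/d$, so $\delta$ is surjective. Therefore $\Ext^1(\mZ/I_d,X)=0$.
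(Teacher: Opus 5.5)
Your argument is correct, but it takes a different route from the paper's.

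\textbf{The paper's route.} The paper stays entirely inside $\mZ$-modules. It applies $\Hom(\mZ/I_d,\blank)$ to the sequence and uses $\Hom(\mZ/I_d,X)=0$, which holds because $X(\pt)$ is torsion-free. This gives an exact sequence $0\ra \Z/d\ra \Z/(d,e)\ra \Ext^1(\mZ/I_d,X)\ra 0$. Injectivity of the first map then yields both $d|e$ and $\Ext^1(\mZ/I_d,X)=0$ at once. Uniqueness is proved by a lifting argument rather than by classifying extension classes. For two modules $X,Y$ of this type, $\Hom(\mZ/I_d,Y)=0=\Ext^1(\mZ/I_d,Y)$ makes restriction $\Hom(X,Y)\ra \Hom(I_e,Y)$ a bijection. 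So the two inclusions of $I_e$ extend to maps $f\colon X\ra Y$ and $g\colon Y\ra X$, which are forced to be mutually inverse.

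\textbf{Your route.} You transport the extension class to $\Ext^1_{\Z}(\Z/d,\Z)$ via the exact functor $\ev_{\pt}$. You then use the classification of abelian extensions of $\Z/d$ by $\Z$ to force the class to have order $d$. Finally, you rescale by automorphisms of $\mZ/I_d$ to identify all extensions with unit class.

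\textbf{On the formula for $\Phi$.} Your formula is right: a cocycle $m\in I_e(\Theta_d)$ evaluates on the generator $Ip$ of $F_{\Theta_d}(\pt)$ to $p_*(m)=dm$. However, the step you flagged as the main obstacle can be skipped entirely. $\Phi$ is a homomorphism out of a group of order $(d,e)$, so the order of $\Phi([X])$ divides $(d,e)$. Torsion-freeness of $X(\pt)$ forces that order to be $d$. This already gives $d|e$ and shows that $[X]$ generates $\Ext^1(\mZ/I_d,I_e)\iso\Z/d$.

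\textbf{Trade-offs.} Your approach makes explicit how the Mackey extension class controls the abelian extension at the $\pt$ spot. The paper's approach is more economical. It uses torsion-freeness only through the trivial vanishing $\Hom(\mZ/I_d,X)=0$. It needs neither Yoneda-Ext functoriality under $\ev_{\pt}$ nor any knowledge of the unit action on $\Ext^1(\mZ/I_d,I_e)$. Its uniqueness step works verbatim whenever $\Hom(\mZ/I_d,\blank)$ and $\Ext^1(\mZ/I_d,\blank)$ vanish on the modules in question.
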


\begin{proof}
Say that an $X$ satisfying the conditions of the proposition is \mdfn{type $(e;d)$}.  Applying $\Hom(\mZ/I_d,\blank)$ to the short exact sequence gives
\[ 0 \ra \Z/d \ra \Z/(d,e) \ra \Ext^1(\mZ/I_d,X) \ra 0
\]
where we have used that $\Hom(\mZ/I_d,X)=0$ (because $X(\pt)$ is torsion-free), $\Ext^1(\mZ/I_d,I_e)\iso \Z/(d,e)$, and $\Ext^1(\mZ/I_d,\mZ/I_d)=0$.  For the left map in the exact sequence  to be an injection we must have $(d,e)=d$, or $d|e$.  But then the left map is an injection $\Z/d\ra \Z/d$ and thus an isomorphism. Hence $\Ext^1(\mZ/I_d,X)=0$.

Now let $Y$ be another module of type $(e;d)$.  Applying $\Hom(\blank,Y)$ to the sequence $0\ra I_e\ra X\ra \mZ/I_d\ra 0$ and using $\Ext^1(\mZ/I_d,Y)=0$
now shows that $\Hom(X,Y)\ra \Hom(I_e,Y)$ is an isomorphism. Similarly, $\Hom(X,X)\ra \Hom(I_e,X)$ is an isomorphism. 

From the above isomorphism statements (and their analogs with $X$ and $Y$ reversed) we obtain maps $f\colon X\ra Y$ and $g\colon Y\ra X$ making the following triangles commute:
\[ \xymatrix{
I_e \ar@{ >->}[r]^{i_X} \ar@{ >->}[d]_{i_Y} & X\ar[dl]^f
\\
Y 
}
\qquad
\xymatrix{
I_e \ar@{ >->}[r]^{i_X} \ar@{ >->}[d]_{i_Y} &  X
\\
Y. \ar[ur]_g
}
\]
Then $gfi_X=i_X$ and it follows from our isomorphism $\Hom(X,X)\ra \Hom(I_e,X)$ that $gf=\id_X$.  The analogous argument shows $fg=\id_Y$, and hence $X\iso Y$.

We have now shown that any two modules of type $(e;d)$ are isomorphic.  Since $\mZ(e;d)$ and $X$ are both such modules, $X\iso \mZ(e;d)$.  
\end{proof}

The following generalization is not immediately important, but it is good to have at our disposal for the future.  

\begin{cor}
Suppose the $\mZ$-module $X$ has a filtration
$X_0\subseteq X_1\subseteq \cdots \subseteq X_s=X$
with the properties that $X_0\iso I_e$ and $X_i/X_{i-1}\iso \mZ/I_{d_i}$ for all $i\geq 1$.  Also assume that $X(\pt)$ is torsion-free.  Then $d_i|e$ for all $i$, $(d_i,d_j)=1$ whenever $i\neq j$, and $X\iso \mZ(e;\prod_i d_i)$ (note that $\prod_i d_i$ divides $e$).  
\end{cor}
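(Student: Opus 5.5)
Induct on $s$, using Proposition~\ref{pr:type(e,d)-facts} as the engine. For $s=0$ there is nothing to prove beyond $X\iso I_e$. For $s=1$ we have $0\ra I_e\ra X\ra \mZ/I_{d_1}\ra 0$ with $X(\pt)$ torsion-free, and Proposition~\ref{pr:type(e,d)-facts} gives $d_1|e$ and $X\iso \mZ(e;d_1)$.

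For the inductive step, first note that $X_{s-1}(\pt)\subseteq X(\pt)$ is torsion-free. So the inductive hypothesis applied to $X_0\subseteq\cdots\subseteq X_{s-1}$ gives:
\begin{itemize}
\item $d_i|e$ for $i<s$;
\item the $d_i$ with $i<s$ are pairwise coprime;
\item $X_{s-1}\iso \mZ(e;D)$ with $D=\prod_{i<s}d_i$, which divides $e$.
\end{itemize}
We then have $0\ra \mZ(e;D)\ra X\ra \mZ/I_{d_s}\ra 0$. The plan is to convert this into an extension of a monogenal module by $I_e$ and apply Proposition~\ref{pr:type(e,d)-facts} again.

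Let $Q=X/X_0$. Then $0\ra \mZ/I_D\ra Q\ra \mZ/I_{d_s}\ra 0$, using the sequence $0\ra I_e\ra \mZ(e;D)\ra \mZ/I_D\ra 0$. The key claims are that $(D,d_s)=1$ and that $Q\iso \mZ/I_{Dd_s}$; then $0\ra I_e\ra X\ra \mZ/I_{Dd_s}\ra 0$ and Proposition~\ref{pr:type(e,d)-facts} finishes, giving $Dd_s|e$ (hence $d_s|e$) and $X\iso\mZ(e;Dd_s)$.

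To get coprimality, apply $\Hom(\mZ/I_{d_s},\blank)$ to $0\ra \mZ(e;D)\ra X\ra \mZ/I_{d_s}\ra 0$. Since $X(\pt)$ is torsion-free, $\Hom(\mZ/I_{d_s},X)=0$, so the connecting map $\Z/d_s=\Hom(\mZ/I_{d_s},\mZ/I_{d_s})\ra \Ext^1(\mZ/I_{d_s},\mZ(e;D))$ is injective. Compute the target from $0\ra I_e\ra \mZ(e;D)\ra \mZ/I_D\ra 0$, using $\Hom(\mZ/I_{d_s},\mZ/I_D)=\Z/(d_s,D)$, $\Ext^1(\mZ/I_{d_s},I_e)\iso\Z/(d_s,e)$ (Proposition~\ref{pr:Tor-I}) and $\Ext^1(\mZ/I_{d_s},\mZ/I_D)=0$. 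This gives an exact sequence
\[ 0\ra \Z/(d_s,D)\ra \Z/(d_s,e)\ra \Ext^1(\mZ/I_{d_s},\mZ(e;D))\ra 0, \]
where the left map is injective because $\Hom(\mZ/I_{d_s},\mZ(e;D))=0$. So the $\Ext^1$ has order $(d_s,e)/(d_s,D)$, and injectivity of $\Z/d_s$ into it forces $d_s\le (d_s,e)/(d_s,D)$. Hence $(d_s,D)=1$ and $d_s|e$.

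Given coprimality, $\Ext^1(\mZ/I_{d_s},\mZ/I_D)=0$ (Proposition~\ref{pr:Ext-Mackey}), so the extension $Q$ splits. Proposition~\ref{pr:misc}(c) then gives $Q\iso \mZ/I_D\oplus\mZ/I_{d_s}\iso\mZ/I_{Dd_s}$.

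The main obstacle is this $\Ext^1$ bookkeeping, in particular identifying the left map $\Z/(d_s,D)\ra\Z/(d_s,e)$ and checking that the long exact sequence really produces an injection. If that gets murky, I would fall back on a direct analysis at $\pt$: $X(\pt)$ is torsion-free of rank one, so $Q(\pt)=X(\pt)/X_0(\pt)$ is cyclic of order $Dd_s$, and that cyclicity forces $(D,d_s)=1$ via the extension $0\ra\Z/D\ra Q(\pt)\ra\Z/d_s\ra 0$ together with Proposition~\ref{pr:misc}(a).
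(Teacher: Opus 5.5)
Your main argument is correct, but it is organized differently from the paper's. The paper first uses $\Ext^1(\mZ/I_b,\mZ/I_c)=0$ to split $X/X_0\iso\bigoplus_i\mZ/I_{d_i}$, which lets it reorder the filtration at will. It then only looks at the first two steps. Since $X_1$ is of type $(e;d_1)$, one gets $d_1|e$ and $\Ext^1(\mZ/I_{d_1},X_1)=0$. Applying $\Hom(\mZ/I_{d_1},\blank)$ to $0\ra X_1\ra X_2\ra \mZ/I_{d_2}\ra 0$ then embeds $\Z/(d_1,d_2)$ into that zero group. So divisibility and pairwise coprimality come for free from the vanishing statement already in Proposition~\ref{pr:type(e,d)-facts}, and one more use of that proposition, together with Remark~\ref{re:decompose}, finishes.

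You instead induct on $s$ and test the newest quotient $\mZ/I_{d_s}$ against the whole previous stage. This costs you an extra computation: $\Ext^1(\mZ/I_{d_s},\mZ(e;D))$ has order $(d_s,e)/(d_s,D)$, which generalizes the paper's vanishing (the case $d_s=D$). In return you get $d_s|e$ and coprimality with the full product $D$ in one stroke, with no reordering. That computation is fine, and so is the squeeze $d_s\le (d_s,e)/(d_s,D)\le d_s/(d_s,D)$.

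Two points need attention. First, your inductive hypothesis only gives an abstract isomorphism $X_{s-1}\iso \mZ(e;D)$. Nothing you stated guarantees that it carries $X_0$ onto the canonical copy of $I_e$, so ``$X_{s-1}/X_0\iso \mZ/I_D$'' is not justified as written. It is true and easy to fix in either of two ways:
\begin{itemize}
\item carry $X/X_0\iso \mZ/I_{\prod_i d_i}$ along as part of the inductive statement, since your argument proves it at every stage; or
\item observe, as the paper does, that $\Ext^1(\mZ/I_b,\mZ/I_c)=0$ forces $X_{s-1}/X_0\iso\bigoplus_{i<s}\mZ/I_{d_i}$, which is $\mZ/I_D$ by pairwise coprimality.
\end{itemize}

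Second, your fallback at the $\pt$ spot does not work. A cyclic group of order $Dd_s$ is always an extension of $\Z/d_s$ by $\Z/D$, for example $0\ra \Z/\ell\ra\Z/\ell^2\ra\Z/\ell\ra 0$. So cyclicity of $Q(\pt)$ cannot force $(D,d_s)=1$, and Proposition~\ref{pr:misc}(a) presupposes coprimality rather than proving it. Coprimality genuinely needs the Mackey-level $\Ext$ argument, so drop the fallback.
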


\begin{proof}
First observe that $X/X_0$ has the filtration $\{X_i/X_0\}_i$ whose quotients are the $\mZ/I_{d_i}$.  Since $\Ext^1(\mZ/I_b,\mZ/I_c)=0$ for any $b$ and $c$, it follows that $X/X_0\iso \bigoplus_i \mZ/I_{d_i}$.  We can filter this module so that the quotients are the $\mZ/I_{d_i}$ in any order, and so the same applies to $X$.  In other words, we are free to change the original filtration in order to permute the $d_i$'s.  

The module $X_1$ is of the type considered in Proposition~\ref{pr:type(e,d)-facts} and so we conclude $d_1|e$ and $\Ext^1(\mZ/I_{d_1},X_1)=0$.  

Applying $\Hom(\mZ/I_{d_1},\blank)$ to $0\ra X_1 \ra X_2 \ra \mZ/I_{d_2}\ra 0$ gives the exact sequence 
\[ 0=\Hom(\mZ/I_{d_1},X_2)\ra \Z/(d_1,d_2) \ra \Ext^1(\mZ/I_{d_1},X_1)=0.
\]
Note that the first $\Hom$ is zero because $X_2(\pt)$ is torsion-free.  It follows that $\Z/(d_1,d_2)=0$, and so $(d_1,d_2)=1$.  

By permuting the $d_i$ we now conclude that $d_i|e$ for all $i$, and that $(d_i,d_j)=1$ for any $i\neq j$.

Now return to the first paragraph, where we showed that we have the short exact sequence 
\[ 0 \ra I_e=X_0 \ra X \ra \bigoplus_i \mZ/I_{d_i} \ra 0.
\]
Since the $d_i$ are pairwise relatively prime, Remark~\ref{re:decompose} implies that the module on the right is isomorphic to $\mZ/I_{D}$ where $D=\prod_i d_i$.  So by Proposition~\ref{pr:type(e,d)-facts} we conclude $X\iso \mZ(e;\prod_i d_i)$.  
\end{proof}

\begin{remark}
Do not mistake the $\mZ(e;d)$ modules as nicely-behaved objects, simply because their values are all equal to $\Z$.    For example, when $n=\ell^2$ one can prove that 
\[ \mZ(\ell^2;\ell)\bbox\, \mZ(\ell^2;\ell)\iso \mZ/I_\ell \oplus \mZ(\ell^2;1).
\]
The appearance of the torsion term is not at all intuitive here.
\end{remark}

\section{Applications to equivariant topology}
\label{se:appl}

In this section we give a very brief tour of some applications to Bredon cohomology calculations.  Readers are referred to \cite{DH2} for a much deeper treatment.

Fix $G=C_n$.  For each $d|n$ let $S^{\lambda_d}$ be the chain complex
\[ 0 \lra F_d \llra{\id-Rt} F_d \llra{R\pi} \mZ \lra 0.
\]
This arises as the cellular chain complex for a certain evident cell structure on the one-point compactification of $\C$ where $C_n$ has $t$ acting as multiplication by $e^{\tfrac{2\pi i d}{n}}$, but this geometric input can here be taken as a black box.  Note that $S^{\lambda_d}$ is a truncation of the standard free resolution of $\mZ/I_d$, so we can read off that
\[ \uH_*(S^{\lambda_d})\iso \begin{cases}
\mZ & \text{if $i=2$}, \\
0 & \text{if $i=1$ or $i\geq 3$}, \\
\mZ/I_d & \text{if $i=0$.}
\end{cases}
\]
Computing the regular portion of the Bredon cohomology groups $H^\star_G(\pt)$ amounts to computing
\[ \uH_*(S^{\lambda_{d_1}}\bbox \cdots \bbox S^{\lambda_{d_k}}) \qquad\text{and}\qquad
\uH_*\bigl (\uHom(S^{\lambda_{d_1}}\bbox \cdots \bbox S^{\lambda_{d_k}},\mZ)\bigr )
\]
for any collection of divisors $d_1,\ldots,d_k$ of $n$.  
Of course this is a completely algebraic problem about chain complexes of $\mZ$-modules, and one can attempt to tackle it with the standard tools of homological algebra.

The K\"unneth and Universal Coefficient spectral sequences work for $\mZ$-modules just as they do in any reasonable abelian category.  If $C_\bullet$ is a bounded-below complex of free $\mZ$-modules and $D_\bullet$ is any chain complex of $\mZ$-modules, then there is a K\"unneth spectral sequence
\[ E^2_{p,q}=\bigoplus_{q_1+q_2=q} \uTor_p(\uH_{q_1}(C),\uH_{q_2}(D))\Rightarrow \uH_{p+q}(C\bbox D).
\]
Here $E^2_{p,q}$ is drawn on a grid in the $(p,q)$ spot, and the differentials take the form $d_r\colon E^r_{p,q}\lra E^r_{p-r,q+r-1}$.  The Universal Coefficient spectral sequence (perhaps better described as the spectral sequence for a  mapping space) has
\[ E_2^{p,q}=\bigoplus_{q_2-q_1=q} \uExt^{-p}(\uH_{q_1}(C),\uH_{q_2}(D)) \Rightarrow \uH_{p+q}(\uHom(C,D)).
\]
Here $E_2^{p,q}$ is again drawn in the $(p,q)$ spot (and so is concentrated in the half-plane $p\leq 0$) and the differentials again take the form $d_r\colon E_r^{p,q}\ra E_r^{p-r,q+r-1}$.  

See also \cite[Section 4.4]{Z} for these spectral sequences.

\begin{example}
Let us attempt to compute $\uH_*(S^{\lambda_b}\bbox S^{\lambda_c})$ using the K\"unneth spectral sequence.  The $E^2$-term is given in Figure~\ref{fig:sample1}.  There is a single possible nonzero differential, the indicated $d^3$.  It turns out that this differential exists and is the evident injection that we encountered back in Proposition~\ref{pr:mZ-ses}.  The argument for this is not transparent, and we refer the reader to the methods of \cite{DH2} (which avoid the spectral sequence, but nevertheless lead to this conclusion).  As a result one obtains

\[ \uH_i(S^{\lambda_b}\bbox S^{\lambda_c}) \iso \begin{cases}
\mZ & \text{if $i=4$},\\
\mZ/I_{[a,b]} & \text{if $i=2$}, \\
\mZ/I_{(a,b)} & \text{if $i=0$},\\
0 & \text{otherwise.}
\end{cases}
\]

\begin{figure}[ht]
\begin{tikzpicture}[scale=0.5]
\draw (0.2,-0.2) node{$\mZ/I_{(a,b)}$};
\draw (12,-0.2) node{$\mZ/I_{(a,b)}$};
\draw (0,2.8) node{$\mZ/I_a\oplus \mZ/I_b$};
\draw (0,5.4) node{$\mZ$};
\draw[->,thick] (10.8,0.1) -- (0.1,2.4);
\draw (7,1.5) node{??};

\foreach \y in {-1,0.5,2,3.5,5,6.5}
	{\draw (-2,\y)--(14,\y);}
\foreach \x in {-2,2,6,10,14}
    {\draw (\x,-1) -- (\x,6.5);}
\filldraw (-2,-1) circle(.1);
\draw[->,thick] (-2,-1)--(14.2,-1);
\draw[->,thick] (-2,-1)--(-2,6.7);
\draw (-2.4,6.6) node{$\scriptstyle{q}$};
\draw(14.4,-1) node{$\scriptstyle{p}$};
\end{tikzpicture}
\label{fig:sample1}
\caption{K\"unneth spectral sequence for $\uH_*(S^{\lambda_a}\bbox S^{\lambda_b})$}
\end{figure}
\end{example}

\begin{example}
Next let us try to compute the homology modules of $\uHom(S^{\lambda_a},S^{\lambda_b})$.  We can attempt this directly using the Universal Coefficient spectral sequence, or we can use the isomorphism
\[ \uHom(S^{\lambda_a},S^{\lambda_b})\iso \uHom(S^{\lambda_a},\mZ)\bbox S^{\lambda_b}
\]
(which follows from Corollary~\ref{co:box-hom})
and then use the K\"unneth spectral sequence.  We will attempt it both ways. 

The $E_2$-term for the Universal Coefficient spectral sequence is in Figure~\ref{fig:sample2}.  This time there are two $d_3$'s to analyze, and neither one is easy to evaluate.

\begin{figure}[ht]
\begin{tikzpicture}[scale=0.5]
\draw (0.2,2.8) node{$\mZ/I_{(a,b)}$};
\draw (12,-0.2) node{$\mZ/I_{b}$};
\draw (12,2.8) node{$\mZ/I_{(a,b)}\oplus \mZ$};
\draw (0,5.6) node{$\mZ/I_a$};
\draw[->,thick] (10.8,0.1) -- (0.1,2.4);
\draw (7,1.5) node{??};
\draw[->,thick] (10.1,3.2) -- (0.8,5.4);
\draw (7,4.5) node{??};

\foreach \y in {-1,0.5,2,3.5,5,6.5}
	{\draw (-2,\y)--(14,\y);}
\foreach \x in {-2,2,6,10,14}
    {\draw (\x,-1) -- (\x,6.5);}
\filldraw (14.0,2) circle(.1);
\draw[->,thick] (14.0,2)--(-2.3,2);
\draw[->,thick] (14.0,-1)--(14.0,6.7);
\draw[->,thick] (14,2)--(14.6,2);
\draw (14.5,6.4) node{$\scriptstyle{q}$};
\draw(14.9,2) node{$\scriptstyle{p}$};
\end{tikzpicture}
\label{fig:sample2}
\caption{Universal Coefficient spectral sequence for $\uH_*(\uHom(S^{\lambda_a}, S^{\lambda_b}))$}
\end{figure}

Let us instead look at the K\"unneth spectral sequence for $\uHom(S^{\lambda_a},\mZ)\bbox S^{\lambda_b}$.  By inspection $\uHom(S^{\lambda_a},\mZ)$ is a truncation of our standard resolution for $\mZ/I_a$, and so its homology modules are easy to calculate: they are $I_a$ in degree $-2$ and zero elsewhere.  The $E^2$-term is  then readily computed and is in Figure~\ref{fig:sample3}.  This time there is no room for differentials, and the spectral sequence collapses.  We do have an extension problem for $H_0$, though.  It turns out that $H_0$ is torsion-free and so by Proposition~\ref{pr:type(e,d)-facts} the module in question is $\mZ(a;(a,b))$.  We again refer to \cite{DH2} for an approach to some of the missing details here.  (Note that by comparing the two spectral sequences we now see that both $d_3$'s in the Universal Coefficient spectral sequence must be surjective).  

\begin{figure}[ht]
\begin{tikzpicture}[scale=0.5]
\draw (0.2,-0.2) node{$I_{(a,b)}/I_b$};
\draw (8,-0.2) node{$\mZ/I_{(a,b)}$};
\draw (0,2.8) node{$I_a$};
\foreach \y in {-1,0.5,2,3.5,5}
	{\draw (-2,\y)--(14,\y);}
\foreach \x in {-2,2,6,10,14}
    {\draw (\x,-1) -- (\x,5);}
\filldraw (-2,2) circle(.1);
\draw[->,thick] (-2,2)--(14.3,2);
\draw[->,thick] (-2,-1)--(-2,5.4);
\draw (-2.4,5.6) node{$\scriptstyle{q}$};
\draw(14.5,2) node{$\scriptstyle{p}$};
\end{tikzpicture}
\caption{K\"unneth spectral sequence for $\uH_*\bigl (\uHom(S^{\lambda_a},\mZ) \bbox S^{\lambda_b}\bigr )$}
\label{fig:sample3}
\end{figure}

If we were to now try to compute the homology of $\bigl( \uHom(S^{\lambda_a},\mZ)\bbox S^{\lambda_b}\bigr )\bbox S^{\lambda_c}$ we would need to use some of the $\uTor$  modules from Proposition~\ref{pr:Ia/Ib}, because of the $I_{(a,b)}/I_b$ terms from the previous computation.  But clearly things get a bit hairy at this point.  
\end{example}

Again, the content of this section is mostly just to demonstrate the kinds of settings where the earlier work in the paper proves quite useful.  For more in this direction, see \cite{DH2}.


\section{Change of groups}
\label{se:change}
Suppose one wants to understand $\uTor_*^{\mZ_{C_n}}(\mZ/I_a,\mZ/I_b)$.  As we know from Remark~\ref{re:decompose}, there is a decomposition $\mZ/I_a\iso \bigoplus_{\ell} \mZ/I_{a(\ell)}$ where the sum is over all primes $\ell$, and $a(\ell)$ is the $\ell$-adic part of $a$.  The analogous decomposition of $\mZ/I_b$ allows one to immediately reduce to the case of $\uTor_*^{\mZ_{C_n}}(\mZ/I_{\ell^e},\mZ/I_{\ell^f})$.  The goal of this section is to explain a further reduction where one changes $C_n$ to $C_{n(\ell)}$.  For this we need to review the general tools for changing the base group.

\subsection{Restriction and induction}
\label{se:res-ind}
Let us first recall some general category theory.  If $I$ and $J$ are $\Ab$-categories (categories enriched over abelian groups) say that a functor $\alpha\colon I\ra J$ is additive if the maps on hom-sets are additive.  Write $\Ab^I$ for the category of additive functors, and write $\Pre(\alpha)\colon \Ab^J\ra \Ab^I$ for precomposition with $\alpha$.  The functor $\Pre(\alpha)$ has left and right adjoints, called additive Kan extensions and denoted $L_\alpha$ and $R_\alpha$.
For an additive functor $M\colon I\ra \Ab$ these are 
given by the formulas
\[ (L_\alpha M)(j)=\coeq \Biggl [ 
\bigoplus_{i\ra i'} J(\alpha i',j)\tens Mi
\dbra
\bigoplus_i J(\alpha i,j)\tens Mi \Biggr ]
\]
(the two maps are the evident ones) and
\[ (R_\alpha M)(j)=\eq \Biggl [
\prod_i (Mi)^{J(j,\alpha i)} \dbra
\prod_{i\ra i'} (Mi')^{J(j,\alpha i)}
\Biggr ].
\]
In the latter formula we use the notation $A^B=\Hom(B,A)$ for abelian groups $A$ and $B$.  

Suppose we are given an additive functor $\beta\colon J\ra I$.  An easy exercise shows that
the structure of an adjunction $(\alpha,\beta)$ (with $\alpha$ the left adjoint) determines a natural isomorphism $L_\alpha M\ra M\beta$, and the structure of an adjunction $(\gamma,\alpha)$ determines a natural isomorphism $M\gamma \ra R_\alpha M$.  (Essentially, one uses the unit and counit of the adjunction to write down augmentation maps for the equalizer/coequalizer together with a contracting homotopy.)  Both of these can be rephrased as saying that when $(\alpha,\beta)$ is an adjunction then $(\Pre(\beta),\Pre(\alpha))$ is an adjunction.

Now suppose that $f\colon G_1\ra G_2$ is a homomorphism of finite groups.  We have the usual adjoint pairs
\[ \xymatrix{ 
\Z[G_1]\MMod \ar@/^6ex/[r]^{\Ind_f} \ar@/_5ex/[r]_{\coInd_f} \ar @/^4ex/ @{{}{ }{}} [r]|{\perp} 
\ar @/_2.5ex/ @{{}{ }{}} [r]|{\perp}
& \Z[G_2]\MMod \ar[l]_-{\Res_f} 
}
\]
where the induction and coinduction functors are $\Ind_f(M)=\Z[G_2]\tens_{\Z[G_1]} M$ and $\coInd_f(M)=\Hom_{\Z[G_1]}(\Z[G_2],M)$.  Recall that when $f$ is an inclusion these two functors are naturally isomorphic, by a standard argument.

The restriction and induction functors applied to a permutation module yield, canonically, another permutation module, so these functors yield a corresponding adjoint pair 
\begin{equation}
\label{eq:BZ-adjoint}
\xymatrix{
B\Z_{G_1} \ar@/^5ex/[r]^{\Ind_f}  \ar@/^3ex/@{{}{ }{}}[r]|{\perp}& B\Z_{G_2}. \ar[l]^{\Res_f}
}
\end{equation}
When $f$ is an inclusion this is also true of $\coInd_f$, because of the isomorphism with $\Ind_f$.  But for a projection $G\ra G/N$ and a $G$-set $S$, there is no isomorphism between $\Z\langle S\rangle^N$ and $\Z\langle S/N\rangle$ that is canonical in $S$ and so coinduction does not yield a functor between the Burnside categories.

Upon taking opposite categories the adjoint pair of (\ref{eq:BZ-adjoint}) yields a similar adjunction but with $\Res_f$ now the left adjoint.  Then passing to diagram categories we have
\begin{equation}
\label{eq:BZ-adjoint2}
\xymatrix{
\mZ_{G_1}\MMod 
\ar@/^3ex/@{{}{ }{}}[r]|{\perp}
\ar@/_4ex/@{{}{ }{}}[r]|{\perp}
\ar[r]_{\Pre(\Res_f)}
& \mZ_{G_2}\MMod. 
\ar@/^7ex/[l]^{R_{\Res_f}}
\ar@/_5ex/[l]_{\Pre(\Ind_f)} 
}
\end{equation}
When $f$ is an inclusion $R_{\Res_f}=\Pr(\Ind_f)$.  

It is common to define induction and restriction of Mackey functors by
\[ \mInd_f=\Pr(\Res_f) \quad\text{and}\quad \mRes_f=\Pr(\Ind_f)
\]
(see, for example, \cite[Section 3]{W}).  This can be a little confusing when one needs to go back to the perspective of Kan extensions; for example, the left Kan extension along the restriction functor $\Res_f$ is again ``restriction'', but here in the guise of $\mRes$.  Unfortunately, this is a case where no choice of terminology seems to be entirely satisfying.

Note that restriction $\mRes$ is always the left adjoint to induction $\mInd$ (again, this is slightly awkward), and when $f$ is an inclusion then it is also the right adjoint to induction.  When $f$ is an inclusion   
it is common to write $\mInd_f(M)=M\ind_{G_1}^{G_2}$ and $\mRes_f(M)=M\res^{G_2}_{G_1}$.  Let us record the formulas
\[ (\mRes_f M)(\!\phantom{|}_{G_1} S)=M(G_2\times_{G_1} S), \qquad
(\mInd_f N)(\!\phantom{|}_{G_2} T)=N(\!\phantom{|}_{G_1}T).
\]
Note that these formulas immediately imply that $\mRes_f$ and $\mInd_f$ are both exact functors, since for chain complexes of Mackey functors exactness is determined objectwise.  

\begin{remark}
It is useful to have a rough image of what these functors do that relates back to pictures of the orbit category.  
When $f$ is an inclusion, $\mRes_f(M)$ takes the values of $M$ on the ``large'' part of the orbit diagram, from $G_2$ to $G_2/G_1$, relabelling the entries to match the orbit category of $G_1$.  The induction functor is more subtle to describe in terms of a simple picture.

When $f$ is a projection $G\ra G/N$ ($N$ a normal subgroup of $G$) and $M$ is a $\mZ_G$-module, $\mInd_f(M)$ just takes the values of $M$ on the ``small'' part of the orbit diagram, from $G/N$ down to $G/G$.  If $X$ is a $\mZ_{G/N}$-module then $\mRes_f X$ is precisely $X$ on these ``small'' orbits, but then is freely extended into the rest.\end{remark}

The following results follow readily from the adjunctions and definitions:

\begin{prop} 
\label{pr:group-change}
Let $f\colon G_1\ra G_2$ be a homomorphism.
\begin{enumerate}[(a)]
\item If $T$ is a $G_2$-set then $\mRes_f (F_{T})\iso F_{({}_{G_1}\!T)}$ naturally in $T$.  
\item If $f$ is an inclusion and $S$ is a $G_1$-set then $\mInd_f(F_S)\iso F_{(G_2\times_{G_1}S)}$ naturally in $S$.  
\item For the projection $f\colon G\ra G/N$, where $N$ is a normal subgroup, one has $\mInd_f\circ \mRes_f \iso \id$.  
\end{enumerate}
\end{prop}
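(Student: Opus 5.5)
For (a), I would compute $\mRes_f(F_T)=\Pre(\Ind_f)(F_T)$ directly from the formula $(\mRes_f M)({}_{G_1}S)=M(G_2\times_{G_1}S)$. This gives
\[ (\mRes_f F_T)(S)=\cB\Z_{G_2}(\Ind_f S,T)\iso \cB\Z_{G_1}(S,\Res_f T)=F_{({}_{G_1}T)}(S), \]
using the adjunction $(\Ind_f,\Res_f)$ on Burnside categories from (\ref{eq:BZ-adjoint}). This is the permutation-module version of $\Hom_{\Z[G_2]}(\Z[G_2]\tens_{\Z[G_1]}\Z\langle S\rangle,\Z\langle T\rangle)\iso\Hom_{\Z[G_1]}(\Z\langle S\rangle,\Z\langle T\rangle)$. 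Since the adjunction bijection is natural in both variables, the isomorphism is natural in $S$, so it is an isomorphism of Mackey functors, and it is also natural in $T$. Alternatively, the left adjoint $L_{\Res_f}$ of a representable is representable at $\Res_f T$ by Yoneda. Either route is formal.

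For (b), where $f$ is an inclusion, I would use the same method with the other adjunction. Here $\Ind_f\iso\coInd_f$, so $\Res_f$ is also left adjoint to $\Ind_f$ on $\cB\Z$. Then
\[ (\mInd_f F_S)(T)=F_S(\Res_f T)=\cB\Z_{G_1}(\Res_f T,S)\iso\cB\Z_{G_2}(T,\Ind_f S)=F_{G_2\times_{G_1}S}(T). \]
The step that needs a little care is checking that the isomorphism $\Ind_f\iso\coInd_f$ restricts to permutation modules compatibly with composition in $\cB\Z$. This is the standard isomorphism $\Hom_{\Z[G_1]}(\Res\,\Z\langle T\rangle,\Z\langle S\rangle)\iso\Hom_{\Z[G_2]}(\Z\langle T\rangle,\Z\langle G_2\times_{G_1}S\rangle)$, obtained by sending $\phi$ to $t\mapsto\sum_{gG_1}g\,\phi(g^{-1}t)$, and it is natural in both variables.

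For (c), I would compose the two formulas:
\[ (\mInd_f\mRes_f X)(T)=(\mRes_f X)(\Res_f T)=X(\Ind_f\Res_f T) \]
for a $G/N$-set $T$. It then suffices to show that the counit $\Ind_f\Res_f T=G/N\times_G T\to T$ is an isomorphism of $G/N$-sets, naturally in $\cB\Z_{G/N}$. At the level of sets, $N$ acts trivially on $T$, so $(G/N)\times_G T=T$. At the level of modules this is $\Z[G/N]\tens_{\Z[G]}\Z\langle T\rangle\iso\Z\langle T\rangle$. Because the counit of the adjunction (\ref{eq:BZ-adjoint}) is a natural transformation on $\cB\Z_{G/N}$, precomposing with it gives a natural isomorphism $\mInd_f\mRes_f X\iso X$. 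I expect no real obstacle; the main care point is naturality with respect to transfers, which is automatic because everything is phrased through the counit of the adjunction on the Burnside categories.
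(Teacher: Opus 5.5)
Your proposal is correct and is essentially the argument the paper intends. The paper leaves the proof to the reader, remarking only that it follows readily from the adjunctions and the defining formulas $(\mRes_f M)(S)=M(G_2\times_{G_1}S)$ and $(\mInd_f N)(T)=N({}_{G_1}T)$. You carry this out: you transport the $\cBZ$-level adjunctions $\Ind_f\dashv\Res_f$ (and $\Res_f\dashv\Ind_f$ for inclusions), and you use that the counit $G/N\times_G T\to T$ is an isomorphism.
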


\begin{proof}
Left to the reader.
\end{proof}

\begin{remark}
\label{re:res-free}
Note in particular that Proposition~\ref{pr:group-change}(a) yields $\mRes_f \mZ_{G_2}=\mRes_f F_{\pt}=F_{(\!\phantom{|}_{G_1}\pt)}=\mZ_{G_1}$. Additionally, when $f$ is a surjection and $H\supseteq \ker f$ it gives that $\mRes_f F_{G_1/H}=F_{G_1/H}$, where the two free modules are slightly different: the first is a free $\mZ_{G_2}$-module and the second is a free $\mZ_{G_1}$-module.   
\end{remark}

Returning to the general case of $f\colon G_1\ra G_2$, the commutative diagram
\[ \xymatrix{
B\Z_{G_2}^{op} \times B\Z_{G_2}^{op} \ar[d]_{\Res_f\times \Res_f}\ar[r]^-{\times} & B\Z_{G_2}^{op} \ar[d]^{\Res_f} \\
B\Z_{G_1}^{op} \times B\Z_{G_1}^{op} \ar[r]^-{\times} & B\Z_{G_1}^{op}  \\
}
\]
yields a natural isomorphism between successive left Kan extensions corresponding to the two ways of going around the diagram.  This immediately yields the following:

\begin{prop}
\label{pr:Res-monoidal}
In the above setting there is a natural isomorphism
\[ \mRes_f(M\bbox_{G_2} N) \iso (\mRes_f M)\bbox_{G_1} (\mRes_f N).
\]
\end{prop}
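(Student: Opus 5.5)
The plan is to compute both sides as left Kan extensions and compare them. Write $\Res=\Res_f\colon B\Z_{G_2}^{op}\ra B\Z_{G_1}^{op}$, so that $\mRes_f=\Pre(\Ind_f)$. By the general category theory recalled above, the adjunction $(\Ind_f,\Res_f)$ on $B\Z$ passes to opposite categories as the adjunction $(\Res_f,\Ind_f)$. This gives a natural isomorphism $L_{\Res}M\iso M\circ\Ind_f=\mRes_f M$. So I will identify $\mRes_f$ with left additive Kan extension along $\Res$.

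Next, recall that $M\bbox_{G_2}N$ is the left Kan extension of $\tens\circ(M\times N)$ along $\times\colon B\Z_{G_2}^{op}\times B\Z_{G_2}^{op}\ra B\Z_{G_2}^{op}$. Left Kan extensions compose, since left adjoints of composites are composites of left adjoints. Hence $\mRes_f(M\bbox N)$ is the left Kan extension of $M\tens N$ along the composite $\Res\circ\times$.

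By the commutative square, this composite equals $\times\circ(\Res\times\Res)$. So the same object is the left Kan extension along $\times$ of $L_{\Res\times\Res}(M\tens N)$, the extension being taken on $B\Z_{G_1}^{op}\times B\Z_{G_1}^{op}$.

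It remains to show that $L_{\Res\times\Res}(M\tens N)\iso (L_{\Res}M)\tens(L_{\Res}N)$ as bifunctors. The clean route is again adjunction: $\Res\times\Res$ has the right adjoint $\Ind\times\Ind$ on the product category, so
\[ L_{\Res\times\Res}(M\tens N)\iso (M\tens N)\circ(\Ind\times\Ind)=(M\circ\Ind)\tens(N\circ\Ind). \]
This equals $(\mRes_f M)\tens(\mRes_f N)$. One caveat is that $M\tens N$ is additive in each variable separately rather than jointly. I will therefore interpret everything with bi-additive functors, that is, functors on the tensor product of $\Ab$-categories. The adjunction statement $L_\alpha\iso\Pre(\beta)$ holds verbatim there, using the same explicit coequalizer and contracting-homotopy argument.

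Taking the left Kan extension along $\times$ for $G_1$ then gives $(\mRes_f M)\bbox_{G_1}(\mRes_f N)$. The whole chain is built from canonical isomorphisms, so the result is natural in $M$ and $N$.

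The main obstacle is the bookkeeping of variance and enrichment. One must check that the two-variable Kan extension along $\Res\times\Res$, taken over the tensor product of $\Ab$-categories, is correctly computed by precomposition with $\Ind\times\Ind$. One must also check that the square commutes up to natural isomorphism, via the canonical $G_1$-isomorphism between the restriction of $S\times T$ and the product of the restrictions, and that this isomorphism is compatible with the $B\Z$ morphisms. If the abstract route becomes murky, a fallback is to verify the isomorphism on free modules $M=F_S$, $N=F_T$. There Corollary~\ref{co:F-box-F} and Proposition~\ref{pr:group-change}(a) give
\[ \mRes_f(F_S\bbox F_T)\iso F_{{}_{G_1}(S\times T)}\iso F_{{}_{G_1}S}\bbox F_{{}_{G_1}T}. \]
One would then extend to all modules by right exactness of $\bbox$ and exactness of $\mRes_f$, using free presentations.
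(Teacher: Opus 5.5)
Your proposal is correct and follows the paper's own argument. The paper gets the isomorphism the same way, by comparing the iterated left Kan extensions around the commutative square formed by $\times$ and $\Res_f$, with left Kan extension along $\Res_f$ identified with $\mRes_f$. Your identification of $L_{\Res\times\Res}(M\tens N)$ with $(\mRes_f M)\tens(\mRes_f N)$, via the adjunction $\Res\times\Res\dashv\Ind\times\Ind$ on the tensor product of $\Ab$-categories, fills in a step the paper leaves implicit.
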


We can now give a very short proof for a result we mentioned back in Section~\ref{se:background}.

\begin{proof}[Proof of Proposition~\ref{pr:box-on-G}]
Suppose $f$ is the inclusion $\{e\}\inc G$.  If $M$ is a $\mZ_G$-module then $\mRes_f(M)=M(G)$.  So Proposition~\ref{pr:Res-monoidal} gives a natural isomorphism $(M\bbox N)(G)\iso M(G)\tens N(G)$.  A little legwork shows this to be the same as the map introduced in Section~\ref{se:background}.
\end{proof}

We also have the following change-of-groups theorem for $\uTor$:

\begin{prop}
\label{pr:res-Tor}
Let $f\colon G_1\ra G_2$ be a homomorphism, and let $M$ and $N$ be $\mZ_{G_2}$-modules.  Then
\[ \uTor_*^{\mZ_{G_1}}(\mRes_f M,\mRes_f N)\iso \mRes_f \Bigl [\uTor_*^{\mZ_{G_2}}(M,N) \Bigr ].
\]
\end{prop}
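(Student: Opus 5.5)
Take a free resolution $P_\bullet\ra M$ by $\mZ_{G_2}$-modules, each $P_k$ a direct sum of free modules $F_T$ for finite $G_2$-sets $T$. Then $\uTor_*^{\mZ_{G_2}}(M,N)$ is the homology of $P_\bullet\bbox_{G_2} N$. Apply $\mRes_f$ and compare with a computation over $G_1$.

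First, $\mRes_f$ is exact: by the formula $(\mRes_f M)(S)=M(G_2\times_{G_1}S)$, exactness is tested objectwise. Hence $\mRes_f P_\bullet\ra \mRes_f M$ is still a resolution, and $\mRes_f$ commutes with taking homology of complexes. So
\[ \mRes_f\bigl[\uTor_*^{\mZ_{G_2}}(M,N)\bigr]\iso H_*\bigl(\mRes_f(P_\bullet\bbox_{G_2}N)\bigr). \]

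Second, Proposition~\ref{pr:Res-monoidal} gives a natural isomorphism of complexes
\[ \mRes_f(P_\bullet\bbox_{G_2}N)\iso (\mRes_f P_\bullet)\bbox_{G_1}(\mRes_f N). \]
Naturality of this isomorphism in the first variable makes it compatible with the differentials. It is also additive, so it commutes with the direct sums in each $P_k$.

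Third, $\mRes_f P_\bullet$ must be a flat resolution over $\mZ_{G_1}$, so that its box product with $\mRes_f N$ computes $\uTor^{\mZ_{G_1}}_*(\mRes_f M,\mRes_f N)$. By Proposition~\ref{pr:group-change}(a), $\mRes_f F_T\iso F_{({}_{G_1}T)}$, which is again a free module. Since $\mRes_f$ preserves direct sums (being a precomposition functor), each $\mRes_f P_k$ is a direct sum of free $\mZ_{G_1}$-modules. These are flat by Corollary~\ref{co:flat->Free}, and a direct sum of flat modules is flat because box product commutes with direct sums and exactness is tested objectwise. The standard balancing argument (a flat resolution computes $\uTor$) then gives the $G_1$ side.

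I expect the main obstacle to be only bookkeeping. One should either confirm that the isomorphism of Proposition~\ref{pr:Res-monoidal} is natural in both variables, or else avoid the flatness argument by using projectivity: $\mRes_f$ is left adjoint to the exact functor $\mInd_f$, so it preserves projectives. With either route, both sides are computed by the same complex, which gives the claimed isomorphism.
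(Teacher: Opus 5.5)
Your proof is correct and is essentially the paper's argument: restrict a free resolution of $M$, use exactness of $\mRes_f$ and Proposition~\ref{pr:group-change}(a) to see that it is a free resolution of $\mRes_f M$, then apply Proposition~\ref{pr:Res-monoidal} and commute $\mRes_f$ with homology. The flatness detour is harmless but unnecessary, since $\mRes_f P_\bullet$ is already a free (hence projective) resolution over $\mZ_{G_1}$.
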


\begin{proof}
Let $J_\bullet\ra M\ra 0$ be a free resolution over $\mZ_{G_2}$.  Then $\mRes_f J_\bullet \ra \mRes_f M\ra 0$ is a free resolution over $\mZ_{G_1}$, using Proposition~\ref{pr:group-change}(a) and the fact that $\mRes_f$ is exact.  But
\[ (\mRes_f J_\bullet)\bbox (\mRes_f N)\iso \mRes_f(J_\bullet \bbox N) \]
by Proposition~\ref{pr:Res-monoidal}, and since $\mRes_f$ is exact it commutes with taking  homology.  
\end{proof}

\begin{remark}
Although we won't use it, we mention the following change-of-groups result for $\uExt$.  Let $f\colon G_1\ra G_2$, let $M$ be a $\mZ_{G_2}$-module, and let $N$ be a $\mZ_{G_1}$-module.  Then
\[ \mInd_f \Bigl[ \uExt^i_{\mZ_{G_1}}(\mRes_f M,N) \Bigr ]\iso \uExt^i_{\mZ_{G_2}}(M,\mInd_f N).
\]
The result for $\uHom$ can be checked directly from the definitions, and then the extension to $\uExt$ is almost immediate using the same ideas as in Proposition~\ref{pr:res-Tor}.
\end{remark}

\subsection{Change of groups and families}

Let $f\colon G_1\ra G_2$ be a homomorphism and let $\cF$ be a family of subgroups of $G_2$.  Define $f^*\cF=\{K\leq G_1\,|\, f(K)\in \cF\}$, and observe that this is a family of subgroups of $G_1$.  Note that $f^*\cF$ contains $\{f^{-1}(H)\,|\, H\in \cF\}$, and in fact is obtained from this set by adding in all subgroups of its elements.

The surjection $\mZ_{G_2}\ra \mZ_{G_2}/\cF$ yields a canonical map
\[ \mZ_{G_1}=\mRes_f \mZ_{G_2} \fib \mRes_f (\mZ_{G_2}/\cF).
\]
For $K\in f^*\cF$ we have
$[\mRes_f (\mZ_{G_2}/\cF)](G_1/K)=[\mZ_{G_2}/\cF](G_2/f(K)) =0$,
the vanishing because $f(K)\in \cF$.  So our canonical map descends to give 
\[ \mZ_{G_1}/f^*\cF \fib \mRes_f (\mZ_{G_2}/\cF).
\]

\begin{prop}
\label{pr:res-Z/F}
When $f\colon G_1\ra G_2$ is surjective, the above map $\mZ_{G_1}/f^*\cF \ra \mRes_f(\mZ_{G_2}/\cF)$ is an isomorphism.
\end{prop}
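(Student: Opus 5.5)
The plan is to compare the two sides orbit by orbit, using the explicit orders computed in Proposition~\ref{pr:Z/F-order}. Since the map $\mZ_{G_1}/f^*\cF \ra \mRes_f(\mZ_{G_2}/\cF)$ is already known to be surjective, it suffices to show that, at each orbit $G_1/K$, the source and target are finite cyclic groups of the same order. A surjection between finite groups of equal order is then an isomorphism.

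For the target, the formula $(\mRes_f M)(S)=M(G_2\times_{G_1}S)$ gives
\[
[\mRes_f(\mZ_{G_2}/\cF)](G_1/K)=[\mZ_{G_2}/\cF](G_2/f(K)).
\]
Here we use that $G_2\times_{G_1}G_1/K\iso G_2/f(K)$, which holds because $f$ is surjective. By Proposition~\ref{pr:Z/F-order}, this group is cyclic of order $\gcd\{[f(K):H] \mid H\in\cF,\ H\leq f(K)\}$.

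For the source, Proposition~\ref{pr:Z/F-order} applied to the family $f^*\cF$ gives that $(\mZ_{G_1}/f^*\cF)(G_1/K)$ is cyclic of order $\gcd\{[K:L] \mid L\leq K,\ f(L)\in\cF\}$. So the main task is the numerical identity
\[
\gcd\{[K:L] \mid L\leq K,\ f(L)\in\cF\}=\gcd\{[f(K):H] \mid H\in\cF,\ H\leq f(K)\}.
\]
This is the step I expect to need care, though it should be elementary. Set $N=\ker f$.

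To show the left side divides the right side, take $H\in\cF$ with $H\le f(K)$ and put $L=K\cap f^{-1}(H)$. Then $f(L)=H$ (using $H\le f(K)$), so $f(L)\in\cF$ and $L\in f^*\cF$. Moreover $[K:L]=[f(K):H]$, because $L\supseteq K\cap N$ and $f$ induces $K/(K\cap N)\iso f(K)$. Hence every term on the right already appears on the left.

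To show the right side divides the left side, take $L\le K$ with $f(L)\in\cF$. Then $[K:L]=[f(K):f(L)]\cdot[K\cap N:L\cap N]$, so $[K:L]$ is a multiple of $[f(K):f(L)]$, which is a term on the right.

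The two gcds therefore agree, the orders match at every orbit, and the surjection is an isomorphism levelwise, hence an isomorphism of Mackey functors. The only subtlety to double-check is that the identification of the target value uses the surjectivity of $f$ exactly where needed, namely in $G_2\times_{G_1}G_1/K\iso G_2/f(K)$ as a single orbit.
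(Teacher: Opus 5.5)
Your proof is correct, but it takes a different route from the paper's.

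\textbf{The paper's route.} The paper never counts orders. It applies the exact functor $\mRes_f$ to the presentation $\bigoplus_{H\in\cF}F_{G_2/H}\to\mZ_{G_2}\to\mZ_{G_2}/\cF\to 0$. It then uses surjectivity of $f$ to identify $\mRes_f F_{G_2/H}$ with the free module $F_{G_1/f^{-1}(H)}$ on a single orbit. This gives $\mRes_f(\mZ_{G_2}/\cF)\iso\mZ_{G_1}/J$, where $J$ is generated by the elements $1_{G_1/f^{-1}(H)}$. Finally it checks $J=I_{f^*\cF}$ directly, using $1_{G_1/K}=p^*(1_{G_1/f^{-1}(f(K))})$. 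This is a structural identification at the level of presentations.

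\textbf{Your route.} You replace this with Proposition~\ref{pr:Z/F-order} plus a group-theoretic gcd identity. You verify both divisibilities correctly, via $L=K\cap f^{-1}(H)$ and $[K:L]=[f(K):f(L)]\cdot[K\cap N:L\cap N]$. Your final step is more elementary, but it leans on the earlier order computation, which itself needed the fixed-point description of free modules.

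\textbf{A correction to your closing remark.} The isomorphism $G_2\times_{G_1}G_1/K\iso G_2/f(K)$ holds for every homomorphism $f$. The paper already uses it without surjectivity just before the proposition. Your gcd identity does not use surjectivity either. So surjectivity is not used ``exactly where needed'' anywhere in your argument; you have in fact proved the statement for arbitrary $f$.

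\textbf{A minor tacit assumption.} You assume $\cF\neq\emptyset$, so that the groups are finite. If $\cF=\emptyset$, both sides are $\mZ_{G_1}$ and the claim is trivial.
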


\begin{proof}
We have the exact sequence $\bigoplus_{H\in \cF} F_{G_2/H} \ra \mZ_{G_2} \ra \mZ_{G_2}/\cF \ra 0$ of $\mZ_{G_2}$-modules.  Applying $\mRes_f$ yields a similar exact sequence of $\mZ_{G_1}$-modules of the form
\[ \bigoplus_{H\in \cF} F_{G_1/f^{-1}(H)}\ra \mZ_{G_1} \ra \mRes_f(\mZ_{G_2}/\cF)\ra 0
\]
(note we have used Remark~\ref{re:res-free} here and that surjectivity of $f$ implies $G_2/H$ is a transitive $G_1$-set given by $G_1/f^{-1}(H)$).  This identifies $\mRes_f(\mZ_{G_2}/\cF)$ with $\mZ_{G_1}/J$ where $J\subseteq \mZ_{G_1}$ is the ideal generated by the elements $1_{G_1/f^{-1}(H)}$ for $H\in \cF$.  

In contrast, $I_{f^*\cF}$ is the ideal generated by elements $1_{G_1/K}$ for $f(K)\in \cF$.  It is trivial that $J\subseteq I_{f^*\cF}$, but in fact they are equal: this is because $K\subseteq f^{-1}(f(K))$ and $1_{G_1/K}=p^*(1_{G_1/f^{-1}(f(K))})$.
\end{proof}

\begin{cor}
\label{co:tor-change}
Let $f\colon G_1\ra G_2$ be a surjection and let $\cF_a$ and $\cF_b$ be families of subgroups of $G_2$.  Then
\[ \uTor_*^{\mZ_{G_1}}\Bigl (\mZ_{G_1}/f^*\cF_a,\ \mZ_{G_1}/f^*\cF_b\Bigr )\iso \mRes_f \Bigl [ \uTor_*^{\mZ_{G_2}}(\mZ_{G_2}/\cF_a,\ \mZ_{G_2}/\cF_b)\Bigr].
\]
\end{cor}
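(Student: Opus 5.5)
This follows by combining Proposition~\ref{pr:res-Z/F} with the change-of-groups result for $\uTor$ in Proposition~\ref{pr:res-Tor}. Take $M=\mZ_{G_2}/\cF_a$ and $N=\mZ_{G_2}/\cF_b$ as $\mZ_{G_2}$-modules. Proposition~\ref{pr:res-Tor} applies to any homomorphism $f$ and gives
\[ \uTor_*^{\mZ_{G_1}}\bigl(\mRes_f M,\mRes_f N\bigr)\iso \mRes_f\Bigl[\uTor_*^{\mZ_{G_2}}(M,N)\Bigr]. \]
Recall that this came from restricting a free resolution $J_\bullet\ra M$ of $\mZ_{G_2}$-modules. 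The restriction $\mRes_f J_\bullet$ is still free by Proposition~\ref{pr:group-change}(a), and it is still a resolution because $\mRes_f$ is exact. We then use that $\mRes_f$ is strong monoidal (Proposition~\ref{pr:Res-monoidal}) and commutes with homology.

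Next we identify the arguments on the left. Because $f$ is surjective, Proposition~\ref{pr:res-Z/F} gives isomorphisms of $\mZ_{G_1}$-modules
\[ \mZ_{G_1}/f^*\cF_a\iso \mRes_f(\mZ_{G_2}/\cF_a) \quad\text{and}\quad \mZ_{G_1}/f^*\cF_b\iso \mRes_f(\mZ_{G_2}/\cF_b). \]
Since $\uTor_*^{\mZ_{G_1}}$ is a functor in each variable, it carries these isomorphisms to isomorphisms. Substituting them into the left side of the display above yields the stated isomorphism.

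There is no real obstacle here. The only point needing care is that surjectivity of $f$ is genuinely used, namely in Proposition~\ref{pr:res-Z/F}. There it ensures that each $G_2/H$, viewed as a $G_1$-set, is the single orbit $G_1/f^{-1}(H)$, so that the ideal generated by the restricted generators is exactly $I_{f^*\cF}$. If one also wants naturality, one should note that all the isomorphisms used are natural in the modules.
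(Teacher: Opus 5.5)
Your proposal is correct and follows the paper's proof exactly: the paper simply says the corollary is immediate from Proposition~\ref{pr:res-Z/F}, which is where surjectivity of $f$ is used, combined with Proposition~\ref{pr:res-Tor}, which holds for any homomorphism. Your recap of why those two propositions hold is accurate but not needed for the corollary itself.
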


\begin{proof}
Immediate from Propositions~\ref{pr:res-Z/F} and \ref{pr:res-Tor}.
\end{proof}

\subsection{Applications}

Fix a prime $\ell$.  Recall that $n(\ell)$ is the $\ell$-primary part of $n$,  and $n(\hat{\ell})=\frac{n}{n(\ell)}$.
We have the extension of groups
\[ 0 \ra C_{n(\hat{\ell})} \inc C_n \llra{f} C_{n(\ell)}  \ra 0.
\]
By Proposition~\ref{pr:group-change}(a) we have 
$\mRes_f(F_{\Theta_{\ell^e}})=F_{\Theta_{\ell^e}}$.
Note that the two free functors, though written the same, are slightly different: the first is a $\mZ_{C_{n(\ell)}}$-module and the second is a $\mZ_{C_n}$-module.  

Let $d=\ell^e$.  
Recall that the $C_{n(\ell)}$-family $\cF^{C_{n(\ell)}}_{d}$ consists of all subgroups of $C_{n(\ell)}$ whose order divides $\tfrac{n(\ell)}{d}$.  Since $|f(K)|=|K|(\ell)$, we have that $f^*(\cF^{C_{n(\ell)}}_d)$
consists of all subgroups $K\leq C_n$ such that $|K|(\ell)$ divides $\frac{n(\ell)}{d}$.  But note that this is equivalent to the condition that $|K|$ divides $\frac{n}{d}$, which shows that
\[ f^*(\cF^{C_{n(\ell)}}_d)=\cF^{C_n}_d.
\]
Therefore
Proposition~\ref{pr:res-Z/F} yields that $\mRes_f (\mZ_{C_{n(\ell)}}/I_d)=\mZ_{C_n}/I_d$.  Finally,
Corollary~\ref{co:tor-change} now gives us that
\[ \uTor_*^{\mZ_{C_n}}(\mZ_{C_n}/I_{\ell^e},\ \mZ_{C_n}/I_{\ell^f})
\iso \mRes_f \Bigl [ \uTor_*^{\mZ_{C_{n(\ell)}}}(\mZ_{C_{n(\ell)}}/I_{\ell^e},\ \mZ_{C_{n(\ell)}}/I_{\ell^f} )\Bigr ].
\]
This is the maneuver that allows us to connect the $\uTor$-modules over $C_n$ and $C_{n(\ell)}$.  

We give one more application, which shows how the results of this paper can be extended beyond the setting of cyclic groups.  There are exactly two nontrivial families for the symmetric group $\Sigma_3$, namely $\cF_{[3]}=\{e,C_3\}$ and $\cF_{[ 2]}=\{e,\langle (12)\rangle, \langle(13)\rangle, \langle (23)\rangle\}$.  If $f\colon \Sigma_3\ra C_2$ is the (unique) quotient map, then $\cF_{[3]}=f^*\cF_2$.  So Corollary~\ref{co:tor-change} gives that
\[ \uTor_*^{\mZ_{\Sigma_3}}(\mZ/\cF_{[3]},\mZ/\cF_{[3]})\iso \mRes_f \Bigl [\uTor_*^{\mZ_{C_2}}(\mZ/I_2,\mZ/I_2)\Bigr ].
\]
Of course we know the right hand side, by our work in previous sections.  


\appendix
\section{A leftover proof}
\label{se:appendix}

This section deals with the proof of
Proposition~\ref{pr:box-slice-isos}, which was skipped in the body of the text. We recall the proposition here for convenience: \medskip

\hypersetup{linkcolor=black}
\noindent \textbf{Proposition~\ref{pr:box-slice-isos}.}
\hypersetup{linkcolor=red}
\emph{There is a natural isomorphism $F_A\bbox M\iso M^{DA}$ and also a natural isomorphism $\uHom(F_A,M) \iso M^A$.  Consequently, note the natural isomorphism
\[ \uHom(F_A,\mZ)\iso \mZ^A \iso F_{DA}.
\]}\medskip

 We begin by producing the natural map $F_A\bbox M\ra M^{DA}$.  
Recall that for any finite $G$-set $X$, $(F_A\bbox M)(X)$ is the coequalizer of the diagram
\begin{center}
\begin{tikzcd}
  \coprod\limits_{\smash{C_1,C_2,D_1,D_2}}
  \cBZ(X,C_1\times C_2) \tens \cBZ(C_1,D_1)\tens \cBZ(C_2,D_2) \tens F_A(D_1)\tens M(D_2) \ar[d, shift left] \ar[d, shift right]
\\
\coprod\limits_{C_1,C_2} 
\cBZ(X,C_1\times C_2) \tens F_A(C_1)\tens M(C_2)
\end{tikzcd}
\end{center}
\noindent
which we rewrite as
\begin{center}
\begin{tikzcd}
\!\!\!\!  \coprod\limits_{\smash{C_1,C_2,D_1,D_2}}
  \cBZ(X,C_1\times C_2) \tens \cBZ(C_1,D_1)\tens \cBZ(C_2,D_2) \tens \cBZ(D_1,A)\tens M(D_2) \ar[d, shift left] \ar[d, shift right]
\\
\coprod\limits_{C_1,C_2} 
\cBZ(X,C_1\times C_2) \tens \cBZ(C_1,A)\tens M(C_2).
\end{tikzcd}
\end{center}
\noindent
Given $T\in \cBZ(X,C_1\times C_2)$ and $S\in \cBZ(C_1,A)$ we can form $(S\times \id_A)\circ T\in \cBZ(X,A\times C_2)$ and then use adjointness to obtain a corresponding element in $\cBZ(DA\times X,C_2)$.  Pullback along this map allows us to pass from $M(C_2)$ to $M(DA\times X)=M^{DA}(X)$.  Said differently, we are taking the composition
\[ \xymatrixrowsep{1.5pc}\xymatrix{
\cBZ(X,C_1\times C_2) \tens \cBZ(C_1,A)\tens M(C_2)\ar[d] \\
\cBZ(X,C_1\times C_2)\tens \cB_Z(C_1\times C_2,A\times C_2)\tens M(C_2)\ar[d]\\
\cBZ(X,A\times C_2)\tens M(C_2) \ar[d] \\
\cBZ(DA\times X,C_2)\tens M(C_2)\ar[d]\\
M(DA\times X)=M^{DA}(X).
}
\]
This composition is readily checked to respect the coequalizer, and therefore induces $(F_A\bbox M)(X)\ra M^{DA}(X)$, that is natural in both $X$ and $A$.  So we have a map of $\mZ$-modules $\sigma\colon F_A\bbox M\ra M^{DA}$ that is natural in $A$.  

\begin{prop} 
\label{pr:box-isos}
\mbox{}\par
\begin{enumerate}[(a)]
\item The map $\sigma$ is an isomorphism when $M$ is a free module $F_B$.  
\item The map $\sigma$ is an isomorphism for every $\mZ$-module $M$.  
\end{enumerate}
\end{prop}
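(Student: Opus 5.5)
For (a), I will compute $(F_A\bbox F_B)(X)$ directly from the coequalizer with $M=F_B$, and show that the map $\sigma$ is the composite of two canonical isomorphisms.

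The first is a Yoneda (co-Yoneda) reduction. Since both $F_A=\cBZ(\blank,A)$ and $F_B=\cBZ(\blank,B)$ are representable, the coequalizer is the Day convolution of two representables. The density formula collapses the coend over $C_1$ and $C_2$, using $\cBZ(C_1,A)$ and $\cBZ(C_2,B)$ as the representing objects, and yields a natural isomorphism
\[ (F_A\bbox F_B)(X)\iso \cBZ(X,A\times B)=F_{A\times B}(X). \]
Concretely, the inverse sends $h\in\cBZ(X,A\times B)$ to the class of $h\tens \id_A\tens \id_B$. One checks that the coequalizer relations identify every generator $T\tens S\tens S'$ with $\bigl((S\times S')\circ T\bigr)\tens\id_A\tens\id_B$. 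This is the standard argument that Day convolution of representables is represented by the product.

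The second isomorphism is the adjunction $\cBZ(X,A\times B)\iso\cBZ(DA\times X,B)=F_B^{DA}(X)$. The remaining step is to check that the composite of these two isomorphisms agrees with $\sigma$. It suffices to evaluate on generators of the form $h\tens\id_A\tens\id_B$, where $\sigma$ forms $(\id_A\times\id_B)\circ h=h$, takes its adjoint in $\cBZ(DA\times X,B)$, and pulls back $\id_B$ along it, giving exactly the adjoint of $h$. Hence $\sigma$ is an isomorphism for $M=F_B$, and by additivity also for finite direct sums of free modules.

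For (b), choose a presentation $P_1\ra P_0\ra M\ra 0$ with $P_0,P_1$ direct sums of free modules $F_B$. Arbitrary sums are fine, since both functors below commute with direct sums: $\bbox$ is a left adjoint, and $M^{DA}$ is computed objectwise. The functor $F_A\bbox(\blank)$ is right exact. The functor $M\mapsto M^{DA}$ is exact, because $M^{DA}(X)=M(DA\times X)$ and exactness is checked objectwise. Naturality of $\sigma$ in $M$ gives a map between two right exact rows, which by (a) is an isomorphism on the $P_1$ and $P_0$ terms, so the five lemma shows that $\sigma_M$ is an isomorphism.

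For the second part of Proposition~\ref{pr:box-slice-isos}, I will use the enriched Yoneda argument:
\[ \uHom(F_A,M)(X)=\Hom(F_X\bbox F_A,M)\iso\Hom(F_{X\times A},M)\iso M(A\times X), \]
which holds by part (a) (Corollary~\ref{co:F-box-F}) and the Yoneda Lemma, naturally in $X$ and $A$. The main obstacle I expect is bookkeeping the variance, that is, checking that the adjunction isomorphism with $D$ is natural in all variables and that $\sigma$ respects the coequalizer relations. Once the co-Yoneda reduction is set up carefully, this is routine.
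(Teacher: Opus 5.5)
Your proposal is correct and follows essentially the same route as the paper. Your co-Yoneda inverse $h\mapsto h\otimes \id_A\otimes \id_B$ is precisely the paper's contracting homotopy $S$, which shows that the coequalizer maps isomorphically onto $\cBZ(X,A\times B)$; $\sigma$ is then identified with this map via the adjunction isomorphism. For (b), your presentation-plus-five-lemma argument is a concrete form of the paper's observation that every $\mZ$-module is a colimit of free modules and that both $F_A\bbox(\blank)$ and $(\blank)^{DA}$ preserve colimits.
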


\begin{proof}
Part (b) follows immediately from (a), since every $\mZ$-module is a colimit of free modules and both $F_A\bbox(\blank)$ and $(\blank)^{DA}$ preserve colimits.  
So we only need to prove (a).

Recall again that $(F_A\bbox F_B)(X)$ is the coequalizer of 
\begin{center}
\begin{tikzcd}  
\!\!\!\!\!\coprod\limits_{{C_1,C_2,D_1,D_2}}\!\!\!\!\!\!\!\!
\cBZ(X,C_1\times C_2) \tens \cBZ(C_1,D_1)\tens \cBZ(C_2,D_2) \tens \cBZ(D_1,A)\tens \cBZ(D_2,B) \arrow[d, shift left] \ar[d, shift right]
\\
\coprod\limits_{C_1,C_2} 
\cBZ(X,C_1\times C_2) \tens \cBZ(C_1,A)\tens \cBZ(C_2,B).
\end{tikzcd}
\end{center}
Cartesian product and composition yields an evident map $\beta$ to $\cBZ(X,A\times B)$ that is compatible with the coequalizer maps.
This is not identical to the map $\sigma$, but they are easily seen to be related by the diagram below:

\[ \xymatrix{
(F_A\bbox F_B)(X) \ar[rr]^-\beta \ar[d]_{\sigma} && \cBZ(X,A\times B) \\
(F_B)^{DA}(X) \ar@{=}[r] & F_B(DA\times X)\ar@{=}[r] &\cBZ(DA\times X,B)\ar[u]_\iso
}
\]
(the right vertical map is our familiar adjunction isomorphism).  So we can prove that $\sigma$ is an isomorphism by instead proving that $\beta$ is an isomorphism.  However, this is almost trivial: the coequalizer diagram below has an obvious pair of contracting homotopies
\begin{center}
\begin{tikzcd}  
\!\!\!\!\!\coprod\limits_{{C_1,C_2,D_1,D_2}}\!\!\!\!\!\!\!\!
\cBZ(X,C_1\times C_2) \tens \cBZ(C_1,D_1)\tens \cBZ(C_2,D_2) \tens \cBZ(D_1,A)\tens \cBZ(D_2,B) \arrow[d, shift left] \ar[d, shift right]
\\
\coprod\limits_{C_1,C_2} 
\cBZ(X,C_1\times C_2) \tens \cBZ(C_1,A)\tens \cBZ(C_2,B)\arrow[d] \arrow[u, bend right=40,"S'" right]\\
\cBZ(X,A\times B) \arrow[u, bend right=40,"S" right]
\end{tikzcd}
\end{center}
where $S$ maps into the $C_1=A$, $C_2=B$ summand in the evident way, and $S'$ maps into the $D_1=A$, $D_2=B$ summand similarly.  
\end{proof}

It was established in the course of the above proof that there is 
a natural isomorphism $F_A\bbox F_B\iso F_{A\times B}$.
We will use this to complete our goal:

\begin{proof}[Proof of Proposition~\ref{pr:box-slice-isos}]
The isomorphism $F_A\bbox M\iso M^{DA}$ has already been established in Proposition~\ref{pr:box-isos}.
It remains to show $\uHom(F_A,M)\iso M^A$.  For this we observe the sequence of natural isomorphisms
\begin{align*}
\uHom(F_A,M)(X)=\Hom(F_X\bbox F_A,M)=\Hom(F_{X\times A},M)&=M(X\times A)\\
&=M(A\times X)\\
&=M^A(X).
\end{align*}
\end{proof}


\bibliographystyle{amsalpha}

\begin{thebibliography}{JTTW}

\bibitem[A]{A} J.~E. Arnold, Jr., {\em Homological algebra based on permutation modules\/}, J. Algebra {\bf 70} (1981), 250--260.   

\bibitem[BD]{BD} S. Basu and P. Dey, {\em Equivariant cohomology for cyclic groups\/}, 
Int. Math. Res. Not. IMRN 2025, Paper No. rnaf150, 41pp.

\bibitem[B1]{B1} S. Bouc, {\em Green functors and $G$-sets\/}, Lecture Notes in Math. {\bf 1671}, Springer-Verlag, Berlin, 1997.

\bibitem[B2]{B2} S. Bouc, {\em Complexity and cohomology of cohomological Mackey functors\/}, Adv. Math. {\bf 221} (2009), no. 3, 983--1045. 

\bibitem[BSW]{BSW} S. Bouc, R. Stancu, and P. Webb, {\em On the projective dimension of Mackey functors\/}, Algebr. Represent. Theory {\bf 20} (2017), no. 6, 1467--1481.

\bibitem[DH1]{DH1} D. Dugger and C. Hazel, {\em $RO(G)$-graded cohomology of equivariant configuration spaces\/}, 
Algebr. Geom. Topol. {\bf 26} (2026), no. 2, 437--484.

\bibitem[DH2]{DH2} D. Dugger and C. Hazel, {\em The Bredon equivariant cohomology of a point for cyclic groups\/}, preprint, 2026.

\bibitem[HHR]{HHR2} M.~A. Hill, M.~J. Hopkins, and D.~C. Ravenel, {\em The slice spectral sequence for certain $RO(C_{p^n})$-graded suspensions of $H\mZ$\/}, Bol. Soc. Mat. Mex. (3) {\bf 23} (2017), no. 1, 289--317.

\bibitem[L1]{L1} L.~G. Lewis, {\em The theory of Green functors\/}, unpublished preprint, 1981.

\bibitem[L2]{L2} L.~G. Lewis, {\em The $RO(G)$-graded equivariant ordinary cohomology of complex projective spaces with linear $\Z/p$-actions\/},
Algebraic topology and transformation groups (G\"ottingen, 1987), 53-122.  Lecture Notes in Math. {\bf 1361}, Springer-Verlag Berlin, 1988.

\bibitem[MQS]{MQS} D. Mehrle, J.~D. Quigley, M. Stahlhauer, {\em Pathological computations of Mackey functor-valued Tor over cyclic groups\/}, Bull. Lond. Math. Soc. {\bf 57} (2025), no. 10, 3024--3036.

\bibitem[TW]{TW} J. Th\'evenaz and P.~J. Webb, {\em The structure of Mackey functors\/}, Trans. Amer. Math. Soc. {\bf 347} (1995), 1865--1961.

\bibitem[W]{W} P.~J. Webb, {\em A guide to Mackey functors\/}, in Handbook of Algebra Vol. 2, Elsevier/North-Holland, Amsterdam, 2000, pp. 805--836.

\bibitem[Y]{Y} T. Yoshida, {\em On $G$-functors (II): Hecke operators and $G$-functors\/}, J. Math. Soc. Japan {\bf 35} (1983), 179--190.

\bibitem[Z]{Z} M. Zeng, {\em Equivariant Eilenberg-MacLane spectra in cyclic $p$-groups\/}, preprint, 2017, arXiv:1710.01769.  (Note that a previous version existed under a different title: {\em $RO(G)$-graded homotopy Mackey functor of $H\mZ$ for $C_{p^2}$ and homological algebra over $\mZ$-modules}).  


\end{thebibliography}

\end{document}